\PassOptionsToPackage{dvipsnames}{xcolor}
\documentclass[11pt]{amsart}
\usepackage{verbatim}
\usepackage{graphicx}
\usepackage{bbding}
\usepackage[shortlabels]{enumitem}
\setlist[enumerate,1]{label={\upshape(\arabic*)},leftmargin=1cm}
\usepackage[margin=1in]{geometry}
\usepackage[all,arc]{xy}
\SelectTips{lu}{10}
\usepackage{tikz}
\usetikzlibrary{cd}
\usetikzlibrary{calc,intersections,through,backgrounds}
\usepackage{pgfplots}
\pgfplotsset{compat=1.5.1}
\usepackage{amssymb}
\usepackage[mathscr]{euscript} %for \mathscr

\setlength{\footskip}{1cm}
\pagestyle{plain}

\numberwithin{equation}{section}
%%%%%%%%%%%%%%%%%%%%%%%%%%%%%%%%
% Theorems, etc.
% theorem style plain --- default
\newtheorem{thmx}{Theorem}
 % "letter-numbered" theorems
\newtheorem{thm}[equation]{Theorem}

\newtheorem{lem}[equation]{Lemma}
\newtheorem*{lem*}{Lemma}
\newtheorem{cor}[equation]{Corollary}
\newtheorem{conj}[equation]{Conjecture}
\newtheorem{prop}[equation]{Proposition}

\newtheorem*{prop*}{Proposition}

\theoremstyle{definition}
\newtheorem{defn}[equation]{Definition}
\newtheorem{exmp}[equation]{Example}

\theoremstyle{remark}

\newtheorem{rem}[equation]{Remark} %\renewcommand{\therem}{}

\newtheorem{claim}[equation]{Claim}

\newcommand{\al}{\alpha}
\newcommand{\A}{\mathscr{A}}
\newcommand{\B}{\mathscr{B}}

\newcommand{\cm}{\text{\tiny\Checkmark}}

\newcommand{\de}{\delta}
\newcommand{\ds}{\displaystyle}

\newcommand{\e}{\emptyset}

\newcommand{\ep}{{\varepsilon}}
\newcommand{\F}{\mathscr{F}}
\newcommand{\fin}{{\it fin}}

\newcommand{\g}{{\gamma}}

\newcommand{\lb}{\lambda}

\newcommand{\m}{\medskip}

\newcommand{\n}{\noindent}
\newcommand{\N}{{\mathbb N}}

\newcommand{\R}{\mathbb{R}}

\newcommand{\s}{{\sigma}}

\newcommand{\sm}{\setminus}

\newcommand{\sq}{\subseteq}
\newcommand{\sz}{\scriptsize}
\newcommand{\ta}{\theta}

\newcommand{\ts}{\textstyle}

\newcommand{\wt}{\widetilde}

\title{Repeated integrals of increasing functions}

\author{Maxim R.~Burke}

\address{School of Mathematical and Computational Sciences, University of Prince Edward Island
Charlottetown PE, Canada C1A 4P3}

\email{burke@upei.ca}

\thanks{The first author's research supported by NSERC. The author thanks Yinhe Peng and the Academy of Mathematics and Systems Science of the Chinese Academy of Sciences for their hospitality in the fall of 2024 when some of this research was carried out. The author also thanks Bill Weiss for a helpful discussion on early results related to this paper.}

\subjclass{Primary 26A241, 46E10, 26A48; Secondary 54C05, 26D05, 26A51, 41A30.}%
%
%{\bf AMS Subject Classification (2020):}
%26A24, %Differentiation (real functions of one variable): general theory, generalized derivatives, mean value theorems
%46E10, %Topological linear spaces of continuous, differentiable or analytic functions
%54C05, %Continuous maps
%26A48, %Monotonic functions, generalizations
%26D05, %Inequalities for trigonometric functions and polynomials
%26A51, %Convexity of real functions in one variable,
%41A30 %Approximation by other special function classes
%
\keywords{repeated integrals, increasing function, convex function, continuous transversal, topology on $C^\infty$ functions, comonotone approximation}%

\author{Maleeha Haris}
\email{mharis16666@upei.ca}

\author{Madhavendra}
\email{mmadhavendra@upei.ca}

\date{}

\begin{document}

\begin{abstract}
Motivated by a problem on comonotone approximation of $C^n$ functions by entire functions, for increasing functions $f\colon[0,1]\to[0,1]$, we characterize the possible values of $(a,b,c)$, where $a=I(f)(1)$, $b=I^2(f)(1)$, $c=I^3(f)(1)$ ($I$ is the integral operator $I(f)(x)=\int_0^xf(t)\,dt$), as those which satisfy the conditions $0\leq a\leq 1$, $a^2/2\leq b\leq a/2$, $2b^2\leq 3ac$,
$a^2 + 4b^2 + 6c\leq 6ac +2ab+2b$, and $0\leq c\leq a/6$.
Our main theorem states that if $a,b,c$ are real numbers for which the inequalities are strict, then there is a function $f$ satisfying $a=I(f)(1)$, $b=I^2(f)(1)$, $c=I^3(f)(1)$ which is $C^\infty$ with $f(0)=0$, $f(1)=1$, $Df(x)>0$ for $0<x<1$, and whose derivatives $D^jf(0)$ and $D^jf(1)$, $j\geq 1$, are arbitrary as long as they are consistent with the increasing nature of $f$.
The construction of $f$ proceeds by starting with a continuous parametrization $s\mapsto \rho_s\in C^\infty([0,1])$ defined on an open subset of $\R^4$, and composing with successive continuous transversals through the open set to fix the values of $I^j(\rho_s)(1)$ for $j=0,1,2,3$.

Addressing the aforementioned problem on comonotone approximation, we examine the set $V_n\sq\R^{2(n+1)}$ of possible values $D^jf(0)$, $D^jf(1)$, $j=0,\dots,n$,
of the derivatives of a $C^n$ function at the endpoints when $D^nf$ is increasing but not constant. We make a conjecture about the nature of this set and prove our conjecture for $n\leq 3$ as a consequence of the theorem mentioned above.
\end{abstract}

\date{\today}

\maketitle

%\noindent
%{\bf{Key Words}:} repeated integrals, increasing function, convex function, continuous transversal, topology on $C^\infty$ functions, comonotone approximation.
%
%\smallskip
%
%\noindent
%{\bf AMS Subject Classification (2020):}
%26A24, %Differentiation (real functions of one variable): general theory, generalized derivatives, mean value theorems
%46E10, %Topological linear spaces of continuous, differentiable or analytic functions
%54C05, %Continuous maps
%26A48, %Monotonic functions, generalizations
%26D05, %Inequalities for trigonometric functions and polynomials
%26A51, %Convexity of real functions in one variable,
%41A30 %Approximation by other special function classes
%%26A06, %One-variable calculus

\section{Introduction}

This paper has its origins in the study of comonotone approximation by entire functions of $C^n$ functions having piecewise monotone derivatives.
A function $f\colon\R\to\R$ is \emph{piecewise monotone} if there is a closed discrete set $K\sq \R$ such that $f$ is monotone on the components of the complement of $K$. Two functions $f,g$ are \emph{comonotone} if there is a closed discrete set $K\sq \R$ such that $f$ and $g$ have the same monotonicity on the components of the complement of $K$.

It is known (see \cite{Bu2019}, Theorem A) that if $n$ is a nonnegative integer and for some $m>n$, $f\colon \mathbb{R}\to\mathbb{R}$ is a $C^m$ function such that $D^{n+1}f$ has no flat points (i.e., for each $x\in\R$, there is an integer $i\in[n+1,m]$ such that $D^if(x)\not=0$), then for any closed discrete set $E\sq\R$, and for any positive continuous function $\varepsilon\colon\R\to\R$, there is a function $g\colon\R\to\R$ which is the restriction of an entire function, whose derivatives approximate those of $f$ within $\ep$ with interpolation on $E$, in symbols, for all $x\in \R$ we have
\begin{itemize}
\item
$|D^ig(x) - D^if(x)|<\varepsilon(x)$, $0\leq i\leq m$;

\item
$D^ig(x) = D^if(x)$ when $x\in E$, $0\leq i\leq m$;
\end{itemize}
and each $D^kf$ is
comonotone with $D^kg$, $k=0,\dots,n$.

The following theorem is a partial answer to the question of what can be said if $m=n$, i.e., $f$ may not have derivatives of order larger than $n$.
In the statement, a \emph{platform} of a continuous function $f\colon\R\to\R$ is a maximal interval on which $f$ is constant, and a compact platform $I=[a,b]$ is a \emph{turning platform} if for some $\ep>0$, the values of $f$ on $(a-\ep,b+\ep)\sm I$ are either all larger than the constant value $k$ of $f$ on $I$, or all smaller than $k$. If $D^nf$ has a platform unbounded above, then $A^{n}_{\max}=A^{n}_{\max}(f)$ denotes this platform, and $A^n_{\max}=\e$ otherwise. Similarly, $A^{n}_{\min}=A^{n}_{\min}(f)$ denotes the platform of $D^nf$ which is unbounded below, if there is one, and $A^n_{\min}=\e$ otherwise.

The theorem makes an assumption $(P_n)$ which we discuss shortly.

\begin{thm}[\cite{Bu2025}]\label{t:C.n.Hoischen.C.n.fcts}
Assume $(P_n)$. Let $f\colon \R\to\R$ be a $C^n$ function such that $D^jf$ is piecewise monotone for $0\leq j\leq n$, and $D^nf$ is not constant. Suppose there is a closed discrete set $E\sq \R$, disjoint from $A^n_{\min}\cup A^n_{\max}$, having no more than one point on any platform of $D^nf$, and having exactly one point on each turning platform of $D^jf$, $0\leq j\leq n$.
Let $\ep\colon \R\to\R$ be a positive continuous function.
Then there is a function $g\colon \R\to\R$ which is the restriction of an entire function and satisfies the following conditions for $x\in \R$.
\begin{enumerate}
\item\label{p:C.n.Hoischen:1}
$|D^jg(x)-D^jf(x)|<\ep(x)$, $0\leq j\leq n$.

\item\label{p:C.n.Hoischen:2}
$D^jg(x)=D^jf(x)$ when $x\in E$, $0\leq j\leq n$.

\item\label{p:C.n.Hoischen:3}
$D^jg(x)\not=0$ when $x\not\in E$, $0<j\leq n+1$.
\end{enumerate}
\end{thm}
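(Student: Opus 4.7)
The plan is to reduce the statement to the comonotone entire approximation result of \cite{Bu2019} quoted in the introduction, which requires the stronger hypothesis that $f\in C^m$ for some $m>n$ and that $D^{n+1}f$ has no flat points. Thus I would first replace $f$ by a smoother function $\tilde f$ that retains all the structural features mentioned in the conclusion, and then invoke that earlier theorem for $\tilde f$. Concretely, $\tilde f$ should be a globally $C^{n+1}$ function that is $\ep/2$-close to $f$ in $C^n$-norm, agrees with $f$ in all derivatives of order $\leq n$ on $E$, is comonotone with $f$ through order $n$, and has $D^{n+1}\tilde f$ with no flat points. Given such $\tilde f$, Theorem~A of \cite{Bu2019} applied with $m=n+1$ and tolerance $\ep/2$ delivers an entire $g$ from which conclusions (1) and (2) follow by the triangle inequality and the interpolation on $E$, while (3) follows from the comonotonicity, the analyticity of $g$, and an arrangement in Step~1 that the zeros of $D^k\tilde f$ off $E$ are either absent (for $k=n+1$) or controlled by the partition below.

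To construct $\tilde f$, I would decompose $\R$ at a closed discrete set $K\supseteq E$ that contains the monotonicity-change points of each $D^kf$ ($0\leq k\leq n$) and the endpoints of every platform of each $D^kf$; the hypotheses imposed on $E$ in the statement---disjointness from $A^n_{\min}\cup A^n_{\max}$, at most one point on each platform of $D^n f$, exactly one point on each turning platform---are exactly what is needed for this partition to be compatible with interpolation at $E$. On each subinterval $J=[a,b]$ of $\R\sm K$, the restriction $f|_J$ carries a prescribed $n$-jet at each endpoint and a prescribed monotonicity type of $D^nf|_J$ (strictly monotone or constant); I would replace $f|_J$ on the open interior by a $C^\infty$ function that matches those boundary jets (in fact, a consistent choice of boundary jets of order $n+1$, so that the global $\tilde f$ is $C^{n+1}$ across $K$), realizes the same monotonicity type, and satisfies $D^{n+1}\neq 0$ throughout $(a,b)$, the $C^n$-perturbation being bounded by $\ep(x)/2$. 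The hypothesis $(P_n)$ is precisely the existence of such local replacements for any consistent boundary data; it packages the realizability problem that, for $n=3$, is the main content of the rest of this paper.

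The main obstacle is clearly this local realization step: given prescribed jets at the two endpoints of $[a,b]$ and a prescribed monotonicity type of $D^n$, one must produce a $C^\infty$ function on $[a,b]$ matching the jets, with $D^n$ of the prescribed type and $D^{n+1}$ nowhere vanishing on $(a,b)$. This is a genuinely coupled constraint on the two endpoint jets, since the $n+1$ iterated integrals of a strictly positive, monotone $D^{n+1}$ must connect the boundary values in a way consistent with several integral inequalities---exactly the ones characterized in the abstract in the case $n=3$. Once the pieces are reassembled across $K$ to form $\tilde f$, the application of Theorem~A of \cite{Bu2019} finishes the proof, and the entire $g$ so produced satisfies (1)--(3).
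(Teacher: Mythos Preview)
The paper does not contain a proof of this theorem. It is stated with attribution to \cite{Bu2025} and is used only as motivation: the present paper's job is to supply the ingredient $(P_n)$ for $n\leq 3$, not to prove Theorem~\ref{t:C.n.Hoischen.C.n.fcts} itself. So there is no proof here against which to compare your proposal.

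That said, your outline correctly identifies the intended role of $(P_n)$: it is exactly the local realization step you describe, furnishing a $C^\infty$ function on an interval with prescribed endpoint jets, $D^{n+1}>0$ on the interior, and controlled higher derivatives at the endpoints. A couple of cautions about your reduction, however. First, the version of Theorem~A from \cite{Bu2019} quoted in the introduction yields approximation, interpolation on $E$, and comonotonicity of $D^kf$ and $D^kg$ for $0\leq k\leq n$; it does not literally deliver condition~(3), which demands $D^jg(x)\neq 0$ for all $x\notin E$ and $0<j\leq n+1$. You would need either a sharper statement from \cite{Bu2019} or an additional argument. Second, to apply that theorem with $m=n+1$ you need $D^{n+1}\tilde f$ to have no flat points on all of $\R$, including at the partition points $K$; your piecewise construction only arranges $D^{n+1}\tilde f\neq 0$ on each open piece, so you must take care with the endpoint $(n{+}1)$-jets (this is where the freedom in $(P_n)$ to specify $D^{n+1}f(0)=D^{n+1}f(1)=1$ is relevant). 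These are not fatal, but they are the places where your sketch would need to be fleshed out, and presumably \cite{Bu2025} handles them.
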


On a component $I$ of the complement of $E$, for $0\leq j\leq n$, $D^jg$ is monotone by \ref{p:C.n.Hoischen:3}, and $D^jf$ is also monotone on $I$ because $E$ has a point on each turning platform of $D^jf$. From \ref{p:C.n.Hoischen:2} it follows that these monotonicities are the same for $D^jg$ and $D^jf$ if $I$ is bounded (so both endpoints are in $E$), and when $I$ is not bounded the same conclusion follows from \ref{p:C.n.Hoischen:2} and \ref{p:C.n.Hoischen:1} if we take $\ep(x)$ so that it vanishes at $\pm\infty$.

To state the assumption $(P_n)$, we first define the following set of points $b=(b_0,\dots,b_n)\in\R^{n+1}$.
\begin{align*}
W_n & = \{b\in\R^{n+1}: \text{there is an $f\in C^n[0,1]$ with $D^nf$ increasing\footnotemark\
and nonconstant such that} \\
& \rule{2.5cm}{0cm}\text{$D^jf(0)=0$ and $D^jf(1)=b_j$ for all $j=0,\dots,n$}\}.
\end{align*}
\footnotetext{For functions $f\colon I\to \R$ on an interval $I$, we use the word increasing in its non-strict sense, i.e., $f$ is increasing if $x\leq y$ implies $f(x)\leq f(y)$ (for all $x,y\in I$). Similarly for the word decreasing.}
$(P_n)$ is the statement that $W_n$ is open in $\R^{n+1}$ and that for each $b\in W_n$ we may choose the witnessing function $f$ to be $C^\infty$ and so that $D^{n+1}f(x)>0$ for all $x\in (0,1)$,
$D^{n+1}f(0) = D^{n+1}f(1) = 1$ and $D^jf(0) = D^jf(1) = 0$ for $j>n+1$.
\begin{conj}
$(P_n)$ is true for all nonnegative integers $n$.
\end{conj}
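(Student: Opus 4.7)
The plan is to extend the construction of the paper's main theorem (which settles $(P_n)$ for $n\leq 3$) to arbitrary $n$ by producing an $(n+1)$-parameter family of admissible $C^\infty$ functions whose moment map has full rank. Writing $\rho=D^nf$ for the witness $f$, the requirements on $\rho$ become: $\rho\in C^\infty[0,1]$, strictly increasing, $\rho(0)=0$, $D\rho(0)=D\rho(1)=1$, $D^k\rho(0)=D^k\rho(1)=0$ for all $k\geq 2$, together with the $n+1$ moment constraints $I^{n-j}(\rho)(1)=b_j$ for $0\leq j\leq n$. The value $b_n=\rho(1)$ is itself one of the moments to be prescribed.

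Starting from a basepoint $\rho^\circ(x)=x+\eta(x)$ with $\eta\in C^\infty_c((0,1))$ small enough that $\rho^\circ$ is strictly increasing, I would consider perturbations
\[
\rho_s=\rho^\circ+s_0\chi+\sum_{k=1}^n s_k\psi_k,
\]
where $\chi\in C^\infty[0,1]$ is a strictly increasing step-like function flat at both endpoints with $\chi(0)=0$ and $\chi(1)=1$ (so $s_0$ adjusts $\rho_s(1)$ while preserving every other boundary datum), and $\psi_k\in C^\infty_c((0,1))$ are bumps concentrated near distinct points $a_1,\dots,a_n\in(0,1)$. For small $|s|$ the function $\rho_s$ remains admissible, and the Jacobian of $s\mapsto(I^{n-j}(\rho_s)(1))_{j=0}^n$ at $s=0$ is invertible for generic $a_k$ (its principal $n\times n$ submatrix is, up to column scaling, Vandermonde in $1-a_1,\dots,1-a_n$, while the row corresponding to $j=n$ contributes $(1,0,\dots,0)$). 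The inverse function theorem then yields local openness of the image. To establish openness of all of $W_n$, I would start from any $b\in W_n$ with $C^n$ witness $f^*$, mollify $D^nf^*$ and add a small linear bias to obtain a strictly increasing $C^\infty$ approximant, then adjust its Taylor series at $0$ and $1$ via Borel's theorem to arrange the required boundary derivatives; this yields an admissible smooth witness whose moment vector lies arbitrarily close to $b$, and the Jacobian argument recentred at that witness supplies a neighborhood of $b$ inside $W_n$. The same family, evaluated at the $s$ producing moment vector $b$, provides the $C^\infty$ witness required by the second clause of $(P_n)$, with the prescribed endpoint derivatives built in by construction.

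The main obstacle I foresee is the passage from local openness at each point to a construction that reaches every $b\in W_n$ from a single basepoint or template. For $n\leq 3$ the paper uses \emph{successive} continuous transversals, leveraging the explicit polynomial description of $W_n$ to steer a starting family through the admissible region. For general $n$ no such description is known, and one would need either a path-connectedness argument allowing one to connect an arbitrary $b$ to a fixed basepoint by a curve in $W_n$ along which the inverse Jacobian can be integrated (with uniform control of the strict-monotonicity margin of $\rho_s$), or an intrinsically global parametrization — perhaps via scaling and reparametrization of a canonical convex profile — that realizes every admissible moment vector directly. Establishing either of these without access to an explicit inequality description of $W_n$ is where I expect the bulk of the technical work to lie.
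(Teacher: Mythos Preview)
This statement is a \emph{conjecture} in the paper; the paper does not prove it for general $n$ (only for $n\leq 3$, via the explicit inequality descriptions of $W_0,\dots,W_3$). So there is no ``paper's proof'' to compare against, and your proposal must be assessed as an attempted proof of an open problem.

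Your local ingredient is sound: for a fixed admissible $\rho^\circ$, the map $s\mapsto (I^{n-j}(\rho_s)(1))_{j=0}^n$ is affine, and your Vandermonde computation correctly shows its Jacobian is invertible for generic bump locations. This does prove that the set of moment vectors realized by strictly increasing smooth $\rho$ with the prescribed boundary jets is open in $\R^{n+1}$.

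The gap is in the step you label ``the Jacobian argument recentred at that witness supplies a neighborhood of $b$.'' The neighborhood produced has radius proportional to the admissibility margin $\min_{[0,1]} D\rho^\circ$, since that bounds how large $|s|$ may be before $\rho_s$ ceases to be increasing. But the moment error $|M(\rho^\circ)-b|$ you must correct is \emph{also} of order $\min D\rho^\circ$: it comes from the linear bias you added (contributing a fixed vector times the bias), from the mollification, and from the Borel boundary adjustment. Whether the neighborhood radius exceeds the error is a competition of constants you have not resolved, and for bump perturbations of width $O(1/n)$ the naive estimate goes the wrong way. So your argument as written proves that the realizable set is open and dense in $W_n$, but open plus dense does not give equality; in particular it does not yet show $W_n$ itself is open.

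The obstacle you name at the end is therefore not merely about global reach from a single template: the local step near a possibly degenerate witness already needs repair. One way to close the gap is to observe that integration by parts gives $I^{j}(\rho)(1)=\frac{1}{j!}\int_0^1(1-t)^{j}\,d\rho(t)$, so (after a linear change of coordinates) $W_n$ is exactly the set of moment sequences of nonzero atomless positive measures on $[0,1]$, which by classical truncated Hausdorff moment theory coincides with the \emph{interior} of the moment cone. That identification supplies both openness and, for each interior point, a representing measure with smooth strictly positive density---from which the required $\rho$ and the boundary-jet adjustment follow by your Borel/template construction. Without invoking (or reproving) that boundary characterization of the moment cone, your perturbative scheme alone does not settle the conjecture.
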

In this paper, we investigate the nature of the sets $W_n$. Our main results are for the case $n\leq 3$.
We prove the following. The case $n=0$ is proven in \cite{Bu2019}, Proposition 6.2.

\begin{thmx}[Theorem \ref{t:conj}]\label{t:A}
The statements $(P_n)$, $n=0,1,2,3$, all hold. We have the following:
\begin{align*}
W_0 & = \{a\in \R:a>0\}, \\
W_1 & = \{(a,b)\in \R^2 :0<a<b\}, \\
W_2 & = \{(a,b,c)\in \R^3 :0<2a<b,\,b^2<2ac\}, \\
W_3 & = \{(a,b,c,d)\in \R^4 :0<c<d,\,2b^2<3ac,\,6ad + 4b^2 + c^2 < 6ac + 2bc + 2bd\}.
\end{align*}
\end{thmx}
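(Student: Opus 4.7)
The case $n=0$ is \cite{Bu2019}, Proposition 6.2, and openness of $W_n$ in each case is immediate from the strict inequalities in the description. We prove necessity and sufficiency of the listed inequalities for $n = 1, 2, 3$ separately, with the sufficiency direction reducing everything to the main theorem of the paper.

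\textbf{Necessity.} For $f \in C^n[0,1]$ with $g := D^n f$ increasing and nonconstant and $D^j f(0) = 0$ for $j = 0, \dots, n$, the Stieltjes measure $dg$ is a positive finite Borel measure on $[0,1]$ of total mass $b_n$ that is not a point mass. Iterated integration by parts yields
\[
b_j \;=\; D^j f(1) \;=\; I^{n-j}(g)(1) \;=\; \frac{1}{(n-j)!}\int_0^1 (1-s)^{n-j}\,dg(s) \qquad (j = 0, \dots, n),
\]
so, writing $m_k := \int_0^1 (1-s)^k\,dg$, the coordinates of a point of $W_n$ are scalar multiples of the moments $m_k$. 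Each inequality defining $W_n$ now arises as a Cauchy-Schwarz or Chebyshev inequality in these moments, or in the shifted moments $m_k - m_{k+1} = \int_0^1 s(1-s)^k\,dg$; strictness follows because the relevant weight ratios are nonconstant on the support of the (non-point-mass) measure. For example, $m_1^2 < m_0 m_2$ gives $c^2 < 2bd$ for $n=3$, and inequality (e) rearranges to $(m_0 - m_1)(m_2 - m_3) > (m_1 - m_2)^2$, which is strict Cauchy-Schwarz for the positive measure $s\,dg$ applied to the weights $1$ and $1-s$.

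\textbf{Sufficiency.} Given $(b_0, \dots, b_n)$ satisfying the strict inequalities, set $d := b_n > 0$. The task reduces to producing a $C^\infty$ increasing $\phi\colon [0,1] \to [0,1]$ with $\phi(0) = 0$, $\phi(1) = 1$, $D\phi > 0$ on $(0,1)$, prescribed values $I^{n-j}(\phi)(1) = b_j/d$ for $j = 0, \dots, n-1$, and boundary derivatives $D\phi(0) = D\phi(1) = 1/d$, $D^k\phi(0) = D^k\phi(1) = 0$ for $k \ge 2$; then $f := I^n(d\phi)$ verifies $(P_n)$. For $n = 3$, the substitution $(\alpha, \beta, \gamma) = (c/d, b/d, a/d)$ converts the hypotheses on $(a,b,c,d)$ precisely into the strict form of the five inequalities of the abstract characterizing the admissible $(a,b,c)$, and the main theorem of the paper produces $\phi$ directly, the requested boundary derivatives being consistent with $\phi$ increasing since $D\phi(0) = 1/d > 0$. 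For $n = 2$, we apply the same main theorem but treat $\gamma = I^3(\phi)(1)$ as a free parameter; the only nontrivial point is that some $\gamma \in (2\beta^2/(3\alpha), \alpha/6)$ satisfies inequality (e). This reduces to the quadratic $4\beta^2 - 2\alpha(\alpha+1)\beta + \alpha^3 < 0$, which factors with roots exactly $\alpha^2/2$ and $\alpha/2$, so the strict range $\alpha^2/2 < \beta < \alpha/2$ guarantees a nonempty admissible interval for $\gamma$. The case $n = 1$ is analogous, with both $\beta$ and $\gamma$ treated as free parameters. The main obstacle is the paper's main theorem itself: once its transversal-through-parametric-family construction is in hand, the reductions above amount to bookkeeping plus the short quadratic compatibility check described for $n = 2$.
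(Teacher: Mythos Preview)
Your proposal is essentially correct, and your sufficiency argument matches the paper's: rescale by $d=b_n$, apply Theorem~\ref{t:n=3.b} to the normalized data, and for $n<3$ fill in the missing integrals as free parameters (the paper packages this as Corollary~\ref{c:n=0,1,2}, and verifies $l<r$ via Proposition~\ref{p:g1g2}, which is exactly your quadratic factorization $4\beta^2-2\alpha(\alpha+1)\beta+\alpha^3=4(\beta-\alpha/2)(\beta-\alpha^2/2)$).

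Your necessity argument, however, is genuinely different from the paper's and worth highlighting. The paper proves $W_n\subseteq S_n$ by direct convexity analysis of $g=I(f)$: it bounds $I(g)$ between explicit piecewise-linear extremals $g_1,g_2$ (Propositions~\ref{n=2.a} and~\ref{p:n.equals.3}), then integrates. Your route---writing $b_j=m_{n-j}/(n-j)!$ with $m_k=\int_0^1(1-s)^k\,dg$ and reading off the inequalities as Cauchy--Schwarz for the atomless measures $dg$ and $s\,dg$---is shorter and more uniform across $n$, and strictness is immediate since a continuous nonconstant $g$ gives a non-point-mass $dg$. The paper's approach, by contrast, identifies the extremal functions explicitly, which it needs elsewhere (e.g.\ Theorem~\ref{t:a-e}(3)).

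Two small corrections to your write-up: the example $m_1^2<m_0m_2$ yields $c^2<2bd$, which is true but is \emph{not} one of the defining inequalities of $W_3$; the relevant Cauchy--Schwarz for $2b^2<3ac$ is $m_2^2<m_1m_3$. And your ``inequality (e)'' should be (d) in the abstract's labeling (the two-sided bound on $c$); the rearrangement $(m_1-m_2)^2<(m_0-m_1)(m_2-m_3)$ you give is correct for that one.
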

Our approach to the proof is to construct
a function $g$ which will be the third derivative of $f$ and then get $f$ as the third integral of $g$, doing so in such a way that the successive integrals have prescribed values. Our main theorem is the following. The statement mentions functions $\s_\de\colon[0,\de]\to[0,\de]$ and $\tau_\de\colon [1-\de,1]\to [1-\de,1]$ which are introduced in Section \ref{s:spec.higher}. They satisfy $\s_\de(0)=0$, $D\s_\de>0$ on $(0,\de]$, $\tau_\de(1)=1$ and $D\tau_\de>0$ on $[1-\de,1)$. The derivatives $D^n\s_\de(0)$ and $D^n\tau_\de(1)$, $n\geq 1$, can be specified arbitrarily as long as they are consistent with the increasing nature of $\s_\de$ and $\tau_\de$ (Proposition \ref{p:endpt.der}).

\begin{thmx}[Theorem \ref{t:n=3.b}]\label{t:B}
Let $a,b,c$ be positive numbers satisfying
\begin{gather}\label{eq:3ineq}
0<a<1,\ \ \frac{a^2}2 < b < \frac{a}2,\ \ \frac{2b^2}{3a} < c < \frac{-a^2 + 2ab -4b^2 + 2b}{6(1-a)}.
\end{gather}
For each small enough $\de>0$, there is a $C^\infty$ function $f\colon [0,1]\to[0,1]$ such that $f=\s_\de$ on $[0,\de/2]$, $f=\tau_\de$ on $[1-\de/2,1]$, $Df>0$ on $(0,1)$, $I(f)(1)=a$, $I^2(f)(1)=b$ and $I^3(f)(1)=c$.
\end{thmx}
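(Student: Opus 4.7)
The plan, as previewed in the abstract, is to construct a continuous 4-parameter family $s\mapsto\rho_s\in C^\infty([0,1])$ defined on an open set $U\sq\R^4$, with each $\rho_s$ strictly increasing on $(0,1)$ and coinciding with $\s_\de$ on $[0,\de/2]$ and with $\tau_\de$ on $[1-\de/2,1]$, and then to apply four successive continuous transversals to extract an $s^*\in U$ realizing
\[
\bigl(I^0(\rho_{s^*})(1),\,I(\rho_{s^*})(1),\,I^2(\rho_{s^*})(1),\,I^3(\rho_{s^*})(1)\bigr) = (1,a,b,c),
\]
where $I^0$ denotes the identity, so $I^0(\rho_s)(1)=\rho_s(1)$. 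The required $f$ will be $f = \rho_{s^*}$; four parameters are needed because there are four scalar constraints.

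First I would fix $\de>0$ small and construct a base function $\rho_*\in C^\infty([0,1])$ that is strictly increasing on $(0,1)$, agrees with $\s_\de$ on $[0,\de/2]$ and $\tau_\de$ on $[1-\de/2,1]$, and whose four quantities $I^j(\rho_*)(1)$, $j=0,\dots,3$, are close to $(1,a,b,c)$. Then pick four $C^\infty$ bump functions $\psi_0,\dots,\psi_3$ with pairwise disjoint supports in $(\de/2,1-\de/2)$, arranged so that the $4\times 4$ matrix $M = [I^j(\psi_i)(1)]_{i,j=0}^{3}$ is nonsingular; placing the supports at nested, well-separated scales makes $M$ essentially triangular and hence invertible. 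Set $\rho_s = \rho_* + \sum_{i=0}^3 s_i\psi_i$ on a small open neighborhood $U\sq\R^4$ of the origin. For $U$ sufficiently small, the strict positivity of $D\rho_*$ on $\bigcup_{i=0}^3\mathrm{supp}(\psi_i)$ keeps $\rho_s$ strictly increasing on $(0,1)$, and the matching with $\s_\de,\tau_\de$ at the endpoints is automatic since the $\psi_i$ vanish there. The map $\Phi\colon U\to\R^4$, $\Phi(s)=\bigl(I^j(\rho_s)(1)\bigr)_{j=0}^3$, is affine with invertible linear part $M$, so it is a local homeomorphism; successive application of continuous transversals through nested codimension-one slices $\Sigma_0\supset\Sigma_1\supset\Sigma_2\supset\{s^*\}$, each defined by fixing one coordinate of $\Phi$ in turn, then produces the required $s^*$.

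The main obstacle is to arrange that the base $\rho_*$ has integral profile close enough to $(1,a,b,c)$ to fall within the local surjectivity neighborhood of $\Phi$; this is exactly where the strict inequalities (\ref{eq:3ineq}) enter essentially. I expect these inequalities to describe precisely the interior of the set of realizable triples $(a,b,c) = (I(f)(1),I^2(f)(1),I^3(f)(1))$ for strictly increasing $C^\infty$ functions $f\colon[0,1]\to[0,1]$ with $f(0)=0$, $f(1)=1$. The upper bound $b<a/2$ is a Chebyshev-type inequality for oppositely ordered monotone functions applied to the representation $b=\int_0^1(1-t)f(t)\,dt$, and $2b^2<3ac$ is a Cauchy--Schwarz-type relation among the weighted integrals $b$ and $c=\tfrac12\int_0^1(1-t)^2 f(t)\,dt$; the lower bound $a^2/2<b$ and the upper bound on $c$ arise symmetrically by applying these inequalities to the reflected function $t\mapsto 1-f(1-t)$, together with rearrangement arguments exploiting $0\leq f\leq 1$. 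For $(a,b,c)$ strictly interior to this region, a concrete $\rho_*$ can be assembled as a mollified piecewise-linear template on $[\de/2,1-\de/2]$ with a few adjustable break heights, with $\de$ chosen small enough that the (fixed) contributions of $\s_\de$ and $\tau_\de$ to the four integrals are negligible on the scale of the local-surjectivity tolerance.
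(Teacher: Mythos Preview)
Your proposal contains a concrete error and a scaling gap that you have not addressed.

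\textbf{The error.} Each $\psi_i$ is supported in $(\de/2,1-\de/2)$, so $I^0(\psi_i)(1)=\psi_i(1)=0$ for every $i$; the $j=0$ column of $M$ is identically zero and $M$ is singular. In fact the constraint $\rho_s(1)=1$ is automatic (you have $\rho_s=\tau_\de$ near $1$ independently of $s$), so there are only three genuine scalar constraints and you should work with three parameters and a $3\times3$ matrix. This part is easily repaired.

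\textbf{The gap.} After the repair, your affine-perturbation scheme faces a scaling obstruction that you have not confronted. Any mollified piecewise-linear template $\rho_*$ that approximates a two-step function $f_{u_0v_0}$ closely enough for the integral error to be $O(\de)$ must have slope of order $\de$ on its two plateaus (this is exactly how the paper's template is built). If your bumps $\psi_i$ sit on those plateaus, the monotonicity constraint $D\rho_s>0$ forces $|s_i|\lesssim \de/\|D\psi_i\|_\infty$, while the correction needed satisfies $|s|\sim\|M^{-1}\|\cdot O(\de)$. For bumps of fixed width $w$ one has $\|D\psi_i\|_\infty\sim 1/w$ and $\|M^{-1}\|\sim\|V^{-1}\|/w$ with $V$ a Vandermonde-type matrix, so feasibility reduces to the $\de$-independent inequality $w^2\gtrsim C\|V^{-1}\|$ for a specific constant $C$; with three disjoint supports in $(0,1)$ this is a genuine numerical constraint and it is not clear it can be met. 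Replacing the plateau slope $\de$ by a fixed $\eta$ does not help, because the integral error then becomes $O(\eta)$ and the same inequality reappears. Your remark that ``placing the supports at nested, well-separated scales makes $M$ essentially triangular'' does not obviously produce a usable bound on $\|M^{-1}\|\cdot\|D\psi_i\|_\infty$.

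\textbf{How the paper proceeds.} The paper builds essentially the same mollified piecewise-linear template (vertices $A$ through $F$, alternating slopes $1/\de$ and $\de$), but instead of adding external bump functions it moves the template's own structural parameters. First $t$ (the height parameter of the upper plateau) is adjusted to hit $I(\rho)(1)=a$; then $v$ (the lower plateau) to hit $I^2(\rho)(1)=b$; then $u$ (the jump location) to hit $I^3(\rho)(1)=c$. Each of these motions preserves $D\rho>0$ over an $O(1)$ range of the parameter (not merely $O(\de)$), and each induces a strictly monotone change in one integral after the earlier ones have been pinned (Corollary~\ref{cor:g.incre}, Proposition~\ref{p:I2}); continuous transversals (Proposition~\ref{p:cont.of.sect}) then deliver the exact values. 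Using the template's geometry as the deformation space, rather than bolted-on bumps, is what sidesteps the scaling issue entirely.
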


($I$ is the integral operator $I(f)(x)=\int_0^xf(t)\,dt$.)
The bounds on the values of $a,b,c$ are exact. This follows from Theorem \ref{t:a-e} part of which we state here as Theorem C.
For intervals $J$ and functions $f,g\colon J\to\R$, we write $f=_\fin g$ if $f$ and $g$ are equal modulo the ideal of finite sets, i.e., $\{x\in J:f(x)\not=g(x)\}$ is finite.

\begin{thmx}[Cf.\ Theorem \ref{t:a-e}]\label{t:C}
The possible values of $a=I(f)(1)$, $b=I^2(f)(1)$, and $c=I^3(f)(1)$ for increasing functions $f\colon[0,1]\to[0,1]$ are those which satisfy the following inequalities.
\begin{enumerate}[\rm(a)]
\item
$0\leq a\leq 1$

\item
$a^2/2\leq b\leq a/2$

\item
$2b^2\leq 3ac$

\item
$6(1-a)c\leq -a^2+2ab-4b^2+2b$

\item
$0\leq c\leq a/6$
\end{enumerate}
Either of $(a)$ or $(b)$ can be omitted, and $(c)$ can be omitted if $0<a<1$.
The inequalities are all strict unless $f=_\fin g$ for some $g\colon[0,1]\to[0,1]$ which is either constant or a $2$-step step function%
\footnote{$g$ is a $2$-step step function if its range consists of two elements both of whose preimage is a nontrivial subinterval of $[0,1]$.}
whose value on the first step is $0$ or whose value on the second step is $1$.
In particular, the inequalities in {\rm (\ref{eq:3ineq})} hold when $f$ has a positive derivative at some point of the interval $[0,1]$.
\end{thmx}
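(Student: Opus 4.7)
The plan is to exploit the Fubini identities $b=\int_0^1(1-t)f(t)\,dt$ and $c=\int_0^1\tfrac{(1-t)^2}{2}f(t)\,dt$, which display $a,b,c$ as integrals of the increasing $f$ against the decreasing kernels $1$, $1-t$, $(1-t)^2/2$ on the probability space $[0,1]$. With this in hand the easy bounds fall out: (a) is trivial from $0\le f\le 1$; the upper bound in (b) and the bound (e) are instances of Chebyshev's integral inequality; and the lower bound in (b) follows because $f\le 1$ and $F(1)=a$ (where $F=I(f)$) force $F(t)\ge a-(1-t)$, which combined with $F\ge 0$ gives $F(t)\ge\max\{0,t-(1-a)\}$ pointwise, hence $b=\int_0^1 F(t)\,dt\ge a^2/2$.

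For the nontrivial pair (c) and (d), I would parametrize $f$ by its Lebesgue--Stieltjes measure. Setting $p=f(0)\ge 0$ and $\nu=df$ with $\lambda:=\nu([0,1])\le 1-p$ (adding a point mass at $s=1$ leaves $a,b,c$ unchanged because the kernel $(1-s)^k$ vanishes there, so one may assume equality), Fubini yields
\[ a=p+m_1,\qquad 2b=p+m_2,\qquad 6c=p+m_3,\qquad m_k=\int_0^1(1-s)^k\,d\nu(s). \]
Setting $g(s)=1-s$ and, when $\lambda>0$, $M_k=\int g^k\,d\tilde\mu$ with $\tilde\mu=\nu/\lambda$, a direct expansion rewrites
\[ 3ac-2b^2=\tfrac12\,p\lambda(M_1+M_3-2M_2)+\tfrac12\,\lambda^2(M_1M_3-M_2^2), \]
in which both factors are manifestly nonnegative: the first equals $\int g(1-g)^2\,d\tilde\mu$, and the second is Cauchy--Schwarz applied to $g^{1/2}\cdot g^{3/2}$. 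A parallel expansion, after introducing the weighted measure $d\eta=(1-g)\,d\tilde\mu$, reduces (d) to $(\int g\,d\eta)^2\le(\int d\eta)(\int g^2\,d\eta)$, which is Cauchy--Schwarz for the positive measure $\eta$.

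The equality characterization then drops out of the standard equality cases of Chebyshev's and Cauchy--Schwarz inequalities. Equality in (c) forces $\lambda=0$ (so $f$ is constant) or $p=0$ together with $\tilde\mu$ a point mass (so $f$ is a $2$-step step function with first-step value $0$); equality in (d) forces $\lambda=0$ or $\tilde\mu$ a point mass with $\lambda=1-p$ (so $f$ is a $2$-step step function with second-step value $1$); and equality in the upper bound of (b) or in (e) forces $f$ essentially constant. Consequently, if $f$ has a positive derivative somewhere in $[0,1]$, then $f=_{\fin}g$ fails for every $g$ of the listed form, so the strict inequalities \eqref{eq:3ineq} hold.

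For the converse direction, the interior cut out by the strict versions of (a)--(e) is realized by Theorem B, which in fact produces $C^\infty$ witnesses, while the boundary points are realized by the extremal constants and $2$-step step functions identified above. The redundancy claims are short algebraic verifications: (b) gives $a^2/2\le b\le a/2$, which forces $0\le a\le 1$ and makes (a) redundant; and under $0<a<1$, the strict form of (c) can be derived from (a), (b), (d), (e) by direct manipulation. The main technical hurdle will be locating the two algebraic identities that cast $3ac-2b^2$ and the (d)-discrepancy as nonnegative combinations of Cauchy--Schwarz and ``$g(1-g)^k\ge 0$'' remainders---especially for (d), where the correct measure is the weighted $\eta=(1-g)\tilde\mu$ rather than $\tilde\mu$ itself.
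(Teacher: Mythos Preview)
Your approach via the Lebesgue--Stieltjes measure $\nu=df$ and moment inequalities is genuinely different from the paper's and, for the forward direction, is both correct and slicker. The paper argues geometrically: it writes $g=I(f)$, identifies the two extremal piecewise-linear convex functions $g_1,g_2$ sharing the same $(a,b)$, and proves $I(g_1)\le I(g)\le I(g_2)$ pointwise via a case analysis of where the convex curve $g$ crosses each $g_i$ (Propositions~\ref{p:g1g2} and~\ref{p:n.equals.3}). Your reduction of (c) to $\tfrac12\,p\!\int g(1-g)^2\,d\nu+\tfrac12(m_1m_3-m_2^2)\ge 0$ and of (d) to Cauchy--Schwarz for the weighted measure $s\,d\nu$ bypasses that comparison entirely; one even finds, without the normalization $\lambda=1-p$, that the (d)-gap equals $(1-p-\lambda)(m_2-m_3)$ plus the Cauchy--Schwarz remainder, both visibly nonnegative, and your equality analysis then goes through. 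The paper's route has the compensating advantage that the extremal $f_{uv}$ are already in hand for the converse: part~(2) of Theorem~\ref{t:a-e} dispatches the full boundary by the Intermediate Value Theorem along the segment $L\cap\overline T$, whereas you invoke Theorem~B for the interior and leave the boundary realization as a sketch.

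There is one concrete error in your redundancy discussion. The assertion that (c) follows from (a), (b), (d), (e) when $0<a<1$ is false: the triple $(a,b,c)=(\tfrac12,\tfrac15,10^{-3})$ satisfies (a), (b), (d), (e) strictly, yet $2b^2=0.08>0.0015=3ac$. In fact the statement you were handed almost certainly contains a typo---what is redundant for $0<a<1$ is (e), not (c), exactly as Remark~\ref{r:converse}(ii) explains (from (c) one gets $c\ge 0$; from (b) and (d) one gets $c\le a/6$). You also only argued (b)$\Rightarrow$(a); to see that (b) itself can be dropped in favour of (a) one needs Proposition~\ref{p:prelim.ineq}(2)(iii), which uses both (c) and (d). These are side issues about the redundancy claims; your treatment of the main characterization and of the strictness statement is sound.
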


The paper is organized as follows. In Section \ref{s:prelim}, we introduce some notation and prove some technical facts needed later. We also review the properties of convex functions and the properties of a standard topology on the $C^\infty$ functions. In Section \ref{s:spec.higher}, we give our main device for modifying functions so that the derivatives at endpoints will have specified values. In Section \ref{s:endpoint.values}, we analyze the nature of the sets $W_n$ defined above. In Section \ref{s:p.w.l.}, we introduce a technique for rounding off the corners of a piecewise linear function to get a $C^\infty$ function. In subsequent sections we need to be able to do this so that the output function depends continuously on the input function in a suitable sense. In Section \ref{s:If.I2f}, for increasing functions $f\colon[0,1]\to[0,1]$ we develop necessary conditions for the successive integrals $I(f)(1)$, $I^2(f)(1)$ and $I^3(f)(1)$ to have specified values, and we prove Theorem \ref{t:C}.
In Section \ref{s:fuv} we prove our main theorem, Theorem \ref{t:B}.  In Section \ref{s:Thm9.1}, we use Theorem \ref{t:B} to prove Theorem \ref{t:A}.

\section{Preliminaries}\label{s:prelim}

Fix an interval $I$ of the real line $\R$. For $a\in\R$, let $a_+=\max(0,a)$. For $f\colon I\to\R$, we use $Df(x)$ for the derivative of $f$ at $x$, writing $D_-f(x)$ and $D_+f(x)$ for the one-sided derivatives on the left and right, respectively.
We write $C^n(I)$ for the functions on $I$ which are $n$ times continuously differentiable, and $C^\infty(I)$ for the functions on $I$ which have derivatives of all orders. At endpoints of $I$, if there are any, the derivatives are meant in the one-sided sense. We use the notation $f(x+)$ for $\lim_{t\to x+}f(t)$, and $f(x-)$ for $\lim_{t\to x-}f(t)$, when these limits exist.
We shall make use of Borel's theorem on the surjectivity of the derivative map $f\mapsto (D^kf(p):k=0,1,\dots)$, namely, given $p\in \R$ and a sequence of real numbers $t_k$, $k=0,1,\dots$, there is an $f\in C^\infty(\R)$ such that $D^kf(p)=t_k$, $k=0,1,\dots$ (see \cite{Nar}, Theorem 1.5.4).

For an integrable function $f$ on an interval containing a point $a$, we let $I_a(f)$ denote the function on the same interval given by $I_a(f)(x)=\int_a^xf(t)\,dt$. When $a=0$ we omit the subscript, writing $I(f)$, or just $If$, for $I_0(f)$, as we did in the introduction.

We state without proof the following easy but useful observation.

\begin{prop}\label{p:fin}
Suppose $J$ is an open interval in $\R$ and $f\colon J\to \R$ is constant. If $g\colon J\to \R$ is increasing and $g=_\fin f$ then $g(x)=f(x)$ for all $x\in J$.
\end{prop}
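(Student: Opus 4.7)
The plan is straightforward: assume for contradiction that $g$ differs from the constant $f$ at some point, and use the openness of $J$ together with the finiteness of the exceptional set to sandwich that point between places where $g$ agrees with $f$, then invoke monotonicity.

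More concretely, let $k$ denote the constant value of $f$ on $J$, and let $F=\{x\in J:g(x)\neq k\}$, which is finite by hypothesis. I would suppose toward a contradiction that there exists $x_0\in J$ with $g(x_0)\neq k$, so in particular $x_0\in F$. Since $J$ is an open interval and $F$ is finite, the set $J\setminus F$ is dense in $J$; in particular there exist $y_1,y_2\in J\setminus F$ with $y_1<x_0<y_2$. By definition of $F$ we have $g(y_1)=g(y_2)=k$, and since $g$ is increasing,
\[
k=g(y_1)\leq g(x_0)\leq g(y_2)=k,
\]
forcing $g(x_0)=k$, contradicting $x_0\in F$. Hence $F=\emptyset$ and $g\equiv k=f$ on $J$.

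The argument has no genuine obstacle: the only thing to watch is that one really does need $J$ to be \emph{open} (or at least that $x_0$ be an interior point of $J$) to guarantee that $J\setminus F$ meets every neighborhood of $x_0$ on both sides; on a half-open interval the conclusion can fail at the closed endpoint. The finiteness of the exceptional set is used only to produce two such points $y_1,y_2$, so the statement in fact remains valid whenever $\{x:g(x)\neq f(x)\}$ has empty interior relative to $J$, but the stated form is all that is needed in the sequel.
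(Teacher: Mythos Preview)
Your proof is correct and is exactly the natural argument; the paper itself states this proposition without proof, calling it an ``easy but useful observation,'' so there is nothing to compare.
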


For a function on a product $f\colon X\times Y\to\R$, for $x\in X$ we write $f_x$ for the vertical section of $f$ at $x$ given by $f_x(y)=f(x,y)$, $y\in Y$. For any function $f$ and a subset $A$ of its domain, $f\!\mid_A$ denotes the restriction of $f$ to $A$.
$\N$ denotes the set of natural numbers $1,2,3,\dots$.

\m

\n \textbf{A. A system of inequalities.}\ \
We shall make use of the following properties of the system of inequalities $(\beta)$ defined in the next proposition, which is related to the statements of Theorems A, B, and C.

\begin{prop}\label{p:prelim.ineq}
We consider the following system $(\al)$ of linear equations
\begin{gather}
2y^2 = 3xz \label{eq.a} \\
6x + 4y^2 + z^2 = 6xz + 2yz + 2y \label{eq.b}
\end{gather}
and the following system $(\beta)$ of linear inequalities in real numbers $x,y,z$.
\begin{gather}
2y^2 \leq 3xz \label{eq.c} \\
6x + 4y^2 + z^2 \leq 6xz + 2yz + 2y \label{eq.d}
\end{gather}

\begin{enumerate}
\item
The solutions to $(\al)$ are $(x,y,z)=(z/6,z/2,z)$ and $(x,y,z)=(z^3/6,z^2/2,z)$ for $z\in\R$.

\item
Let $(x,y,z)$ be a solution to $(\beta)$.
\begin{enumerate}[\rm(i)]
\item
If $z=0$ then $(x,y,z)=(x,0,0)$ with $x\leq 0$. Conversely, all triples $(x,0,0)$ with $x\leq 0$ are solutions to $(\beta)$, and satisfy {\rm(\ref{eq.a})}.

\item
If $z=1$ then $(x,y,z)=(x,1/2,1)$ with $x\geq 1/6$. Conversely, all triples $(x,1/2,1)$ with $x\geq 1/6$ are solutions to $(\beta)$, and satisfy {\rm(\ref{eq.b})}.

\item
$0\leq z\leq 1$ if and only if $z^2/2\leq y\leq z/2$. When $y=z^2/2$ or $y=z/2$, $(x,y,z)$ is a solution to $(\al)$.

\item
$0 < z < 1$ if and only if $z^2/2 < y < z/2$, when $(x,y,z)$ is not a solution to $(\al)$.

\item
If $0<z<1$ then $(2/3)y^2<x\leq y/3$, with equality on the right if and only if $y=z/2$.

\item
If $0\leq z<1$ then $x\leq z/6$, with equality if and only if $y=z/2$.
\end{enumerate}
\end{enumerate}
\end{prop}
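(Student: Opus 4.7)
The plan is to push both systems into factored form and let the factorizations drive the case analysis.

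For part (1), I would eliminate $x$ via (\ref{eq.a}) as $x=2y^2/(3z)$ for $z\neq 0$, substitute into (\ref{eq.b}), multiply through by $z$, and observe that the resulting polynomial factors as
\[
(2y-z)(2y-z^2) = 0,
\]
yielding the two families $(z/6,z/2,z)$ and $(z^3/6,z^2/2,z)$. The degenerate case $z=0$ forces $y=0=x$ directly from (\ref{eq.a}) and (\ref{eq.b}), and is covered by both.

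The engine for part (2) is the algebraic identity
\[
6x+4y^2+z^2-6xz-2yz-2y \;=\; (2y-z)^2+(6x-2y)(1-z),
\]
which rewrites (\ref{eq.d}) as $(2y-z)^2 \leq (2y-6x)(1-z)$. Parts (i) and (ii) fall out immediately by substituting $z=0$ and $z=1$: the former forces $y=0$ (via (\ref{eq.c})) and then $x\leq 0$ (via (\ref{eq.d})); the latter collapses (\ref{eq.d}) to $(2y-1)^2\leq 0$, pinning $y=1/2$, after which (\ref{eq.c}) gives $x\geq 1/6$.

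The heart of the argument is the forward direction of (iii) for $0<z<1$ (the endpoint cases reduce to (i) and (ii)). The nonnegativity of $1-z$ in the rewritten (\ref{eq.d}) yields $2y\geq 6x$, so $yz\geq 3xz\geq 2y^2$ by (\ref{eq.c}); this first rules out $y<0$ (which would force $xz<0$ against (\ref{eq.c})) and then gives $y\leq z/2$. For the lower bound, substitute $6x\geq 4y^2/z$ from (\ref{eq.c}) into the rewritten (\ref{eq.d}) to obtain
\[
(z-2y)^2 \leq 2y(z-2y)(1-z)/z,
\]
and divide by the nonnegative factor $z-2y$ (treating $y=z/2$ separately) to read off $z^2\leq 2y$. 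The equality cases $y=z/2$ and $y=z^2/2$ pinch the two inequalities of $(\beta)$ to equalities, recovering the two families of (1). The reverse direction of (iii) is free: if $z<0$ or $z>1$ then $z^2/2>z/2$, so the constraint $z^2/2\leq y\leq z/2$ is already vacuous.

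Parts (iv)--(vi) then follow with little extra effort: (iv) combines (iii) with the identification of the boundary cases as $(\al)$-solutions; (v) uses $y\geq 3x$ together with $x\geq 2y^2/(3z)$ from (\ref{eq.c}), noting $y>0$ because $y\geq z^2/2>0$ and tracing back when $x=y/3$ can occur; (vi) is the chain $x\leq y/3\leq z/6$ from (v) and (iii). The main obstacle I foresee is keeping the sign analysis in the forward direction of (iii) straight: the divisions by $1-z$ and by $z-2y$ must both be justified and their equality cases tracked carefully, since these are exactly the cases where (\ref{eq.c}) and (\ref{eq.d}) simultaneously become equalities and drop us onto the curves of part (1).
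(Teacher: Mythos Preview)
Your proposal is correct and follows the same overall strategy as the paper: eliminate $x$ via (\ref{eq.a}) to get the factorization $(2y-z)(2y-z^2)=0$ for part (1), and for (iii) feed the bound $6x\geq 4y^2/z$ from (\ref{eq.c}) into (\ref{eq.d}) to squeeze out $z^2/2\leq y\leq z/2$. The endpoint cases (i), (ii) and the equality-tracking are handled the same way in both.

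Where you genuinely diverge is the identity
\[
6x+4y^2+z^2-6xz-2yz-2y \;=\; (2y-z)^2+(6x-2y)(1-z),
\]
which the paper does not isolate. This buys you a much cleaner route to (v) and (vi): from the rewritten (\ref{eq.d}) and $1-z>0$ you read off $x\leq y/3$ in one line, with equality exactly when $2y=z$; then (vi) is the chain $x\leq y/3\leq z/6$ via (iii). The paper instead works with the explicit expression $r=(-4y^2-z^2+2yz+2y)/(6(1-z))$ and proves $r<y/3$ and $r\leq z/6$ by separate inequality manipulations (the first via $z<y+z^2/(4y)$, the second via $(2y-1)(2y-z)\geq 0$). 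Your identity makes these essentially automatic. One small point to tidy: your chain for (vi) invokes (v), which assumes $0<z<1$, so the case $z=0$ of (vi) should be dispatched separately via (i); this is trivial but worth a sentence.
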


\begin{rem}
The inequalities in $(\beta)$, even if strict, imply neither $z>0$ nor $z<1$ as can be seen by considering the triples $(x,y,z)=(-2,1,-1)$ and $(x,y,z)=(1,2,3)$. The strict inequalities do however rule out the values $z=0,1$ by (2) (i, ii).
\end{rem}

\begin{rem}\label{r:l.r}
When $0<z<1$, the system $(\beta)$ can be written as $l\leq x\leq r$, where \begin{gather*}
l = \frac{2y^2}{3z}\ \ \text{and}\ \ r = \frac{-4y^2-z^2+2yz+2y}{6(1-z)}.
\end{gather*}
If $y=z^2/2$ or $y=z/2$ then it is readily checked that $l=r$ ($=z^3/6$ and $z/6$, respectively). When $z^2/2<y<z/2$ we shall see later (Proposition \ref{p:g1g2}) that $l<r$. This provides a method for generating the solutions to $(\beta)$ when $0<z<1$, namely, choose $z,y,x$ in that order, first choosing $z$ so that $0<z<1$, ensuring that $z^2/2<z/2$, then choosing $y$ so that $z^2/2\leq y\leq z/2$, ensuring that $l\leq r$, then choosing $x$ so that $l\leq x\leq r$. Since $l\leq x\leq r$ holds, we have a solution to $(\beta)$. Conversely, every solution $(x,y,z)$ to $(\beta)$ with $0<z<1$ satisfies $z^2/2\leq y\leq z/2$ by (iii), and satisfies $l\leq x\leq r$, and so is obtainable by such a sequence of selections.

For the system $(\beta)$ with strict inequalities, the solutions when $0<z<1$ are obtainable similarly, choosing $z,y,x$ so that $0<z<1$, then $z^2/2<y<z/2$, then $l<x<r$. (Use (iv) instead of (iii).)
\end{rem}

\begin{proof}
(1)
Clearly $(x,y,z)=(0,0,0)$ is a solution to $(\al)$, and if $z=0$ then (\ref{eq.a}) gives $y=0$ and then (\ref{eq.b}) gives $x=0$. Now suppose $z\not=0$. To solve the system, from (\ref{eq.a}) get $6x=4y^2/z$ and substitute into (\ref{eq.b}). This gives $4y^2/z+4y^2+z^2 = 4y^2 + 2yz + 2y$, or $4y^2-2(z^2+z)y+z^3=(2y-z)(2y-z^2)=0$. Hence, when $z\not=0$, the solutions to (\ref{eq.b}) in the presence of (\ref{eq.a}) are precisely $y=z/2$ and $y=z^2/2$. For (\ref{eq.a}) we need $x=(2/3)y^2/z$, so $x=z/6$ when $y=z/2$ and $x=z^3/6$ when $y=z^2/2$.

\smallskip

(2) (i) If $z=0$ then (\ref{eq.c}) implies $y=0$ and then (\ref{eq.d}) reduces to $x\leq 0$.

\smallskip

(ii) If $z=1$ then (\ref{eq.d}) reduces to $(2y-1)^2\leq 0$ which has the unique solution $y=1/2$. Then (\ref{eq.c}) becomes $x\geq 2y^2/(3z) = 1/6$.

\smallskip

(iii) If $z^2/2\leq y\leq z/2$ then $z^2\leq z$, so $0\leq z\leq 1$. For the converse, (i) and (ii) show that if $z=0$ or $z=1$ then $z^2/2 = y = z/2$. For the case $0<z<1$, write (\ref{eq.d}) as $6x-6xz \leq -4y^2 - z^2 + 2yz + 2y$. From (\ref{eq.c}) we get $4y^2/z\leq 6x$. Multiplying by $(1-z)$ leads by transitivity to  to $4y^2/z -4y^2 \leq -4y^2 - z^2 + 2yz + 2y$, or $4y^2-2(z^2+z)y+z^3=(2y-z)(2y-z^2)\leq 0$. The solutions are given by $z^2/2\leq y\leq z/2$.
If $y=z^2/2$ then from (\ref{eq.c}) we get $z^4/2\leq 3xz$, or $z^3\leq 6x$. From (\ref{eq.d}) we get $6x + z^4 + z^2 \leq 6xz + z^3 + z^2$ which yields $6x(1-z)\leq z^3(1-z)$, so $6x\leq z^3$. Thus, $x=z^3/6$ and $y=z^2/2$, and by (1), $(x,y,z)$ is a solution to $(\al)$.
Similarly, when $y=z/2$, (\ref{eq.c}) and (\ref{eq.d}) give $x=z/6$ and from (1) we get that $(x,y,z)$ is a solution to $(\al)$.

\smallskip

(iv) If $z^2/2< y< z/2$ then $z^2< z$, so $0< z< 1$. The other direction follows from (iii).

\smallskip

(v)
Assume that $0<z<1$. By (iii), $z^2/2\leq y\leq z/2$, so $y>0$. By (\ref{eq.c}), $x\geq (2/3)y^2/z > (2/3)y^2$. There remains to show that $x\leq y/3$ with equality when and only when $y=z/2$.
By (iii) and (1),
when $y=z/2$ we have $x=z/6$, so $x = z/6 = y/3$.
Writing (\ref{eq.d}) as $x\leq r$, with $r$ as in Remark \ref{r:l.r}, we see that $x < y/3$ when $z^2/2\leq y < z/2$ will follow if we show $r<y/3$, i.e.,
\begin{gather}\label{eq:v}
 \frac{-4y^2-z^2+2yz+2y}{6(1-z)} < \frac{y}3,
\end{gather}
which is equivalent, upon clearing the denominators and rearranging, to
$z < y+z^2/(4y)$. This is indeed true because the function $f(t) = t + z^2/(4t)$ decreases strictly on $(0,z/2]$ and hence has unique minimum value $f(z/2) = z$. Thus, $f(y)=y+z^2/(4y) > z$, and hence (\ref{eq:v}) holds, when $z^2/2\leq y<z/2$.

\smallskip

(vi)
Since $z<1$, we can write (\ref{eq.d}) as $x\leq r$, so $x\leq z/6$ will follow if we show that $r\leq z/6$. In the inequality $r\leq z/6$, cancelling the $6$'s and multiplying by $1-z$, then simplifying and factoring, gives the equivalent form $(2y-1)(2y-z)\geq 0$ which is true because by (iii), $y\leq z/2<1/2$.
Thus, $x\leq r\leq z/6$. If $x=z/6$ then $r=z/6$ and the same computation leads to $(2y-1)(2y-z)=0$ giving $y=z/2$ (since we must have $y\leq z/2$). Conversely, if $y=z/2$ then by (iii) and (1), $x=z/6$.
\end{proof}

\n \textbf{B. Convex functions.}\ \
We need some properties of convex functions defined on an interval $I$. We say that a function $f\colon I\to\R$ is \emph{convex} if
$f((1-\lb)x+\lb y)\leq (1-\lb)f(x)+\lb f(y)$ for all $x,y\in I$ and all $\lb\in[0,1]$.
The following proposition lists standard properties of convex functions.
The introductory chapters of textbooks on convexity contain the proofs, for example \cite{Hor1994}, \cite{NP2018}, \cite{RV}.

\begin{prop}\label{p:convex.properties}
Let $f\colon I\to\R$ be convex. Write $I^\circ$ for the interior of $I$ in $\R$. Write $\al=\inf I$, $\beta=\sup I$ taken in $[-\infty,\infty]$. $($So $I^\circ = I\sm\{\al,\beta\}$.$)$
\begin{enumerate}
\item
For any $x<y$ in $I$, $f(t)\leq f(x) + ((f(y)-f(x))/(y-x))(t-x)$, $x\leq t\leq y$ $($i.e., the graph of $f$ is below its secant on $[x,y])$.

\item
The slope of a secant of $f$ over an interval gets larger if the endpoints of the interval are moved to the right, more precisely, whenever $a<b$ and $a'<b'$ in $I$, with $a\leq a'$ and $b\leq b'$, we have $(f(b)-f(a))/(b-a)\leq (f(b')-f(a'))/(b'-a')$.

\item
If $a<b<c$ are points of $I$ and the point $(b,f(b))$ is on the secant of $f$ over the interval $[a,c]$, then the graph of $f$ coincides with its secant on $[a,c]$.

\item
$f$ is continuous on $I^\circ$.

\item
$D_-f(x)$ exists at each $x\in I\sm\{\al\}$. It is finite except possibly when $x=\beta\in I$. $D_-f$ is increasing on its domain $I\sm\{\al\}$.
$D_-f$ is left-continuous on $I^0$ and is left-continuous at $\beta$ if $\beta\in I$ and $f$ is continuous at $\beta$.

\item
$D_+f(x)$ exists at each $x\in I\sm\{\beta\}$. It is finite except possibly when $x=\al\in I$. $D_+f$ is increasing on its domain $I\sm\{\beta\}$.
$D_+f$ is right-continuous on $I^0$ and is right-continuous at $\al$ if $\al\in I$ and $f$ is continuous at $\al$.

\item
For each $x\in I^\circ$, $D_-f(x)\leq D_+f(x)$. We have $D_{-}f(x) = D_{+}f(x)$ if and only if $D_{-}f$ is continuous at $x$ if and only if $D_{+}f$ is continuous at $x$.

\item
$f$ is differentiable on $I^\circ$ except at the countably many points $x$ where $D_-f(x) < D_+f(x)$.

\item
For $x\in I$ and $m\in\R$, consider the line $L(s) = f(x) + m(s-x)$ which satisfies $L(x)=f(x)$.
\begin{enumerate}[\rm(a)]
\item
If $x\not=\beta$ and $m\leq D_+f(x)$ then $f(s)\geq L(s)$ for all $s\in I$, $s\geq x$.

\item
If $x\not=\al$ and $m\geq D_-f(x)$ then $f(s)\geq L(s)$ for all $s\in I$, $s\leq x$.

\item
If $x\in I^\circ$ and $D_-f(x)\leq m\leq D_+f(x)$ then $f(s)\geq L(s)$ for all $s\in I$.
\end{enumerate}
In all three settings, we say that the line $L(s)=m(s-x)+f(x)$ is a \emph{support line} for $f$ at $x$ on the interval $I\cap [x,\beta]$, $I\cap [\al,x]$ and $I$, respectively.

\item
If $f$ is continuous, then for any $p\in I$, $f(x)=f(p)+\int_p^x D_-f(t)\,dt$ for all $x\in I$. We can replace $D_-f$ by $D_+f$ since they agree except on a countable set.

\item
Either $f$ is monotone on $I^\circ$, or there is a point $\gamma\in I^\circ$ such that $f$ is decreasing on $(\al,\gamma]$ and increasing on $[\gamma,\beta)$. Consequently, $\lim_{x\to\al+}f(x)$ and $\lim_{x\to \beta-} f(x)$ exist in $[-\infty,\infty]$.

\item
{\rm(a)} If $\al\in I$ then $-\infty<\lim_{x\to\al+}f(x)\leq f(\al)$.
If $\lim_{x\to\al+}f(x) < f(\al)$ then $D_+f(\al)=-\infty$.
{\rm(b)} If $\beta\in I$ then $-\infty<\lim_{x\to\beta-}f(x)\leq f(\beta)$.
If $\lim_{x\to\beta-}f(x) < f(\beta)$ then $D_-f(\beta)=\infty$.

\item
Let $a<b$ be points of $I$. If $D_+f(a) = D_-f(b)$ then $f$ coincides with its secant on $[a,b]$. If $D_+f(a)<D_-f(b)$ then $D_+f(a) < (f(b)-f(a))/(b-a) < D_-f(b)$.
\end{enumerate}
\end{prop}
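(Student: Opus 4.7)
My plan is to derive essentially all thirteen properties from one master inequality — the monotonicity of secant slopes — which is an immediate reformulation of the definition of convexity. First I would establish (1), the graph-below-secant bound, by writing each $t\in[x,y]$ as $t=(1-\lambda)x+\lambda y$ for $\lambda=(t-x)/(y-x)\in[0,1]$ and substituting into the convexity inequality. From (1), rearranging the resulting inequality at three points $a<b<c$ in $I$ yields the three-slope chain
$$\frac{f(b)-f(a)}{b-a}\leq\frac{f(c)-f(a)}{c-a}\leq\frac{f(c)-f(b)}{c-b},$$
which gives (2) directly. Statement (3) follows because if $(b,f(b))$ lies on the secant over $[a,c]$, the outer two slopes in the chain are equal, forcing equality throughout and, applied to any $t\in(a,c)$ in the role of $b$, pinning $f(t)$ to the secant value.

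Next I would extract (5)–(7) from this slope monotonicity. For fixed $x\in I\setminus\{\alpha\}$, the quotient $t\mapsto(f(t)-f(x))/(t-x)$ is increasing by (2), so $D_-f(x)$ exists as its left-hand limit; it is finite whenever any point $a<x$ in $I$ provides a lower bound, the only failure being $x=\beta\in I$ with no room on the right. The monotonicity of $D_-f$ and $D_+f$ is (2) passed to the limit, and for left- or right-continuity of $D_\pm f$ at $x\in I^\circ$, I would sandwich the appropriate one-sided derivative at nearby points between secant slopes that converge to $D_\mp f(x)$, using continuity of $f$. The inequality $D_-f(x)\leq D_+f(x)$ in (7) is the limiting form of (2), and its iff-characterizations follow because the monotone $D_\pm f$ satisfy $D_-f(y)\leq D_+f(y)\leq D_-f(z)$ for $y<z$, so the jump at $x$ vanishes exactly when either of $D_\pm f$ is continuous at $x$.

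The remaining items are mechanical consequences. For (4), pick $a<x<b$ in $I$ and bound $|f(t)-f(x)|$ by $\max(|D_+f(a)|,|D_-f(b)|)\cdot|t-x|$ for $t\in[a,b]$ via (2). Statement (8) is the fact that a monotone function has only countably many jumps, and strict inequality in (7) occurs only at such jumps. For (9), rearranging $f(s)\geq L(s)$ yields $(f(s)-f(x))/(s-x)\geq m$, which holds on $\{s>x\}$ by (2) and the definition of $D_+f(x)$ as an infimum of right-hand slopes; the symmetric and two-sided cases are identical. Item (10) follows since continuity of $f$ together with monotone one-sided derivatives makes $f$ absolutely continuous on compact subintervals of $I$, so the fundamental theorem applies. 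Statement (11) is immediate from the fact that the monotone $D_+f$ is of one sign off at most one sign-change point, and (12) follows by taking the limit at $\alpha$ or $\beta$ along secants from a fixed interior support line — finiteness of $f(\alpha{+})$ comes from such a support line, and the $-\infty$ condition from the secant slope $(f(t)-f(\alpha))/(t-\alpha)\to-\infty$ when $f(\alpha{+})<f(\alpha)$. For (13), the limiting two-sided form of (2) gives $D_+f(a)\leq (f(b)-f(a))/(b-a)\leq D_-f(b)$; when the outer values coincide, (3) applied in the limit produces secant coincidence, and the strict-inequality case is handled by contradiction using (3) again.

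The only real obstacle is bookkeeping: ordering (5)–(7) so that nothing is circular, and handling endpoints carefully when one-sided derivatives may be infinite. Because the authors explicitly cite the standard references \cite{Hor1994}, \cite{NP2018}, \cite{RV}, I would present the argument exactly in the order above, proving the three-slope inequality once and invoking it repeatedly, which is the shape of the proofs found in those sources.
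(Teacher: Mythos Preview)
The paper does not give its own proof of this proposition; it simply records these as standard facts about convex functions and refers the reader to the introductory chapters of \cite{Hor1994}, \cite{NP2018}, \cite{RV}. Your sketch is correct and is exactly the standard derivation found in those sources --- establish the three-slope inequality from the definition, and read off everything else from it --- so there is nothing to compare and nothing missing.
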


Several of these properties characterize convexity. It will be useful to have the following converses for (5) (or (6)) and (10).

\begin{prop}
Let $f\colon I\to\R$ and let $a\in I$.
\begin{enumerate}
\item
If $f$ is continuous and has a left derivative $D_-f$ or a right derivative $D_+f$ which is increasing on $I^\circ$ then $f$ is convex.

\item
If $f$ is increasing and we define $g\colon I\to\R$ by $g(x) = \int_a^xf(t)\,dt$, then $g$ is convex and $D_-g(x) = f(x-)$, $D_+g(x)=f(x+)$ whenever $x\in I$ is in the domain of $D_-g$, $D_+g$, respectively.
\end{enumerate}
\end{prop}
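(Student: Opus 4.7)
The plan is to reduce both parts to a single classical monotonicity lemma for one-sided derivatives: if $h\colon[a,b]\to\R$ is continuous and $D_+h(t)\geq 0$ for every $t\in(a,b)$, then $h$ is nondecreasing on $[a,b]$ (and symmetrically for $D_-h$, via the reflection $t\mapsto -t$). The proof is the standard contradiction argument: if $h(a)>h(b)$, pick $\ep>0$ so small that $\phi(t)=h(t)+\ep(t-a)$ still satisfies $\phi(a)>\phi(b)$, let $c$ be the largest point of $[a,b]$ at which $\phi(c)\geq\phi(a)$, and observe that $\phi(t)<\phi(c)$ for $t$ slightly to the right of $c$ forces $D_+\phi(c)\leq 0$, contradicting $D_+\phi(c)=D_+h(c)+\ep>0$.

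For part (1), assume $D_+f$ is increasing on $I^\circ$ (the $D_-f$ case is symmetric). Fix arbitrary $x_1<x_2$ in $I$, let $L(t)$ be the secant through $(x_1,f(x_1))$ and $(x_2,f(x_2))$ with slope $m$, and set $h=f-L$. Then $h$ is continuous on $[x_1,x_2]$, vanishes at the endpoints, and $D_+h=D_+f-m$ is still increasing on $(x_1,x_2)$. Because an increasing function changes sign at most once, setting $c=\sup\{t\in[x_1,x_2]:D_+h(t)\leq 0\}$ gives $D_+h\leq 0$ on $(x_1,c)$ and $D_+h\geq 0$ on $(c,x_2)$. Applying the lemma to $-h$ on $[x_1,c]$ and to $h$ on $[c,x_2]$ shows $h$ is first nonincreasing and then nondecreasing, and since $h$ vanishes at both endpoints, $h\leq 0$ throughout $[x_1,x_2]$. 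This is exactly the statement that the graph of $f$ lies below its secant, which by Proposition \ref{p:convex.properties}(1) is equivalent to convexity on the pair $x_1<x_2$; since the pair was arbitrary, $f$ is convex.

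For part (2), the monotonicity of $f$ makes it bounded on every compact subinterval of $I$, so $g(x)=\int_a^x f(t)\,dt$ is locally Lipschitz and in particular continuous. For $x$ not at the right endpoint of $I$ and small $h>0$, the difference quotient $(g(x+h)-g(x))/h=(1/h)\int_x^{x+h}f(t)\,dt$ is bounded below by $f(x+)$ because $f(t)\geq f(x+)$ for all $t>x$, and bounded above by $f(x+)+\ep$ once $h$ is small enough that $f(t)<f(x+)+\ep$ on $(x,x+h)$. Squeezing yields $D_+g(x)=f(x+)$; the formula $D_-g(x)=f(x-)$ is obtained analogously. Finally, since $x\mapsto f(x+)=\inf_{t>x}f(t)$ is increasing on $I^\circ$ whenever $f$ is, $D_+g$ is increasing on $I^\circ$, and part (1) then yields the convexity of $g$.

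The only piece with real content is the monotonicity lemma, and that is the standard one-sided refinement of the familiar fact that a continuous function with nonnegative derivative is nondecreasing; once it is in hand, everything else is bookkeeping with secants and one-sided limits, and I do not expect any serious obstacle.
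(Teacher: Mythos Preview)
Your proposal is correct. The paper's own proof is much shorter because it simply defers to textbooks: H\"ormander, Theorem 1.1.9, for part~(1), and Roberts--Varberg, Theorem~A, for the convexity of $g$ in part~(2), together with a brief argument (essentially identical to yours) for $D_\pm g(x)=f(x\pm)$. Your route is a genuine, self-contained proof of what those references supply, and it has the pleasant feature of deriving the convexity in~(2) directly from~(1) rather than citing a second source. One minor technicality in your sketch of the monotonicity lemma: since $D_+h\geq 0$ is assumed only on the \emph{open} interval $(a,b)$, the maximal point $c$ could equal $a$, where you cannot invoke $D_+h(c)\geq 0$. The standard patch is to note that if $h(a)>h(b)$ then by continuity $h(a')>h(b)$ for some $a'\in(a,b)$, and rerun the argument on $[a',b]$; this does not affect the soundness of your overall approach.
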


\begin{proof}
(1) \cite{Hor1994}, Theorem 1.1.9. (The assumption that $D_-f$ or $D_+f$ exists and is increasing can be weakened to say that one of the four Dini derivatives%
\footnote{These are the function $x\mapsto \liminf_{t\to x-}(f(t)-f(x))/(t-x)$ and the same with $\liminf$ replaced by $\limsup$ and/or $t\to x-$ replaced by $t\to x+$.  They are valued in $[-\infty,\infty]$. It isn't necessary to assume that the values are finite, though after the fact they are.}
is increasing on $I^\circ$.)

(2) The convexity of $g$ follows by \cite{RV}, Theorem A page 9. (That theorem assumes $I$ is open, but the proof of convexity on page 10 does not require that assumption.)
At any point $x\in I$ which is not the least element of $I$, if we redefine $f(x)$ to be $f(x-)$, then this revised $f$ is continuous from the left at $x$ and has the same integral as $f$ on intervals where $f$ is integrable. Hence $D_-g(x)=f(x-)$. Similarly $D_+g(x)=f(x+)$ if $x$ is not the largest element of $I$.
\end{proof}

\begin{prop}\label{p:translate.conv}
Let $f\colon \R\to\R$ be a convex function. Let $m\leq D_+f(a)$, and let $c>0$. Define $g\colon \R\to\R$ by
\[
g(x) = \begin{cases}
m(x-a) + f(a) & \text{when $x\leq a+c$} \\
f(x-c) + mc & \text{when $x\geq a+c$}.
\end{cases}
\]
Then: {\rm(a)} $g$ is convex; {\rm(b)} $D_+g(x)\leq D_+f(x)$ for $x\geq a$ and $f-g$ is increasing on $[a,\infty)$;
{\rm(c)} if $D_+f(x)>m$ for $x>a$, then $g(x)<f(x)$ for all $x > a$.
\end{prop}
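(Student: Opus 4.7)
The plan is to reduce everything to statements about the right derivative $D_+g$, using Proposition 2.3(1) to deduce convexity from continuity plus monotone $D_+g$, and Proposition 2.2(10) to express $f$ and $g$ as integrals of their right derivatives.

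First I would verify that $g$ is continuous: both pieces agree at the junction $x=a+c$, giving $mc+f(a)$. Next I would compute $D_+g$ piecewise. On $(-\infty,a+c)$, $g$ is affine with slope $m$, so $D_+g\equiv m$ there. On $[a+c,\infty)$, $g(x)=f(x-c)+mc$, so $D_+g(x)=D_+f(x-c)$. In particular $D_+g(a+c)=D_+f(a)\geq m$ by hypothesis, so $D_+g$ does not drop across the junction, and it is monotone on each side since $D_+f$ is increasing (Proposition 2.2(6)). Thus $D_+g$ is increasing on $\R$, and together with continuity this gives part (a).

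For part (b), for $a\leq x<a+c$ we have $D_+g(x)=m\leq D_+f(a)\leq D_+f(x)$, while for $x\geq a+c$ we have $D_+g(x)=D_+f(x-c)\leq D_+f(x)$, again using that $D_+f$ is increasing. Applying Proposition 2.2(10) to the convex continuous functions $f$ and $g$ separately, with base point $p=a$, and subtracting gives
\[
(f-g)(y)-(f-g)(x)=\int_x^y\bigl(D_+f(t)-D_+g(t)\bigr)\,dt\geq 0
\]
for $a\leq x\leq y$, so $f-g$ is increasing on $[a,\infty)$. Part (c) then follows in two steps: for $a<x\leq a+c$, since $D_+f(t)>m$ on $(a,x]$ (an interval of positive length), Proposition 2.2(10) gives $f(x)=f(a)+\int_a^x D_+f\,dt>f(a)+m(x-a)=g(x)$; in particular $f(a+c)>g(a+c)$. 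For $x>a+c$, the monotonicity of $f-g$ on $[a,\infty)$ from part (b) gives $(f-g)(x)\geq(f-g)(a+c)>0$.

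The proof is essentially bookkeeping, with no real obstacle beyond careful handling of the junction point $a+c$ and tracking where the inequalities are strict in (c); the only subtle point is that $f-g$ is not itself convex in general, so we cannot apply Proposition 2.2 directly to $f-g$ but must instead subtract two separate integral representations.
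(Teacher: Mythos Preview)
Your proposal is correct and follows essentially the same approach as the paper: compute $D_+g$ piecewise, check monotonicity across the junction to get convexity, then compare $D_+g$ with $D_+f$ on $[a,\infty)$ and integrate. The only cosmetic difference is in part~(b): the paper concludes that $f-g$ is increasing by citing an external lemma (H\"ormander, Lemma~1.1.8) applied directly to $D_+(f-g)\geq 0$, whereas you avoid this by writing $f$ and $g$ separately via Proposition~2.2(10) and subtracting---your remark that $f-g$ is not itself convex, so one cannot apply 2.2(10) to it directly, is well taken and makes your version slightly more self-contained.
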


\begin{rem}
Note that with $v=c(1,m)$ we have $g\!\mid_{[a+c,\infty)} = f\!\mid_{[a,\infty)}+v$ since for $x\geq a+c$,
\[
(x,g(x)) = (x-c,f(x-c)) + (c,mc).
\]
\[
\begin{tikzpicture}
\def\u{0.6cm}
    \draw (0,0) -- (6*\u,0);
    \coordinate (start) at (0,1*\u);
    \coordinate (p1) at (2*\u, 0.8*\u);
    \coordinate (end) at (4*\u,3*\u);
    \draw[blue,-Latex] (start) -- node [below] {\sz $v$} (p1);
    \coordinate (c1) at ($(start) + 1*(2*\u,-0.2*\u)$);
    \coordinate (c4) at ($(end) - 1*(1*\u,1.2*\u)$);
    \draw[red] (start) .. controls (c1) and (c4) .. (end);
    \fill (start) circle [radius=1pt];
    \node at (2*\u,1.7*\u) {\sz $f$};
    \begin{scope}[xshift = 2*\u,yshift=-0.2*\u]
        \coordinate (start) at (0,1*\u);
        \coordinate (p1) at (2*\u, 0.8*\u);
        \coordinate (end) at (4*\u,3*\u);
        \coordinate (c1) at ($(start) + 1*(2*\u,-0.2*\u)$);
        \coordinate (c4) at ($(end) - 1*(1*\u,1.2*\u)$);
        \draw[red] (start) .. controls (c1) and (c4) .. (end);
        \fill (start) circle [radius=1pt];
        \node at (2*\u,1.7*\u) {\sz $g$};
    \end{scope}
    \draw (0,-0.1*\u) -- ++(0,0.2*\u);
    \draw (2*\u,-0.1*\u) -- ++(0,0.2*\u);
    \node[below] at (0,-0.1*\u) {\sz \rule{0pt}{5pt}$a$};
    \node[below] at (2*\u,-0.1*\u) {\sz \rule{0pt}{5pt}$a+c$};
\end{tikzpicture}
\]
Moreover, the graph of $g$ follows the line through $(a,f(a))$ in the direction of the vector $v$ on the interval $[a,a+c]$: $g\!\mid_{[a,a+c]} = \{(a,f(a))+tc(1,m):0\leq t\leq 1\}$.
\end{rem}

\begin{proof}
(a) For $x < a+c$, $D_+g(x)=Dg(x)=m$ is constant. For $x\geq a+c$, $D_+g(x) = D_+f(x-c)$ is increasing. and $m \leq D_+f(a) = D_+g(a+c)$, so $D_+g$ is increasing.
Since, as follows readily from the formulas, $g$ is continuous, we conclude that $g$ is convex.

(b) To see that $D_+g\leq D_+f$ on $[a,\infty)$, first consider $a\leq x < a+c$. We have $D_+g(x) = m \leq D_+f(a)\leq D_+f(x)$. If $x\geq a+c$ then $D_+g(x) = D_+f(x-c)\leq D_+f(x)$. This gives $D_+(f-g)\geq 0$ on $[a,\infty)$ and hence $f-g$ is increasing on that interval (\cite{Hor1994}, Lemma 1.1.8).

(c) If $D_+f(x)>m$ for $x>a$, then in (b) we get $D_+g(x) = m < D_+f(x)$ when $a < x < a+c$. Together with $g(a)=f(a)$ and the fact from (b) that $D_+g\leq D_+f$ on $[a,\infty)$, this gives that for $x>a$,
\[
g(x) = g(a) + \int_a^x D_+g(t)\,dt < f(a) + \int_a^x D_+f(t)\,dt = f(x). \qedhere
\]
\end{proof}

We will make use of the proposition in the following forms which follow straightforwardly from the proposition.

\begin{cor}\label{c:translate.conv}
Let $f\colon\R\to\R$, $a\in\R$.
\begin{enumerate}
\item
Assume $f$ is concave. Let $m\geq D_+f(a)$, and let $c>0$. Define $g\colon\R\to\R$ by
\[
g(x) = \begin{cases}
m(x-a) + f(a) & \text{when $x\leq a+c$} \\
f(x-c)+mc & \text{when $x\geq a+c$}.
\end{cases}
\]
Then: {\rm (a)} $g$ is concave; {\rm (b)} $D_+f(x)\leq D_+g(x)$ for $x\geq a$ and $g-f$ is increasing on $[a,\infty)$; {\rm (c)} if $D_+f(x)<m$ for $x>a$, then $f(x)<g(x)$ for all $x > a$.

\item
Assume $f$ is convex. Let $m \geq D_-f(a)$, and let $c<0$. Define $g\colon\R\to\R$ by
\[
g(x) = \begin{cases}
f(x-c) + mc & \text{when $x\leq a+c$} \\
m(x-a) + f(a) & \text{when $x\geq a+c$}.
\end{cases}
\]
Then: {\rm (a)} $g$ is convex; {\rm (b)} $D_-f(x)\leq D_-g(x)$ for $x\leq a$ and $f-g$ is decreasing on $(-\infty,a]$; {\rm (c)} if $D_-f(x)<m$ for $x<a$, then $g(x)<f(x)$ for all $x < a$.
\end{enumerate}
\end{cor}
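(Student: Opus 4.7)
The plan is to deduce both statements from Proposition \ref{p:translate.conv} by applying it to a negated or reflected version of $f$; no new analytic input is required, and the only real obstacle is sign bookkeeping.

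For (1), I would replace $f$ with $-f$, which is convex, and observe that the hypothesis $m\geq D_+f(a)$ becomes $-m\leq D_+(-f)(a)$. Applying Proposition \ref{p:translate.conv} to $-f$ with slope $-m$ and the same $c>0$ produces a convex function whose pointwise negative is exactly the $g$ defined in (1). A single sign flip then translates the conclusions of the proposition into concavity of $g$, the reversed inequality $D_+f\leq D_+g$ on $[a,\infty)$, monotonicity of $g-f$ on $[a,\infty)$, and the strict separation $f(x)<g(x)$ in (c).

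For (2), the translate extends $f$ to the left of $a$, so I would work with the reflection $\tilde f(x)=f(2a-x)$, which is again convex. A short computation gives $D_+\tilde f(a)=-D_-f(a)$, so the hypothesis $m\geq D_-f(a)$ becomes $-m\leq D_+\tilde f(a)$, and $\tilde c:=-c$ is positive. Applying Proposition \ref{p:translate.conv} to $\tilde f$ at the point $a$ with slope $-m$ and length $\tilde c$ yields a convex $\tilde g$, and I would define $g(x)=\tilde g(2a-x)$. A direct case check, using that $2a-x\leq a+\tilde c$ iff $x\geq a+c$, shows that $g$ matches the piecewise formula in (2); composition with the affine reflection $x\mapsto 2a-x$ preserves convexity, converts $D_+$ at $2a-x$ into $-D_-$ at $x$, and reverses the monotonicity direction, so the proposition's conclusions (a)--(c) translate into those required in (2), with the monotonicity now asserted on $(-\infty,a]$ rather than on $[a,\infty)$.
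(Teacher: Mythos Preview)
Your reduction is correct: negating $f$ for part (1) and reflecting via $x\mapsto 2a-x$ for part (2) are precisely the ``straightforward'' deductions from Proposition~\ref{p:translate.conv} that the paper has in mind (its own proof is just the one-line remark that the corollary follows straightforwardly from the proposition). The sign and case bookkeeping you outline checks out in each instance.
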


\n \textbf{C. A topology on the $C^\infty$ functions}.\ \
There is a natural and standard topological vector space structure on the family $C^\infty(I)$. (Cf.\ \cite{Ru3}, 1.46.) This is the strucure generated by the seminorms $f\mapsto \|D^jf\|_K := \sup\{|D^jf(x)|:x\in K\}$ ($=0$ if $K=\e$), where $j$ is a nonnegative integer and $K$ is a compact set in $I$.
The finite intersections of sets $V_I(K,j,\ep) = \{f\in C^\infty(I):\|D^jf\|_K<\ep\}$, for $\ep>0$, form a base of open neighborhoods at the origin (i.e., the zero function).

As in any topological vector space, the operations of addition and multiplication by a scalar are continuous.

\begin{exmp}
For each fixed $f_1,\dots,f_n\in C^\infty(\R)$, the map $\R^n\to C^\infty(\R)$ given by $(a_1,\dots,a_n)\mapsto a_1f_1+\dots+a_nf_n$ is continuous. In particular, if we associate to $a=(a_0,\dots,a_n)$ the polynomial $p_a(x)=a_0+\dots+a_nx^n$, then $a\mapsto p_a$ is continuous on $\R^{n+1}$.
\end{exmp}

We also need the following facts.

\begin{prop}\label{p:restr.cont}
For $a\in I$, the evaluation map $\phi_a\colon C^\infty(I)\to\R$ given by $\phi(f)=f(a)$ is continuous. For intervals $J\sq I$ of $\R$, the restriction map $C^\infty(I)\to C^\infty(J)$ is continuous.
\end{prop}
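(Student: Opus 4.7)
The plan is to exploit linearity: both $\phi_a$ and the restriction map are linear maps between topological vector spaces, so continuity is equivalent to continuity at the origin, which in turn amounts to showing that preimages of basic open neighborhoods of $0$ in the target contain basic open neighborhoods of $0$ in the source. Since the base at $0$ in each $C^\infty$-space consists of finite intersections of the sets $V(K,j,\ep)$, it suffices to handle a single such set, and then take intersections.

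For $\phi_a$, I would choose the compact set $K=\{a\}\sq I$ and $j=0$. Then $V_I(\{a\},0,\ep)=\{f\in C^\infty(I):|f(a)|<\ep\}$ is a basic open neighborhood of $0$ in $C^\infty(I)$ that $\phi_a$ sends into $(-\ep,\ep)$. Linearity of $\phi_a$ upgrades this to continuity everywhere.

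For the restriction map $\rho\colon C^\infty(I)\to C^\infty(J)$ defined by $\rho(f)=f\!\mid_J$, I would observe two things. First, any compact $K\sq J$ is automatically compact in $I$ since $J\sq I$; second, $D^j(f\!\mid_J)=(D^jf)\!\mid_J$ on $J$, so in particular $\|D^j(f\!\mid_J)\|_K=\|D^jf\|_K$. Consequently, for each basic open set $V_J(K,j,\ep)$ of $C^\infty(J)$ one has $\rho^{-1}(V_J(K,j,\ep))=V_I(K,j,\ep)$, which is itself a basic open neighborhood of $0$ in $C^\infty(I)$. Taking finite intersections handles the full neighborhood base at $0$, and linearity of $\rho$ gives continuity at every point.

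There is no real obstacle here; the statement is essentially a bookkeeping check that the defining seminorms on $C^\infty(J)$ pull back to defining seminorms on $C^\infty(I)$, and that point-evaluation at $a$ is bounded by the seminorm $\|\cdot\|_{\{a\}}$. The only care needed is to invoke linearity explicitly so that continuity at $0$ suffices.
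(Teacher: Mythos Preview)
Your proof is correct and follows essentially the same approach as the paper: use linearity to reduce to continuity at $0$, then for $\phi_a$ use $V_I(\{a\},0,\ep)$, and for restriction note that a compact $K\sq J$ is compact in $I$ with $\|D^j(f\!\mid_J)\|_K=\|D^jf\|_K$, so $V_I(K,j,\ep)$ maps into $V_J(K,j,\ep)$.
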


\begin{proof}
Evaluation and restriction are both linear, so it is enough to show that they are continuous at $0$. Given a neighborhood $(-\ep,\ep)$ of $0\in\R$, the image of $V_I(\{a\},0,\ep)$ under $\phi_a$ is contained in $(-\ep,\ep)$.
Given a neighborhood $V_J(K,j,\ep)$ of $0\in C^\infty(J)$, where $K$ is a compact subset of $J$. the image of $V_I(K,j,\ep)$ under the restriction map is contained in $V_J(K,j,\ep)$.
\end{proof}

\begin{prop}\label{p:mult.cont}
The operation of pointwise multiplication on $C^\infty(\R)$ is continuous.
\end{prop}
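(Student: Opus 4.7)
The plan is to verify continuity of $\mu\colon C^\infty(\R)\times C^\infty(\R)\to C^\infty(\R)$, $\mu(f,g)=fg$, at an arbitrary point $(f_0,g_0)$, directly from the defining seminorms. Since finite intersections of sets of the form $V_\R(K,j,\ep)$ form a base at $0$, and since addition is continuous (so neighborhoods of $f_0g_0$ are translates of neighborhoods of $0$), it suffices to show: given a compact $K\sq\R$, an integer $j\geq 0$, and $\ep>0$, there exist $\de>0$ and an integer $N\geq j$ such that whenever $u,v\in C^\infty(\R)$ satisfy $\|D^iu\|_K<\de$ and $\|D^iv\|_K<\de$ for $0\leq i\leq N$, one has $\|D^j((f_0+u)(g_0+v)-f_0g_0)\|_K<\ep$.

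The main tool is the Leibniz rule: expanding $(f_0+u)(g_0+v)-f_0g_0=f_0v+ug_0+uv$ and differentiating $j$ times gives
\[
D^j\bigl((f_0+u)(g_0+v)-f_0g_0\bigr)=\sum_{i=0}^{j}\binom{j}{i}\bigl[D^if_0\cdot D^{j-i}v+D^iu\cdot D^{j-i}g_0+D^iu\cdot D^{j-i}v\bigr].
\]
Taking the sup-norm on $K$ and using $\|\phi\psi\|_K\leq\|\phi\|_K\|\psi\|_K$, and setting $M=1+\max_{0\leq i\leq j}\bigl(\|D^if_0\|_K+\|D^ig_0\|_K\bigr)$, one obtains an upper bound of the form $2^j(2M\de+\de^2)$ provided $\|D^iu\|_K,\|D^iv\|_K<\de$ for $0\leq i\leq j$. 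Choosing $\de>0$ small enough that $2^j(2M\de+\de^2)<\ep$ (and taking $N=j$) gives the required estimate, so the neighborhoods $f_0+\bigcap_{i=0}^{j}V_\R(K,i,\de)$ and $g_0+\bigcap_{i=0}^{j}V_\R(K,i,\de)$ of $f_0$ and $g_0$ have product mapped by $\mu$ into $f_0g_0+V_\R(K,j,\ep)$.

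There is no real obstacle here; the only thing to be careful about is that the compact set $K$ on the output side is used unchanged on the input side (since $K\sq\R$ and all functions are defined on $\R$), and that the family of seminorms controlling the input involves not only $D^j$ but all $D^i$ for $i\leq j$, as is forced by the Leibniz expansion. The continuity at the single point $(f_0,g_0)$ of this bilinear map is thus equivalent, in the locally convex setting, to a jointly bounded estimate involving finitely many seminorms on each factor, which is exactly what the Leibniz bound provides.
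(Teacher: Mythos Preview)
Your proof is correct and follows essentially the same approach as the paper: fix $(f_0,g_0)$, $K$, $j$, $\ep$, expand $fg-f_0g_0$ (equivalently $(f_0+u)(g_0+v)-f_0g_0$) and apply the Leibniz rule to bound $\|D^j(fg-f_0g_0)\|_K$ by $2^j(\de^2+2M\de)$, then choose $\de$ small. The paper's argument is the same up to notational choices in defining $M$.
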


\begin{proof}
This is similar to the proof that multiplication on $\R$ is continuous. Fix $f_0,g_0\in C^\infty(\R)$, as well as a nonnegative integer $j$, a compact $K\sq\R$, and an $\ep>0$. Let $M$ be any upper bound on the numbers $\|D^if_0\|_K$, $\|D^ig_0\|_K$, $i=0,\dots,j$.
For any $f,g\in C^\infty(\R)$ and $\de>0$, if $f-f_0$ and $g-g_0$ belong to $\bigcap_{i=0}^j V(K,i,\de)$
then using the Leibniz formula we see that
$\|D^j(fg-f_0g_0)\|_K = \|D^j(fg)-D^j(f_0g_0)\|_K$ is bounded by
\[
\sum_{i=0}^j \binom{j}{i} (\|D^i(f-f_0)\|\|D^{j-i}(g-g_0)\| + \|D^i(f_0)\|\|D^{j-i}(g-g_0)\| + \|D^i(f-f_0)\|D^{j-i}(g_0)\|)
\]
which does not exceed $2^j(\de^2+2M\de)$ and hence is $<\ep$ if $\de$ is small enough.
\end{proof}

\begin{prop}\label{p:properties.of.tau}
The operation of composition on $C^\infty(\R)$
is continuous.
\end{prop}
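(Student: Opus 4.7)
The plan is to reduce the general case to continuity of composition in the sup seminorm via the Fa\`a di Bruno formula, using the already-established continuity of pointwise multiplication (Proposition \ref{p:mult.cont}) together with the fact that differentiation $f\mapsto Df$ is a continuous linear self-map of $C^\infty(\R)$ (since $\|D^k(Df)\|_K = \|D^{k+1}f\|_K$, a seminorm in the generating family).

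Fix $(f_0,g_0)\in C^\infty(\R)^2$, a nonnegative integer $j$, a compact $K\sq\R$, and $\ep>0$; the goal is to find neighborhoods of $f_0$ and $g_0$ such that $\|D^j(f\circ g)-D^j(f_0\circ g_0)\|_K<\ep$. The Fa\`a di Bruno formula expresses $D^j(f\circ g)(x)$ as a fixed finite integer combination of terms
\begin{gather*}
[(D^{k}f)\circ g](x)\cdot\prod_{i=1}^{j}\bigl(D^ig(x)\bigr)^{m_i},
\end{gather*}
indexed by nonnegative tuples $(m_1,\dots,m_j)$ with $\sum_{i} im_i=j$ and $k=m_1+\cdots+m_j$. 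It therefore suffices to show that each such term, regarded as a function of $(f,g)$, is continuous from $C^\infty(\R)^2$ into $C^\infty(\R)$ equipped with the single seminorm $\|\cdot\|_K$.

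The ``outer'' factor $\prod_i (D^ig)^{m_i}$ depends only on $g$, continuously into $C^\infty(\R)$ by continuity of differentiation and Proposition \ref{p:mult.cont}, and in particular continuously into $(C^\infty(\R),\|\cdot\|_K)$. Multiplication of two functions in $(C^\infty(\R),\|\cdot\|_K)$ is continuous via the standard estimate $\|pq-p_0q_0\|_K\leq \|p-p_0\|_K\|q\|_K+\|p_0\|_K\|q-q_0\|_K$. The problem therefore reduces to showing that $(f,g)\mapsto (D^{k}f)\circ g$ is continuous from $C^\infty(\R)^2$ into $(C^\infty(\R),\|\cdot\|_K)$; and since $f\mapsto D^{k}f$ is continuous, this in turn reduces to the $k=0$ statement: $(h,g)\mapsto h\circ g$ is continuous in the sup norm on $K$.

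For this last step I would fix a compact interval $K'$ whose interior contains the compact set $g_0(K)$; then constraining $\|g-g_0\|_K$ to be small enough forces $g(K)\sq K'$, and the triangle inequality together with the mean value theorem gives
\begin{gather*}
\|h\circ g-h_0\circ g_0\|_K\leq \|h-h_0\|_{K'} + \|Dh_0\|_{K'}\cdot\|g-g_0\|_K,
\end{gather*}
which can be made smaller than $\ep$ on a suitable neighborhood of $(h_0,g_0)$. This mean-value estimate is the only genuinely analytic ingredient; everything else is an algebraic consequence of continuity of multiplication. The main obstacle is simply the bookkeeping: keeping the two compact sets $K$ and $K'$ straight while piecing the estimates together across the different layers produced by Fa\`a di Bruno.
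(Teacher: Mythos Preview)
Your proof is correct and follows essentially the same route as the paper: both use the Fa\`a di Bruno expansion of $D^j(f\circ g)$ to reduce to a sup-norm estimate for composition, handled by enlarging $K$ to a compact interval containing $g_0(K)$. The only cosmetic difference is that you factor the argument through the already-proven continuity of multiplication, whereas the paper bundles everything into uniform continuity of the Fa\`a di Bruno polynomial on a cube; your modular version is arguably cleaner, but the underlying idea is identical.
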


\begin{proof}
For each nonnegative integer $m$, there is a polynomial
\[
P_m(x_0,\dots,x_m,y_1,\dots,y_m)
\]
with integer coefficients such that for any $f,g\in C^\infty(\R)$, writing $g\circ f$ as $g(f)$,
\[
D^m(g(f)) = P_m(g(f),(Dg)(f),\dots,(D^mg)(f),Df,\dots,D^mf).
\]
For example, we can take $P_0(x_0)=x_0$, $P_1(x_0,x_1,y_1) = x_1y_1$. Exact formulas for $D^m(g(f))$ are known (see for example the Fa\`{a} di Bruno formula \cite{KP1992}, Lemma 1.3.1.), but this property, which follows easily by induction on $m$, suffices for our purposes.

Given $f_0,g_0\in C^\infty(\R)$, a compact interval $K$, a nonnegative integer $m$ and $\ep>0$, let $L=[c,d]$ be a compact interval such that $D^jf_0(K)\sq L$, $j=0,\dots,m$. Let $L'=[c-1,d+1]$. By uniform continuity of the polynomials $P_j$ on $(L')^{2j+1}$, there is an $\eta>0$ such that for $j=0,\dots,m$,
\[
|P_j(x_0,\dots,x_j,y_1,\dots,y_j) - P_j(x'_0,\dots,x'_j,y'_1,\dots,y'_j)|<\ep
\]
whenever the arguments $x_i$, $x'_i$, $y_k$, $y'_k$ belong to $L'$ with $|x_i-x'_i|<\eta$, $|y_k-y'_k|<\eta$. We may take $\eta\leq 1$.

By uniform continuity of $D^jg_0$ on $L'$, there is a $\de>0$ such that
\[
|x-y|<\de \Rightarrow |D^jg_0(x)-D^jg_0(y)|<\eta/2
\]
for all $x,y\in L'$, $j=0,\dots,m$. We may take $\de\leq \eta\leq 1$. Now let $f,g\in C^\infty(\R)$ and suppose
\[
\|D^jf-D^jf_0\|_K<\de,\ \ \|D^jg-D^jg_0\|_{L'}<\de/2,\ \ j=0,\dots,m.
\]
Note that for $x,y\in L'$ and $j=0,\dots,m$, $|x-y|<\de$ implies
\begin{align*}
| & D^jg(x)-D^jg_0(y)| \leq |D^jg(x)-D^jg_0(x)| + |D^jg_0(x)-D^jg_0(y)| < \de/2 + \eta/2 \leq \eta.
\end{align*}
Thus, for any $j=0,\dots,m$ and $x\in K$, $D^jf_0(x)\in D^jf_0(K)\sq L$ and $|D^jf(x)-D^jf_0(x)|<\de\leq 1$, so $D^jf(x)$ and $D^jf_0(x)$ both belong to $L'$. In particular, $f(x)$ and $f_0(x)$ both belong to $L'$, so $|(D^jg)(f(x)) - D^jg_0(f_0(x))|<\eta$. By the choice of $\eta$ and the formula for $D^m(g(f))$, we get $\|D^m(g(f))-D^m(g_0(f_0))\|_K<\ep$.
\end{proof}

\begin{rem}
Given $n,k\in\N$, the operation of composition $C^\infty(\R^n)\times C^\infty(\R^k)^n\to C^\infty(\R^k)$ which to $g\in C^\infty(\R^n)$ and $f_i\in C^\infty(\R^k)$, $i=1,\dots,n$, associates $F\in C^\infty(\R^k)$ given by $F(x) = g(f_1(x),\dots,f_n(x))$ is continuous by a proof similar to the one above for the case $n=k=1$.%
\footnote{The topology on $C^\infty(\R^n)$ is generated by the seminorms $f\mapsto \|D^\al f\|_K := \sup\{|D^\al f(x)|:x\in K\}$ where $\al$ is an $n$-multi-index, $K$ is a compact set in $\R^n$, and for $\al=(\al_1,\dots,\al_n)$, $D^\al = (\partial/\partial x_1)^{\al_1}\dots (\partial/\partial x_n)^{\al_n}$.}
If we state continuity of composition in this form, then the continuity of multiplication from Proposition \ref{p:mult.cont} follows since $f_1f_2=g(f_1,f_2)$, where $g(x,y)=xy$ is multiplication on $\R$.
\end{rem}

\begin{prop}\label{p:properties.of.tau.2}
The derivative operator and the integral operators $I_a(f)(x)=\int_a^xf(t)\,dt$ on $C^\infty(\R)$ are continuous.
\end{prop}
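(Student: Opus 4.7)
The plan is to exploit the fact that both operators are linear, so that continuity on all of $C^\infty(\R)$ reduces to continuity at the zero function. I would then match each basic neighborhood $V(K,j,\varepsilon)=\{f:\|D^jf\|_K<\varepsilon\}$ of $0$ in the codomain with a basic neighborhood of $0$ in the domain by writing down an explicit seminorm estimate, rather than invoking anything heavier.

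For the derivative operator, the identity $D^j(Df)=D^{j+1}f$ gives $\|D^j(Df)\|_K=\|D^{j+1}f\|_K$, so $D$ maps $V(K,j+1,\varepsilon)$ into $V(K,j,\varepsilon)$ and is therefore continuous. For the integral operator $I_a$, the fundamental theorem of calculus yields $D(I_af)=f$, hence $D^j(I_af)=D^{j-1}f$ for $j\geq 1$; the same matching shows that $I_a$ sends $V(K,j-1,\varepsilon)$ into $V(K,j,\varepsilon)$.

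The only case that needs a small enlargement of the compact set is $j=0$ for $I_a$, where one must bound $\sup_{x\in K}\bigl|\int_a^x f(t)\,dt\bigr|$ in terms of a seminorm of $f$. Here I would take $K'$ to be any compact interval containing $K\cup\{a\}$ (for instance the closed convex hull) and set $L=\sup_{x\in K}|x-a|$. For each $x\in K$ the segment from $a$ to $x$ lies in $K'$, so $\bigl|\int_a^x f(t)\,dt\bigr|\leq L\|f\|_{K'}$. When $L>0$ this shows $I_a$ sends $V(K',0,\varepsilon/L)$ into $V(K,0,\varepsilon)$; when $L=0$ one has $K\subseteq\{a\}$ and $I_a(f)$ vanishes identically on $K$. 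No step here is a genuine obstacle—the proposition is really a bookkeeping check that the defining seminorms of $C^\infty(\R)$ interact with differentiation and integration in the expected way, the only mildly non-trivial point being the need to absorb the base point $a$ into the compact set when estimating an antiderivative.
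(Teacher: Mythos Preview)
Your proof is correct and essentially identical to the paper's: both use linearity to reduce to continuity at $0$, the identity $\|D^j(Df)\|_K=\|D^{j+1}f\|_K$ for the derivative, the relation $D^j(I_af)=D^{j-1}f$ for $j\geq 1$, and the bound $\|I_a(f)\|_K\leq (\text{diam})\,\|f\|_{K'}$ with $K'$ the convex hull of $K\cup\{a\}$ for the $j=0$ case of $I_a$. The only cosmetic difference is that you spell out the degenerate case $K\subseteq\{a\}$ explicitly.
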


\begin{proof}
Continuity of the derivative operator follows from $\|D^j(Df)\|_K = \|D^{j+1}f\|_K$ and linearity of the operator. For continuity of $I_a$, fix a compact interval $K$, and write $L$ for the convex hull of $K\cup \{a\}$, and $b$ for the diameter of $L$. Continuity of $I_a$ follows from its linearity along with the inequality $\|I_a(f)\|_K\leq b\|f\|_{L}$, which holds since for $x\in K$, $|I_a(f)(x)| = |\int_a^x f(t)\,dt|\leq \|f\|_{L}|x-a|$, as well as the equalities $\|D^jI_a(f)||_K = \|D^{j-1}f\|_K$ valid for $j\geq 1$.
\end{proof}

\begin{prop}\label{p:C.infty.by.cases}
Let $X$ be a topological space, $I$ a nontrivial interval of $\R$. Let $f,g\colon X\to C^\infty(I)$ be continuous, and let $a\colon X\to I^\circ$ be continuous. Write $f(x)=f_x$, $g(x)=g_x$, $a(x)=a_x$. Assume that for all nonnegative integers $j$ and all $x\in X$, $D^jf_x(a_x) = D^jg_x(a_x)$. Define $h\colon X\to C^\infty(I)$ by taking $h(x)(y) = h_x(y)$ for $y\in I$ to be
\[
h_x(y) = \begin{cases}
f_x(y) & \text{if $y\leq a_x$} \\
g_x(y) & \text{if $y\geq a_x$}.
\end{cases}
\]
Then $h$ is continuous.
\end{prop}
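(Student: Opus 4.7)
I would proceed in two stages: first verify that each $h_x$ actually lies in $C^\infty(I)$, then establish continuity of $x\mapsto h_x$ by a direct case analysis on the location of a test point $y$ relative to the moving breakpoint $a_x$.

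\textbf{Stage 1 (smoothness of $h_x$).} Since the one-sided derivatives $D^j f_x(a_x)$ and $D^j g_x(a_x)$ agree for every $j\geq 0$, the piecewise function $h_x$ has matching left and right derivatives of all orders at $a_x$. A standard Taylor-with-remainder argument (equivalently, the fact that $g_x-f_x$ vanishes to infinite order at $a_x$, so its extension by $0$ on $(-\infty,a_x]$ is $C^\infty$) shows $h_x\in C^\infty(I)$.

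\textbf{Stage 2 (continuity of $x\mapsto h_x$).} Fix $x_0\in X$, a compact $K\sq I$, a nonnegative integer $j$, and $\ep>0$. Set $a=a_{x_0}$ and choose a compact $L\sq I$ containing a neighborhood of $K\cup\{a\}$ in $I$. Because $f,g\colon X\to C^\infty(I)$ are continuous, composing with the restriction map from Proposition~\ref{p:restr.cont} yields continuous maps into $C^\infty(L)$; together with continuity of $a(\cdot)$, we can find a neighborhood $U$ of $x_0$ and an $\eta>0$ (to be fixed momentarily) such that for every $x\in U$,
\begin{gather*}
a_x\in L,\quad |a_x-a|<\eta,\quad \|D^jf_x-D^jf_{x_0}\|_L<\ep/3,\quad \|D^jg_x-D^jg_{x_0}\|_L<\ep/3.
\end{gather*}
Now for $y\in K$ compare $D^jh_x(y)$ with $D^jh_{x_0}(y)$ case by case. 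If $y\leq\min(a_x,a)$ then both values equal $D^jf_\bullet(y)$ and the difference is bounded by $\ep/3$; symmetrically for $y\geq\max(a_x,a)$ with $g$ in place of $f$. The only delicate case is when $y$ lies between $a_x$ and $a$, where one of $h_x,h_{x_0}$ uses $f$ while the other uses $g$; say $h_x(y)=f_x(y)$ and $h_{x_0}(y)=g_{x_0}(y)$. Here I insert the common value $D^jf_{x_0}(a)=D^jg_{x_0}(a)$ and estimate
\begin{gather*}
|D^jh_x(y)-D^jh_{x_0}(y)| \leq \|D^jf_x-D^jf_{x_0}\|_L + |D^jf_{x_0}(y)-D^jf_{x_0}(a)| + |D^jg_{x_0}(a)-D^jg_{x_0}(y)|,
\end{gather*}
and since $|y-a|\leq|a_x-a|<\eta$, by continuity of $D^jf_{x_0}$ and $D^jg_{x_0}$ at $a$ the last two terms can be made $<\ep/3$ once $\eta$ is chosen small enough. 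The symmetric sub-case is handled identically. This gives $\|D^jh_x-D^jh_{x_0}\|_K<\ep$ on $U$, proving continuity.

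\textbf{Main obstacle.} The smoothness of $h_x$ is essentially automatic from the hypothesis, and the two ``consistent'' cases in the estimate are routine continuity of $f$ and $g$. The substantive step is the treatment of the thin overlap region $[\min(a_x,a),\max(a_x,a)]$, where $h_x$ and $h_{x_0}$ are given by different branches; the whole argument hinges on exploiting the assumed infinite-order matching of $f_{x_0}$ and $g_{x_0}$ at $a$ to bridge between the two branches via the common value $D^jf_{x_0}(a)=D^jg_{x_0}(a)$, with the bridging error controlled by the modulus of continuity of $D^jf_{x_0}$ and $D^jg_{x_0}$ near $a$.
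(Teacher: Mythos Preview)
Your proof is correct and follows essentially the same approach as the paper: first check $h_x\in C^\infty(I)$ from the matching derivatives, then prove continuity at $x_0$ by an $\ep/3$ argument that splits into the two easy cases $y\leq\min(a_x,a_{x_0})$, $y\geq\max(a_x,a_{x_0})$ and the overlap case, where you bridge between the $f$- and $g$-branches via the common value $D^jf_{x_0}(a_{x_0})=D^jg_{x_0}(a_{x_0})$ and control the error using continuity of $D^jf_{x_0}$, $D^jg_{x_0}$ at $a_{x_0}$. The paper's write-up differs only cosmetically (it fixes $\de$ before $U$ rather than deferring the choice of your $\eta$, and it works with $K$ directly rather than an auxiliary $L$, which suffices since $y\in K$ throughout).
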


\begin{proof}
Since $D^jf_x(a_x) = D^jg_x(a_x)$ for all $j$, we have $h_x\in C^\infty(I)$. Let $x_0\in X$. Fix a subbasic open neighborhood $V=V_I(K,j,\ep)$ of the origin in $C^\infty(I)$.

Let $\de>0$ be such that for any $y\in I$, $|y-a_{x_0}|<\de$ implies both $|D^jf_{x_0}(y)-D^jf_{x_0}(a_{x_0})|<\ep/3$ and $|D^jg_{x_0}(y)-D^jg_{x_0}(a_{x_0})|<\ep/3$. Then let $U$ be an open neighborhood of $x_0$ in $X$ so that for all $x\in U$, $f_x-f_{x_0}\in V_I(K,j,\ep/3)$, $g_x-g_{x_0}\in V_I(K,j,\ep/3)$, and $|a_x - a_{x_0}|<\de$.

Now let $x\in U$ and $y\in K$.
We want to show that $|D^jh_x(y) - D^jh_{x_0}(y)| < \ep$.
Without loss of generality, $a_x\leq a_{x_0}$.

If $y\leq a_x$ then $|D^jh_x(y) - D^jh_{x_0}(y)| = |D^jf_x(y) - D^jf_{x_0}(y)| < \ep/3 < \ep$.

If $y\geq a_{x_0}$ then $|D^jh_x(y) - D^jh_{x_0}(y)| = |D^jg_x(y) - D^jg_{x_0}(y)| < \ep/3<\ep$.

If $a_x<y<a_{x_0}$ then $|y-a_{x_0}|<\de$, so since $D^jg_{x_0}(a_{x_0}) = D^jf_{x_0}(a_{x_0})$ we have
\begin{align*}
|D^jh_x(y) & - D^jh_{x_0}(y)| = |D^jg_x(y) - D^jf_{x_0}(y)| \\
& \leq |D^jg_x(y) - D^jg_{x_0}(y)| + |D^jg_{x_0}(y) - D^jg_{x_0}(a_{x_0})| + |D^jf_{x_0}(a_{x_0}) - D^jf_{x_0}(y)| \\
& < \ep/3 + \ep/3 + \ep/3 = \ep. \qedhere
\end{align*}
\end{proof}

\m

\n\textbf{D. Continuity of transversals.}\ \
We require for functions of two variables, for example $f\colon \R^2\to\R$, each of whose vertical sections $f_x = f(x,\,\cdot\,)$ take a particular value $a$ at a unique point $g(x)$, i.e., $f(x,g(x))=a$, to know that the transversal function $g$ is continuous under suitable assumptions on $f$. The following proposition gives sufficient conditions on $f$ for our purposes.

\begin{prop}\label{p:cont.of.sect}
Let $X$ be a topological space, and let $Y$ and $Z$ be linearly ordered spaces with their order topology. Let $E\sq X\times Y$ satisfy that each horizontal section $E^y$ is open in $X$ and each vertical section $E_x$ is an interval of $Y$. Let $f\colon E\to Z$ be continuous in the first variable and strictly increasing in the second. Let $a\in Z$. Let $X_0=\{x\in X:E_x\not=\e\}$. Suppose that for each $x\in X_0$, there is a $y$ such that $(x,y)\in E$, $y$ is not an endpoint of $E_x$, and $f(x,y)=a$. Then the function $X_0\to Y$ given by $x\mapsto f_x^{-1}(a)$ is continuous.
\end{prop}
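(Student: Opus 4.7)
Since $f$ is strictly increasing in the second variable, the hypothesis that for each $x \in X_0$ there is a $y \in E_x$ (not an endpoint) with $f(x, y) = a$ forces $y$ to be unique; call it $g(x)$. This gives a well-defined function $g : X_0 \to Y$, and the plan is to verify continuity directly from a subbasis for the order topology of $Y$.

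Fix $x_0 \in X_0$ and a basic open neighborhood $V = (y_1, y_2)$ of $g(x_0)$. (The cases where $V$ is a ray $\{y : y < y_2\}$ or $\{y : y > y_1\}$ or all of $Y$ are handled by dropping the corresponding side of the argument below.) The key preliminary step is to select $y^-, y^+ \in E_{x_0}$ with $y_1 \leq y^- < g(x_0) < y^+ \leq y_2$. Since $g(x_0)$ is interior to the interval $E_{x_0}$, there is some $y^* \in E_{x_0}$ with $y^* < g(x_0)$; if $y^* \geq y_1$ set $y^- = y^*$, otherwise $y_1$ lies strictly between $y^*$ and $g(x_0)$, both of which are in the interval $E_{x_0}$, so $y_1 \in E_{x_0}$ and we set $y^- = y_1$. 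The construction of $y^+$ is symmetric.

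By hypothesis the horizontal sections $E^{y^-}$ and $E^{y^+}$ are open in $X$ and both contain $x_0$, and the maps $x \mapsto f(x, y^\pm)$ are continuous on them. Since $f(x_0, y^-) < a < f(x_0, y^+)$ by strict monotonicity in the second variable, and since $\{z \in Z : z < a\}$ and $\{z \in Z : z > a\}$ are open in the order topology on $Z$, the set
\[
U = \{\, x \in E^{y^-} \cap E^{y^+} : f(x, y^-) < a < f(x, y^+) \,\}
\]
is an open neighborhood of $x_0$ in $X$. For any $x \in U \cap X_0$, both $y^-$ and $y^+$ lie in $E_x$, and the point $g(x) \in E_x$ satisfies $f(x, g(x)) = a$, so strict monotonicity of $f_x$ forces $y^- < g(x) < y^+$, hence $g(x) \in (y_1, y_2) = V$. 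Thus $g$ is continuous at $x_0$.

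The only genuine subtlety I anticipate is that the linearly ordered space $Y$ need not be order-dense, so one cannot blithely pick $y^\pm$ strictly between $g(x_0)$ and the $y_i$; the interval hypothesis on $E_{x_0}$ is exactly what resolves this, by allowing the choice $y^- = y_1$ (respectively $y^+ = y_2$) when no point of $E_{x_0}$ lies strictly between $y_1$ and $g(x_0)$ (respectively between $g(x_0)$ and $y_2$).
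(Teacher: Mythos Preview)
Your proof is correct and follows essentially the same approach as the paper. The paper works with subbasic open rays and shows their preimages are open, while you show continuity at each point using basic open intervals; the core step---using the interval hypothesis on $E_{x_0}$ together with the fact that $g(x_0)$ is not an endpoint to locate $y^\pm\in E_{x_0}$ with $y_1\le y^-<g(x_0)<y^+\le y_2$---is identical, and your discussion of the non-order-dense case makes explicit what the paper states in one line.
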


\begin{proof}
It is enough to show that the preimage of each ``half-line'' of the form $\{y\in Y:y_0<y\}$ or $\{y\in Y:y<y_0\}$ under $x\mapsto f_x^{-1}(a)$ is open, so fix $y_0\in Y$. Let $x\in X_0$. Write $y=f_x^{-1}(a)$. Assume that $y_0 < y$. Since $E_x$ is an interval of $Y$ of which $y$ is not the least element, there is a $y_1\in E_x$ such that $y_0\leq y_1<y$. Then by assumption we have $f(x,y_1) < f(x,y)=a$. Let $U$ be an open neighborhood of $x$ such that $t\in U$ implies $(t,y_1)\in E$ and $f(t,y_1) < a$. Then for $t\in U$ we have that $E_t$ is nonempty (it contains $y_1$) and $f_t(y_1) < a$, so $y_0 \leq y_1 < f_t^{-1}(a)$. Similarly, if $y<y_0$ then there is an open neighborhood $U$ of $x$ such that $t\in U$ implies $f_t^{-1}(a)$ is defined and $< y_0$.
\end{proof}

The following example shows that the assumption that $y=f_x^{-1}(a)$ is not an endpoint of $E_x$ cannot be omitted.

\begin{exmp}
Consider $f(x,y)=xy$ on $E=[0,1]^2\sm (\{0\}\times[0,1))$ with $X=Y=[0,1]$, $a=0$. All assumptions are satisfied except that when $x=0$, $f_x^{-1}(0)=1$ is an endpoint of $E_x$. The function $x\mapsto f_x^{-1}(0)$ is not continuous at $0$ since it equals $0$ for $x>0$.
\end{exmp}

\n\textbf{E. A $C^\infty$ function.}\ \
In our constructions of $C^\infty$ functions, we shall make use of a $C^\infty$ function $h\colon [0,1]\to [0,1]$ having the properties listed in the following proposition.

\begin{prop}\label{p:h}
There is a $C^\infty$ function $h\colon [0,1]\to [0,1]$ having the following properties.
\noindent\begin{tabular}{@{}ll}
\parbox{7cm}{
\begin{enumerate}
\item
$D^nh(0)=0$ $(n\geq 0)$,

\item
$h(1)=1$, $D^nh(1)=0$ $(n\geq 1)$,

\item
$Dh(x)>0$ when $0<x<1$,

\item
$\int_0^1 h(t)\,dt = 1/2$.
\end{enumerate}}
&
\parbox{7cm}{
\begin{tikzpicture}
\def\u{1.3cm}
    \draw (0,0) -- ++(1*\u,0) -- ++(0,1*\u) -- ++(-1*\u,0) -- cycle;
    \coordinate (start) at (0,0);
    \coordinate (end) at (1*\u,1*\u);
    \coordinate (c1) at ($(start) + 0.7*(1*\u,0)$);
    \coordinate (c2) at ($(end) - 0.7*(1*\u,0)$);
    \draw[red] (start) .. controls (c1) and (c2) .. (end);
    \node[below left] at (0,0) {\sz $0$};
    \node[below] at (1*\u,0) {\sz $1$};
    \node[left] at (0,1*\u) {\sz $1$};
    \node[left] at (0.5*\u,0.5*\u) {\sz $h$};
 \end{tikzpicture}}
\end{tabular}
\end{prop}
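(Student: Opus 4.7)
The plan is to use the standard $C^\infty$ ``bump building block'' $\phi\colon\R\to\R$ defined by $\phi(x)=e^{-1/x}$ for $x>0$ and $\phi(x)=0$ for $x\leq 0$. This function is $C^\infty$ on $\R$ with $\phi(x)>0$ exactly when $x>0$, $\phi'(x)>0$ on $(0,\infty)$, and $D^n\phi(0)=0$ for every $n\geq 0$ (this last fact is a routine induction showing that $D^n\phi(x)=P_n(1/x)e^{-1/x}$ on $(0,\infty)$ for a polynomial $P_n$, so the limit as $x\to 0^+$ is $0$). I would then set
\[
h(x) \;=\; \frac{\phi(x)}{\phi(x)+\phi(1-x)},\qquad x\in[0,1].
\]
Since the denominator is strictly positive for every $x\in\R$ (one of $x$, $1-x$ is $>0$), $h$ is $C^\infty$ on $[0,1]$ by Propositions \ref{p:mult.cont} and \ref{p:properties.of.tau} (or by direct invocation of the quotient rule).

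For the flatness at $0$, I would write $h(x)=\phi(x)g(x)$ with $g(x)=(\phi(x)+\phi(1-x))^{-1}$, which is $C^\infty$ on a neighborhood of $0$, and apply the Leibniz rule: every term in $D^nh(0)=\sum_{k=0}^n\binom{n}{k}D^k\phi(0)\,D^{n-k}g(0)$ vanishes because $D^k\phi(0)=0$ for all $k$. This gives (1). For (2) and (4) I would exploit the symmetry $h(x)+h(1-x)=1$, which follows directly from the definition. Evaluating at $x=0$ gives $h(1)=1-h(0)=1$; differentiating $n\geq 1$ times at $x=0$ and using (1) gives $D^nh(1)=0$; and integrating over $[0,1]$ gives $2\int_0^1 h=1$.

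For (3), the quotient rule gives
\[
Dh(x) \;=\; \frac{\phi'(x)\phi(1-x) + \phi(x)\phi'(1-x)}{(\phi(x)+\phi(1-x))^2},
\]
which is strictly positive on $(0,1)$ because each of $\phi(x)$, $\phi(1-x)$, $\phi'(x)$, $\phi'(1-x)$ is positive there. I expect no real obstacle in this argument; the only point requiring a bit of care is the verification that all derivatives of $\phi$ vanish at $0$, which is a standard exercise, and the symmetry $h(x)+h(1-x)=1$, which is immediate from the explicit formula.
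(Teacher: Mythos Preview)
Your proof is correct. Both your argument and the paper's hinge on the same key observation: a function satisfying $h(x)+h(1-x)=1$ automatically has $\int_0^1 h=1/2$, and this symmetry also transfers the flatness at $0$ to flatness at $1$. The difference is in how the symmetric $h$ is produced. You build it directly via the standard quotient $h=\phi/(\phi+\phi(1-\cdot))$, which has the symmetry baked in from the start. The paper instead begins with \emph{any} $C^\infty$ function $f$ satisfying (1)--(3), forms its ``rotated'' counterpart $g(x)=1-f(1-x)$, and then takes the average $h=(f+g)/2$; this averaging forces the symmetry and hence (4). Your construction is more explicit and self-contained; the paper's has the minor conceptual advantage of showing that (4) can always be arranged by symmetrization once (1)--(3) are in hand, without committing to a particular $f$.
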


\begin{proof}
Start with any $C^\infty$ function $f$ satisfying (1)--(3), for example, $f(0)=0$, $f(1)=1$, $f(x)=\exp(-(1/x)\exp(-1/(1-x)))$ for $0<x<1$.
Let $g$ be the function whose graph on $[0,1]$ is obtained by rotating the graph of $f$ by $180^\circ$ around the center of the unit square, i.e., when $f(x)=y$, we have $g(1-x) = 1-y$,
or $g(x) = 1-f(1-x)$.
Then $g$ is also a $C^\infty$ function satisfying (1)--(3).
The average of $f$ and $g$, namely the function $h=(f+g)/2$, is $C^\infty$ and satisfies (1)--(3). Moreover, $h$ is invariant under rotation of its graph by $180^\circ$ around the center of the unit square since
\begin{align*}
2(1-h(1-x)) & = 2 - 2h(1-x) = 2 - f(1-x) - g(1-x) \\
& = (1-f(1-x)) + (1-g(1-x)) = g(x) + f(x) = 2h(x)
\end{align*}
Thus, $\int_0^1 h(t)\,dt = 1/2$ since by the invariance under rotation, the area above the curve equals the area under the curve.
\end{proof}

\section{Specifying higher order derivatives at the endpoints}
\label{s:spec.higher}

For the remainder of the paper, for each $\de>0$, we fix continuous functions $\de\mapsto \s_\de$ and $\de\mapsto \tau_\de$ mapping positive numbers $\de$ into $C^\infty(\R)$. We denote the restriction of $\s_\de$ to $[0,\de]$ also by $\s_\de$, and we denote the restriction of $\tau_\de$ to $[1-\de,1]$ also by $\tau_\de$. These functions are required to satisfy the following conditions.
\begin{itemize}
\item
$\s_\de\colon [0,\de]\to [0,\de]$, $\s_\de(0)=0$, $D\s_\de>0$ on $(0,\de]$, and $\s_\de(x)=x$ when $\de/2\leq x\leq \de$.

\item
$\tau_\de\colon [1-\de,1]\to [1-\de,1]$, $\tau_\de(1)=1$, $D\tau_\de>0$ on $[1-\de,1)$, and $\tau_\de(x)=x$, $1-\de\leq x\leq 1-\de/2$.
\end{itemize}

Existence of such functions is trivial since we could take all $\s_\de$ and $\tau_\de$ to be the identity function, but we are interested in controlling also the values of the derivatives $D^j\s_\de(0)$ and $D^j\tau_\de(1)$. The following proposition shows that we could impose arbitrary values on these derivatives when $j\geq 1$, subject to the restriction that the first nonzero $D^j\s_\de(0)$ (if there is one) is positive, and the first nonzero $D^j\tau_\de(1)$ (if there is one) is positive if $j$ is odd, and negative if $j$ is even.

\begin{prop}\label{p:endpt.der}
Let $\al_0,\al_1,\dots$ be a sequence of real numbers such that $\al_0=0$ and
either $\al_j=0$ for all $j\geq 1$, or for the least $j\geq 1$ for which $\al_j\not=0$ we have $\al_j>0$.
Let $\beta_0,\beta_1,\dots$ be a sequence of real numbers such that $\beta_0=1$ and either $\beta_j=0$ for all $j\geq 1$, or for the least $j\geq 1$ for which $\beta_j\not=0$ we have $(-1)^{j+1}\beta_j>0$.

Then there are continuous functions $\de\mapsto \s_\de$ and $\de\mapsto \tau_\de$ from positive reals $\de>0$ into $C^\infty(\R)$ such that the following properties hold.
\begin{enumerate}
\item
$D^j\s_\de(0)=\al_j$ for all $j=0,1,2,\dots$, $D\s_\de(x)>0$ for $x>0$, and $\s_\de(x)=x$ when $x\geq \de/2$.

\item
$D^j\tau_\de(1)=\beta_j$ for all $j=0,1,2,\dots$, $D\tau_\de(x)>0$ for $x<1$, and $\tau_\de(x)=x$ when $x\leq 1-\de/2$.
\end{enumerate}
\end{prop}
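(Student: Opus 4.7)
The plan is to reduce the construction of $\tau_\delta$ to that of $\sigma_\delta$ via the symmetry $\tau_\delta(x) = 1 - \sigma_\delta(1-x)$ applied to the sequence $\tilde\alpha_0 = 0$, $\tilde\alpha_j = (-1)^{j+1}\beta_j$ for $j\geq 1$. The chain rule then gives $D^j\tau_\delta(1) = (-1)^{j+1}D^j\sigma_\delta(0) = \beta_j$ for $j\geq 1$, and the sign hypothesis on $(\beta_j)$ translates exactly to the hypothesis on $(\tilde\alpha_j)$ needed for $\sigma$. Since the map $f\mapsto\bigl(x\mapsto 1-f(1-x)\bigr)$ is continuous on $C^\infty(\R)$ (composition with $x\mapsto 1-x$ is continuous by Proposition~\ref{p:properties.of.tau}, and the remaining operations are vector-space operations), this reduces the task to producing $\sigma_\delta$.

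The first step would be to fix, once and for all, a model function $\phi\in C^\infty(\R)$ with $D^j\phi(0)=\alpha_j$ for every $j\geq 0$, with $\phi'(x)>0$ for all $x>0$, and with $\phi(x)=x$ for $x\geq M$ for some fixed $M>0$. I would produce $\phi$ by starting from a Borel-theorem realization $\phi_0$ of the prescribed Taylor series at $0$ (cited in Section~\ref{s:prelim}): the sign hypothesis on $(\alpha_j)$ forces $\phi_0$ to be flat to infinite order at $0$ (when all $\alpha_j$ vanish) or to satisfy $\phi_0'>0$ on some right-neighborhood $(0,\eta]$, so $\phi_0$ can be smoothly glued on $[\eta/2,M]$ to the identity, with enough positive drift on the intermediate interval to keep $\phi'>0$ throughout $(0,\infty)$. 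In the all-zero case a scaled copy of the function $h$ of Proposition~\ref{p:h} supplies the piece of $\phi$ near $0$.

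Next, I would choose a constant $\epsilon\in(0,1/2)$ depending only on $\alpha$ and $\phi$ such that $\phi(\epsilon t)<t/2$ for every $t>0$; this is possible because for $t\geq M/\epsilon$ we have $\phi(\epsilon t)=\epsilon t<t/2$, for small $t$ we have $\phi(\epsilon t)\sim\alpha_k(\epsilon t)^k/k!$ with $k$ the first index for which $\alpha_k\neq 0$, and the compact intermediate range of $t$ is handled by shrinking $\epsilon$ if needed. For each $\delta>0$ I would set $\sigma_\delta=\phi$ on $(-\infty,\epsilon\delta]$ and $\sigma_\delta(x)=x$ on $[\delta/2,\infty)$, and on the transition $[\epsilon\delta,\delta/2]$ define $\sigma_\delta$ via its derivative: $D\sigma_\delta$ would be a positive $C^\infty$ function matching $\phi'$ to infinite order at $\epsilon\delta$, matching the constant $1$ to infinite order at $\delta/2$, and satisfying
\[
\int_{\epsilon\delta}^{\delta/2} D\sigma_\delta(t)\,dt \;=\; \frac{\delta}{2}-\phi(\epsilon\delta).
\]
This integral condition forces $\sigma_\delta(\delta/2)=\delta/2$ and hence $\sigma_\delta(x)=x$ on $[\delta/2,\delta]$. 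A concrete such $D\sigma_\delta$ can be built as a smoothed convex combination of the endpoint values $\phi'(\epsilon\delta)$ and $1$, plus a nonnegative bump of the missing mass obtained from a fixed reference profile by rescaling its support to the interior of the transition; the required average slope $(\delta/2-\phi(\epsilon\delta))/((1/2-\epsilon)\delta)$ is positive by the choice of $\epsilon$, so the bump mass is nonnegative and the positivity of $D\sigma_\delta$ is preserved.

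The hard part will be to guarantee that the whole construction is \emph{continuous} in $\delta$ as a map into $C^\infty(\R)$. My plan is to express $\sigma_\delta$ entirely in terms of $\phi$ and a small number of $\delta$-independent auxiliary smooth functions (a cutoff $\psi$ and a bump $\beta$), with $\delta$ entering only through rescalings of the position variable and through scalar-valued continuous functions of $\delta$ such as $\phi(\epsilon\delta)$ and $\delta/2-\phi(\epsilon\delta)$. Continuity of $\delta\mapsto\sigma_\delta$ then reduces to the continuity of composition, scalar multiplication, integration, and piecewise gluing, all of which are supplied by Propositions~\ref{p:mult.cont},~\ref{p:properties.of.tau},~\ref{p:properties.of.tau.2}, and~\ref{p:C.infty.by.cases}.
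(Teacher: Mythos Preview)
Your symmetry reduction for $\tau_\de$ is exactly the paper's, so the task is indeed only to build $\de\mapsto\sigma_\de$.

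There are two genuine gaps in your construction on the transition interval $[\epsilon\de,\de/2]$.

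\emph{First}, a ``smoothed convex combination of the endpoint values $\phi'(\epsilon\de)$ and $1$'' does not match $\phi'$ to infinite order at $\epsilon\de$: it only matches the value. If $c(x)$ is a flat-ended cutoff and you set $D\sigma_\de(x)=c(x)\phi'(\epsilon\de)+(1-c(x))$, then $D^2\sigma_\de(\epsilon\de)=0$, whereas $\phi''(\epsilon\de)$ need not vanish; the glued function is only $C^1$, and Proposition~\ref{p:C.infty.by.cases} (which requires all derivatives to agree at the gluing point) does not apply. Blending the \emph{function} $\phi'$ with the constant $1$ would repair this, but then the integral of the blend is no longer controlled by the endpoint values alone, and the second issue below becomes harder to analyse.

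\emph{Second}, your claim that the bump mass is nonnegative is false in general. Take $\al_1>1$. With $c$ symmetric (so $\int c=L/2$ where $L=\de(1/2-\epsilon)$) one computes the missing mass for small $\de$ to be approximately
\[
\de(1/4+\epsilon/2)(1-\al_1),
\]
which is negative. The point is that when $\phi'(\epsilon\de)$ is large, the blend already overshoots the target integral $\de/2-\phi(\epsilon\de)$, and a ``nonnegative bump'' cannot correct downward. Your argument that ``the required average slope is positive'' only says the target integral is positive, not that it exceeds the blend's integral.

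The paper sidesteps both problems by a two-stage transition. It first blends the Borel function $u$ into a line of \emph{small} slope $1/4$ (not slope $1$) on a short interval $[0,\de/(4k)]$, with $k$ chosen large enough that $u(x)<kx$ there; the explicit formula $f_\de(x)=u(x)+h(4kx/\de)\bigl((x+\de)/4-u(x)\bigr)$ makes positivity of $Df_\de$ checkable term by term and makes $f_\de$ exactly linear past $\de/(4k)$. Then the corner-rounding device $G_\gamma$ of Proposition~\ref{roundoffcorners} increases the slope from $1/4$ to $1$, meeting the diagonal at $\de/3$. Because both pieces are genuinely linear in a neighbourhood of the gluing point, infinite-order matching is automatic, and the integral bookkeeping is absorbed into the geometry of the corner. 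Continuity in $\de$ then follows directly from the continuity of $\gamma\mapsto G_\gamma$ and Proposition~\ref{p:C.infty.by.cases}.
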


\begin{proof}
The proof builds on the ideas used to prove \cite{Bu2019}, Proposition 6.2.
First we construct a continuous map $\de\mapsto \s_\de$ satisfying (1). By the theorem of Borel on the surjectivity of the derivative map, there is a $C^\infty$ function $u$ for which $D^ju(0)=\al_{j}$, $j=0,1,\dots$. If for all $j\geq 1$, $\al_j=0$, then take $u$ to be constant, $u(x)=\al_0=0$.
By our assumptions on the coefficients $\al_j$, $u(0)=\al_0=0$ and $Du(x)\geq 0$ for $x\geq 0$ close enough to $0$.%
\footnote{Proof. Case 1. $u=0$ or $Du(0)>0$. The claim
holds when $Du(0)>0$ as $Du$ is continuous. Case 2. $u\not=0$ and $Du(0)=0$. Then the least index $j\geq 0$ such that $\al_{j}\not=0$ is $>1$, and we have $D^ju(0)=\al_{j}>0$. Choose $\theta>0$ so that $D^ju(x)>0$, $0\leq x<\theta$. For $x\in(0,\theta)$, the Taylor formula gives $Du(x)=D^ju(\xi)x^{j-1}/(j-1)!$ for some $\xi\in (0,x)$. Hence, $Du(x)>0$.}
Fix $k\in\N$ satisfying $k\geq 2$ and $Du(0)<k$. Then $u(x)<kx$ for $x>0$ close enough to $0$. Since the statement of (1) continues to hold if we make $\de$ larger, it is enough to arrange it for $\de\mapsto \s_\de$ defined when $0<\de\leq \de_0$, where $\de_0$ is chosen so that $Du(x)\geq 0$ and $u(x)<kx$ when $0<x\leq \de_0$. (Then we can take $\s_\de=\s_{\de_0}$ for $\de\geq \de_0$.)

For $0<\de\leq \de_0$, set
\[
f(x)=f_\de(x)=u(x)+h\left(\frac{4kx}{\de}\right)((x+\de)/4 - u(x)).
\]
Note that $\de\mapsto f_\de$ is continuous. When $x\geq \de/(4k)$, we have $f(x)=(x+\de)/4$, so $Df(x)=1/4>0$. Let us verify that $Df(x)$ is positive when $0<x<\de/(4k)$.
\begin{align*}
Df(x) & = Du(x)+\frac{4k}{\de}Dh\left(\frac{4kx}{\de}\right)((x+\de)/4 - u(x))
+ h\left(\frac{4kx}{\de}\right)(1/4-Du(x)) \\
& =  Du(x)\left(1-h\left(\frac{4kx}{\de}\right)\right)+\frac{4k}{\de}Dh\left(\frac{4kx}{\de}\right)((x+\de)/4 - u(x))
+ \frac14 h\left(\frac{4kx}{\de}\right)
.\end{align*}
Using the fact that $u(x) < kx < \de/4$, we see that all terms are nonnegative and the last is positive, so $Df(x)>0$. Because the derivatives of all orders of $h$ are zero at the origin, we get $D^jf(0)=D^ju(0)=\al_{j}$ for all $j=0,1,2,\dots$. The graph of $y=f(x)$, for $x\geq \de/(4k)$, meets the diagonal when $(x+\de)/4 = x$, i.e., when $x=\de/3$.

We consider the $C^\infty$ function $G_{\gamma_\de}$, with $\gamma_\de = (1/4,1,\de/3,\de/3,\de/12)$, which on the interval $[\de/8,\de/2]\sq [\de/(4k),\de/2]$, equals $(x+\de)/4$ to the left of the interval $J=(\de/3-\de/12,\de/3+\de/12) = (\de/4,5\de/12)$, and equals $x$ to the right of $J$, and whose derivative on $J$ increases monotonically from $1/4$ to $1$ (Proposition \ref{roundoffcorners}). Take $\s_\de(x) = f_\de(x)$ when $x\leq \de/5$, $\s_\de(x)=G_{\gamma_\de}(x)$ when $x\geq \de/5$. Since both functions equal $(x+\de)/4$ in a neighborhood of $\de/5$, it follows from Proposition \ref{p:C.infty.by.cases} that $\de\mapsto \s_\de$ is continuous.

To construct $\de\mapsto \tau_\de$, apply (1) to the sequence $\al_0=0$, $\al_j=(-1)^{j+1}\beta_j$ for $j\geq 1$, to get a continuous map $\de\mapsto g_\de$ such that for $\de>0$, $g=g_\de$ satisfies $g(0)=0$, $D^jg(0)=\al_j=(-1)^{j+1}\beta_j$ for $j\geq 1$, $Dg(x)>0$ for $x>0$, and $g(x)=x$ for $x\geq\de/2$.
Take $\tau_\de(x) = -g_\de(1-x)+1$. The map $\de\mapsto \tau_\de$ is continuous, and for each $\de>0$, for  $\tau=\tau_\de$ we have $\tau(1)=1$. For any $j\geq 1$, $D^j\tau(x) = (-1)^{j+1}D^jg(1-x)$, so $D^j\tau(1) = (-1)^{j+1}D^jg(0) = \beta_j$. When $x<1$, we have $1-x>0$, so $D\tau(x) = Dg(1-x)>0$. And when $x\leq 1-\de/2$, we have $1-x\geq \de/2$, so $\tau(x) = -g(1-x)+1 = -(1-x) + 1 = x$.
\end{proof}

The simple observation in the following example will be useful later.

\begin{exmp}\label{n=0}
For each $\de$ with $0<\de<1/2$, there is a $C^\infty$ function $f\colon [0,1]\to[0,1]$ such that $f=\s_\de$ on $[0,\de]$, $f=\tau_\de$ on $[1-\de,1]$, and $Df>0$ on $(0,1)$.
\end{exmp}

\begin{proof}
Define $f=\s_\de$ on $[0,\de]$, $f=\tau_\de$ on $[1-\de,1]$, $f(x)=x$ on $[\de,1-\de]$.
\end{proof}

By a simple rescaling of the function in the example, we get the following.

\begin{prop}[\cite{Bu2019}, Proposition 6.2]
Let $\al=(\al_0,\al_1,\dots)$ and $\beta=(\beta_0,\beta_1,\dots)$ be sequences of real numbers such that $\al_0<\beta_0$ and either $\al_j=0$ for all $j\geq 1$, or for the least $j\geq 1$ for which $\al_j\not=0$ we have $\al_j>0$; and either $\beta_j=0$ for all $j\geq 1$, or for the least $j\geq 1$ for which $\beta_j\not=0$ we have $(-1)^{j+1}\beta_j>0$.

Then for any interval $[a,b]$, $a<b$, there is a $C^\infty$ function $f\colon [a,b]\to\R$ such that $Df(x)>0$ for all $x\in (a,b)$, and $D^jf(a)=\al_j$ and $D^jf(b)=\beta_j$ for $j=1,2,\dots$.
\end{prop}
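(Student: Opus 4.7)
The plan is to reduce the problem to the unit interval $[0,1]$ and then invoke the Example together with Proposition \ref{p:endpt.der}. Set $L = b - a$ and $H = \beta_0 - \al_0$. Since $\al_0 < \beta_0$ we have $H > 0$. Any candidate $f\colon[a,b]\to\R$ of the form $f(x) = \al_0 + H\cdot h((x-a)/L)$, with $h\colon[0,1]\to[0,1]$ smooth, automatically satisfies $f(a)=\al_0$, $f(b)=\beta_0$, and $D^jf(a) = HL^{-j}D^jh(0)$, $D^jf(b) = HL^{-j}D^jh(1)$ for $j \geq 1$, while $Df$ is a positive multiple of the rescaled $Dh$. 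So it suffices to build an $h$ with $h(0)=0$, $h(1)=1$, $Dh>0$ on $(0,1)$, and with the scaled endpoint derivatives $D^jh(0) = L^j\al_j/H$, $D^jh(1) = L^j\beta_j/H$ for $j\geq 1$.

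To produce $h$, I would apply Proposition \ref{p:endpt.der} to the rescaled sequences $\al'_j = L^j\al_j/H$ and $\beta'_j = L^j\beta_j/H$ (with $\al'_0 = 0$ and $\beta'_0 = 1$). The hypotheses on the first nonzero entries of the two sequences pass through because each scaling factor $L^j/H$ is strictly positive, so neither the index of the first nonzero term nor its sign changes under the multiplication. This yields families $\de \mapsto \s_\de$ and $\de \mapsto \tau_\de$ with the prescribed derivative data at $0$ and $1$, each coinciding with the identity on a subinterval meeting the middle portion of $[0,1]$.

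Finally, picking any $\de \in (0, 1/2)$, I would assemble $h$ as in the Example labelled \ref{n=0}: set $h = \s_\de$ on $[0,\de]$, $h(x)=x$ on $[\de, 1-\de]$, and $h = \tau_\de$ on $[1-\de, 1]$. Smoothness at the seams is automatic since $\s_\de$ and $\tau_\de$ each agree with the identity on a neighborhood of the gluing point; positivity of $Dh$ on $(0,1)$ holds piecewise and hence globally; and $D^jh(0) = D^j\s_\de(0) = \al'_j$, $D^jh(1) = D^j\tau_\de(1) = \beta'_j$ by construction. Then $f(x) = \al_0 + H\cdot h((x-a)/L)$ finishes the proof. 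No step here is genuinely difficult; the only item requiring attention is the observation that the sign conditions required by Proposition \ref{p:endpt.der} survive multiplication by the positive factors $L^j/H$, which is immediate.
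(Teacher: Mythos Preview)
Your proof is correct and follows essentially the same route as the paper: apply Proposition~\ref{p:endpt.der} to suitably rescaled sequences, assemble $h$ via Example~\ref{n=0}, then pull back by an affine change of variable. The only difference is cosmetic: you include a vertical rescaling by $H=\beta_0-\al_0$ so that your $f$ also hits $f(a)=\al_0$, $f(b)=\beta_0$, whereas the paper's proof simply takes $f(x)=g((x-a)/(b-a))$ and rescales the sequences by $(b-a)^j$ alone, since the statement only constrains $D^jf$ for $j\geq 1$ and the endpoint \emph{values} are not actually required.
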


\begin{proof}
Get $g\colon[0,1]\to[0,1]$ from Example \ref{n=0} using $\s_\de$ and $\tau_\de$ obtained from Proposition \ref{p:endpt.der} applied to the sequences $\al^*_j = (b-a)^j\al_j$ and $\beta^*_j = (b-a)^j\beta_j$.
Let $f(x) = g((x-a)/(b-a))$, $a\leq x\leq b$. For $j=0,1,2,\dots$, we have $D^jf(x)=(b-a)^{-j}D^jg((x-a)/(b-a))$, so $D^jf(a)=(b-a)^{-j}D^jg(0)=(b-a)^{-j}\al_j^*=\al_j$, $D^jf(b)=(b-a)^{-j}D^jg(1)=(b-a)^{-j}\beta_j^*=\beta_j$.
\end{proof}

\section{The families $V_n[c,d]$ and $W_n[c,d]$}
\label{s:endpoint.values}

In this section we examine in more detail the assumption $(P_n)$ mentioned in the introduction and its associated sets.
For $a=(a_0,\dots,a_n)$ and $b=(b_0,\dots,b_n)$ in $\R^{n+1}$, let $(a;b)$ denote the concatenation $(a;b) = (a_0,\dots,a_n,b_0,\dots,b_n)\in \R^{2(n+1)}$. We also write this tuple as $(a_j\,;\,b_j)$ when $n$ is clear from the context.
We write
\begin{align*}
\F_n[c,d] & = \{f\in C^n[c,d]: D^nf\ \text{is increasing but not constant}\}, \\
\F_n^\infty[c,d] & = \{f\in C^\infty[c,d]: D^{n+1}f(x)>0\ \text{for all}\ x\in (c,d)\},
\end{align*}
and set $\F_n=\F_n[0,1]$, $\F_n^\infty=\F^\infty_n[0,1]$.

\begin{rem}\label{r:signs}
If $f\in\F_n^\infty[c,d]$ has $D^jf(c)=\al_j$ and $D^jf(d)=\beta_j$ for all nonnegative integers $j$, then for $j>n$, the first nonzero $\al_j$, if there is one, must be positive, and the first nonzero $\beta_j$, if there is one, must be positive if $n+j$ is odd and negative if $n+j$ is even.

\begin{proof}
We can see this from the Taylor theorem. Suppose $j>n$ is least with $D^jf(c)\not=0$, where $f$ is $C^\infty$. Then for $x\in [c,d]$ we have
\[
D^nf(x) = \sum_{i=n}^{j-1} \frac{D^{i}f(c)}{(i-n)!}(x-c)^{i-n} + \frac{D^jf(\xi_x)}{j!}(x-c)^{j-n} = D^{n}f(c) + \frac{D^jf(\xi_x)}{j!}(x-c)^{j-n}
\]
for some $\xi_x$ with $c<\xi_x<x$. Since $D^jf$ is continuous, if $D^jf(c)$ is negative, then $D^nf(x)<D^nf(c)$ for $x$ close enough to $c$.
At $d$ we can argue similarly, or apply the result at the left endpoint to the function $g(x) = (-1)^{n+1}f(-x)$ defined on $[-d,-c]$. We have the formula $D^jg(x) = (-1)^{j+n+1}D^jf(-x)$. It follows that $D^ng(x) = -D^nf(-x)$ is increasing if and only if $D^nf(x)$ is, and $D^jf(d)=\beta_j$ if and only if $D^jg(-d)=(-1)^{j+n+1}\beta_j$. If $j>n$ is least with $\beta_j\not=0$ then, the increasing nature of $D^nf$ forces that of $D^ng$ and hence forces $(-1)^{j+n+1}\beta_j>0$.
\end{proof}
\end{rem}

Let $\mathcal{S}$ denote the set of infinite sequences $\al=(\al_0,\al_1,\al_2,\dots)$ of real numbers, and for $x=(x_0,\dots,x_n)\in\R^{n+1}$, set
\begin{align*}
\A_n(x) & = \{\al\in\mathcal{S}: \al_j=x_j,\, j=0,\dots,n,\ \text{and either}\ \al_j=0\ \text{for all}\ j>n, \\
& \rule{2.02cm}{0cm}\text{or for the least $j>n$ such that $\al_j\not=0$, we have $\al_j>0$}\}, \\
\B_n(x) & = \{\beta\in\mathcal{S}: \beta_j=x_j,\, j=0,\dots,n,\ \text{and either}\ \beta_j=0\ \text{for all}\ j>n, \\
& \rule{2.02cm}{0cm}\text{or for the least $j>n$ such that $\beta_j\not=0$, we have $(-1)^{n+j+1}\beta_j>0$}\}.
\end{align*}
We also make use of sequences indexed starting at $1$, so we let
$\wt{\mathcal{S}}$ denote the set of infinite sequences $\al=(\al_1,\al_2,\al_3,\dots)$ of real numbers, and we set
\begin{align*}
\wt{\A} & = \{\al\in\wt{\mathcal{S}}: \text{either}\ \al_j=0\ \text{for all}\ j,\ \text{or for the least $j$ such that $\al_j\not=0$, we have $\al_j>0$}\}, \\
\wt{\B} & = \{\beta\in\wt{\mathcal{S}}: \text{either}\ \beta_j=0\ \text{for all}\ j,\ \text{or for the least $j$ such that $\beta_j\not=0$, we have $(-1)^{j+1}\beta_j>0$}\}.
\end{align*}

We define the following subsets of $\R^{2(n+1)}$.
\begin{align*}
V_n[c,d] & = \{(a;b): a,b\in\R^{n+1}\ \text{and there is an $f\in \F_n[c,d]$} \\
& \rule{1.9cm}{0cm}\text{such that $D^jf(c)=a_j$ and $D^jf(d)=b_j$ for all $j=0,\dots,n$}\} \\
V_n^\infty[c,d] & = \{(a;b): a,b\in\R^{n+1}\ \text{and for all $\al\in\A_n(a)$, $\beta\in\B_n(b)$ there is an $f\in \F_n^\infty[c,d]$} \\
& \rule{1.9cm}{0cm}\text{such that $D^jf(c)=\al_j$ and $D^jf(d)=\beta_j$ for all $j=0,1,2,\dots$}\}
\end{align*}
We set $V_n=V_n[0,1]$, $V^\infty_n=V_n^\infty[0,1]$.
We shall see (Proposition \ref{p:equiv.P_n}) that $(P_n)$ is equivalent to the statement that $V_n[c,d] = V_n^{\infty}[c,d]$ and $V_n[c,d]$ is open in $\R^{2(n+1)}$ for all $c<d$.
Working toward a proof of this equivalence, we establish some properties of the families $V_n[c,d]$.

\begin{rem}
Using witnessing functions with $D^nf$ decreasing instead of increasing in the definitions simply negates the family $V_n[c,d]$. More precisely we have the following.

(a) If $\F_n^*[c,d]$ is obtained from $\F_n[c,d]$ by saying that $D^nf$ is decreasing instead of increasing, then $\F_n^*[c,d]=-\F_n[c,d] = \{-f:f\in \F_n[c,d]\}$, and replacing $\F_n[c,d]$ by $\F_n^*[c,d]$ in the definition of $V_n[c,d]$ gives the family $V^*_n[c,d] = -V_n[c,d]$.

(b) Similarly, if $\A_n^*(a)$, $\B_n^*(b)$, $(\F_n^\infty)^*[c,d]$ are obtained from $\A_n(a)$, $\B_n(b)$, $\F_n^\infty[c,d]$ by saying that $\al_j<0$, $(-1)^{n+j+1}\beta_j<0$, and $D^{n+1}f(x)<0$ instead of $\al_j>0$, $(-1)^{n+j+1}\beta_j>0$ and $D^{n+1}f(x)>0$, respectively, then $(\F_n^\infty)^*[c,d] = -\F_n^\infty[c,d]$, and replacing $\A_n(a)$, $\B_n(b)$, $\F_n^\infty[c,d]$ by $\A_n^*(a)$, $\B_n^*(b)$, $(\F_n^\infty)^*[c,d]$ in the definition of $V_n^\infty[c,d]$ gives the family $(V_n^\infty)^*[c,d] = -V_n^\infty[c,d]$.
\end{rem}

\begin{rem}
If we define a new family of $2(n+1)$-tuples by deleting ``but not constant'' from the definition of $V_n[c,d]$, then the new family has the form $V_n[c,d]\cup\{(v,\psi(v)):v\in \R^{n+1}\}$, where $\psi\colon \R^{n+1}\to \R^{n+1}$ is a linear isomorphism.
To see this, note that if $D^nf$ is constant, then $f$ is a polynomial of degree at most $n$. In that case, the values $a_i=D^if(c)$, $0\leq i\leq n$, are arbitrary, and they uniquely determine $f$, namely $f(x)=\sum_{i=0}^n(a_i/i!)(x-c)^i$, and hence uniquely determine the values $b_j=D^jf(d) = \sum_{i=j}^n(a_i/i!)(\prod_{0\leq k<j}(i-k))(d-c)^{i-j}$, $0\leq j\leq n$. Since the same statement is true with the roles of `$c$, $(a_i)$' and `$d$, $(b_j)$' interchanged, the map $\psi\colon \R^{n+1}\to \R^{n+1}$ given by $\psi((a_i)_{0\leq i\leq n}) = (b_j)_{0\leq j\leq n}$ is a bijection. Linearity of $\psi$ is clear from the formula for $b_j$ above.
\end{rem}

We now reduce our examination of $V_n[c,d]$, $V_n^\infty[c,d]$ to the examination of the families $W_n$, $W_n^\infty$ defined below.
\begin{align*}
W_n[c,d] & = \{b\in\R^{n+1}: \text{there is an $f\in \F_n[c,d]$ such that} \\
& \rule{2.5cm}{0cm}\text{$D^jf(c)=0$ and $D^jf(d)=b_j$ for all $j=0,\dots,n$}\} ,\\
W_n^\infty[c,d] & = \{b\in\R^{n+1}: \text{for all $\al\in\A_n(0)$, $\beta\in\B_n(b)$ there is an $f\in \F_n^\infty[c,d]$ such that} \\
& \rule{2.5cm}{0cm}\text{$D^jf(c)=\al_j$ and $D^jf(d)=\beta_j$ for all $j=0,1,2,\dots$}\}.
\end{align*}
We set $W_n=W_n[0,1]$, $W^\infty_n=W_n^\infty[0,1]$. In later sections we work with integration rather than differentiation, so it will be useful to rephrase the definitions of $W_n$ and $W^\infty_n$ in terms of integration.

\begin{prop}\label{p:W.equiv}
We have
{\rm
\begin{align*}
W_n & = \{b\in\R^{n+1}: \text{there is a $g\in \F_0$ such that $g(0)=0$ and $I^jg(1)=b_{n-j}$ for all $j=0,\dots,n$}\} ,\\
W_n^\infty & = \{b\in\R^{n+1}: \text{for all $\al\in\wt{\A}$, $\beta\in\wt{\B}$ there is a $g\in \F_0^\infty$ such that $g(0)=0$,} \\
& \rule{0.7cm}{0cm}\text{$I^jg(1)=b_{n-j}$ for all $j=0,\dots,n$, and $D^jg(0)=\al_j$, $D^jg(1)=\beta_j$ for $j=1,2,3,\dots$}\}.
\end{align*}
}
\end{prop}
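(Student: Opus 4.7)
My plan is to reduce both equalities to a single bijection at the level of witnessing functions, namely $f \leftrightarrow g := D^n f$ with inverse $g \mapsto I^n g$. The point is that peeling off the top $n$ derivatives of $f$ reduces the problem from $\F_n$ to $\F_0$, and all the endpoint-value data on $f$ are then recovered from $g$ by successive integration.

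For the first equality, I would first check that $D^n\colon \F_n[0,1] \to \F_0[0,1]$ is a bijection with inverse $I^n$: the condition ``$D^n f$ is increasing but not constant'' is exactly ``$g \in \F_0$'' for $g = D^n f$, and $f \in C^n$ is recovered because the $n$-fold primitive of a continuous function is $C^n$. Assuming $D^j f(0) = 0$ for $j = 0,\dots,n$, iterating the Fundamental Theorem of Calculus yields $D^{n-j}f = I^j g$ on $[0,1]$ for $0 \le j \le n$, and evaluating at $1$ turns $D^j f(1) = b_j$ into $I^j g(1) = b_{n-j}$. Conversely, if $g \in \F_0$ with $g(0) = 0$ and $I^j g(1) = b_{n-j}$, then $f := I^n g$ automatically has $D^j f(0) = 0$ for $0 \le j \le n$ (each iterated integral vanishes at its lower limit) and recovers $D^j f(1) = b_j$; the two maps are mutually inverse.

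For the $W_n^\infty$ equality, I would apply the same bijection, now using that $f \mapsto D^n f$ carries $\F_n^\infty$ to $\F_0^\infty$: smoothness is preserved, and $D^{n+1}f > 0$ on $(0,1)$ becomes $D g > 0$. The higher-order endpoint derivatives shift by $n$: $D^j g(0) = D^{n+j}f(0)$ and $D^j g(1) = D^{n+j}f(1)$ for $j \ge 1$. I would accordingly set up bijections $\A_n(0) \leftrightarrow \wt\A$ and $\B_n(b) \leftrightarrow \wt\B$ via $\wt\al_j := \al_{n+j}$ and $\wt\beta_j := \beta_{n+j}$, noting that the first $n+1$ entries of $\al$ are forced to $0$ by $\al \in \A_n(0)$ and those of $\beta$ are pinned to $(b_0,\dots,b_n)$. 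The sign condition on $\wt\al$ passes through transparently, and the parity check $(-1)^{n+j+1} = (-1)^{n+(n+i)+1} = (-1)^{i+1}$ (with $j = n + i$) shows that the sign condition on $\beta$ converts exactly to the one on $\wt\beta$. Under these identifications, the quantifications ``for all $\al \in \A_n(0)$, $\beta \in \B_n(b)$'' and ``for all $\wt\al \in \wt\A$, $\wt\beta \in \wt\B$'' are equivalent, and the endpoint translation at $0$ and $1$ proceeds as in the first part.

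There is no substantive obstacle in the argument. The main care will be the bookkeeping: the reversed indexing $b_{n-j}$ versus $b_j$ (reflecting that differentiation counts down from $n$ while integration counts up from $0$), and the parity computation showing that the sign condition on $\wt\beta$ is preserved — not flipped — under the index shift by $n$.
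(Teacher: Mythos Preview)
Your proposal is correct and follows exactly the approach the paper takes: the paper's proof is the single sentence ``Use the correspondence $g=D^nf$, $f=I^ng$ between the witnessing functions,'' and you have spelled out precisely that correspondence together with the bookkeeping for the index shift and the sign/parity check on $\wt\B$.
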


\begin{proof}
Use the correspondence $g=D^nf$, $f=I^ng$ between the witnessing functions.
\end{proof}

For $\lb=(\lb_j)=(\lb_0,\dots,\lb_n)$, write $D_\lb\colon\R^{n+1}\to\R^{n+1}$ for the diagonal operator
\[
D_\lb(x_0,\dots,x_n)=(\lb_0x_0,\dots,\lb_nx_n).
\]
For $a=(a_0,\dots,a_n)$, write $p_a(x)=\sum_{k=0}^na_kx^k/k!$. Let $T_n\colon \R^{2(n+1)}\to\R^{n+1}$ be the linear transformation given by
\[
T_n(a_j\,;\,b_j) = (b_j-D^jp_a(1)).
\]
Let $\pi_n\colon \R^{2(n+1)}\to \R^{n+1}$ be the projection onto the second half of the coordinates, $\pi_n(u;v)=v$ where $u,v\in\R^{n+1}$. Note that for vectors $\lb_1,\lb_2\in\R^{n+1}$, setting $\lb=(\lb_1;\lb_2)\in \R^{2(n+1)}$, we have the relation $\pi_n D_\lb = D_{\lb_2}\pi_n$. Finally, let $H_n$ denote the subspace of $\R^{2(n+1)}$ consisting of vectors whose first $n+1$ coordinates are zero: $H_n = \{(0;v):v\in\R^{n+1}\}$ where $0$ is the zero vector of $\R^{n+1}$.

\begin{prop}\label{p:scaling hat W_n}
The following relations hold, as well as the same relations with a superscript $\infty$ added to all $V$'s and $W$'s.
\begin{enumerate}
\item
$W_n[c,d]=\pi_n(V_n[c,d]\cap H_n)$.

\item
$V_n[c,d]=D_\lb(V_n)$, where $\lb=((d-c)^{-j}\,;\,(d-c)^{-j})$.

\item
$W_n[c,d]=D_\lb(W_n)$,
where $\lb=(\,(d-c)^{-j}:j=0,\dots,n)$.

\item
$V_n=T_n^{-1}(W_n)$.
\end{enumerate}
\end{prop}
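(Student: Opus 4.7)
The plan is to prove (1)--(4) in sequence, noting that (3) will drop out of (1) and (2) combined, and to handle the plain and $\infty$ versions by parallel arguments. The only additional bookkeeping in the $\infty$ case is to check that the natural bijections between $\A_n$- and $\B_n$-classes preserve the leading-sign conditions on tail coordinates.

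For (1), I would simply unfold the definitions: a tuple $(0;b)\in V_n[c,d]\cap H_n$ is witnessed by an $f\in\F_n[c,d]$ with $D^jf(c)=0$ and $D^jf(d)=b_j$ for $j\le n$, and this is exactly the data required to witness $b\in W_n[c,d]$. In the $\infty$ version, the ``for all $\al\in\A_n(0)$, $\beta\in\B_n(b)$'' clauses in the definitions of $V_n^\infty[c,d]$ and $W_n^\infty[c,d]$ match word for word.

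For (2), I would use the affine reparametrization $g(x)=f(c+(d-c)x)$. The chain rule gives $D^jg(0)=(d-c)^jD^jf(c)$ and $D^jg(1)=(d-c)^jD^jf(d)$, so this is a bijection $\F_n[c,d]\leftrightarrow\F_n$ (and $\F_n^\infty[c,d]\leftrightarrow\F_n^\infty$, since $D^{n+1}g=(d-c)^{n+1}(D^{n+1}f)\circ(\,\cdot\,)$ is positive on $(0,1)$ iff $D^{n+1}f$ is positive on $(c,d)$) carrying $(a;b)$ to $((d-c)^ja_j\,;\,(d-c)^jb_j)$, i.e.\ to $D_\lambda^{-1}(a;b)$, which gives the claim. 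In the $\infty$ case, multiplication by the positive scalar $(d-c)^j$ yields bijections $\A_n(a)\leftrightarrow\A_n((d-c)^ja_j)$ and $\B_n(b)\leftrightarrow\B_n((d-c)^jb_j)$ preserving the leading-sign conditions.

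For (3), I would combine (1) and (2): since $D_\lambda$ is diagonal it preserves $H_n$, and the displayed identity $\pi_n D_\lambda=D_{\lambda_2}\pi_n$ then gives $W_n[c,d]=\pi_n(D_\lambda(V_n)\cap H_n)=D_{\lambda_2}\pi_n(V_n\cap H_n)=D_{\lambda_2}W_n$, where $\lambda_2$ is the second half of $\lambda$ and matches the $\lambda$ stated in (3). For (4), I would use $h=f-p_a$: given a witness $f\in\F_n$ for $(a;b)\in V_n$, $h$ lies in $\F_n$ because $D^np_a=a_n$ is constant so $D^nh=D^nf-a_n$ remains increasing and nonconstant, and $h$ witnesses $T_n(a;b)\in W_n$; the converse replaces $h$ by $h+p_a$. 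In the $\infty$ version, the key observation is that $D^jp_a=0$ for $j>n$, so subtracting or adding $p_a$ leaves the derivative tails at both endpoints unchanged; the bijection $f\leftrightarrow h$ then transports $\A_n(a)\times\B_n(b)$-indexed witness families to $\A_n(0)\times\B_n(T_n(a;b))$-indexed ones by modifying only the first $n+1$ entries.

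There is no genuine obstacle here: the content is routine and the proof is really a matter of careful bookkeeping. The one thing to be attentive to is precisely the verification that in (2) and (4) the required bijections between $\A_n$- and $\B_n$-classes respect the sign conditions on tails, which they do trivially--because the scaling factors in (2) are positive and because the polynomial modification in (4) vanishes on tails.
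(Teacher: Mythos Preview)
Your proposal is correct and follows essentially the same approach as the paper: (1) is immediate from the definitions, (2) uses the affine reparametrization $g(x)=f(c+(d-c)x)$ and the chain rule, (3) is obtained from (1) and (2) via the identity $\pi_n D_\lambda = D_{\lambda_2}\pi_n$ together with the fact that $D_\lambda$ preserves $H_n$, and (4) uses the subtraction/addition of the polynomial $p_a$. Your treatment of the $\infty$ versions is slightly more explicit than the paper's---you spell out that positive scaling in (2) and vanishing of $D^jp_a$ for $j>n$ in (4) give bijections between the relevant $\A_n$- and $\B_n$-classes---but the underlying arguments are the same.
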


\begin{proof}
(1) Clear from the definitions.

(2) If the $C^n$ function $f\colon[c,d]\to\R$ witnesses $v=(a_j;b_j)\in V_n[c,d]$, then $g(x)=f(c+x(d-c))$ is a $C^n$ function on $[0,1]$. For $j=0,\dots,n$, we have
\[
D^jg(x) = (d-c)^jD^jf(c+x(d-c))
\]
so $D^jg(0)=(b-a)^ja_j$, $D^jg(1)=(b-a)^jb_j$, $j=0,\dots,n$, and $D^ng$ is increasing but not constant.
Hence the vector $w = ((b-a)^ja_j;(b-a)^jb_j)$ belongs to $V_n$, and we have $D_\lb(w)=v$.

In the case $n=\infty$, we want that $w\in V^\infty_n$, so we let $\al_j,\beta_j$ be as in the definition. Then the sequences $(d-c)^{-j}\al_j,(d-c)^{-j}\beta_j$ satisfy the conditions with respect to $v$, so there is a $C^\infty$ function $f\colon[c,d]\to\R$ with $D^jf(c)=(d-c)^{-j}\al_j$, $D^jf(d)=(d-c)^{-j}\beta_j$, $j=0,1,\dots$, and
$D^{n+1}f(x)>0$, $c<x<d$. The function $g(x)=f(c+x(d-c))$ is $C^\infty$ on $[0,1]$. For $j=0,\dots,n$, we have
\[
D^jg(x)=(d-c)^jD^jf(c+x(d-c))
\]
so $D^jg(0)=\al_j$, $D^jg(1)=\beta_j$, $D^{n+1}g(x)>0$, $0<x<1$, and hence $w\in W_n^\infty$.

Conversely given a $C^n$ function $f\colon [0,1]\to\R$ witnessing that $w=(a_j;b_j)\in V_n$, the reader can verify that the function $g(x)=f((x-c)/(d-c))$ on $[c,d]$ witnesses that
the vector $v = (a_j(d-c)^{-j};b_j(d-c)^{-j})$ belongs to $V_n[a,b]$ and we have $D_\lb(w)=v$. Similarly, if we add the superscripts $\infty$.

(3) With $\lb = (\lb_1;\lb_2)$ as in (3), where $\lb_1=\lb_2=(\,(d-c)^{-j}:j=0,\dots,n)$, we have $W_n[c,d] = \pi_n(V_n[c,d]\cap H_n) = \pi_n(D_\lb(V_n)\cap H_n) = \pi_n(D_\lb(V_n\cap H_n)) = D_{\lb_2}\pi_n(V_n\cap H_n) = D_{\lb_2}W_n$.

(4) Suppose $f\colon[0,1]\to \R$ witnesses that $(a_j\,;\,b_j)\in V_n$. Let $g(x)=f(x)-p_a(x)$. Then $g$ is a $C^n$ function with $D^ng(x)=D^nf(x)-a_n$ increasing but not constant since $D^nf$ is increasing but not constant. Also, for $j=0,\dots,n$, $D^jg(0)=D^jf(0)-a_j=a_j-a_j=0$ and $D^jg(1)=b_j-D^jp_a(1)$. Thus, $T_n(a_j\,;\,b_j)=(b_j-D^jp_a(1))\in W_n$. Conversely, suppose $T_n(a_j\,;\,b_j)=(b_j-D^jp_a(1))\in W_n$ with witnessing function $g$. Set $f(x)=g(x)+p_a(x)$. This is a $C^n$ function with $D^nf(x)=D^ng(x)+a_n$ increasing but not constant. For $j=0,\dots,n$, $D^jf(0)=D^jg(0)+a_j=a_j$ and $D^jf(1)=D^jg(1)+D^jp_a(1)=b_j$. Hence, $(a_j\,;\,b_j)\in V_n$. Similarly, we get $V_n^\infty=T_n^{-1}(W_n^\infty)$. (Note that $D^jf(x)=D^jg(x)$ when $j>n$.)
\end{proof}

\begin{prop}\label{p:equiv.P_n}
The following statements are equivalent.
\begin{enumerate}
\item
$(P_n)$

\item
$W_n = W_n^{\infty}$ and $W_n$ is open in $\R^{n+1}$.

\item
$V_n[c,d] = V_n^{\infty}[c,d]$ and $V_n[c,d]$ is open in $\R^{2(n+1)}$ for all $c<d$.
\end{enumerate}
\end{prop}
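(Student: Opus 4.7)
The plan is to split the proof into $(2)\Leftrightarrow(3)$ and $(1)\Leftrightarrow(2)$.

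For $(2)\Leftrightarrow(3)$, I would invoke Proposition \ref{p:scaling hat W_n}. Since $T_n\colon\R^{2(n+1)}\to\R^{n+1}$ is a continuous linear surjection (as $T_n(0;y)=y$ for every $y$), it is open by the open mapping theorem. Combined with $V_n=T_n^{-1}(W_n)$ and $V_n^\infty=T_n^{-1}(W_n^\infty)$, this yields simultaneously $W_n=W_n^\infty\Leftrightarrow V_n=V_n^\infty$ and $W_n$ open $\Leftrightarrow V_n$ open. The identities $V_n[c,d]=D_\lb(V_n)$ and $V_n^\infty[c,d]=D_\lb(V_n^\infty)$, with $D_\lb$ a linear homeomorphism of $\R^{2(n+1)}$, then extend both conclusions from $[0,1]$ to every $[c,d]$.

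For $(2)\Rightarrow(1)$, openness of $W_n$ is part of $(2)$, and given $b\in W_n=W_n^\infty$ the specific sequences $\al=(0,\dots,0,1,0,0,\dots)\in\A_n(0)$ (with the $1$ in slot $n+1$) and $\beta=(b_0,\dots,b_n,1,0,0,\dots)\in\B_n(b)$ yield a witness $f\in\F_n^\infty$ whose derivatives are exactly those required by $(P_n)$.

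The substantive direction is $(1)\Rightarrow(2)$. Openness is part of $(P_n)$, and $W_n^\infty\sq W_n$ is automatic because any $f\in\F_n^\infty$ has $D^nf$ strictly increasing and thus belongs to $\F_n$. For $W_n\sq W_n^\infty$, fix $b\in W_n$ and an arbitrary $(\al,\beta)\in\A_n(0)\times\B_n(b)$, and use $(P_n)$ to pick a witness $f_0\in\F_n^\infty$ with the specific $(P_n)$-derivatives. The desired $f$ is obtained by modifying $f_0$ only on $[0,\de]$ and $[1-\de,1]$ for small $\de>0$, leaving $f_0$ intact on $[\de,1-\de]$. On $[0,\de]$ the replacement is a $C^\infty$ function $\phi$ satisfying $D^j\phi(0)=\al_j$ for all $j\geq 0$, $D^j\phi(\de)=D^jf_0(\de)$ for all $j\geq 0$, and $D^{n+1}\phi>0$ on $(0,\de)$; the modification on $[1-\de,1]$ is analogous. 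Since $\al_j=0=D^jf_0(0)$ for $j\leq n$ and the middle portion is untouched, the conditions $D^jf(0)=0$ and $D^jf(1)=b_j$ for $j\leq n$, which express that $f$ witnesses $b$, are preserved automatically, and no separate integral adjustment is required. The function $\phi$ is built by the technique of Proposition \ref{p:endpt.der}: Borel's theorem supplies a $C^\infty$ germ at $0$ with the desired Taylor expansion $\al$; this germ is blended with a carefully chosen linear piece via the cutoff $h(4kx/\de)$ of Proposition \ref{p:h}, and the result is rounded off to match $f_0$ smoothly at $x=\de$ via a $G_{\gamma_\de}$ from Section \ref{s:p.w.l.}.

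The main obstacle is guaranteeing $D^{n+1}\phi>0$ throughout $(0,\de)$. The Leibniz expansion of $D^{n+1}(\chi u)$ in the transition region produces summands $D^j\chi\cdot D^{n+1-j}u$ with $|D^j\chi|=O(\de^{-j})$ and, since the Borel germ $u$ vanishes to order $n$ at $0$, $|D^{n+1-j}u|=O(\de^j)$, so each summand is only $O(1)$ and cannot be absorbed by shrinking $\de$. As in the proof of Proposition \ref{p:endpt.der}, the fix is to exploit the explicit algebraic form of the blended function so that the expression for $D^{n+1}\phi$ decomposes into a sum of manifestly nonnegative pieces. A further subtlety arises when $\al_{n+1}=0$: positivity of $D^{n+1}\phi$ for $x$ slightly positive must then be recovered from the next nonzero entry of $\al$, exactly as handled in the degenerate-case argument of Proposition \ref{p:endpt.der}.
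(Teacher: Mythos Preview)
Your treatment of $(2)\Leftrightarrow(3)$ and $(2)\Rightarrow(1)$ is fine; the gap is in $(1)\Rightarrow(2)$, specifically in the existence of the local replacement $\phi$.

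The proposed construction does not work for $n\geq 1$. Blending a Borel germ $u$ with a linear piece $\ell$ via $u+h(4kx/\de)(\ell-u)$ cannot produce $D^{n+1}\phi>0$: on the far side of the blend one has $D^{n+1}\phi\to D^{n+1}\ell=0$, and in the transition region the Leibniz expansion involves $D^jh$ for $2\leq j\leq n+1$, which change sign, so no ``manifestly nonnegative'' decomposition is available (contrast the $n=0$ computation in Proposition~\ref{p:endpt.der}, where only $h$ and $Dh$ appear and both are nonnegative). The subsequent $G_{\gamma_\de}$ step is designed to match piecewise-linear data, not the full jet of $f_0$ at $\de$. More seriously, the requirement $D^j\phi(\de)=D^jf_0(\de)$ for $0\leq j\leq n$ is not a boundary condition on $D^{n+1}\phi$ at all: it says $I_0^{\,n+1-j}(D^{n+1}\phi)(\de)=D^jf_0(\de)$, i.e.\ $n+1$ prescribed iterated integrals of the positive function $D^{n+1}\phi$. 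So the ``no integral adjustment'' claim is illusory---the integral constraints have simply been pushed into the existence of $\phi$, which is now a local instance of the very problem you are trying to solve, and which your construction does not establish.

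The paper's route is genuinely different. It passes to the integral formulation (Proposition~\ref{p:W.equiv}), so the convexity constraint becomes merely $Dw>0$, which the technique of Proposition~\ref{p:endpt.der} \emph{does} handle. It then glues $\s_\de,\tau_\de$ to a rescaled $(P_n)$-witness $g$ for a \emph{nearby} point $q\in W_n$; the specific $(P_n)$-values $D^{n+1}g(0)=D^{n+1}g(1)=1$ and $D^jg(0)=D^jg(1)=0$ for $j>n+1$ are exactly what make this gluing $C^\infty$. The price is that the iterated integrals $I^jw(1)$ are perturbed; Claim~\ref{claim:w.u} shows the perturbation is an invertible affine map in $q$ with coefficients of size $O(\de)$, and openness of $W_n$ (which \emph{is} part of $(P_n)$) then guarantees a choice of $q\in W_n$ making $I^jw(1)=b_n^{-1}b_{n-j}$ exactly.
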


\begin{proof}
The equivalence of (2) and (3) follows easily from Proposition \ref{p:scaling hat W_n}. Also, (2) clearly implies (1), so there remains to show that (1) implies (2). First we establish the following claim.

\begin{claim}\label{claim:w.u}
Let $\de$ satisfy $0<\de<1/2$. Suppose $w\colon [0,1]\to [0,1]$ is increasing and satisfies $w(x)=x$ for $x\in\{0,\de,1-\de,1\}$. Let $\s$ and $\tau$ denote the restrictions of $w$ to $[0,\de]$ and $[1-\de,1]$, respectively.
If the values of $w$ on $[\de,1-\de]$ are obtained from an increasing function $u\colon [0,1]\to[0,1]$ satisfying $u(0)=0$ and $u(1)=1$, using the formula
\[
w(x) = (1-2\de)u\left(\frac{x-\de}{1-2\de}\right) + \de,\ \ \de\leq x\leq 1-\de,
\]
then the numbers $I^n w(1)$ and $I^nu(1)$ satisfy $I^0w(1)=w(1)=1$, $I^0u(1)=u(1)=1$, and, for $n\geq 1$, satisfy linear equations
\[
I^nw(1) = r_n + (1-2\de)^{n+1}I^nu(1) + \sum_{k=1}^{n-1}s_kI^ku(1),
\]
where $r_n$ and the coefficients $s_k$ depend only on $\s$ and $\tau$, and satisfy $0\leq s_k\leq \de$ and $0\leq r_n\leq d_n\de$, where $d_1=3$, $d_{n+1}=d_n+n+2$.
\end{claim}

\begin{proof}
We first verify that we have for each nonnegative integer $n$,
\[
I^nw(x) = \begin{cases}
T^n_1(x), & \text{$0\leq x\leq \de$} \\
T^n_2(x) + I^n_\de w(x), & \text{$\de\leq x\leq 1-\de$} \\
T^n_3(x) + \sum_{k=0}^{n-1} I^{n-k}_\de w(1-\de)\bigl(x - (1-\de)\bigr)^k/k!, & \text{$1-\de\leq x\leq 1$}
\end{cases}
\]
where $T^n_1,T^n_2,T^n_3$ are nonnegative functions on $[0,\de]$, $[\de,1-\de]$, $[1-\de,1]$, respectively, which
depend only on $\s$ and $\tau$, and
satisfy $T^n_1(x)\leq \de$, $T^n_2(x)\leq n\de$, and, for $n\geq 1$, $T^n_3(x)\leq c_n\de$, where $c_1=2$, and $c_{n+1}=c_n+(n+1)$.

We proceed by induction on $n$. When $n=0$, the formula in the middle clause is just $w(x)=I_\de^0w(x)$ with $T^0_2(x)=0$, and the first and third clause formulas are $T^0_1(x) = \s(x)$ and $T^0_3(x) = \tau(x)$, respectively. For the inductive step, given the formula for $n$,
when $0\leq x\leq \de$ we have $I^{n+1}w(x) = T^{n+1}_1(x) = IT^n_1(x)$. For $\de\leq x\leq 1-\de$ we have
\begin{align*}
I^{n+1}w(x) & = T^{n+1}_1(\de) + I_\de T^n_2(x) + I^{n+1}_\de w(x) \\
& = T^{n+1}_2(x) + I^{n+1}_\de w(x),
\end{align*}
where $T_2^{n+1}(x) = T_1^{n+1}(\de) + I_\de T^n_2(x)\leq \de + n\de = (n+1)\de$.
And for $1-\de\leq x\leq 1$, we have
\begin{align*}
I^{n+1}w(x) & = T^{n+1}_2(1-\de) + I^{n+1}_\de w(1-\de) + I_{1-\de}T^n_3(x) + \sum_{k=0}^{n-1} I^{n-k}_\de w(1-\de)\frac{(x - (1-\de))^{k+1}}{(k+1)!} \\
& = T^{n+1}_3(x) + \sum_{k=0}^{n} I^{(n+1)-k}_\de w(1-\de)\frac{(x - (1-\de))^k}{k!}
\end{align*}
When $n=0$, $T^{n+1}_3(x) = T^1_3(x) = T^{1}_2(1-\de) + I_{1-\de}T^0_3(x) = T^{1}_2(1-\de) + I_{1-\de}\tau(x)\leq 2\de = c_{n+1}\de$, and for $n\geq 1$,
\[
T^{n+1}_3(x) = T^{n+1}_2(1-\de) + I_{1-\de}T^n_3(x) \leq (n+1)\de + c_n\de = c_{n+1}\de.
\]
Next, by induction on nonnegative integers $n$, we have that for $\de\leq x\leq 1-\de$,
\[
I_\de^n w(x) = (1-2\de)^{n+1} I^n u\left(\frac{x-\de}{1-2\de}\right) + \de\frac{(x-\de)^n}{n!}.
\]
For $n=0$, this is just the given relationship between $w$ and $u$, and if the formula holds for $n$, then
\begin{align*}
I_\de^{n+1} w(x) & = (1-2\de)^{n+1} \int_\de^x I^n u\left(\frac{t-\de}{1-2\de}\right)\,dt + \de\frac{(x-\de)^{n+1}}{(n+1)!} \\
& = (1-2\de)^{n+2} \int_0^{(x-\de)/(1-2\de)} I^n u(s)\,ds + \de\frac{(x-\de)^{n+1}}{(n+1)!} \\
& = (1-2\de)^{n+2} I^{n+1} u\left(\frac{t-\de}{1-2\de}\right) + \de\frac{(x-\de)^{n+1}}{(n+1)!}
\end{align*}
Taking $x=1$ in the formula for $I^nw(x)$, for $n\geq 1$, gives
\[
I^nw(1) = T^n_3(1) + I^n_\de w(1-\de) + \sum_{k=1}^{n-1} I^{n-k}_\de w(1-\de)\frac{\de^k}{k!}.
\]
Taking $x=1-\de$ in the formula for $I_\de^n w(x)$ gives
\[
I_\de^n w(1-\de) = (1-2\de)^{n+1} I^n u(1) + \de\frac{(1-\de)^n}{n!}.
\]
Substituting the latter into the former, we get
\begin{align*}
I^nw(1) & = T^n_3(1) + (1-2\de)^{n+1} I^n u(1) + \de\frac{(1-\de)^n}{n!} + \sum_{k=1}^{n-1} (1-2\de)^{n-k+1}I^{n-k} u(1)\frac{\de^k}{k!} + \de\frac{(1-\de)^{n-k}}{(n-k)!}\frac{\de^k}{k!} \\
& = r_n + (1-2\de)^{n+1} I^n u(1) + \sum_{k=1}^{n-1} s_kI^k u(1),
\end{align*}
where each $s_k$ depends only on $\de$ and satisfies $0\leq s_k\leq \de$, and $r_n$ depends only on $\s$ and $\tau$ and satisfies
\[
0\leq r_n = T^n_3(1) + \de\frac{(1-\de)^n}{n!} + \sum_{k=1}^{n-1} \de\frac{(1-\de)^{n-k}}{(n-k)!}\frac{\de^k}{k!} \leq c_n\de + \de + (n-1)\de = d_n\de,
\]
where $d_n = c_n + n$, so $d_1=c_1+1=3$ and $d_{n+1} = c_{n+1}+(n+1) = c_n+(n+1)+(n+1) = d_n+(n+2)$.
\end{proof}

Now returning to the proof that (1) implies (2), assume (1). In (2), that $W_n$ is open in $\R^{n+1}$ is part of $(P_n)$, and $W_n^\infty\sq W_n$ is clear, so we must show that $W_n\sq W_n^\infty$.

Let $b=(b_0,\dots,b_n)\in W_n$.
We use Proposition \ref{p:W.equiv}.
Let $\al\in\wt{\A}$, $\beta\in\wt{\B}$. We must find an $f\in \F_0^\infty$ such that $f(0)=0$, $I^jf(1)=b_{n-j}$, $j=0,\dots,n$, and $D^jf(0)=\al_j$, $D^jf(1)=\beta_j$ for all $j=1,2,\dots$.

Fix maps $\de\mapsto\s_\de$ and $\de\mapsto\tau_\de$ obtained from Proposition \ref{p:endpt.der} using $\al_0=0$ with $b_n^{-1}\al_{j}$ in the place of $\al_j$ for $j\geq 1$, and $\beta_0=1$ with $b_n^{-1}\beta_{j}$ in the place of $\beta_j$ for $j\geq 1$.

From the claim, taking $\s=\s_\de$, $\tau=\tau_\de$ (and any $u$, for example $u(x)=x$), we get the coefficients $r_n$ and $s_k$ depending only on $\de$ (and not on $u$), and $r_n,s_k\to 0$ as $\de\to 0$.

Since $W_n$ is closed under scaling by positive constants,%
\footnote{In fact, $W_n$ is a convex cone in the sense of \cite[ \S 27]{Be1974}, i.e., is closed under taking linear combinations with positive coefficients.}
we have $b_n^{-1}b = (b_n^{-1}b_0,\dots,b_n^{-1}b_n)\in W_n$. Solve the system of linear equations for $q_0,\dots,q_{n-1}$,
\[
b_n^{-1}b_{n-j} = r_j + (1-2\de)^{j+1}q_{n-j} + \sum_{k=1}^{j-1}s_kq_{n-k},\ \ j=1,\dots,n,
\]
and take $q_n=1$. Since $W_n$ is open by $(P_n)$ and, from the equations above, we see that $q\to b_n^{-1}b$ as $\de\to 0$, we get $q=(q_0,\dots,q_n)\in W_n$ if $\de$ is small enough. Fix such a $\de$.

By $(P_n)$, there is a $g\in \F^\infty_n$ such that $D^jg(0)=0$, $D^jg(1)=q_j$, $j=0,\dots,n$, $D^{n+1}g(0)=D^{n+1}g(1)=1$, $D^{j}g(0)=D^{j}g(1)=0$ for $j>n+1$.

Define $u=D^ng$, and let $w$ be obtained from $u$ as in Claim \ref{claim:w.u}, with $\s=\s_\de$ and $\tau=\tau_\de$. Let $f=b_nw$.

We see that $w$ is $C^\infty$ by checking that at $\de$ and $1-\de$, the derivatives from the left and right all agree. At $\de$, on the left $w=\s_\de$ and we have $\s_\de(\de)=\de$, $D\s_\de(\de)=1$ and for $j\geq 2$, $D^j\s_\de(\de)=0$. On the right, using the formula in Claim \ref{claim:w.u}, $w(\de)=\de$, $Dw(\de)=Du(0)=D^{n+1}g(0)=1$ and for $j\geq 2$, $D^jw(\de)=(1-2\de)^{1-j}D^ju(0)=(1-2\de)^{1-j}D^{n+j}g(0)=0$. Similarly for the values at $1-\de$.

Also $w\in \F^\infty_0$. For this we check that $Dw(x)>0$ for $0<x<1$. When $0<x\leq \de$, $Dw(x) = D\s_\de(x)>0$. When $\de\leq x\leq 1-\de$, $Dw(x) = Du((x-\de)/(1-2\de)) = D^{n+1}g((x-\de)/(1-2\de))>0$. And when $1-\de\leq x < 1$, $Dw(x) = D\tau_\de(x)>0$.

Hence, we have that $f\in \F^\infty_0$.

For each $j=0,\dots,n$, $I^ju(1) = D^{n-j}g(1) = q_{n-j}$.
It follows that for $j=0,\dots,n$ we have $I^jw(1) = b_n^{-1}b_{n-j}$. This is clear if $j=0$, and for $j=1,\dots,n$ we have by Claim \ref{claim:w.u} and the choice of $q$,
\begin{align*}
I^jw(1) & = \ts r_j + (1-2\de)^{j+1}I^ju(1) + \sum_{k=1}^{j-1}s_kI^ku(1) \\
& = \ts r_j + (1-2\de)^{j+1}q_{n-j} + \sum_{k=1}^{j-1}s_kq_{n-k} \\
& = b_n^{-1}b_{n-j}
\end{align*}
Then for each $j=0,\dots,n$, we have
$I^jf(0) = b_n I^jw(0) = b_n I^{j}\s_\de(0)=0$, $I^jf(1) = b_n I^jw(1) = b_nb_n^{-1}b_{n-j} = b_{n-j}$.
For $j\geq 1$, we have $D^{j}f(0) = b_n D^{j}w(0) = b_n D^{j}\s_\de(0)=b_nb_n^{-1}\al_{j} = \al_{j}$. Similarly $D^{j} f(1) = b_n D^{j}w(1) = b_nD^j\tau_\delta(1) = b_nb_n^{-1}\beta_{j} = \beta_{j}$.

This completes the proof of the proposition.
\end{proof}

As an exercise in applying the properties of convex functions, we work out $V_1[c,d]$. Once we know from Theorem \ref{t:conj} that $W_1=\{(a_0,a_1)\in\R^2:0<a_0<a_1\}$, we could also get this via the clauses of Proposition \ref{p:scaling hat W_n}. In the following example, we sketch a direct verification.

\begin{exmp}
$V_1[c,d] = \{(a_0,a_1,b_0,b_1)\in\R^4: a_1 < (b_0-a_0)/(d-c) < b_1\}$.

\begin{proof}
If $(a_0,a_1,b_0,b_1)\in V_1[c,d]$ and $f$ is a witnessing function satisfying in particular that $D^i(f)(c) = a_i$ and $D^i(f)(d) = b_i$ for $i=0,1$, then we want to show that
\[
Df(c) < \frac{f(d)-f(c)}{d-c} < Df(d).
\]
But $Df$ is increasing and not constant, so $f$ is convex and by the first part of Proposition \ref{p:convex.properties} (13), we have $Df(c) < Df(d)$, and then the second part gives the inequalities above. Conversely, given $(a_0,a_1,b_0,b_1)\in\R^4$ satisfying $a_1 < (b_0-a_0)/(d-c) < b_1$, to build a witnessing function $f$, start with the graph of the two-segment piecewise linear function having the correct values of $f$ and $Df$ at $c$ and $d$, and then ``round off'' the corner (with an arc of circle for example) to get a $C^1$ function.
\[
\begin{tikzpicture}
\def\u{3cm}
\def\d{1cm}
\draw (0,2*\d) -- (0,0) -- (2*\u,0) -- ++(0,2.5*\d);
\node[left] at (0,2*\d) {\sz $a_0$};
\node[right] at (2*\u,2.5*\d) {\sz $b_0$};
\node[below] at (0,0) {\sz $c$};
\node[below] at (2*\u,0) {\sz $d$};
\coordinate (start) at (0,2*\d);
\coordinate (end) at (2*\u,2.5*\d);
\coordinate (sstart) at ($(0,2*\d)+(3*0.7,-2*0.7)$);
\coordinate (eend) at ($(2*\u,2.5*\d)+(- 0.9*3,- 0.9*2)$);
\draw[gray!50] (start) -- node[black,above,yshift=0.5ex] {\sz slope $=(b_0-a_0)/(d-c)$} (end);
\draw[gray!50] (start) -- node[black,above,xshift=3ex,pos=0.4] {\sz slope $=a_1$} ++(3,-2);
\draw[gray!50,shorten >= -5.3ex] (end) -- node[black,above,xshift=-3ex] {\sz slope $=b_1$} ++(-3,-2);
    \coordinate (c1) at ($(sstart) + 0.1*(3,-2)$);
    \coordinate (c2) at ($(eend) - 0.17*(3,2)$);
    \draw[red] (sstart) .. controls (c1) and (c2) .. (eend) node[above,black,pos=0.5] {\sz $f$};
\draw[red] (start) -- (sstart);
\draw[red] (end) -- (eend);
\end{tikzpicture}
\]
We leave it for the reader to check the details.
\end{proof}
\end{exmp}

\section{Converting piecewise linear functions into $C^\infty$ functions}
\label{s:p.w.l.}

We develop a tool for converting a piecewise linear function into a $C^\infty$ function in a way that is continuous in the parameters of the piecewise linear function. This will be useful in subsequent sections.

\begin{prop}\label{roundoffcorners}
Let $\Gamma=\R^4\times\R^{>0}$. For $\g=(m_1,m_2,a,c,\de)\in \Gamma$, let $F_\g\in C(\R)$ be the function whose graph on $(-\infty,a]$ is a straight line through the point $(a,c)$ having slope $m_1$, and whose graph on $[a,\infty)$ is a straight line through the point $(a,c)$ having slope $m_2$.
There is a continuous map $\Gamma\to C^\infty(\R)$, $\g\mapsto G_\g$, such that  the following hold.

\[
\begin{tikzpicture}
\def\u{0.6cm}
    \coordinate (start) at (0,4*\u);
    \coordinate (p1) at (3*\u, 2*\u);
    \coordinate (end) at (6*\u,3*\u);
    \draw (-1.5*\u,0.5*\u) -- (8*\u,0.5*\u);
    \draw (0,0.4*\u) -- ++(0,0.2*\u);
    \draw (3*\u,0.4*\u) -- ++(0,0.2*\u);
    \draw (6*\u,0.4*\u) -- ++(0,0.2*\u);
    \node[below] at (0,0.4*\u) {\sz \rule{0pt}{5pt}$a-\de$};
    \node[below] at (3*\u,0.4*\u) {\sz \rule{0pt}{5pt}$a$};
    \node[below] at (6*\u,0.4*\u) {\sz \rule{0pt}{5pt}$a+\de$};

    \draw[blue,shorten <= -1.5*\u] (start) -- (p1);
    \draw[blue,shorten >= -2*\u] (p1) -- (end);
    \coordinate (c1) at ($(start) + 0.4*(3,-2)$);
    \coordinate (c2) at ($(end) - 0.6*(3,1)$);
    \draw[red] (start) .. controls (c1) and (c2) .. (end) node[above,black,pos=0.6] {\sz $G_\g$};
    \draw[dashed] (start) -- (0,0.7*\u);
    \draw[dashed] (end) -- (6*\u,0.7*\u);
    \fill (start) circle [radius=0.07*\u];
    \fill (p1)    circle [radius=0.07*\u];
    \fill (end)   circle [radius=0.07*\u];
    \node[below] at (p1)    {\sz $(a,c)$};
    \node at (6.8*\u,2.7*\u) {\sz $F_\g$};
\end{tikzpicture}
\]
\begin{enumerate}
\item
If $m_1=m_2$ then $G_\g=F_\g$.

\item
If $m_1\not=m_2$ then $G_\g(x)=F_\g(x)$ when $x\notin (a-\de,a+\de)$ and $D^2G_\g(x)\not= 0$ for all $x\in (a-\de,a+\de)$.

\item
The map $\g=(m_1,m_2,a,c,\de)\mapsto G_\g$ commutes with translation of the point $(a,c)$ in the sense that if $\g'=(m_1,m_2,a',c',\de)\in \Gamma$ and we let $v=(a',c')-(a,c)$, we have $G_{\g'} = G_\g + v$, i.e., $G_{\g'}(x) = G_\g(x-a'+a) + c'-c$ for $x\in\R$.

\item
The graph of $G_\g$ on $[a-\de,a+\de]$ lies inside the convex hull of the points $(x,F_\g(x))$, for $x=a-\de,\,a,\,a+\de$.
\end{enumerate}
\end{prop}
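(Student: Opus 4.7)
The plan is to build $G_\g$ by smoothing the derivative $DF_\g$, which equals $m_1$ on $(-\infty,a)$ and $m_2$ on $(a,\infty)$, into a $C^\infty$ function that transitions monotonically between $m_1$ and $m_2$ on $[a-\de,a+\de]$. Using the function $h$ of Proposition \ref{p:h}, let $\wt h\colon\R\to\R$ be the extension of $h$ with $\wt h\equiv 0$ on $(-\infty,0]$ and $\wt h\equiv 1$ on $[1,\infty)$; the vanishing of all derivatives of $h$ at $0$ and of the higher derivatives at $1$ (together with $h(1)=1$) makes $\wt h$ of class $C^\infty$. Set $\Psi(y)=\int_0^y\wt h(s)\,ds$, which is $C^\infty$ on $\R$ with $\Psi\equiv 0$ on $(-\infty,0]$ and $\Psi(y)=y-\tfrac12$ on $[1,\infty)$ (using $\int_0^1h=\tfrac12$). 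Finally define
\begin{equation*}
G_\g(x) \;=\; c + m_1(x-a) + 2\de(m_2-m_1)\,\Psi\!\left(\frac{x-(a-\de)}{2\de}\right).
\end{equation*}

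I would then verify the four conditions in turn. For (1), if $m_1=m_2$ the last term vanishes and $G_\g(x)=c+m_1(x-a)=F_\g(x)$. For (2), when $x\le a-\de$ we have $\Psi(\cdot)=0$ so $G_\g=F_\g$, and when $x\ge a+\de$ the substitution $\Psi((x-(a-\de))/(2\de))=(x-(a-\de))/(2\de)-\tfrac12$ simplifies $G_\g(x)$ to $c+m_2(x-a)=F_\g(x)$; on $(a-\de,a+\de)$, $D^2G_\g(x)=\tfrac{m_2-m_1}{2\de}\,Dh\!\left(\frac{x-(a-\de)}{2\de}\right)$, which is nonzero because $Dh>0$ on $(0,1)$ (Proposition \ref{p:h}(3)) and $m_1\ne m_2$. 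Condition (3) is immediate from the formula, since changing $(a,c)$ to $(a',c')$ with $v=(a'-a,c'-c)$ adds exactly $v$ to every $(x,G_\g(x))$. For (4), I split on the sign of $m_2-m_1$: when $m_1<m_2$, $D^2G_\g\ge0$ so $G_\g$ is convex on $[a-\de,a+\de]$ and lies below its chord, which is the chord of $F_\g$; and since $m_1\le DG_\g\le m_2$ throughout (the factor $\wt h$ takes values in $[0,1]$), comparison of derivatives with $F_\g$ on the pieces $[a-\de,a]$ and $[a,a+\de]$, together with agreement at the endpoints, gives $G_\g\ge F_\g$ there. The case $m_1>m_2$ is symmetric (concavity), and $m_1=m_2$ is trivial. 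Together these place the graph of $G_\g$ in the triangle with vertices $(a-\de,F_\g(a-\de))$, $(a,c)$, $(a+\de,F_\g(a+\de))$.

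For continuity of $\g\mapsto G_\g$ into $C^\infty(\R)$, the explicit formula factors into three building blocks: the affine function $x\mapsto c+m_1(x-a)$, the scalar $2\de(m_2-m_1)$, and the composition $\Psi\circ L_\g$ where $L_\g(x)=(x-(a-\de))/(2\de)$. The first is continuous since a polynomial with continuously varying coefficients depends continuously on them in $C^\infty(\R)$ (the example following the definition of the topology). The map $\g\mapsto L_\g$ is likewise continuous into $C^\infty(\R)$, so $\g\mapsto \Psi\circ L_\g$ is continuous by continuity of composition (Proposition \ref{p:properties.of.tau}). Multiplication by the $\g$-dependent scalar $2\de(m_2-m_1)$ and addition are continuous by the topological vector space structure. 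The main obstacle I expect is bookkeeping in (4): arranging the case split on $\sign(m_2-m_1)$ and exploiting that the derivative of the smoothing lies in the closed interval bounded by $m_1$ and $m_2$; once this is in hand, the convex/concave comparison with the chord and with $F_\g$ on each subinterval is routine, and the continuity step is a direct assembly from the preliminaries.
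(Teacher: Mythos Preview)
Your proof is correct and is essentially identical to the paper's: both smooth the derivative using the function $h$ of Proposition~\ref{p:h} and integrate, yielding exactly the same $G_\g$ (the paper first treats the normalized case $a=c=0$, $\de=1$ and then rescales, whereas you write the general formula $G_\g(x)=c+m_1(x-a)+2\de(m_2-m_1)\Psi((x-(a-\de))/(2\de))$ directly, but substituting the paper's rescaling into its normalized formula gives precisely your expression). Your verifications of (1)--(4) and of continuity parallel the paper's, with your derivative-comparison argument for (4) being a minor rephrasing of the paper's support-line argument.
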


As a result of (2), when $m_1<m_2$, $DG_\g$ is strictly increasing from $m_1$ to $m_2$ on $[a-\de,a+\de]$, and similarly is strictly decreasing when $m_1>m_2$.

\begin{proof}
(1), (2), (3) Fix $\g=(m_1,m_2,a,c,\de)\in \Gamma$. Define $b_1,b_2$ by the equations
$m_1a + b_1 = c = m_2a + b_2$. Thus, the function $F=F_\g$ is given by $F(x)=m_1x+b_1$ when $x\leq a$, and $F(x)=m_2x+b_2$ when $x\geq a$. We will first produce the $C^\infty$ function $G_\g$ for the case $a=0$, $c=0$, $\de=1$ (so $b_1=b_2=0$ as well), getting a map $\Gamma'=\R^2\to C^\infty(\R)$ sending $\g=(m_1,m_2)$ to $G=G_{\g}$. Let $h$ be as in Proposition \ref{p:h}.
The derivative of $G$ will be
\[
\varphi(x) = m_1 + (m_2-m_1)h\left(\frac{x+1}{2}\right).
\]
Note that if $m_1\not=m_2$, we will have $D^2G = D\varphi \not=0$ when $-1<x<1$. Let
\begin{align}
G(x) & = F(-1) + \int_{-1}^x \varphi(t)\,dt \nonumber \\
& = -m_1 + m_1(x+1) + (m_2-m_1)\int_{-1}^x h\left(\frac{t+1}{2}\right)\,dt \nonumber \\
& = m_1x + 2(m_2-m_1)\int_{0}^{(x+1)/2} h(t)\,dt\label{eq3}.
\end{align}
We have $G(x) = F(x)$ if $m_1=m_2$. Note that $\g\mapsto G_{\g}$ is continuous on $\Gamma'$ since each of the maps into $C^\infty(\R)$ given by $\g\mapsto m_1x$, $\g\mapsto 2(m_2-m_1)r$, where $r$ is the $C^\infty$ functions $r(x)=I(h)((x+1)/2)$, is continuous, and $\g\mapsto G_{\g}$ is obtained by adding, these.

We have $G(x)=m_1x=F(x)$ when $x\leq -1$. We require $G(1) = F(1)$. Using (\ref{eq3}) and $\int_0^1 h(t)\,dt = 1/2$ we have
\begin{gather*}
G(1) = m_1 + 2(m_2-m_1)\int_{0}^{1} h(t)\,dt = m_2 = F(1).
\end{gather*}
Then for $x\geq 1$, $\varphi(x) = m_2$, so
\begin{align*}
G(x) & = F(-1) + \int_{-1}^x \varphi(t)\,dt = F(-1) + \int_{-1}^{1} \varphi(t)\,dt + \int_{1}^x \varphi(t)\,dt \\
& = G(1) + m_2(x-1) = m_2 + m_2(x-1) = m_2x = F(x).
\end{align*}

For a general $\de>0$, with $\g=(m_1,m_2,a,c,\de)$, letting $\bar{\g}=(m_1,m_2)$, we can take
\[
G_\g(x) = c + \de\, G_{\bar{\g}}\left(\frac{x-a}{\de}\right).
\]
It is easy to see that the map $\g\mapsto G_\g$ is continuous. (Write $w_{a,b}(x) = b(x-a)$, then $(a,b)\mapsto w_{a,b}$ is continuous, hence so is $\g\mapsto (\bar{\g},a,c,\de)\mapsto (\bar{\g},a,c,1/\de)\mapsto (G_{\bar{\g}},w_{a,1/\de})\mapsto G_{\bar{\g}}(w_{a,1/\de})$.)

When $x\leq a-\de$, $(x-a)/\de\leq -1$, so
\begin{align*}
G_\g(x) & = c + \de m_1(x-a)/\de \\
& = m_1a+b_1 + m_1(x-a) = m_1x+b_1 = F(x).
\end{align*}
(This argument also shows that $G_\g = F$ if $m_1=m_2$.)
Similarly, when $x\geq a+\de$, $G_\g(x) = m_2x+b_2 = F(x)$. When $a-\de<x<a+\de$, $DG_\g(x) = DG_{\bar{\g}}((x-a)/\de)$, so $D^2G_\g(x) = \de^{-1}D^2G_{\bar{\g}}((x-a)/\de)\not=0$ as long as $m_1\not=m_2$.

For (3), we have
\begin{align*}
G_\g(x - a' + a) + c'-c & = c + \de\,G_{\bar{\g}}\left(\frac{(x-a'+a)-a}{\de}\right) + c'-c \\
& = c' + \de\,G_{\bar{\g}}\left(\frac{x-a'}{\de}\right) = G_{\g'}(x)
.\end{align*}

(4) The claim is clear from (1) if $m_1=m_2$, so assume $m_1\not=m_2$. On $[a-\de,a+\de]$, $G_\g$ agrees with $F_\g$ at the endpoints. If $m_1<m_2$, then $D^2G_\g\geq 0$ on $[a-\de,a+\de]$, so $G_\g$ is convex on $[a-\de,a+\de]$ and hence its graph is below the secant line $L$ through the points $(x,F_\g(x))$ for $x=a-\de$ and $x=a+\de$.
\[
\begin{tikzpicture}
\def\u{0.6cm}
    \coordinate (start) at (0,4*\u);
    \coordinate (p1) at (3*\u, 0*\u);
    \coordinate (end) at (6*\u,3*\u);
    \draw[blue] (start) -- (p1) -- (end);
    \coordinate (c1) at ($(start) + 0.4*(3,-4)$);
    \coordinate (c2) at ($(end) - 0.6*(1,1)$);
    \draw[red] (start) .. controls (c1) and (c2) .. (end) node[above,black,pos=0.5] {\sz $G_\g$};
    \draw (start) -- node[above] {\sz $L$} (end);
    \fill (start) circle [radius=0.07*\u];
    \fill (p1)    circle [radius=0.07*\u];
    \fill (end)   circle [radius=0.07*\u];
    \node[left]  at (start) {\sz $(a-\de,F_\g(a-\de))$};
    \node[below] at (p1)    {\sz $(a,F_\g(a))$};
    \node[right] at (end)   {\sz $(a+\de,F_\g(a+\de))$};
\end{tikzpicture}
\]
The graph is also above the support lines at $a-\de$ and $a+\de$ which are given by the graph of $F_\g$. Thus, the graph of $G_\g$ lies between that of $F_\g$ and the secant line $L$. Similarly if $m_1>m_2$ using concavity of $G_\g$.
\end{proof}

Given points $a_1<\dots<a_n$ with $a_i+2\de<a_{i+1}$ for $1\leq i<n$, and a continuous function $f$ which has constant derivative on each component of each set $(a_i-\de,a_i+\de)\sm\{a_i\}$, we have $f=F_{\g_i}$ on $[a-\de,a+\de]$ for some (unique) $\g_i\in \Gamma$ having last coordinate $\de$. The function $g$ obtained by taking $g(x)=G_{\g_i}(x)$ for $x\in [a_i-\de,a_i+\de]$, $i=1,\dots,n$, $g(x)=f(x)$ otherwise, will be called the \emph{$\de$-modification of $f$ at $a_1,\dots,a_n$}.

\begin{cor}\label{c:roundoffcorners}
Let $n\in\N$ and let $\Gamma_n$ be the convex subset of $\R^{2n+3}$ defined by
\[
\Gamma_n=\{(m_1,m_{n+1},a_1,c_1,a_2,c_2,\dots,a_n,c_n,\de):\de>0,\,a_i+2\de < a_{i+1}\ (1\leq i<n)\}.
\]
For $\g = (m_1,m_{n+1},a_1,c_1,\dots,a_n,c_n,\de)\in \Gamma_n$, let $F^n_\g$ be the continuous function with $F^n_\g(a_i)=c_i$, $i=1,\dots,n$, $F^n_\g$ having constant derivative on each component of $\R\sm\{a_1,\dots,a_n\}$, with $DF^n_\g=m_1$ on $(-\infty,a_1)$, $DF^n_\g=m_{n+1}$ on $(a_n,\infty)$. Let $G^n_\g$ be the $\de$-modification of $F^n_\g$ at $a_1,\dots,a_n$.

The function $G^n_\g$ is $C^\infty$ with $G^n_\g = F_\g$ on $\R\sm\bigcup_{i=1}^n(a_i-\de,a_i+\de)$. The map $\Gamma_n\to C^\infty(\R)$ given by $\g\mapsto G^n_\g$ is continuous.
\end{cor}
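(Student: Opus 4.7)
The plan is to apply Proposition~\ref{roundoffcorners} once at each break point $a_i$ and then splice the resulting $C^\infty$ functions together using Proposition~\ref{p:C.infty.by.cases}, choosing the splice points in the gaps between consecutive $\de$-neighborhoods of break points, which are nonempty by the hypothesis $a_i+2\de<a_{i+1}$.

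First I would extract the $n+1$ slopes of $F^n_\g$. The endpoint slopes $m_1$ and $m_{n+1}$ are coordinates of $\g$; for $1<i\leq n$ the slope of $F^n_\g$ on $(a_{i-1},a_i)$ is forced to equal $\wt m_i(\g):=(c_i-c_{i-1})/(a_i-a_{i-1})$, which is a continuous function on $\Gamma_n$ because $a_i-a_{i-1}>2\de$. For each $i\in\{1,\dots,n\}$ set $\g_i=(s_i^L,s_i^R,a_i,c_i,\de)\in\Gamma$, where $s_i^L$ and $s_i^R$ are the left- and right-hand slopes of $F^n_\g$ at $a_i$ (so $s_1^L=m_1$, $s_n^R=m_{n+1}$, and the remaining values are the appropriate $\wt m$'s). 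The map $\g\mapsto\g_i$ is continuous from $\Gamma_n$ into $\Gamma$, and hence by Proposition~\ref{roundoffcorners} so is $\g\mapsto G_{\g_i}\in C^\infty(\R)$.

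The crucial observation is that for $1\leq i<n$, the functions $G_{\g_i}$ and $G_{\g_{i+1}}$ coincide on the entire interval $[a_i+\de,a_{i+1}-\de]$: outside their respective $\de$-intervals each equals its generating piecewise-linear function $F_{\g_i}$ or $F_{\g_{i+1}}$, and both of these agree on $[a_i,a_{i+1}]$ with the unique affine function passing through $(a_i,c_i)$ and $(a_{i+1},c_{i+1})$, because $s_i^R=\wt m_{i+1}=s_{i+1}^L$. Take the splice point $b_i(\g)=(a_i+a_{i+1})/2$; the spacing hypothesis forces $a_i+\de<b_i<a_{i+1}-\de$, so on a full neighborhood of $b_i$ the two functions $G_{\g_i}$ and $G_{\g_{i+1}}$ are identically equal, and in particular $D^jG_{\g_i}(b_i)=D^jG_{\g_{i+1}}(b_i)$ for every $j\geq 0$.

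Finally I would build $G^n_\g$ by an iterated splicing: apply Proposition~\ref{p:C.infty.by.cases} first to splice $G_{\g_1}$ and $G_{\g_2}$ at $b_1$, then splice the output with $G_{\g_3}$ at $b_2$, and so on up to $b_{n-1}$. At each step, the current accumulated function equals $G_{\g_k}$ on a neighborhood of $b_k$, so the derivative-matching hypothesis reduces to the observation of the previous paragraph. This produces a continuous map $\Gamma_n\to C^\infty(\R)$, $\g\mapsto G^n_\g$, whose value is $G_{\g_i}$ on $[b_{i-1},b_i]$ (using $b_0=-\infty$, $b_n=+\infty$). The agreement $G^n_\g=F^n_\g$ off $\bigcup_i(a_i-\de,a_i+\de)$ is then immediate: on each component of this complement $G^n_\g$ coincides with some $G_{\g_i}$, which outside $(a_i-\de,a_i+\de)$ reduces to $F_{\g_i}$, and that in turn equals the corresponding affine piece of $F^n_\g$. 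The only real obstacle is the careful bookkeeping of which linear piece appears where; no further analytic content is needed beyond Propositions~\ref{roundoffcorners} and~\ref{p:C.infty.by.cases}.
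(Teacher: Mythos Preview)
Your proposal is correct and follows essentially the same approach as the paper: both apply Proposition~\ref{roundoffcorners} at each corner and splice the resulting pieces via Proposition~\ref{p:C.infty.by.cases} at the midpoints $(a_i+a_{i+1})/2$, using that consecutive pieces agree on the overlap because they share the intermediate slope $(c_{i+1}-c_i)/(a_{i+1}-a_i)$. The only cosmetic difference is that the paper phrases the splicing as a formal induction on $n$ (peeling off the last corner) rather than as an explicit left-to-right iteration.
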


\begin{proof}
This follows by induction on $n$.
We have $\Gamma_1 = \Gamma = \R^4\times \R^{>0}$, so the case $n=1$ follows from Proposition \ref{roundoffcorners}. For the continuity of $\g\mapsto G^n_\g$ when $n>1$, let
\[
\g = (m_1,m_{n+1},a_1,c_1,\dots,a_n,c_n,\de) = (m_1^\g,m_{n+1}^\g,a_1^\g,c_1^\g,\dots,a_n^\g,c_n^\g,\de^\g)\in \Gamma_n.
\]
Define $m_n=(c_{n}-c_{n-1})/(a_{n}-a_{n-1})$ and let
\[
\g' = (m_1,m_{n},a_1,c_1,\dots,a_{n-1},c_{n-1},\de)\in \Gamma_{n-1},\ \g''=(m_n,m_{n+1},a_{n},c_{n},\de)\in \Gamma.
\]
Note that $m_n$ is a continuous function of $\g$ and therefore so are $\g'$ and $\g''$. By the induction hypothesis, the function $\g\mapsto \g'\mapsto G^{n-1}_{\g'}$ is continuous, and by the case $n=1$, so is $\g\mapsto \g''\mapsto G_{\g''}$. The function $\g\mapsto G^n_\g$ is obtained from these two by definition by cases as in Proposition \ref{p:C.infty.by.cases} taking $\g\mapsto a_\g$ to be given by $a_\g = (a_{n-1}+a_{n})/2$, and therefore is continuous.
\end{proof}

\section{Restrictions on $I(f)(1)$, $I^2(f)(1)$ and $I^3(f)(1)$}
\label{s:If.I2f}

Let $f\colon[0,1]\to[0,1]$ be an increasing function. Recall that we write $I(f)(x)=\int_0^xf(t)\,dt$. Let $a=I(f)(1)$, $b=I^2(f)(1)$, and $c=I^3(f)(1)$.
In this section, we develop inequalities necessarily satisfied by $a$, $b$ and $c$, and show that these are sufficient for the existence of an increasing function $f\colon[0,1]\to[0,1]$ such that $a=I(f)(1)$, $b=I^2(f)(1)$, and $c=I^3(f)(1)$.

The restrictions on $a$ are straightforward to determine. We record them for reference and leave their verification to the reader.

\begin{prop}\label{n=1.a}
We have $0\leq a\leq 1$. If $a=0$ then $f(x)=0$ for $0<x<1$. If $a=1$ then $f(x)=1$ for $0<x<1$.
\end{prop}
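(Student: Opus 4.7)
The plan is to use only the two facts that $f$ takes values in $[0,1]$ and that $f$ is increasing. Since $0\leq f(t)\leq 1$ for all $t\in[0,1]$, monotonicity of the Riemann integral gives $0 = \int_0^1 0\,dt \leq \int_0^1 f(t)\,dt \leq \int_0^1 1\,dt = 1$, so $0\leq a\leq 1$. This handles the first sentence.

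For the case $a=0$, the plan is to argue by contradiction: if there were an $x_0\in(0,1)$ with $f(x_0)>0$, then because $f$ is increasing we have $f(t)\geq f(x_0)$ for all $t\in[x_0,1]$, hence
\[
a = \int_0^1 f(t)\,dt \geq \int_{x_0}^1 f(t)\,dt \geq (1-x_0)f(x_0) > 0,
\]
contradicting $a=0$. The remaining values $f(t)\geq 0$ on $(0,x_0)$ pose no issue because we also already know $f\geq 0$ on that interval, so we obtain $f(x)=0$ for all $x\in(0,1)$.

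The case $a=1$ is symmetric; I would simply apply the previous argument to the function $g(x)=1-f(1-x)$, which is again increasing with values in $[0,1]$ and satisfies $\int_0^1 g(t)\,dt = 1 - a = 0$. Then $g(x)=0$ on $(0,1)$ translates back to $f(x)=1$ on $(0,1)$. Alternatively, one can reason directly: if $f(x_0)<1$ for some $x_0\in(0,1)$, then $f(t)\leq f(x_0)$ for $t\in[0,x_0]$, giving $a\leq x_0 f(x_0) + (1-x_0)\cdot 1 < 1$, a contradiction. There is no real obstacle here; the only point to be careful about is using monotonicity to control $f$ on an appropriate subinterval in each direction.
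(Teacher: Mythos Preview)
Your proof is correct. The paper actually declines to prove this proposition, stating only that the restrictions on $a$ are straightforward and leaving their verification to the reader; your argument supplies exactly the routine verification the authors had in mind.
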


Note that since $f$ is increasing, if $f(x)=0$ for $0<x<1$ then necessarily $f(0)=0$, so we could have written that $f(x)=0$ for $0\leq x<1$. In such statements, when $f$ is a step function, we prefer formulations which ignore the values at the endpoints of the intervals on which $f$ is constant since in general they are not uniquely determined and they are irrelevant to the computation of the integrals $I^n(f)$.

Next, we examine the restrictions on $b$, showing in particular that $a^2/2\leq b\leq a/2$.

\begin{prop}\label{n=2.a}
Let $f\colon[0,1]\to[0,1]$ be increasing with $I(f)(1)=a$, $I^2(f)(1)=b$.
\begin{enumerate}
\item
$(x-(1-a))_+\leq I(f)(x)\leq ax,\ \ 0\leq x\leq 1$.

\item
$I(f)(x) < ax$ for $x\in (0,1)$, unless $f(x)=a$ for all $x\in (0,1)$.

\item
For $0<x\leq 1-a$, $(x-(1-a))_+ < I(f)(x)$ unless  $f=0$ on $(0,x)$.

\n For $1-a\leq x<1$, $(x-(1-a))_+ < I(f)(x)$ unless $f=1$ on $(x,1)$.

\item
We have
\[
((x-(1-a))_+)^2/2\leq I^2(f)(x)\leq ax^2/2,\ \ 0\leq x\leq 1.
\]
Taking $x=1$, we get in particular,
\[
a^2/2\leq b\leq a/2,
\]
with the first inequality being strict unless $f=0$ on $(0,1-a)$ and $f=1$ on $(1-a,1)$, and the second inequality being strict unless $f$ is constant on $(0,1)$.
\end{enumerate}
\end{prop}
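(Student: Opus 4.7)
The plan is to prove (1) first by two elementary comparisons using only the monotonicity of $f$ and its range $[0,1]$, then derive (2) and (3) by tracking precisely when those comparisons are tight, and finally get (4) by integrating (1) over $[0,x]$ and specializing to $x=1$.

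For the upper bound in (1), I would write
\begin{align*}
ax - I(f)(x) & = x\int_0^1 f(s)\,ds - \int_0^x f(t)\,dt \\
& = x\int_x^1 f(s)\,ds - (1-x)\int_0^x f(t)\,dt \\
& = \int_0^x\!\!\int_x^1 \bigl(f(s)-f(t)\bigr)\,ds\,dt \geq 0,
\end{align*}
the last step because $f$ is increasing and $t\leq x\leq s$ on the region of integration. For the lower bound I would use $f\leq 1$ on $[x,1]$ to get $\int_x^1 f\leq 1-x$, so $I(f)(x)=a-\int_x^1 f\geq x-(1-a)$; combined with the obvious $I(f)(x)\geq 0$ this is $(x-(1-a))_+$.

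For (2), equality $I(f)(x)=ax$ in the upper-bound identity forces $f(s)=f(t)$ for Lebesgue-almost every pair $(t,s)\in(0,x)\times(x,1)$. Monotonicity then makes $f$ a.e.\ constant on $(0,1)$, with constant value $a$ because $\int_0^1 f=a$; and an increasing function that is a.e.\ equal to a constant on an open interval actually equals that constant at every interior point (its two one-sided limits coincide with it). For (3), the two cases are independent: on $(0,1-a]$, $I(f)(x)=0$ together with $f\geq 0$ gives $f=0$ a.e.\ on $(0,x)$, hence everywhere there by monotonicity; on $[1-a,1)$, $I(f)(x)=x-(1-a)$ forces $\int_x^1 f=1-x$ which, with $f\leq 1$, gives $f=1$ a.e.\ on $(x,1)$ and hence everywhere on $(x,1)$.

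For (4), integrating the pointwise inequalities of (1) from $0$ to $x$ yields
\[
((x-(1-a))_+)^2/2 \leq I^2(f)(x) \leq ax^2/2,
\]
and $x=1$ gives $a^2/2\leq b\leq a/2$. To handle the equality cases, note that both envelopes and $I(f)$ are continuous; equality at $x=1$ of integrals of continuous functions that are pointwise ordered forces pointwise equality on $[0,1]$. So $b=a^2/2$ implies $I(f)(x)=(x-(1-a))_+$ for every $x$, whereupon the two halves of (3) give $f=0$ on $(0,1-a)$ and $f=1$ on $(1-a,1)$; and $b=a/2$ implies $I(f)(x)=ax$ for every $x$, whereupon (2) gives $f\equiv a$ on $(0,1)$. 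The only mildly subtle step throughout is the promotion of ``a.e.\ equal to a constant on an interval'' to ``equal at every interior point,'' and that is exactly the content of Proposition \ref{p:fin} applied to the relevant subintervals.
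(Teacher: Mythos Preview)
Your proof is correct, and it takes a genuinely different route from the paper's. The paper argues via the convexity of $g=I(f)$: for (1) it observes that the graph of $g$ lies above its support lines at $0$ (slope $\geq 0$) and at $1$ (slope $\leq 1$), and below its secant through $(0,0)$ and $(1,a)$; for (2) and (3) it uses the three-points-on-a-line property of convex functions (Proposition~\ref{p:convex.properties}(3)) to force $g$ to be affine on the relevant subinterval, then reads off $f$ at continuity points. Your argument avoids the convexity apparatus entirely: the Chebyshev-type double-integral identity
\[
ax - I(f)(x) = \int_0^x\!\!\int_x^1 \bigl(f(s)-f(t)\bigr)\,ds\,dt
\]
gives the upper bound and its equality case directly, and the lower bound and its equality cases come from the crude bounds $0\leq f\leq 1$ on tails. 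What your approach buys is self-containment (no appeal to Section~\ref{s:prelim}B); what the paper's approach buys is a uniform geometric picture that matches the convexity language used throughout Sections~\ref{s:If.I2f} and \ref{s:fuv}. Both routes funnel the ``a.e.\ $\Rightarrow$ everywhere'' step through Proposition~\ref{p:fin}.
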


\begin{proof}
(1) $g=I(f)$ is convex since $f$ is increasing, and we have $D_+g(0)=f(0+)\geq 0$, $D_-g(1)=f(1-)\leq 1$, so the graph of $g$ is above the line through $(0,0)$ of slope $0$, i.e., the $x$-axis, and above the line of slope $1$ through $(0,a)$. This gives the first inequality. The graph of $g$ also lies below its secant through the points at $x=0$ and $x=1$, giving the second inequality.

(2) If $g(x)=I(f)(x)=ax$ for some $x\in (0,1)$, then the convex function $g$ on $[0,1]$ has three points on the line $y=ax$ and therefore coincides with that line. It follows that at each of the (all but countably many) points $x$ where $f$ is continuous, we have $a = Dg(x)=f(x)$. Since $f$ is increasing, we get $f=a$ everywhere on $(0,1)$.

(3) The first part is clear, so suppose that for some $x\in [1-a,1)$, $I(f)(x) = x-(1-a)$. Then the graph of $g=I(f)$ has a secant on $[x,1]$ which follows the line $y=x-(1-a)$. It therefore lies below that line. But we know from (1) that it lies above that line, so it must coincide with the secant. It follows that at each of the (all but countably many) points $t\in [x,1]$ where $f$ is continuous, we have $1 = Dg(t)=f(t)$. Since $f$ is increasing, we get $f=1$ everywhere on $(x,1)$.

(4) Follows from the previous items.
\end{proof}

To study the restrictions on $c$, we introduce a family of functions $f_{uva}$ and their integrals $g_{uva}=I(f_{uva})$, defined as follows. Let $a\in [0,1]$. For $0<u<1$ and $v\in\R$, let $f_{uva}\colon[0,1]\to\R$ be given by
\[
f_{uva}(x) = \begin{cases}
v/u & \text{for $0\leq x<u$} \\
(a-v)/(1-u) & \text{for $u\leq x\leq 1$}.
\end{cases}
\]
When the value of $a$ is fixed, we also write $f_{uv}$ and $g_{uv}$ instead of $f_{uva}$ and $g_{uva}$.

\begin{prop}
The functions $f_{uva}$ have the following properties.
\begin{enumerate}
\item
$I(f_{uva})(1) = a$.

\item
$f_{uva}$ is constant if and only if $v=au$.

\item
$f_{uva}$ is either constant or a $2$-step step function.

\item
$f_{uva} = f_{u'v'a'}$ implies $a=a'$ and $v/u=v'/u'$. We have $u=u'$ $($and hence also $v=v')$ unless the functions are constant.

\item
If $f\colon[0,1]\to\R$ is either constant or a $2$-step step function, then $f=_\fin f_{uva}$, where $a=I(f)(1)$, $u$ is chosen so that $f$ is constant on $(0,u)$ and on $(u,1)$, and $v$ is chosen so that the value of $f$ on $(0,u)$ is $v/u$.
\end{enumerate}
\end{prop}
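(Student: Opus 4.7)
The plan is to dispatch the five parts in turn, as they are all straightforward computations once the definition of $f_{uva}$ is unpacked.

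For (1), I would directly compute
\[
I(f_{uva})(1) = \int_0^u \frac{v}{u}\,dt + \int_u^1 \frac{a-v}{1-u}\,dt = v + (a-v) = a.
\]
For (2), $f_{uva}$ being constant is equivalent to the two values $v/u$ and $(a-v)/(1-u)$ coinciding; solving $v(1-u) = u(a-v)$ gives $v = au$. Part (3) then follows immediately from (2): when $v=au$, $f_{uva}$ is the constant function $a$; when $v\neq au$, the two values differ, and since each is attained on a nontrivial subinterval (namely $[0,u)$ and $[u,1]$), $f_{uva}$ is a $2$-step step function in the sense of the footnote.

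For (4), assume $f_{uva}=f_{u'v'a'}$. Applying $I(\,\cdot\,)(1)$ and using (1) gives $a=a'$. Evaluating at any point of $(0,\min(u,u'))$ yields $v/u = v'/u'$. If the common function is not constant, then by (2) it takes two distinct values, so it has a unique jump point; reading off this jump from the two representations forces $u=u'$, and then $v=v'$.

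For (5), I would split into the two cases allowed by the hypothesis. If $f$ is constant with value $k$, set $a=k$, choose any $u\in(0,1)$, and put $v=ku$; then $f_{uva}$ has both constant values equal to $k$, so $f_{uva}$ agrees with $f$ everywhere on $(0,1)$, hence $f=_\fin f_{uva}$. If $f$ is a $2$-step step function, let $u\in(0,1)$ be the transition point between its two nontrivial constancy intervals, let $\alpha,\beta$ denote its values on $(0,u)$ and $(u,1)$ respectively, and set $a=I(f)(1)=\alpha u+\beta(1-u)$ and $v=\alpha u$. Then $v/u=\alpha$ and $(a-v)/(1-u)=\beta$, so $f_{uva}$ coincides with $f$ on $(0,u)\cup(u,1)$; the two functions can disagree only at the finitely many points $\{0,u,1\}$, giving $f=_\fin f_{uva}$. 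None of these steps presents a real obstacle; the only subtlety is making sure, in (5), that the transition point $u$ extracted from the $2$-step hypothesis actually lies in the open interval $(0,1)$, which is guaranteed because both constancy subintervals are nontrivial.
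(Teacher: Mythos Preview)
Your proposal is correct and follows essentially the same approach as the paper. The paper declares (1) and (3) obvious, proves (2) by the same one-line computation, and handles (4) and (5) together by the same ``choose $u$, then $v$, then compute the second value from $a=I(f)(1)$'' recipe; your version is slightly more explicit in separating (4) from (5) and in treating the constant and $2$-step cases of (5) individually, but the content is identical.
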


\begin{proof}
(1) and (3) are obvious.

(2) Solving $v/u = (a-v)/(1-u)$ gives $v=au$.

(4), (5) Suppose $f$ is either constant or a $2$-step step function.
Choose $u\in (0,1)$ so that $f$ is constant on $(0,u)$ and on $(u,1)$. (This value is unique unless $f$ is constant on $(0,1)$ in which case it is arbitrary.) Then choose $v$, as we must if we want $f=_\fin f_{uva}$, so that the value of $f$ on $(0,u)$ is $v/u$. By (1), we must take $a=I(f)(1)$. The value $m$ of $f$ on $(u,1)$ can be computed from the equation $a=I(f)(1)$ which gives $a=u(v/u) + (1-u)m$ and hence $m=(a-v)/(1-u)$. Thus, $f=_\fin f_{uva}$.
\end{proof}

Now fix $a$ with $0\leq a\leq 1$.
Note that if $f\colon[0,1]\to\R$ is increasing and $f=_\fin f_{uv}$ then in fact $f(x)=f_{uv}(x)$ for all $x\in (0,u)\cup (u,1)$ (by Proposition \ref{p:fin}). All such functions $f$ have the same integral function $g_{uv}=I(f)=I(f_{uv})$ given by
\[
g_{uv}(x) = \begin{cases}
(v/u)x & \text{for $0\leq x \leq u$} \\
v + ((a-v)/(1-u))(x-u) & \text{for $u\leq x \leq 1$}.
\end{cases}
\]
Note that $g_{uv}(u)=v$ and $g_{uv}(1)=a$.
\[
\begin{tikzpicture}
\def\u{2cm}
\def\a{0.55}
\def\b{0.2}
\begin{scope}
    \draw (0,1*\u) -- (0,0) -- (1*\u,0) -- (1*\u,1*\u) -- cycle;
    \coordinate (p1) at (0*\u, 0*\u);
    \coordinate (p2) at (0.6*\u, 0.25*\u);
    \coordinate (p3) at (1*\u,\a*\u);
    \draw[gray!50] (0.6*\u, 0*\u) -- ++(0,1*\u);
    \draw[red,thick] (0,{(0.25/0.6)*\u}) -- (0.6*\u,{(0.25/0.6)*\u});
    \draw[red,thick] (0.6*\u,{((\a-0.25)/(1-0.6))*\u}) -- (1*\u,{((\a-0.25)/(1-0.6))*\u});
    \node[left]  at (0,{(0.25/0.6)*\u}) {\sz $v/u$};
    \node[left]  at (0,1*\u) {\sz $1$};
    \node[below left]  at (0,0) {\sz $0$};
    \node[below]  at (1*\u,0) {\sz $1$};
    \node[below]  at (0.6*\u,0) {\sz \rule{0pt}{5pt}$u$};
    \node[right]  at (1*\u,{((\a-0.25)/(1-0.6))*\u}) {\sz $\ds\frac{a-v}{1-u}$};
    \node[below,yshift=-2.6ex] at (0.5*\u,0) {\sz\upshape $y=f_{uv}(x)$};
\end{scope}
\begin{scope}[xshift=2*\u]
    \draw (0,1*\u) -- (0,0) -- (1*\u,0) -- (1*\u,1*\u) -- cycle;
    \coordinate (p1) at (  0*\u, 0*\u);
    \coordinate (p2) at (  0.6*\u, 0.25*\u);
    \coordinate (p3) at (  1*\u,\a*\u);
    \draw[gray!50] (  0.6*\u, 0*\u) -- (p2);
    \draw[red,thick] (p1) -- (p2) -- (p3);
    \fill (p2) circle [radius = 1.2pt];
    \node[left]  at (0,1*\u) {\sz $1$};
    \node[below left]  at (0,0) {\sz $0$};
    \node[below]  at (1*\u,0) {\sz $1$};
    \node[above left]  at (p2) {\sz $(u,v)$};
    \node[right]  at (1*\u,\a*\u) {\sz $a$};
    \node[below,yshift=-2.6ex] at (0.5*\u,0) {\sz\upshape $y=g_{uv}(x)$};
\end{scope}
\end{tikzpicture}
\]

\begin{prop}\label{p:T}
Let $u,v\in\R$, $0<u<1$.
\begin{enumerate}
\item
The following properties are equivalent.
\begin{enumerate}[\rm(a)]
\item
$f_{uv}$ is increasing with values in $[0,1]$, i.e.,
$0 \leq v/u \leq (a-v)/(1-u) \leq 1$.

\item
$(u,v)$ belongs to the region in $[0,1]^2$ bounded by the lines $y=ax$, $y=0$, $y=x-(1-a)$, equivalently, $(u-(1-a))_+ \leq v \leq au$. This region is a triangle except when $a=0$, when it equals the horizontal axis, or $a=1$, when it equals the diagonal.
\[
\begin{tikzpicture}
\def\u{2cm}
\def\a{0.55}
    \draw ({(1-\a)*\u},0) -- (1*\u,0) -- (1*\u,1*\u) -- (0,1*\u) -- (0,0);
    \draw[dashed ] (0,0) -- (1*\u,\a*\u)
        (0,0) -- ({(1-\a)*\u},0) -- (1*\u,\a*\u);
    \coordinate (p1) at (  0*\u, 0*\u);
    \coordinate (p2) at (0.5*\u,0.2*\u);
    \coordinate (p3) at (  1*\u,\a*\u);
    \fill (p2) circle [radius=1.2pt];
    \node[above left]  at (p2) {\sz $(u,v)$};
    \node[left]  at (0,1*\u) {\sz $1$};
    \node[below left]  at (0,0) {\sz $0$};
    \node[below]  at (1*\u,0) {\sz $1$};
    \node[below]  at ({(1-\a)*\u},0) {\sz $1-a$};
    \node[right]  at (1*\u,\a*\u) {\sz $a$};
\end{tikzpicture}
\]
\end{enumerate}

\item
When $0<a<1$, the following properties are equivalent.
\begin{enumerate}[\rm(a)]
\item
$f_{uv}$ is increasing and nonconstant with values in $(0,1)$, i.e., $0 < v/u < (a-v)/(1-u) < 1$.

\item
$(u,v)$ belongs to the interior $T$ of the triangle bounded by the lines $y=ax$, $y=0$, $y=x-(1-a)$, equivalently, $(u-(1-a))_+ < v < au$.
\end{enumerate}
\end{enumerate}
\end{prop}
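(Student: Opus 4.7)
The plan is to prove both parts by direct algebraic manipulation, solving each of the three inequalities for $v$ using the hypothesis $0 < u < 1$ (so both $u > 0$ and $1-u > 0$, which lets us clear denominators without flipping inequalities).

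For part (1), I would translate the chain $0 \leq v/u \leq (a-v)/(1-u) \leq 1$ into equivalent conditions on $(u,v)$, one at a time:
\begin{itemize}
\item $0 \leq v/u$ is equivalent to $v \geq 0$ (the region above the line $y = 0$);
\item $v/u \leq (a-v)/(1-u)$, upon multiplying by $u(1-u) > 0$, becomes $v(1-u) \leq u(a-v)$, which simplifies to $v \leq au$ (the region below the line $y = ax$);
\item $(a-v)/(1-u) \leq 1$ becomes $a-v \leq 1-u$, i.e., $v \geq u-(1-a)$ (the region above the line $y = x-(1-a)$).
\end{itemize}
Combining the first and third gives $v \geq (u-(1-a))_+$, and together with the second this is $(u-(1-a))_+ \leq v \leq au$. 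Since $u \in (0,1)$ and $0 \leq v \leq au \leq a \leq 1$, the point $(u,v)$ automatically lies in $[0,1]^2$, so this describes the closed triangular region bounded by the three lines. For the parenthetical remark: when $a=0$ the inequalities force $v = 0$ with $u \in (0,1)$, giving a horizontal segment; when $a=1$ they force $v = u$ with $u \in (0,1)$, giving the diagonal segment; for $0 < a < 1$ the three lines meet at the three distinct vertices $(0,0)$, $(1-a,0)$, $(1,a)$, yielding a genuine (nondegenerate) triangle.

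For part (2), the argument is identical but with every occurrence of $\leq$ replaced by $<$: the equivalences above remain valid because each manipulation involves multiplication by a strictly positive quantity, which preserves strict inequality. Hence $0 < v/u < (a-v)/(1-u) < 1$ is equivalent to $(u-(1-a))_+ < v < au$, and this defines the interior $T$ of the triangle from part (1) (the condition automatically forces $0 < u < 1$, since $u = 0$ would require $0 < v < 0$ and $u = 1$ would require $a < v < a$).

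There is no real obstacle here; the whole statement reduces to two lines of algebra per inequality, and the only thing requiring a moment of care is noting that the strict inequalities in (2) already rule out $u \in \{0,1\}$ so that $(u,v)$ genuinely lies in the open triangle rather than on its boundary.
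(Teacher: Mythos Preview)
Your proof is correct and follows essentially the same approach as the paper's own proof: clear denominators in each inequality using $u>0$ and $1-u>0$, rewrite each as a linear constraint on $v$, and combine. The paper's version is slightly terser and omits the extra remark about $u\in\{0,1\}$ in part~(2), which is unnecessary since $0<u<1$ is already assumed in the hypothesis.
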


\begin{proof}
(1) Using $0<u<1$ we have that $u$ and $1-u$ are both positive.
Writing each of the inequalities $0 \leq v/u \leq (a-v)/(1-u) \leq 1$ as a restriction on $v$, we have $v \geq 0$, $v \leq au$, $u-(1-a) \leq v$. Thus, the point $(u,v)$ must lie in the region bounded by the lines $y=ax$, $y=0$, $y=x-(1-a)$. The equivalence of the formulation $(u-(1-a))_+ \leq v \leq au$ is clear, as is the claim about the shape of the region.

(2) Replace the inequalities in the proof of (1) by strict ones.
\end{proof}

\begin{defn}
Fix $a,b$ satisfying $0<a<1$ and $a^2/2\leq b < a/2$.
Set
\[
u_1=1-\frac{2b}a,\ \ v_1=0,\ \ u_2=\frac{1-2a+2b}{1-a},\ \ v_2=\frac{2b-a^2}{1-a}.
\]
Also set $g_1 = g_{u_1,v_1}$, $g_2 = g_{u_2,v_2}$. The graphs of these functions are drawn in Proposition \ref{p:g1g2} below.
\end{defn}

\begin{rem}\label{r:u1u2}
(a) The condition $a^2/2\leq b < a/2$ is equivalent separately to each of the conditions $0<u_1\leq 1-a$ and $1-a\leq u_2<1$. In particular it ensures $0<u_i<1$ for $i=1,2$, so that $f_{u_iv_ia}$ is defined. If $b=a^2/2$ then $u_1=u_2=1-a$ and $v_1=v_2=0$, so $g_1=g_2$.

(b) The function $f_{u_1,v_1,a}$ takes the value $0$ on $(0,u_1)$, and the function $f_{u_2,v_2,a}$ takes the value $1$ on $(u_2,1)$. (Its value on $(u_2,1)$ is $(a-v_2)/(1-u_2)$ and we have $a-v_2=1-u_2=(a-2b)/(1-a)$.)
\end{rem}

The next proposition is a converse to Remark \ref{r:u1u2} (b).

\begin{prop}\label{p:two.step.0.1}
Let $0<u<1$. Let $f\colon [0,1]\to [0,1]$ be a nonconstant increasing function constant on each of the intervals $(0,u)$ and $(u,1)$. Let $a=I(f)(1)$ and $b=I^2(f)(1)$.
\begin{enumerate}
\item
If $f$ has value $0$ on $(0,u)$, then $f=_\fin f_{u_1,v_1,a}$.

\item
If $f$ has value $1$ on $(u,1)$, then $f=_\fin f_{u_2,v_2,a}$.
\end{enumerate}
\end{prop}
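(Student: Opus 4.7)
The plan is to compute $a$ and $b$ directly from the step-function data for $f$, solve the resulting equations for the breakpoint $u$, and verify the recovered $u$ equals $u_1$ (respectively $u_2$).

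For (1), write the (unknown) value of $f$ on $(u,1)$ as $m$ (its value on $(0,u)$ is $0$ by hypothesis). A one-line integration gives $a=I(f)(1)=m(1-u)$, so $m=a/(1-u)$. The function $I(f)$ is identically $0$ on $[0,u]$ and equals $m(t-u)$ on $[u,1]$, so
\[
b = I^2(f)(1) = \int_u^1 m(t-u)\,dt = \frac{m(1-u)^2}{2} = \frac{a(1-u)}{2}.
\]
Solving, $1-u = 2b/a$, i.e., $u = 1 - 2b/a = u_1$, and then $m = a/(1-u_1) = a^2/(2b)$. On the other hand, since $v_1=0$, the definition of $f_{u_1,v_1,a}$ assigns $0$ to $(0,u_1)$ and $(a-v_1)/(1-u_1) = a/(1-u_1) = a^2/(2b)$ to $(u_1,1)$. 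So $f$ and $f_{u_1,v_1,a}$ are equal on $(0,u_1)\cup(u_1,1)$, and hence differ on at most $\{0,u_1,1\}$, giving $f =_\fin f_{u_1,v_1,a}$.

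Part (2) is symmetric. Let $m$ be the value of $f$ on $(0,u)$, the value on $(u,1)$ being $1$. Then $a = mu + (1-u)$, so $mu = a-1+u$. Computing
\[
b = \int_0^u mt\,dt + \int_u^1 \bigl(mu + (t-u)\bigr)\,dt
\]
and substituting $mu=a-1+u$ simplifies (after cancellation of the $mu^2$ and $u^2$ terms) to $b = a + u(1-a)/2 - 1/2$. Solving for $u$ yields $u = (1-2a+2b)/(1-a) = u_2$. Then $m u_2 = a-1+u_2$ evaluates, after clearing denominators, to $(2b-a^2)/(1-a) = v_2$, so $m = v_2/u_2$, matching the left step of $f_{u_2,v_2,a}$; the identity $(a-v_2)/(1-u_2)=1$ is an immediate algebraic check (both sides equal $(a-2b)/(1-a)$ as noted in Remark \ref{r:u1u2}(b)). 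So again $f$ and $f_{u_2,v_2,a}$ agree on $(0,u_2)\cup(u_2,1)$, and $f =_\fin f_{u_2,v_2,a}$.

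I do not foresee a substantive obstacle: the argument reduces to routine algebra once $I(f)$ and $I^2(f)$ are written out explicitly. The only conceptual point is that once the two integrals $a$ and $b$ are prescribed, the breakpoint $u$ of a two-step function with a fixed step value ($0$ on the left or $1$ on the right) is forced, and then the remaining step value is forced by $a$; the denominators $a$ in part (1) and $1-a$ in part (2) never vanish because $0<a<1$ is implicit in $a^2/2 \leq b < a/2$ (which is assumed in the definition of $u_1,v_1,u_2,v_2$), and indeed $0<u_i<1$ by Remark \ref{r:u1u2}(a).
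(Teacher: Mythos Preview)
Your proof is correct and takes essentially the same approach as the paper: compute $a$ and $b$ explicitly from the step data, recover $u=u_1$ (respectively $u=u_2$), and match the step values. The only cosmetic difference is that the paper verifies $u=u_i$ by substituting the computed $a,b$ into the defining formula for $u_i$, while you solve for $u$ and compare; and in part~(2) the paper concludes equality on the remaining interval from the shared integral $a$ rather than by computing $m=v_2/u_2$ directly as you do.
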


\begin{rem}\label{r:p:two.step.0.1}
That $f$ is not constant on $(0,1)$ ensures that it is not identically $0$ or identically $1$ on $(0,1)$, so $0<a<1$.
By Proposition \ref{n=2.a} (4), we then have $a^2/2\leq b<a/2$ so that $f_{u_i,v_i,a}$ is defined for $i=1,2$.
\end{rem}

\begin{proof}
(1) We have $f(x)=m$ for $u<x<1$, for some $m$ which is positive since $a>0$ (Remark \ref{r:p:two.step.0.1}). By direct calculation, $a=I(f)(1)=m(1-u)$ and $b=I^2(f)(1)=m(1-u)^2/2$.
Plugging these values of $a$ and $b$ into $u_1 = 1-2b/a$, we get $u_1=1-(m(1-u)^2)/(m(1-u)) = 1-(1-u) = u$, and therefore $f=_\fin
f_{u_1,v_1,a}$ since both functions are $0$ on $(0,u)$ and they have the same integral $a$ on $[0,1]$, so are necessarily equal on $(u,1)$ as well.

(2) We have $f(x)=m$ for $0<x<u$, $f(x)=1$ for $u<x<1$, for some $m$ which is $<1$ since $a<1$.
Then
\begin{align*}
I(f)(x) & =
\begin{cases}
mx & \text{if $0\leq x\leq u$} \\
mu + (x-u) & \text{if $u\leq x\leq 1$},
\end{cases} \\
I^2(f)(x) & =
\begin{cases}
mx^2/2 & \text{if $0\leq x\leq u$} \\
mu^2/2 + mu(x-u) + (x-u)^2/2 & \text{if $u\leq x\leq 1$}.
\end{cases}
\end{align*}
so $a=I(f)(1)=mu+(1-u)$ and $b=I^2(f)(1)=mu^2/2+mu(1-u)+(1-u)^2/2$, which we can write as $2b
= u^2(1-m) - 2u(1-m) + 1$.
Substituting the values of $a$ and $b$ into the formula for $u_2$, we get
\[
u_2=\frac{1-2a+2b}{1-a}
= \frac{1 - 2mu - 2(1-u) + u^2(1-m) - 2u(1-m) + 1}{1-mu-(1-u)}
= \frac{u^2(1-m)}{u(1-m)} = u.
\]
It follows that $f =_\fin
f_{u_2,v_2,a}$ since both functions have value $1$ on $(u,1)=(u_2,1)$ and they have the same integral $a$ on $[0,1]$, so are necessarily equal on $(0,u)$ as well.
\end{proof}

\begin{prop}\label{p:I2.of.f}
Let $0<a<1$, $0<u<1$.
\begin{enumerate}
\item
For any $b\in\R$, we have $I^2(f_{uv})(1) = b$ if and only if $(u,v)$ is on the straight line $L$ of equation $y = ax - a + 2b$.
\[
\begin{tikzpicture}
\def\u{3.2cm}
\def\a{0.55}
\def\b{0.2}
\def\xx{((1-2*\a+2*\b)/(1-\a))}
    \draw (0,0.6*\u) -- (0,0) ({(1-\a)*\u},0) -- (1*\u,0) -- (1*\u,0.6*\u);
    \draw[dashed ] (0,0) -- (1*\u,\a*\u)
        (0,0) -- ({(1-\a)*\u},0) -- (1*\u,\a*\u);
    \coordinate (p1) at (  0*\u, 0*\u);
    \coordinate (p2) at (  0.4*\u, {(-0.6*\a + 2*\b)*\u});
    \coordinate (p3) at (  1*\u,\a*\u);
    \coordinate (p4) at ( {(1-2*\b/\a)*\u}, 0*\u);
    \coordinate (p5) at (1*\u,2*\b*\u);
    \coordinate (p6) at ( {(1-\a)*\u}, 0*\u);
    \coordinate (p2t) at (  {\xx*\u}, {((\xx-1)*\a + 2*\b)*\u});
    \fill (p2t) circle [radius = 1.2pt];
    \draw (p4) -- ++(0,0.6*\u);
    \draw ({\xx*\u},0) -- ++(0,0.6*\u);
    \draw[gray!50] (p4) -- (p5);
    \fill (p4) circle [radius = 1.2pt];
    \fill (p5) circle [radius = 1.2pt];
    \fill (p6) circle [radius = 1.2pt];
    \node[below left]  at (0,0) {\sz $0$};
    \node[below]  at (1*\u,0) {\sz $1$};
    \node[below]  at (p4) {\sz \rule{0pt}{5pt}$u_1$};
    \node[below]  at ({\xx*\u},0) {\sz \rule{0pt}{5pt}$u_2$};
    \node[right]  at (1*\u,\a*\u) {\sz $a$};
    \node[below]  at ({(1-\a)*\u},0) {\sz $1-a$};
    \node[right]  at (p5) {\sz $2b$};
    \node at (0.85*\u,0.23*\u) {\sz $L$};
\end{tikzpicture}
\]

\item
$L\cap T\not=\e$ if and only if $a^2/2<b<a/2$.

\item
When $a^2/2<b<a/2$, $(u,v)\in L\cap T$ if and only if $(u,v)\in L$ and $u_1<u<u_2$. The endpoints of this segment of $L$ from $u_1$ to $u_2$ are $(u_1,v_1)$ and $(u_2,v_2)$.
\end{enumerate}
\end{prop}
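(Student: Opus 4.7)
Plan for the proof. The three parts each reduce to a direct calculation with the piecewise linear data of $g_{uv}$ and $L$.

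For part (1), I would evaluate $I^2(f_{uv})(1)=\int_0^1 g_{uv}(x)\,dx$ directly using the piecewise formula for $g_{uv}$ displayed just before Proposition \ref{p:T}. Splitting the integral at $u$ and integrating the two linear pieces gives
\[
I^2(f_{uv})(1)=\frac{vu}{2}+v(1-u)+\frac{(a-v)(1-u)}{2}.
\]
Expanding and simplifying collapses the $u$-dependence on $v$ neatly: the coefficient of $v$ is $1-u/2-(1-u)/2=1/2$, leaving $I^2(f_{uv})(1)=(v+a-au)/2$. Setting this equal to $b$ and solving for $v$ produces exactly $v=au-a+2b$, which is the line $L$.

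For part (2), the key observation is that $L$ has slope $a$, hence is parallel to the hypotenuse $y=ax$ of the triangle $T$. So $L$ meets the interior of $T$ iff $L$ lies strictly between the parallel lines $y=ax$ (the upper side) and the apex of the triangle opposite to that side at $x=1-a$, which is the point $(1-a,0)$. Evaluating $L$ at $x=1-a$ gives the value $-a^2+2b$; at the same $x$ the segments bounding $T$ force this value to lie in the open interval $(0,\,a(1-a))=(0,\,a-a^2)$. The two inequalities $-a^2+2b>0$ and $-a^2+2b<a-a^2$ are equivalent, respectively, to $b>a^2/2$ and $b<a/2$, proving the equivalence.

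For part (3), I would identify the two intersection points of $L$ with the boundary of $T$. Solving $ax-a+2b=0$ gives $x=1-2b/a=u_1$, so $L$ meets the side $y=0$ at $(u_1,v_1)$; solving $ax-a+2b=x-(1-a)$ gives $x=(1-2a+2b)/(1-a)=u_2$, and substitution shows the intersection point is $(u_2,v_2)$. Under the hypothesis $a^2/2<b<a/2$, Remark \ref{r:u1u2} (or its strict version, obtained by repeating the inequality manipulations with strict signs) gives $0<u_1<1-a<u_2<1$. Finally, using the characterization of $T$ from Proposition \ref{p:T}(2), namely $(u-(1-a))_+<v<au$, a point $(u,au-a+2b)\in L$ lies in $T$ iff $au-a+2b<au$ (automatic from $b<a/2$) and $au-a+2b>(u-(1-a))_+$; splitting this last inequality into the cases $u\le 1-a$ and $u\ge 1-a$ recovers respectively $u>u_1$ and $u<u_2$.

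The verification is essentially routine algebra; the only mild subtlety is the case split at $u=1-a$ in part (3), which must be carried out cleanly so that the two sub-conditions combine into the single interval $u_1<u<u_2$, together with confirming that $u_1<1-a<u_2$ under the standing hypothesis $a^2/2<b<a/2$ so the two cases genuinely concatenate.
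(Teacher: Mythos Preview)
Your proof is correct and follows essentially the same approach as the paper: the integral computation in (1) is identical, and (2)--(3) amount to substituting $y=ax-a+2b$ into the defining inequalities of $T$. The only cosmetic difference is that the paper plugs directly into the three separate inequalities $y<ax$, $y>0$, $y>x-(1-a)$ (obtaining $b<a/2$, $u>u_1$, $u<u_2$ with no case split), whereas you use the combined form $(u-(1-a))_+<v$ and hence need the split at $u=1-a$; your geometric reading of (2) via the parallel side and the cevian at $x=1-a$ is a nice variant of the same substitution.
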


\begin{proof}
(1) Requiring that $I^2(f_{uv})(1) = b$, or equivalently $I(g_{uv})(1) = b$, translates into
\[
(v/u)\frac{u^2}{2} + v(1-u) + \frac{a-v}{1-u}\cdot \frac{(1-u)^2}{2} = b,
\]
which simplifies to $v = au - a + 2b$, which is equivalent to saying $(u,v)\in L$.

(2), (3) The conditions defining $(x,y)\in T$ are $y<ax$, $y>0$, and $y>x-(1-a)$. Plugging $y = ax - a + 2b$ into each of these, we see that the first is satisfied if and only if $b<a/2$ and the other two are satisfied if and only if $u_1<x<u_2$. Thus, all three can be satisfied if and only if $b<a/2$ and $u_1<u_2$. The latter simplifies to $a^2/2<b$. The computation of $v_2=au_2-a+2b$ is straightforward.

(The criterion $a^2/2<b<a/2$ in (2) can also be seen geometrically. Note that $L$ has slope $a$ and hence is parallel to the top edge of $T$. The point where $L$ crosses the $x$-axis is $u_1$. In order that $L$ intersect $T$, we therefore need $0<u_1<1-a$, or equivalently $a^2/2<b<a/2$.)
\end{proof}

\begin{prop}\label{p:g1g2}
Assume that $0<a<1$ and $a^2/2<b<a/2$. The integral $I^2(g_{uv})(1)$, for $(u,v)\in L$, $0<u<1$, is a continuous strictly increasing function of $u$, specifically
\[
I^2(g_{uv})(1) = \frac{(a - 2b)u - a + 4b}6.
\]
Taking $u=u_1$ and $u=u_2$ we get
\[
I^2(g_{1})(1) = \frac{2b^2}{3a},\ \
I^2(g_{2})(1) = \frac{-a^2 + 2ab -4b^2 + 2b}{6(1-a)}.
\]
\[
\begin{tikzpicture}
\def\u{3cm}
\def\a{0.55}
\def\b{0.2}
\def\xx{(1-2*\b/\a)}
    \draw (0,0.6*\u) -- (0,0) -- (1*\u,0) -- (1*\u,0.6*\u);
    \coordinate (p1) at (  0*\u, 0*\u);
    \coordinate (p2) at (  {\xx*\u}, {((\xx-1)*\a + 2*\b)*\u});
    \coordinate (p3) at (  1*\u,\a*\u);
    \coordinate (p4) at ( {(1-2*\b/\a)*\u}, 0*\u);
    \coordinate (p5) at (1*\u,2*\b*\u);
    \coordinate (p6) at ( {(1-\a)*\u}, 0*\u);
    \draw[gray!50] (p4) -- (p5);
    \draw[gray!50] (p3) -- (p6);
    \draw[red,thick] (p1) -- (p2) -- (p3);
    \fill (p2) circle [radius = 1.2pt];
    \fill (p4) circle [radius = 1.2pt];
    \fill (p5) circle [radius = 1.2pt];
    \fill (p6) circle [radius = 1.2pt];
    \node[below left]  at (0,0) {\sz $0$};
    \node[below]  at (1*\u,0) {\sz $1$};
    \node[below, xshift=-1.4ex]  at (p4) {\sz \rule{0pt}{5pt}$1-2b/a$};
    \node[below, xshift=1.4ex]  at (p6) {\sz \rule{0pt}{6pt}$1-a$};
    \node[above left, xshift=1.8ex]  at (p2) {\sz $(u_1,v_1)$};
    \node[right]  at (1*\u,\a*\u) {\sz $a$};
    \node[right]  at (p5) {\sz $2b$};
    \node[below] at (current bounding box.south) {\sz\upshape $y=g_{1}(x)$};
\end{tikzpicture}
\rule{1.5cm}{0cm}
\begin{tikzpicture}
\def\u{3cm}
\def\a{0.55}
\def\b{0.2}
\def\xx{((1-2*\a+2*\b)/(1-\a))}
    \draw (0,0.6*\u) -- (0,0) -- (1*\u,0) -- (1*\u,0.6*\u);
    \coordinate (p1) at (  0*\u, 0*\u);
    \coordinate (p2) at (  {\xx*\u}, {((\xx-1)*\a + 2*\b)*\u});
    \coordinate (p3) at (  1*\u,\a*\u);
    \coordinate (p4) at ( {(1-2*\b/\a)*\u}, 0*\u);
    \coordinate (p5) at (1*\u,2*\b*\u);
    \coordinate (p6) at ( {(1-\a)*\u}, 0*\u);
    \draw[gray!50] (p4) -- (p5);
    \draw[gray!50] (p3) -- (p6);
    \draw[red,thick] (p1) -- (p2) -- (p3);
    \fill (p2) circle [radius = 1.2pt];
    \fill (p4) circle [radius = 1.2pt];
    \fill (p5) circle [radius = 1.2pt];
    \fill (p6) circle [radius = 1.2pt];
    \node[below left]  at (0,0) {\sz $0$};
    \node[below]  at (1*\u,0) {\sz $1$};
    \node[below, xshift=-1.4ex]  at (p4) {\sz \rule{0pt}{5pt}$1-2b/a$};
    \node[below, xshift=1.4ex]  at (p6) {\sz \rule{0pt}{6pt}$1-a$};
    \node[above left]  at (p2) {\sz $(u_2,v_2)$};
    \node[right]  at (1*\u,\a*\u) {\sz $a$};
    \node[right]  at (p5) {\sz $2b$};
    \node[below] at (current bounding box.south) {\sz\upshape $y=g_{2}(x)$};
\end{tikzpicture}
\]
\end{prop}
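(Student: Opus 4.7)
The plan is to compute $I^2(g_{uv})(1)$ by direct piecewise integration of $g_{uv}$, obtain it as a polynomial in $u$ and $v$, and then substitute the equation $v = au - a + 2b$ of $L$ to reduce to the claimed linear function of $u$.

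First, using the piecewise formula for $g_{uv}$ displayed just before the proposition, I will compute $I(g_{uv})(x)$ piecewise: on $[0,u]$ it equals $vx^2/(2u)$, and for $x\in[u,1]$ it continues as $vu/2 + v(x-u) + \tfrac{a-v}{2(1-u)}(x-u)^2$. Integrating once more over $[0,1]$, the contribution from $[0,u]$ is $vu^2/6$, while the contribution from $[u,1]$ reduces, after using $vu(1-u)/2 + v(1-u)^2/2 = v(1-u)/2$, to $v(1-u)/2 + (a-v)(1-u)^2/6$. Summing and multiplying through by $6$, I obtain
\[
6\,I^2(g_{uv})(1) \;=\; 2v - uv + a - 2au + au^2,
\]
a polynomial in $(u,v)$ depending only on the piecewise-linear shape of $g_{uv}$, with no constraint from $L$ imposed yet.

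Second, I will substitute $v = au - a + 2b$. The key observation is that $-uv = -au^2 + au - 2bu$ contains an $-au^2$ that exactly cancels the $au^2$ in the previous display, so the result collapses to a linear function of $u$. A short simplification gives
\[
6\,I^2(g_{uv})(1) \;=\; (a-2b)u - a + 4b,
\]
which is the asserted formula. Since $a^2/2 < b < a/2$ guarantees $a - 2b > 0$, the slope in $u$ is positive, yielding the continuity and strict monotonicity claim on the segment of $L\cap \overline{T}$ from $(u_1,v_1)$ to $(u_2,v_2)$.

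Finally, the two endpoint values come from direct substitution. At $u_1 = 1 - 2b/a$ the numerator $(a-2b)u_1 - a + 4b$ equals $(a-2b)^2/a - a + 4b = 4b^2/a$, giving $I^2(g_1)(1) = 2b^2/(3a)$. At $u_2 = (1-2a+2b)/(1-a)$ I will clear the factor $1-a$ and expand $(a-2b)(1-2a+2b) = a - 2a^2 + 6ab - 2b - 4b^2$; combining with $-a(1-a) + 4b(1-a)$ in the numerator yields $-a^2 + 2ab - 4b^2 + 2b$, producing the claimed $I^2(g_2)(1) = (-a^2 + 2ab - 4b^2 + 2b)/(6(1-a))$. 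The only real obstacle is the bookkeeping in the two piecewise integrations and in the cancellations after imposing the equation of $L$; no new technique is needed.
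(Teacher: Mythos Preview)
Your proof is correct and follows essentially the same approach as the paper: both compute $I^2(g_{uv})(1)$ by direct piecewise integration to obtain $6\,I^2(g_{uv})(1) = 2v - uv + a - 2au + au^2$, substitute the line equation $v = au - a + 2b$ to collapse this to $(a-2b)u - a + 4b$, and then evaluate at $u_1$ and $u_2$. The only cosmetic difference is that the paper writes out the full piecewise formula for $I^2(g)(x)$ before setting $x=1$, whereas you integrate $I(g)$ over $[0,1]$ directly; the algebra and logic are otherwise identical.
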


\begin{proof}
Fix $(u,v)$ and let $g=g_{uv}$. Write $m=m_{uv}=(a-v)/(1-u)$.
We have
\[
g(x) = \begin{cases}
(v/u)x & \text{for $0\leq x \leq u$} \\
v + m(x-u) & \text{for $u\leq x \leq 1$},
\end{cases}
\]
which gives
\[
I(g)(x) = \begin{cases}
\ds\frac{v}{2u}x^2 & \text{for $0\leq x \leq u$} \\[6pt]
\ds \frac{uv}{2} + v(x-u) + m\frac{(x-u)^2}{2} & \text{for $u\leq x \leq 1$},
\end{cases}
\]
\[
I^2(g)(x) = \begin{cases}
\ds\frac{v}{6u}x^3 & \text{for $0\leq x \leq u$} \\[8pt]
\ds \frac{u^2v}{6} + \frac{uv}{2}(x-u) + v\frac{(x-u)^2}{2} + m\frac{(x - u)^3}{6} & \text{for $u\leq x \leq 1$}. \\
\end{cases}
\]
Taking $x=1$ gives
\[
I^2(g)(1) = \frac{u^2v}{6} + \frac{uv}{2}(1-u) + v\frac{(1-u)^2}{2} + m\frac{(1 - u)^3}{6}.
\]
Multiplying by $6$ and expanding, we get
\begin{align*}
6I^2(g)(1) & = u^2v + 3uv - 3u^2v + 3v - 6uv + 3u^2v + a - 2au + au^2 - v + 2uv - u^2v \\
& = -uv + 2v + a - 2au + au^2 \\
& = -u(au-a+2b) + 2(au-a+2b) + a - 2au + au^2 \\
& = (a - 2b)u - a + 4b,
\end{align*}
as desired.
When $u=u_1=1-2b/a$, we get
\[
6I^2(g_1)(1) = (a - 2b)\left(1-\frac{2b}{a}\right) - a + 4b = \frac{4b^2}{a},
\]
which gives the claimed value for $I^2(g_1)(1)$, and when $u=u_2=(1-2a+2b)/(1-a)$, we get
\begin{align*}
6(1-a)I^2(g_2)(1) & = (a - 2b)(1-2a+2b) - a(1-a) + 4b(1-a) \\
& = a-2a^2+2ab - 2b +4ab -4b^2 - a + a^2 + 4b - 4ab \\
& = -a^2 + 2ab -4b^2 + 2b
\end{align*}
and that gives the claimed value for $I^2(g_2)(1)$.
\end{proof}

We are now ready to establish the necessary restriction on $c$.

\begin{prop}\label{p:n.equals.3}
Let $f\colon [0,1]\to[0,1]$ be increasing with $I(f)(1)=a$, $I^2(f)(1)=b$, $I^3(f)(1)=c$. Assume that $a^2/2<b<a/2$. Let $g=I(f)$. Then $I(g_1)\leq I(g)\leq I(g_2)$. More precisely we have the following.

\begin{enumerate}
\item
Either $g=g_1$, or there are $x_0\in [0,u_1)$ and $x_1\in (u_1,1)$ such that $g=g_1=0$ on $[0,x_0]$, $g_1<g$ on $(x_0,x_1)$, $g<g_1$ on $(x_1,1)$, and $I(g_1)(x)<I(g)(x)$ for all $x\in (x_0,1)$.

\item
Either $g=g_2$, or there are $x_0\in (0,u_2)$ and $x_1\in (u_2,1]$ such that $g<g_2$ on $(0,x_0)$, $g_2<g$ on $(x_0,x_1)$, and $g=g_2$ on $[x_1,1]$, with $I(g)(x)<I(g_2)(x)$ for all $x\in (0,x_1)$, and $I(g)(x)=I(g_2)(x)$ for all $x\in [x_1,1]$.
\end{enumerate}
\[
\begin{tikzpicture}
\def\u{5cm}
\def\a{0.55}
\def\b{0.2}
\def\xx{(1-2*\b/\a)}
\draw (0,0.6*\u) -- (0,0) -- (1*\u,0) -- (1*\u,0.6*\u);
\coordinate (p1) at (  0*\u, 0*\u);
\coordinate (p2) at (  {\xx*\u}, {((\xx-1)*\a + 2*\b)*\u});
\coordinate (p3) at (  1*\u,\a*\u);
\coordinate (p4) at ( {(1-2*\b/\a)*\u}, 0*\u);
\coordinate (p5) at (1*\u,2*\b*\u);
\coordinate (p6) at ( {(1-\a)*\u}, 0*\u);
\draw[gray!50] (p3) -- (p6);
\draw[gray!50] (p4) -- (p5);
\draw[dotted] (0.15*\u,-0.1*\u) -- ++(0,0.7*\u);
\draw[dotted] (0.43*\u,-0.1*\u) -- ++(0,0.7*\u);
\draw[dashed,thick] (p1) -- (p2) -- (p3);
\draw[red,thick] (p1) -- (0.15*\u,0);
\draw[red,thick] plot[smooth] coordinates {(0.15*\u,0) (0.27*\u,0.03*\u) (0.57*\u,0.21*\u) (p3)};
\fill (p2) circle [radius = 1.2pt];
\fill (p4) circle [radius = 1.2pt];
\fill (p5) circle [radius = 1.2pt];
\fill (p6) circle [radius = 1.2pt];
\node[below left]  at (0,0) {\sz $0$};
\node[below]  at (1*\u,0) {\sz $1$};
\node[below, xshift=0ex]  at (p4) {\sz $u_1$};
\node[below, xshift=0ex]  at (0.15*\u,-0.1*\u) {\sz \rule{0pt}{5pt}$x_0$};
\node[below, xshift=0ex]  at (0.43*\u,-0.1*\u) {\sz \rule{0pt}{5pt}$x_1$};
\node[below, xshift=1.5ex]  at (p6) {\sz $1-a$};
\node[right]  at (1*\u,\a*\u) {\sz $a$};
\node[right]  at (p5) {\sz $2b$};
\node[below] at (current bounding box.south) {\sz\upshape $y=g_{1}(x)$ (dashed), $y=g(x)$};
\end{tikzpicture}
\rule{1cm}{0cm}
\begin{tikzpicture}
\def\u{5cm}
\def\a{0.55}
\def\b{0.2}
\def\xx{((1-2*\a+2*\b)/(1-\a))}
\draw (0,0.6*\u) -- (0,0) -- (1*\u,0) -- (1*\u,0.6*\u);
\coordinate (p1) at (  0*\u, 0*\u);
\coordinate (p2) at (  {\xx*\u}, {((\xx-1)*\a + 2*\b)*\u});
\coordinate (p3) at (  1*\u,\a*\u);
\coordinate (p4) at ( {(1-2*\b/\a)*\u}, 0*\u);
\coordinate (p5) at (1*\u,2*\b*\u);
\coordinate (p6) at ( {(1-\a)*\u}, 0*\u);
\draw[gray!50] (p4) -- (p5);
\draw[gray!50] (p3) -- (p6);
\draw[dotted] (0.40*\u,-0.1*\u) -- ++(0,0.7*\u);
\draw[dotted] (0.80*\u,-0.1*\u) -- ++(0,0.7*\u);
\draw[dotted] ({\xx*\u},0) -- (p2);
\draw[dashed,thick] (p1) -- (p2) -- (p3);
\draw[red,thick] (0.8*\u,0.35*\u) -- (p3);
\draw[red,thick] plot[smooth] coordinates {(p1) (0.20*\u,0.045*\u) (0.5*\u,0.175*\u) (0.7*\u,0.287*\u) (0.8*\u,0.35*\u)};
\fill (p2) circle [radius = 1.2pt];
\fill (p4) circle [radius = 1.2pt];
\fill (p5) circle [radius = 1.2pt];
\fill (p6) circle [radius = 1.2pt];
\node[below left]  at (0,0) {\sz $0$};
\node[below]  at (1*\u,0) {\sz $1$};
\node[below]  at (p4) {\sz $u_1$};
\node[below]  at ({\xx*\u},0) {\sz $u_2$};
\node[below, xshift=1.5ex]  at (p6) {\sz $1-a$};
\node[below, xshift=0ex]  at (0.40*\u,-0.1*\u) {\sz \rule{0pt}{5pt}$x_0$};
\node[below, xshift=0ex]  at (0.80*\u,-0.1*\u) {\sz \rule{0pt}{5pt}$x_1$};
\node[right]  at (1*\u,\a*\u) {\sz $a$};
\node[right]  at (p5) {\sz $2b$};
\node[below] at (current bounding box.south) {\sz\upshape $y=g_{2}(x)$ (dashed), $y=g(x)$};
\end{tikzpicture}
\]
Integrating $I(g_1)\leq I(g)\leq I(g_2)$ over $[0,1]$ gives
\[
\frac{2b^2}{3a} \leq c
\leq \frac{-a^2 + 2ab -4b^2 + 2b}{6(1-a)}.
\]
with
\begin{itemize}
\item
strict inequality on the left unless $g=g_1$, equivalently $f=_
{\fin}f_{u_1,v_1}$,

\item
strict inequality on the right unless $g=g_2$, equivalently $f=_{\fin}f_{u_2,v_2}$.
\end{itemize}
\end{prop}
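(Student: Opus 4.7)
The plan is to establish the nested inequality $I(g_1) \leq I(g) \leq I(g_2)$ by analyzing the sign pattern of the difference $F = g - g_i$ on $[0, 1]$ for $i = 1, 2$. Since $(u_i, v_i)$ lies on the line $L$ by definition, Proposition \ref{p:I2.of.f} gives $\int_0^1 g_i = b = \int_0^1 g$, so $F(0) = F(1) = 0$ and $\int_0^1 F = 0$. The crucial observation is that $DF$ inherits a controlled structure from $f$: on each of the two pieces where $Dg_i$ is constant, $DF$ is $f$ minus a constant, hence increasing. Combined with the two endpoint conditions and the integral condition, this pins down the sign pattern of $F$.

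For (1), $DF = f \geq 0$ on $(0, u_1)$ and $DF = f - a^2/(2b)$ is increasing on $(u_1, 1)$, so $F$ is nondecreasing on $[0, u_1]$ and has at most one monotonicity change on $[u_1, 1]$. Assume $g \neq g_1$. The first step is to show $F(u_1) > 0$: if $F(u_1) = 0$, then $F \equiv 0$ on $[0, u_1]$ (nondecreasing from $0$ to $0$), and on $[u_1, 1]$ the same endpoint conditions together with $\int_{u_1}^1 F = 0$ allow only two scenarios for the increasing function $DF$ --- either $DF$ has constant sign (giving $F \equiv 0$ on $[u_1, 1]$ and hence $g = g_1$), or $DF$ changes sign negative-to-positive (giving $F \leq 0$ on $[u_1, 1]$ with $F < 0$ somewhere, contradicting $\int_{u_1}^1 F = 0$). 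Both options are excluded, so $F(u_1) > 0$. Setting $x_0 = \sup\{x \in [0, u_1] : F(x) = 0\}$ gives $x_0 \in [0, u_1)$ with $F \equiv 0$ on $[0, x_0]$ (hence $g \equiv 0 \equiv g_1$ there) and $F > 0$ on $(x_0, u_1]$. On $(u_1, 1)$, the two constant-sign subcases for $DF$ are excluded using $F(u_1) > 0$, $F(1) = 0$, and $\int_{u_1}^1 F = -\int_0^{u_1} F < 0$, so $DF$ changes sign at some $x^* \in (u_1, 1)$; then $F$ strictly decreases on $[u_1, x^*]$ and strictly increases on $[x^*, 1]$, crossing zero at a unique $x_1 \in (u_1, x^*)$, with $F > 0$ on $[u_1, x_1)$ and $F < 0$ on $(x_1, 1)$. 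Setting $H(x) = \int_0^x F$, we get $H \equiv 0$ on $[0, x_0]$, $H$ strictly increases on $[x_0, x_1]$ from $0$, and $H$ strictly decreases on $[x_1, 1]$ back to $H(1) = 0$, so $H > 0$ on $(x_0, 1)$; this is the strict inequality $I(g) > I(g_1)$ on $(x_0, 1)$, and $\int_0^1 H > 0$ yields $c > 2b^2/(3a)$ by Proposition \ref{p:g1g2}.

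For (2), take $F = g - g_2$, so $DF = f - v_2/u_2$ (increasing) on $(0, u_2)$ while $DF = f - 1 \leq 0$ on $(u_2, 1)$; now $F$ is nonincreasing on $[u_2, 1]$, so the roles of the two pieces are reversed from (1). A parallel case analysis shows that if $g \neq g_2$ then $F(u_2) > 0$, there is a unique $x_0 \in (0, u_2)$ with $F < 0$ on $(0, x_0)$ and $F > 0$ on $(x_0, u_2]$, and a unique $x_1 \in (u_2, 1]$ with $F > 0$ on $[u_2, x_1)$ and $F \equiv 0$ on $[x_1, 1]$. The last statement differs in shape from (1): since $F \geq 0$ on $[u_2, 1]$ (nonincreasing from $F(u_2) > 0$ to $F(1) = 0$) while $F$ is nonincreasing, once $F$ first hits $0$ at $x_1$ it must stay at $0$ thereafter; this in turn forces $f = 1$ on $(x_1, 1)$. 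With $H(x) = \int_0^x F$ we obtain $H < 0$ on $(0, x_1)$ and $H \equiv 0$ on $[x_1, 1]$, giving $I(g) < I(g_2)$ on $(0, x_1)$ and $I(g) = I(g_2)$ on $[x_1, 1]$, and $\int_0^1 H < 0$ yields $c < (-a^2 + 2ab - 4b^2 + 2b)/(6(1-a))$. The equivalences $g = g_i \Longleftrightarrow f =_{\fin} f_{u_i, v_i}$ hold because $g = g_i$ makes $f$ agree a.e.\ with $f_{u_i, v_i}$, and two increasing functions agreeing a.e.\ coincide at all common points of continuity; since $f_{u_i, v_i}$ is a $2$-step step function with its only interior discontinuity at $u_i$, the two functions differ on at most a subset of $\{0, u_i, 1\}$.

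The main obstacle is the careful case analysis needed to rule out degenerate configurations of $F$; each must be eliminated using the combination of the endpoint conditions $F(0) = F(1) = 0$, the integral condition $\int_0^1 F = 0$, and the piecewise monotonicity of $DF$. No individual step is deep, but patience is required to confirm that the dichotomy ``$g = g_i$ or the stated sign pattern holds'' is genuinely exhaustive, and that strict inequalities propagate correctly to give the final bounds on $c$.
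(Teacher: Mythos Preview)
Your proposal is correct and reaches the same sign-pattern conclusions as the paper, but it is organized around a different device. The paper argues directly with the convexity of $g=I(f)$: it uses secants and support lines to locate where $g$ sits relative to $g_i$, and for uniqueness of the crossing point $x_1$ it invokes the ``three collinear points on a convex graph force linearity'' principle (Proposition \ref{p:convex.properties}(3)). You instead differentiate the difference $F=g-g_i$ and exploit that on each linear piece of $g_i$ one has $DF=f-\text{const}$, which is increasing; the sign pattern of $F$ is then pinned down by combining this piecewise monotonicity of $DF$ with the endpoint constraints $F(0)=F(1)=0$ and the integral constraint $\int_0^1 F=0$. This repackages the same convexity content in derivative language and trades the geometric one-liners for a short exhaustive case analysis. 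One small imprecision: your phrase ``$F$ strictly decreases on $[u_1,x^*]$ and strictly increases on $[x^*,1]$'' is slightly too strong, since $f$ (hence $DF$) may be constant on a subinterval; but this does not damage the argument, because even with a plateau at the minimum the zero of $F$ occurs in the strictly decreasing portion and is therefore still unique, and the sign pattern $F>0$ on $(x_0,x_1)$, $F<0$ on $(x_1,1)$ survives intact.
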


\begin{proof}
(1) If $g(x)=0$ for $0\leq x\leq u_1$ then by convexity of $g$, the graph of $g$ must be below its secant on $[u_1,1]$, which coincides with the graph of $g_1$ on that interval. But then $g\leq g_1$, so the equality of the integrals of these continuous functions (both integrals $=b$) implies $g=g_1$.

Otherwise, since $g$ is continuous and increasing, there is a largest element $x_0\in [0,u_1)$ such that $g(x_0)=0$. We have $I(g_1)(x) = I(g)(x) = 0$ for $0\leq x\leq x_0$ and $0 = I(g_1)(x) < I(g)(x)$ for $x_0 < x\leq u_1$.
Since
\[
I(g_1)(u_1) + \int_{u_1}^1g_1(x)\,dx = I(g_1)(1) = b = I(g)(1) = I(g)(u_1) + \int_{u_1}^1g(x)\,dx,
\]
we have $\int_{u_1}^1g < \int_{u_1}^1g_1$ and therefore there exist points $x$ with $u_1 < x < 1$ where $g(x)<g_1(x)$.
By continuity of $g$ and $g_1$, there exists a point $x_1\in (u_1,1)$ where $g(x_1) = g_1(x_1)$.

\begin{claim}
$x_1$ is unique.
\end{claim}

If the set $A=\{x\in (u_1,1):g(x)=g_1(x)\}$ contains two distinct points,
say $x_1<x'_1$ both belong to $A$, then there are at least three points on $[x_1,1]$ where $g$ and $g_1$ agree (since $g(1)=a=g_1(1)$). Therefore $g=g_1$ on $[x_1,1]$ by convexity of $g$. Taking $x_1$ to be the least element of $A$, which exists since $g_1(u_1)<g(u_1)$, we have $g_1\leq g$ everywhere on $[0,1]$ and $g_1<g$ on $(x_0,x_1)$, so $I(g_1)(1)<I(g)(1)$, contradiction.

\m

It follows that $g_1<g$ on $(x_0,x_1)$ and $g<g_1$ on $(x_1,1)$ (since there do exist points where $g<g_1$). It is clear then that $I(g_1)(x)<I(g)(x)$ for all $x\in (x_0,x_1]$. Suppose that for some $x\in (x_1,1)$, we had $I(g)(x)\leq I(g_1)(x)$. Since $g<g_1$ on $(x,1)$, that would give $I(g)(1) < I(g_1)(1)$, a contradiction.

(2) Write $m$ for the slope of $g_2$ over the interval $[0,u]$, namely $m=(2b-a^2)/(1-2a+2b)$. If $D_+(g)(0)\geq m$ then since $Dg_-(1)\leq 1$ and the graph of $g$ lies above its support lines at $0$ and $1$, the graph of $g$ lies above the graph of $g_2$ and hence $g=g_2$ since the integrals are equal.

Thus, when $g\not=g_2$, we have $D_+(g)(0)<m$, which we now assume. It follows that $g(x)<g_2(x)$ for $x$ close enough to $0$.

\begin{claim}
There exists a unique point $x_0\in (0,u_2)$ such that $g(x_0)=g_2(x_0)$.
\end{claim}

Suppose we had $g(x)<g_2(x)$ for all $x\in (0,u_2)$. Then $g(u_2)\leq g_2(u_2)=v_2$ and hence $g(u_2)=v_2$ since the graph of $g$ lies above or on the line of slope $1$ through $(1,a)$. But then the graph of $g_2$ on $[u_2,1]$ is a secant through the graph of $g$ on that interval, and hence $g\leq g_2$ on $[u_2,1]$ by convexity, giving $I(g)(1)<I(g_2)(1)$, contradiction.  For uniqueness, if there were two such points $x_0<x'_0$, then together with $0$ there would be three points in $[0,x'_0]$ where $g$ agrees with $g_2$, and hence $g=g_2$ on $[0,x'_0]$. But then $D_+(g)(0)=m$, contradicting our assumption.

\m

It follows that $g<g_2$ on $(0,x_0)$ and $g_2<g$ on $(x_0,u_2)$. (If $g(x)<g_2(x)$ then $g$ being below its secant on $[0,x]$ would force $g(x_0)<g_2(x_0)$.) Then we also have $g_2(u_2)<g(u_2)$ because $g_2(u_2)=g(u_2)$ would give three points $0,x_0,u_2$ where $g=g_2$ and hence $g=g_2$ on $[0,u_2]$ contradicting that $g<g_2$ on $(0,x_0)$. Let $x_1$ be the least element of $(u_2,1]$ where $g(x_1)=g_2(x_1)$.  The graph of $g_2$ on $[x_1,1]$ is a secant through the graph of $g$ on that interval, and hence $g\leq g_2$ on $[x_1,1]$ by convexity. But the graph of $g$ does not go below that secant since $D_-g(1)\leq 1$, so $g=g_2$ on $[x_1,1]$. We thus have the stated relationship that $g<g_2$ on $(0,x_0)$, $g_2<g$ on $(x_0,x_1)$, and $g=g_2$ on $[x_1,1]$.

It is clear then that $I(g)(x)<I(g_2)(x)$ for all $x\in (0,x_0)$. Also, for $x\in [x_1,1]$, subtracting the equal integrals of $g$ and $g_2$ on $[x,1]$ from $I(g)(1)=I(g_2)(1)$ we see that $I(g)(x)=I(g_2)(x)$. In particular, $I(g)(x_1)=I(g_2)(x_1)$.  Suppose that for some $x\in (x_0,x_1)$, we had $I(g)(x)\geq I(g_2)(x)$. Since $g>g_2$ on $(x,x_1)$, that would give $I(g)(x_1) > I(g)(x_1)$, a contradiction.
\end{proof}

\begin{thm}\label{t:a-e}
Consider the following inequalities for real numbers $a,b,c$.
\begin{enumerate}[\rm(a)]
\item
$0\leq a\leq 1$

\item
$a^2/2\leq b\leq a/2$

\item
$2b^2\leq 3ac$

\item
$6(1-a)c\leq -a^2+2ab-4b^2+2b$

\item
$0\leq c\leq a/6$
\end{enumerate}
We have the following.
\begin{enumerate}
\item
Let $f\colon[0,1]\to[0,1]$ be increasing, and let $a=I(f)(1)$, $b=I^2(f)(1)$, $c=I^3(f)(1)$. Then {\rm(a)--(e)} hold.

\item
If $a,b,c$ are real numbers satisfying {\rm(a)--(e)}, then
there is an increasing function $f\colon[0,1]\to[0,1]$ such that $a=I(f)(1)$, $b=I^2(f)(1)$, $c=I^3(f)(1)$.

\item
The following table describes the conditions on $f$ in order for the inequalities in {\rm(a)--(e)} to hold with equality. When $0<a<1$, we let
\[
l=\frac{2b^2}{3a},\ \ r=\frac{-a^2+2ab-4b^2+2b}{6(1-a)}.
\]
\begin{enumerate}[\rm(i)]
\item
The first column of the table lists $7$ exhaustive and mutually exclusive conditions on $(a,b,c)$ that can hold under {\rm(a)--(e)}.

\item
In each numbered row, the second column lists the increasing functions $f\colon[0,1]\to[0,1]$ which satisfy the condition.
Except in row $7$, the listed function is either constant or a $2$-step step function and is uniquely determined up to equality modulo a finite set.
The \raisebox{1pt}{\sz $\bigstar$} in row $7$ indicates that all increasing functions $f\colon[0,1]\to[0,1]$ which are not $=_\fin$ to the ones in rows $1$--$6$ satisfy that condition.

\item
The third column gives the corresponding value of the triple $(a,b,c)$, where $a=I(f)(1)$, $b=I^2(f)(1)$, $c=I^3(f)(1)$. The rest of the columns indicate which of the eight inequalities in {\rm(a)--(e)} hold with equality.
\end{enumerate}
\end{enumerate}
\end{thm}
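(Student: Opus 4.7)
The plan is to prove the three parts in sequence, with the heavy lifting already done by Propositions \ref{n=1.a}, \ref{n=2.a}, \ref{p:n.equals.3}, \ref{p:I2.of.f}, \ref{p:g1g2}, and the arithmetic of Proposition \ref{p:prelim.ineq}. The necessity direction (part 1) and the analysis of when inequalities are sharp (part 3) both reduce to piecing together the strict versus non-strict information from those propositions, while the sufficiency direction (part 2) is a case analysis using the two-step step functions $f_{uva}$ as building blocks.

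For part (1), inequality (a) is Proposition \ref{n=1.a} and (b) is Proposition \ref{n=2.a}(4). I would split on the value of $b$ relative to the bounds in (b). If $b=a^2/2$ or $b=a/2$, then (c) and (d) will be forced to hold with equality by Proposition \ref{p:prelim.ineq}(2)(iii), since the corresponding triple is a solution to the system $(\al)$; the value of $c$ in these cases is $a^3/6$ or $a/6$ respectively (computed directly from the explicit $f$'s or from Proposition \ref{p:prelim.ineq}(1)). Otherwise $a^2/2<b<a/2$, which forces $0<a<1$ by Proposition \ref{n=2.a}(4), and then (c)--(d) are exactly the content of Proposition \ref{p:n.equals.3}. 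Inequality (e) is $0\leq c\leq a/6$: the lower bound follows since $f\geq 0$, and the upper bound follows from Proposition \ref{p:prelim.ineq}(2)(vi) when $0\leq a<1$, with the case $a=1$ handled directly.

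For part (2), I would handle the edge cases first: $(0,0,0)$ by $f=0$; $(1,1/2,1/6)$ by $f=1$; triples with $0<a<1$ and $b=a^2/2$ by $f=f_{1-a,0,a}$; and triples with $0<a<1$ and $b=a/2$ by $f$ the constant $a$ (with $c=a/6$ forced by (c)--(d) and Proposition \ref{p:prelim.ineq}(1)). In the remaining generic case $0<a<1$ and $a^2/2<b<a/2$, Proposition \ref{p:I2.of.f} shows that the two-step functions $f_{uv}$ with $(u,v)$ on the segment of $L$ from $(u_1,v_1)$ to $(u_2,v_2)$ all satisfy $I(f)(1)=a$ and $I^2(f)(1)=b$, and by Proposition \ref{p:g1g2} the map $u\mapsto I^2(g_{uv})(1)$ is linear, ranging over $[l,r]$; an intermediate value argument picks out the $u$ realizing the target $c$.

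For part (3), I would organize the seven rows of the table as the exhaustive disjunction: (1) $a=0$, (2) $a=1$, (3) $0<a<1$ and $b=a^2/2$, (4) $0<a<1$ and $b=a/2$, (5) $0<a<1$, $a^2/2<b<a/2$, $c=l$, (6) same but $c=r$, and (7) all strict. Rows 1--4 force equality in (b) and hence in (c)--(d) by Proposition \ref{p:prelim.ineq}(2)(iii). For rows 5 and 6, the strict parts of Proposition \ref{p:n.equals.3} give that equality $c=l$ (respectively $c=r$) characterizes $f=_\fin f_{u_1,v_1,a}$ (respectively $f_{u_2,v_2,a}$); conversely these functions achieve those extremal values by Proposition \ref{p:g1g2}. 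Row 7 is the complement, where Proposition \ref{n=2.a}(4) and Proposition \ref{p:n.equals.3} together give that all inequalities are strict. The uniqueness up to $=_\fin$ of the function in rows 1--6 is immediate from the explicit descriptions, using Proposition \ref{p:two.step.0.1} to identify the two-step cases from their $(a,b)$ data.

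The main obstacle will be bookkeeping: making sure the seven rows partition the solution set of (a)--(e) and that each row realizes exactly the stated equalities (neither more nor fewer), which requires tracking the logical interplay between (b), (c), (d), (e) through Proposition \ref{p:prelim.ineq}(2). The auxiliary claims in Theorem C's statement (that (a), (b), or (c) can sometimes be omitted) then follow by inspection: (a) follows from (b) since $a^2/2\leq a/2$ already forces $0\leq a\leq 1$; (b) follows from (a), (c), (d) by Proposition \ref{p:prelim.ineq}(2)(iii); and (c) follows from (a), (b), (d) when $0<a<1$ by rewriting (d) as $x\leq r$ and using $l\leq r$ from Remark \ref{r:l.r}.
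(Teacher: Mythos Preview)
Your proof is correct and follows essentially the same approach as the paper: the same case analysis on $a$ and $b$, the same appeals to Propositions \ref{n=1.a}, \ref{n=2.a}, \ref{p:n.equals.3}, \ref{p:g1g2}, and \ref{p:prelim.ineq}, and the same intermediate-value argument along the line $L$ for part (2).

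One caveat on your closing paragraph (which addresses claims outside the stated theorem): the argument you give for ``(c) follows from (a), (b), (d) when $0<a<1$'' does not work, since $c\leq r$ together with $l\leq r$ does not yield $c\geq l$. For instance, $a=1/2$, $b=1/5$, $c=0$ satisfies (a), (b), (d), (e) but violates (c). The paper's Remark \ref{r:converse} in fact shows it is (e), not (c), that becomes redundant when $0<a<1$.
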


{\sz
\[
\rule{1cm}{0cm}\begin{array}{|r|l|c|l|c|c|c|c|c|c|c|c|}
\cline{5-12}
\multicolumn{3}{c}{} & & \multicolumn{8}{c|}{\rule{0pt}{10pt}\text{Inequalities which hold with equality}} \\[2pt]
\hline
& \rule{0pt}{10pt}\text{Condition} & f & (a,b,c) & 0\leq a & a\leq 1 & a^2/2\leq b & b\leq a/2 & {\rm(c)} & {\rm(d)} & 0\leq c & c\leq a/6 \\[2pt]
\hline
1. & \rule{0pt}{10pt}a=0      & 0 & (0,0,0)     & \cm &     & \cm & \cm & \cm & \cm & \cm & \cm \\
2. & \rule{0pt}{10pt}a=1      & 1 & (1,1/2,1/6) &     & \cm & \cm & \cm & \cm & \cm &     & \cm \\[2pt]
\hline
& \rule{0pt}{10pt}\underline{0<a<1} &&&&&&&&&&\\
3. & \rule{0cm}{10pt}\ \ \ \bullet\ b=a^2/2 & f_{1-a,\,0} & (a,a^2/2,a^3/6) &     &     & \cm &    & \cm & \cm &    & \\
4. & \rule{0pt}{10pt}\ \ \ \bullet\ b=a/2 & a & (a,a/2,a/6) &     &     &     & \cm & \cm & \cm &  & \cm \\[2pt]
\hline
& \rule{0pt}{10pt}\underline{0<a<1} &&&&&&&&&&\\
& \rule{0pt}{10pt}\underline{a^2/2<b<a/2} &&&&&&&&&&\\
5. & \rule{0pt}{10pt}\ \ \ \bullet\ c=l & f_{u_1,v_1} & (a,b,l) &     &     & &  & \cm &  &     &  \\
6. & \rule{0pt}{10pt}\ \ \ \bullet\ c=r & f_{u_2,v_2} & (a,b,r) &     &     & &  &  & \cm &  &  \\[2pt]
\hline
& \rule{0pt}{10pt}\underline{0<a<1} &&&&&&&&&&\\
7. & \rule{0pt}{10pt}\underline{a^2/2<b<a/2} & \bigstar & (a,b,c) &&&&&&&&\\
& \rule{0pt}{10pt}\underline{l<c<r} &&&&&&&&&&\\[2pt]
\hline
\end{array}
\]}

\m

\begin{rem}
It follows, as claimed in Theorem C in the introduction, that all inequalities are strict unless $f$ is equal modulo finite to a constant function (rows 1, 2, 4) or a $2$-step step function having value $0$ on the first step or value $1$ on the second step (rows 3, 5, 6). (See Remark \ref{r:u1u2} (b) and Proposition \ref{p:two.step.0.1}.)
\end{rem}

\begin{rem}\label{r:converse}
We make two observations concerning conclusion (2) of the theorem.

(i) In the presence of (c) and (d), clauses (a) and (b) are equivalent by Proposition \ref{p:prelim.ineq} (2)(iii), so one of them could be dropped.

(ii) Clause (e) follows from (c) and (d) if $0<a<1$ because then (c) clearly implies $c\geq 0$ and by Proposition \ref{p:prelim.ineq} (2)(vi), (d) implies $c\leq a/6$. However, (e) is needed for (2), to cover the cases $a=0$ and $a=1$. When $a=0$, by Proposition \ref{p:prelim.ineq} (2)(i), any triple $(a,b,c)=(0,0,c)$ satisfies (c) and (d) as long as $c\leq 0$. However, any increasing $f\colon[0,1]\to[0,1]$ with $a=I(f)(1)=0$ satisfies $f(x)=0$ for $0<x<1$, so $c=I^3(f)(1) = 0$. And when $a=1$, by Proposition \ref{p:prelim.ineq} (2)(ii), any triple $(a,b,c)=(1,1/2,c)$ satisfies (c) and (d) as long as $c\geq 1/6$. However, any increasing $f\colon[0,1]\to[0,1]$ with $a=I(f)(1)=1$ satisfies $f(x)=1$ for $0<x<1$, so $I(f)(x)=x$, $I^2(f)(x)=x^2/2$, $I^3(f)(x)=x^3/6$, giving $c=I^3(f)(1)=1/6$.
\end{rem}

\begin{proof}
(1) We have clauses (a) and (b) by Propositions \ref{n=1.a} and \ref{n=2.a}, respectively. When $a=0$, necessarily $f(x)=0$ for $0<x<1$, so $b=c=0$ as well and hence (c), (d), (e) all hold. When $a=1$, necessarily $f(x)=1$ for $0<x<1$, giving $b=I^2(f)(1)=1/2$, $c=I^3(f)(1)=1/6$ and hence (c), (d), (e) all hold.

When $0<a<1$, Proposition \ref{n=2.a}(4) states that if $b=I^2(f)(1)=a^2/2$ then $f(x)=0$ for $0<x<1-a$ and $f(x)=1$ for $1-a<x<1$. Thus, $I(f)(x)=(x-(1-a))_+$, $I^2(f)(x)=(x-(1-a))_+^2/2$, $I^3(f)(x)=(x-(1-a))_+^3/6$ and hence $c=I^3(f)(1)=a^3/6$. And if $b=I^2(f)(1)=a/2$ then $f(x)=a$ for $0<x<a$. Thus, $I(f)(x)=ax$, $I^2(f)(x)=ax^2/2$, $I^3(f)(x)=ax^3/6$ and hence $c=I^3(f)(1)=a/6$. In both cases, (c) and (d) hold by Proposition \ref{p:prelim.ineq} (1) and (e) clearly holds.

When $0<a<1$, and $a^2/2<b<a/2$, (c) and (d) hold by Proposition \ref{p:n.equals.3} (2), and then (e) holds by Proposition \ref{p:prelim.ineq} (2)(vi).

(2) When $a=0$, from (b) we get $b=0$ and from (d) and (e) we get $c=0$. We can take $f(x)=0$ for all $x$.
When $a=1$, from (b) we get $b=1/2$, and from (c) and (e) we get $c=1/6$. We can take $f(x)=1$ for all $x$.

Now suppose $0<a<1$. If $b=a^2/2$ then by Proposition \ref{p:prelim.ineq} (2)(iii) and (1), we have $c=a^3/6$. We can take $f=f_{1-a,\,0}$.
This function has the required values of $I^n(f)(1)$, $n=1,2,3$.
If $b=a/2$ then by Proposition \ref{p:prelim.ineq} (2)(vi) and (1), we have $c=a/6$. We can take
$f(x)=a$ for all $x$. This function clearly has the required values of $I^n(f)(1)$, $n=1,2,3$.

Finally suppose that $0<a<1$ and $a^2/2<b<a/2$. By (c) and (d), we have $I(g_1)(1)\leq c\leq I(g_2)(1)$. By Proposition \ref{p:g1g2}, there is a point $(u,v)$ on the line denoted $L$ there, with $u_1\leq u\leq u_2$, such that $I^3(f_{uv})(1)=c$. Since $(u,v)\in L$, we have $I^2(f)(1)=b$, and since $u_1\leq u\leq u_2$, $(u,v)$ belongs to the closure of the triangle $T$ and therefore $f_{uv}$ is increasing (Proposition \ref{p:T}) and $I(f_{uv})(1)=a$.

(3) The information in the rows $a=0$ and $a=1$ is provided by Proposition \ref{n=1.a} with the values of $b,c$ being easy computations.
When $0<a<1$, for the rows $b=a/2$ and $b=a^2/2$, we use Proposition \ref{n=2.a} (4) with the values of $c$ being easy computations.

Then in the remaining rows, we have $0<a<1$ and $a^2/2<b<a/2$. For the rows $c=l$ and $c=r$, we use Proposition \ref{p:n.equals.3}. By Proposition \ref{p:g1g2}, $l<r$, so each of $c=l$, $c=r$ implies the negation of the other. That $c<a/6$ follows from Proposition \ref{p:prelim.ineq} (vi) from which we see that $c=a/6$ only holds when $b=a/2$.
\end{proof}

\section{The main theorem}
\label{s:fuv}

In this section, we establish our main theorem.

\begin{thm}
\label{t:n=3.b}
Let $a,b,c$ be positive numbers satisfying
\[
0<a<1,\ \ \frac{a^2}2 < b < \frac{a}2,\ \ \frac{2b^2}{3a} < c < \frac{-a^2 + 2ab -4b^2 + 2b}{6(1-a)}.
\]
For each small enough $\de>0$, there is a $C^\infty$ function $f\colon [0,1]\to[0,1]$ such that $f=\s_\de$ on $[0,\de/2]$, $f=\tau_\de$ on $[1-\de/2,1]$, $Df>0$ on $(0,1)$, $I(f)(1)=a$, $I^2(f)(1)=b$ and $I^3(f)(1)=c$.
\end{thm}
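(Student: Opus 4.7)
The strategy, foretold in the introduction, is to build a continuous four-parameter family $s\mapsto\rho_s\in C^\infty([0,1])$ and apply Proposition \ref{p:cont.of.sect} four times in succession, one transversal per coordinate, to pin the four values $\rho_s(1)$, $I(\rho_s)(1)$, $I^2(\rho_s)(1)$, $I^3(\rho_s)(1)$ to the targets $1,a,b,c$. The combinatorial skeleton to be smoothed is supplied by Theorem \ref{t:a-e}: since $(a,b,c)$ satisfies the strict inequalities, there is a unique $(u_0,v_0)$ in the open triangle $T$, on the line $L:y=ax-a+2b$, such that the two-step function $f_{u_0 v_0 a}$ realizes $I(f)(1)=a$, $I^2(f)(1)=b$, $I^3(f)(1)=c$ exactly (Propositions \ref{p:I2.of.f} and \ref{p:g1g2}).

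Choose a small rounding radius $\eta\ll\delta$ and, for $s=(s_1,s_2,s_3,s_4)$ in an open neighborhood $U\sq\R^4$ of a central value $s_0$, define a continuous family of piecewise-linear $F_s\colon[0,1]\to\R$ equal to $\s_\delta$ on $[0,\delta/2]$ and $\tau_\delta$ on $[1-\delta/2,1]$, extended as the identity on $[\delta/2,\delta]$ and $[1-\delta,1-\delta/2]$ (so that all derivatives match automatically at $\delta/2$ and $1-\delta/2$), and forming on $[\delta,1-\delta]$ an increasing piecewise-linear ``smoothed step''. The four coordinates of $s$ control four independent geometric features of this interior skeleton---for instance the two plateau heights and the two transition abscissae---with $F_{s_0}$ a close piecewise-linear approximation to $f_{u_0 v_0 a}$. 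Round all interior corners uniformly at radius $\eta$ using Corollary \ref{c:roundoffcorners} to obtain $\rho_s\in C^\infty([0,1])$. Continuity of $s\mapsto\rho_s$ into $C^\infty([0,1])$ follows from that corollary together with Propositions \ref{p:C.infty.by.cases} and \ref{p:restr.cont}; positivity $D\rho_s>0$ on $(0,1)$ for $s$ near $s_0$ follows from Proposition \ref{roundoffcorners} since every linear segment of $F_s$ has strictly positive slope; and the four integrals at $s_0$ tend to $(1,a,b,c)$ as $\delta,\eta\to 0$ by Proposition \ref{p:properties.of.tau.2}.

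Set $\Psi(s)=(\rho_s(1), I(\rho_s)(1), I^2(\rho_s)(1), I^3(\rho_s)(1))$; this map is continuous. Order the coordinates of $s$ so that each component $I^{j-1}(\rho_s)(1)$ is strictly monotone in $s_j$ on a uniform neighborhood of the relevant section through $s_0$; equivalently, so that the Jacobian of $\Psi$ at $s_0$ is triangular with nonzero diagonal. Applying Proposition \ref{p:cont.of.sect} to the last coordinate yields a continuous section $s_4=\psi_4(s_1,s_2,s_3)$ on which $I^3(\rho_s)(1)=c$; restricting to this section and applying the proposition to $s_3$ gives $I^2=b$; then to $s_2$ gives $I=a$; finally to $s_1$ gives $\rho_s(1)=1$. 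The resulting $s^*\in U$ produces $f:=\rho_{s^*}\colon[0,1]\to[0,1]$ (for $\delta,\eta$ small enough) which is $C^\infty$, equals $\s_\delta$ on $[0,\delta/2]$ and $\tau_\delta$ on $[1-\delta/2,1]$, has $Df>0$ on $(0,1)$, and realizes $I(f)(1)=a$, $I^2(f)(1)=b$, $I^3(f)(1)=c$.

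The main obstacle is Step~2: engineering the four-parameter skeleton so that the effects of $s_1,\dots,s_4$ on $\Psi$ are appropriately triangular and strictly monotone over uniform neighborhoods, and so that each successive transversal stays inside the open subset of $U$ on which $D\rho_s>0$. Establishing this reduces to producing four explicit geometric perturbations of the smoothed step whose infinitesimal effects on $(\rho_s(1),I(\rho_s)(1),I^2(\rho_s)(1),I^3(\rho_s)(1))$ are linearly independent at $s_0$; once this is done, choosing $\delta$ and $\eta$ small enough makes the discrepancy between $\Psi(s_0)$ and $(1,a,b,c)$ small enough that the transversal chain remains well-defined throughout, and all target values remain in the interior of the ranges through which the successive monotone maps sweep.
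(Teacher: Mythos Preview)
Your outline is the same strategy the paper uses—build a continuous parametric family $\rho_s$ approximating the two-step functions $f_{uvt}$, then thread successive transversals through it via Proposition \ref{p:cont.of.sect}—but two points of your execution diverge from the paper in ways that matter.

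First, you need only three parameters and three transversals, not four: with the boundary pieces $\s_\de$ and $\tau_\de$ glued in, $\rho_s(1)=\tau_\de(1)=1$ is automatic, so there is nothing to pin there. The paper uses $(u,v,t)$ (with $\de$ held fixed small).

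Second, and more substantively, the \emph{order} of the transversals is not arbitrary. You propose fixing $I^3=c$ first, then $I^2=b$, then $I=a$; the paper goes in the opposite order, and for a reason. The monotonicity of $v\mapsto I^2(\rho_{\de u v})(1)$ (Proposition \ref{p:I2}) is established \emph{only after} one has already restricted to the slice $I(\rho)(1)=a$: on that slice, increasing $v$ raises $m_1$ and is forced to lower $m_2$, so Lemma \ref{l:g.incre} and the crossing argument of Lemma \ref{l:f<g>f} give $I(\rho_{\de u v})\leq I(\rho_{\de u v'})$ pointwise, whence $I^2(\rho_{\de u v})(1)<I^2(\rho_{\de u v'})(1)$. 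Without the constraint $I=a$, varying $v$ moves both $I$ and $I^2$, and no such clean comparison is available. Your triangular-Jacobian formulation would recover this locally if you could show $\Psi$ is $C^1$ with the right triangular structure, but that is precisely the work you defer to ``Step 2,'' and the natural triangularity is in the order $I\to I^2\to I^3$, not the reverse.

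The paper replaces the Jacobian heuristic with the explicit comparison Lemma \ref{l:g.incre} (a translate-and-compare argument using Corollary \ref{c:translate.conv}) together with the quantitative closeness Lemmas \ref{l:int.close}, \ref{l:int.close.b}, \ref{l:int.close.d} to guarantee each transversal lands in the interior of the next parameter range. That is the content your sketch is missing.
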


\begin{rem}
Either of the first two displayed conditions could be omitted since the first condition is an easy consequence of the second, and, by Proposition \ref{p:prelim.ineq} (2)(iv), the second follows from the first and third..
\end{rem}

Fix $a,b,c$ as in the statement of the theorem. Recall that for $0<u<1$, $v,t\in\R$, we have
\[
f_{uvt}(x) = \begin{cases}
v/u & \text{for $0\leq x<u$} \\
(t-v)/(1-u) & \text{for $u\leq x\leq 1$},
\end{cases}
\]
and we have $I(f_{uvt})(u)=v$ and $I(f_{uvt})(1) = t$.

Most clauses in the following definition correspond to the inequalities in Proposition \ref{p:T} (2)(a) which describe when $(u,v)$ belongs to the open triangle $T$ defined in (2)(b) of that proposition.

\begin{defn}
Let $S$ be the open subset of $\R^4$ defined by letting $(\de,u,v,t)\in S$ when the following conditions hold:
\begin{itemize}
\item
$0<\de<1$

\item
$3\de < u < 1-3\de$

\item
$2\de<v/u$

\item
$v/u + 2\de < (t-v)/(1-u) < 1-3\de$
\end{itemize}
\end{defn}

\begin{prop}\label{p:S*}
There are continuous functions $\de,t_1,t_2\colon T\to\R$  such that for each $(u,v)\in T$, $\de(u,v)>0$, $t_1(u,v) + 6\de(u,v) < a < t_2(u,v) - 6\de(u,v)$
and when $0<\de<\de(u,v)$ and $t_1(u,v) \leq t \leq t_2(u,v)$ we have $(\de,u,v,t)\in S$.
\end{prop}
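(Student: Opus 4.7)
The plan is to construct $t_1$, $t_2$, and $\de$ explicitly by translating the defining inequalities of $T$ and of $S$ into continuous bounds. The key observation is that the defining conditions of $T$, namely $v>(u-(1-a))_+$ and $v<au$, are equivalent (after dividing the second by $u>0$ and rearranging the first) to the two strict inequalities
\[
\frac{v}{u}<a<v+1-u
\]
holding throughout $T$. This motivates defining
\[
t_1(u,v)=\tfrac{1}{2}\bigl(v/u+a\bigr),\qquad t_2(u,v)=\tfrac{1}{2}\bigl(a+v+1-u\bigr),
\]
i.e., the midpoints of the intervals $(v/u,a)$ and $(a,v+1-u)$. These are continuous on $T$ and satisfy $v/u<t_1(u,v)<a<t_2(u,v)<v+1-u$ throughout $T$.

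Next, I would rewrite each strict inequality appearing in the definition of $S$, together with the required strict inequalities $t_1+6\de<a<t_2-6\de$, as an upper bound on $\de$. Using the monotonicity of $t\mapsto(t-v)/(1-u)$, the two $t$-dependent conditions $v/u+2\de<(t-v)/(1-u)$ and $(t-v)/(1-u)<1-3\de$ hold uniformly for $t\in[t_1(u,v),t_2(u,v)]$ if and only if they hold at $t=t_1$ and $t=t_2$ respectively. A direct calculation (using $ut_1-v=(au-v)/2$ and $v+1-u-t_2=(v+1-u-a)/2$) gives the positive bounds $\de<(au-v)/(4u(1-u))$ and $\de<(v+1-u-a)/(6(1-u))$. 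The remaining conditions yield the simple bounds $\de<1$, $\de<u/3$, $\de<(1-u)/3$, $\de<v/(2u)$, $\de<(a-v/u)/12$, and $\de<(v+1-u-a)/12$. Each of these finitely many bounds is a strictly positive continuous function on $T$, so taking $\de(u,v)$ to be $\tfrac{1}{2}$ of their minimum produces a strictly positive, continuous function on $T$, and the factor $\tfrac12$ guarantees that every inequality in question becomes strict whenever $0<\de<\de(u,v)$.

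There is no real obstacle here beyond routine bookkeeping: one must verify that the bounds really are continuous and positive throughout $T$ (which follows from $(u,v)\in T$ ensuring $0<u<1$, $0<v<au$, and $u+a-1<v$), and that the slack factor $\tfrac12$ turns each non-strict bound into a strict one. In particular, the strict inequality $6\de(u,v)<a-t_1(u,v)$ holds because $\de(u,v)\leq (a-v/u)/24=(a-t_1)/12$, giving $6\de(u,v)\leq (a-t_1)/2<a-t_1$, and similarly for the right-hand estimate $6\de(u,v)<t_2(u,v)-a$.
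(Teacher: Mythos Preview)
Your proof is correct and follows essentially the same approach as the paper's: translate each defining inequality of $S$ (and the two extra inequalities $t_1+6\de<a<t_2-6\de$) into an upper bound on $\de$ that is positive and continuous on $T$, then take $\de(u,v)$ to be half their minimum. Your explicit choice of $t_1,t_2$ as the midpoints $(v/u+a)/2$ and $(a+v+1-u)/2$, independent of $\de$, is in fact a bit cleaner than the paper's version, where $t_1,t_2$ are defined only after $\de(u,v)$ has been fixed and depend on it.
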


\begin{proof}
For each $(u,v)\in T$ we have, by Proposition \ref{p:T}, $v/u < (a-v)/(1-u)<1$. Using these inequalities, it is easily verified that the solutions $f(u,v)$ and $g(u,v)$ to the equations
\[
\frac{v}u + 2f(u,v) = \frac{a-6f(u,v)-v}{1-u}\ \text{and}\ \frac{a+6g(u,v)-v}{1-u} = 1 - 3g(u,v)
\]
are positive continuous functions on $T$. Four of the inequalities defining $(\de,u,v,t)\in S$ state that $\de<\varphi_i(u,v)$, $i=1,2,3,4$, where $\varphi_1(u,v)=1$, $\varphi_2(u,v)=u/3$, $\varphi_3(u,v)=(1-u)/3$, $\varphi_4(u,v)=v/(2u)$. Take $\de(u,v) = \min((1/2)f(u,v),\,(1/2)g(u,v),\,\varphi_i(u,v)\,(i=1,2,3,4))$. Then
\[
\frac{v}u + 2\de(u,v) < \frac{a-6\de(u,v)-v}{1-u} < \frac{a+6\de(u,v)-v}{1-u} < 1 - 3\de(u,v).
\]
Define $t_1(u,v)$ and $t_2(u,v)$ so that $(t_1(u,v)-v)/(1-u)$ is the midpoint between $v/u+2\de(u,v)$ and $(a-6\de(u,v)-v)/(1-u)$ and $(t_2(u,v)-v)/(1-u)$ is the midpoint between $(a+6\de(u,v)-v)/(1-u)$ and $1-3\de(u,v)$. Then $t_1$ and $t_2$ are clearly continuous.

Suppose $(u,v)\in T$, $0<\de<\de(u,v)$ and $t_1(u,v) \leq t \leq t_2(u,v)$. We want to verify that $(\de,u,v,t)\in S$. From the definitions of $t_1$ and $t_2$, we have
\[
\frac{v}u + 2\de(u,v) < \frac{t_1(u,v)-v}{1-u}<\frac{a-6\de(u,v)-v}{1-u} < \frac{a+6\de(u,v)-v}{1-u} < \frac{t_2(u,v)-v}{1-u} < 1 - 3\de(u,v).
\]
The second and fourth inequalities yield $t_1(u,v) + 6\de(u,v) < a < t_2(u,v) - 6\de(u,v)$.
Since $\de<\de(u,v)$ and $t_1(u,v) \leq t \leq t_2(u,v)$, we have
\[
\frac{v}u + 2\de < \frac{t-v}{1-u}< 1 - 3\de.
\]
Other than $\de>0$, the remaining inequalities in the definition of $(\de,u,v,t)\in S$ follow from $\de<\de(u,v)\leq \varphi_i(u,v)$, $i=1,2,3,4$.
\end{proof}

We state the following simple fact as a lemma for ease of reference.

\begin{lem}\label{l:delta.room}
Let $0<\de<1$. Let $A=(a_1,a_2)$, $B=(b_1,b_2)$ be points on a line $M$ of slope $1/\de$, with $a_1<b_1$.
Let $L$ be the line of slope $\de$ through $A$. Let $d$ be the distance from $B$ to $L$ measured along the vertical line through $B$. For any real number $r$, we have the following.
\[
\begin{tikzpicture}
\def\u{2.8cm}
\draw (1*\u,0) -- (0,0) -- (0,1*\u);
\draw (0.25*\u,0) -- (0.75*\u,1*\u) node[above] {\sz $M$};
\draw (0,0.1*\u) -- (1*\u,0.6*\u) node[right] {\sz $L$};
\draw (0,0.4*\u) -- (1*\u,0.9*\u) node[right] {};
\draw ({(3/5)*\u},0) -- ++(0,1*\u);
\coordinate (A) at ({(2/5)*\u},{(3/10)*\u});
\coordinate (B) at ({(3/5)*\u},{(7/10)*\u});
\coordinate (C) at ({(3/5)*\u},0.4*\u);
\coordinate (D) at ($(B)!0.5!(C)$);
\node[below right,fill=white,inner sep=2] at (A) {\sz $A=(a_1,a_2)$};
\node[above left,fill=white,inner sep=3] at (B) {\sz $B=(b_1,b_2)$};
\node[right,inner sep=2,yshift=2pt] at (D) {\sz $d$};
\node[left,inner sep=2] at (0,0.25*\u) {\sz $d$};
\fill (A) circle [radius=0.02*\u];
\fill (B) circle [radius=0.02*\u];
\end{tikzpicture}
\]
\begin{enumerate}
\item
$b_2-a_2>r$ if and only if $b_1-a_1>\de r$, and the same holds with $>$ replaced by $<$ or $=$.

\item
$b_2-a_2>r$ if and only if $d>(1-\de^2)r$. In particular, $b_2-a_2>d$.
\end{enumerate}
\end{lem}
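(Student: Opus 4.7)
The plan is to reduce both parts to a single identity relating $b_2-a_2$, $b_1-a_1$, and $d$. Since $A$ and $B$ lie on the line $M$ of slope $1/\delta$ with $a_1<b_1$, I would first record the equation
\[
b_2-a_2 = \frac{1}{\delta}(b_1-a_1),
\]
from which part (1) is immediate: multiplying both sides of any of the relations $b_2-a_2 \lessgtr r$ (or $=r$) by $\delta>0$ and substituting gives the stated equivalence with $b_1-a_1 \lessgtr \delta r$.

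For part (2), I would compute $d$ directly. The line $L$ through $A$ of slope $\delta$ is parametrized by $y=a_2+\delta(x-a_1)$, so its point above $b_1$ has height $a_2+\delta(b_1-a_1)$, and therefore
\[
d = b_2 - a_2 - \delta(b_1-a_1).
\]
Substituting the formula $b_1-a_1=\delta(b_2-a_2)$ from part (1) yields
\[
d = (1-\delta^2)(b_2-a_2).
\]
Since $0<\delta<1$, the factor $1-\delta^2$ is positive, so the inequality $d>(1-\delta^2)r$ is equivalent to $b_2-a_2>r$. The final assertion $b_2-a_2>d$ then follows at once because $0<1-\delta^2<1$ and $b_2-a_2>0$ (the latter from $b_1>a_1$ combined with the slope $1/\delta>0$ of $M$).

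There is no real obstacle here: the lemma is a purely computational observation about how vertical and diagonal offsets on two lines of reciprocal slopes $\delta$ and $1/\delta$ compare, and everything reduces to the single identity $d=(1-\delta^2)(b_2-a_2)$. I would keep the write-up to a few lines, essentially just the two displayed equations above and the observation that $1-\delta^2>0$.
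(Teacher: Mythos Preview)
Your proof is correct and follows essentially the same route as the paper: both parts reduce to the identities $b_1-a_1=\delta(b_2-a_2)$ and $d=(1-\delta^2)(b_2-a_2)$, after which the equivalences are immediate from $\delta>0$ and $1-\delta^2>0$. The only cosmetic difference is that the paper computes $d$ as the difference of the $y$-intercepts of the two parallel lines of slope $\delta$ through $A$ and $B$, whereas you evaluate $L$ at $x=b_1$ directly; the resulting formula is the same.
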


\begin{proof}
(1) We have $\de(b_2-a_2) = (b_1-a_1)$ and hence $b_2-a_2>r$ if and only if $b_1-a_1>\de r$ where we can replace $>$ by $<$ or $=$.

(2) The lines of slope $\de$ through $A$ and $B$ have equations $y=\de (x-a_1) + a_2$ and $y=\de (x-b_1) + b_2$, respectively.  The distance from $B$ to $L$ measured along the vertical line through $B$ is the difference of the $y$-intercepts of these lines, $d = (b_2-\de b_1) - (a_2 - \de a_1) = (b_2 - a_2) - \de(b_1 - a_1) = (b_2-a_2)(1 - \de^2)$. It follows that $b_2-a_2>r$ is equivalent to $d>(1-\de^2)r$. When $r=d$, the inequality $d>(1-\de^2)r$ holds, so $b_2-a_2>d$.
\end{proof}

Fix $(\de,u,v,t)\in S$. Note that this implies that $f_{uvt}$ maps into $(0,1)$ with $v/u<(t-v)/(1-u)$.
To simplify the description that follows, write $m_1=v/u$, $m_2 = (t-v)/(1-u)$. Our assumption on $(\de,u,v,t)$ is thus that $0<\de<1$, $3\de < u < 1-3\de$ and
\[
2\de<m_1 < m_1 + 2\de < m_2 < 1-3\de.
\]
For each $(\de,u,v,t)\in S$, define a continuous function $\psi_{\de u v t}$ to be $\s_\de$ on $[0,\de]$, to be $\tau_\de$ on $[1-\de,1]$, and to be the piecewise linear function on $[\de,1-\de]$ given by joining with line segments the points $A\to B\to C\to D\to E\to F$, where $A=(\de,\de)$, $F = (1-\de,1-\de)$, $B$ and $C$ are chosen on the line $y=m_1+\de x$ where it meets the lines of slope $1/\de$ through $A$ and $(u,0)$, respectively, and $D$ and $E$ are chosen on the line $y=m_2+\de x$ where it meets the lines of slope $1/\de$ through $(u,0)$ and $F$, respectively.
\[
\begin{tikzpicture}
\def\u{4.4cm}
\def\delt{0.1}
\def\a{0.55}
\def\bb{0.2} %between a^2/2 and a/2
\def\mm{0.4} %a value between 0 and a
\def\myu{0.5} %between \us and \ue
\pgfmathsetmacro\mmm{1 - \mm*\myu/\a}
\pgfmathsetmacro\us{1 - 2*\bb/\a}          %u-start
\pgfmathsetmacro\ue{(1-2*\a+2*\bb)/(1-\a)} %u-end
\pgfmathsetmacro\bx{(\delt/(1-\delt^2))*(\mm + 1 - \delt)}
\pgfmathsetmacro\by{\mm + \delt*\bx}
\pgfmathsetmacro\cx{(\delt/(1-\delt^2))*(\mm + \myu/\delt)}
\pgfmathsetmacro\cy{\mm + \delt*\cx}
\pgfmathsetmacro\dx{(\delt/(1-\delt^2))*(\mmm - \cy + \cx/\delt)}
\pgfmathsetmacro\dy{\mmm + \delt*\dx}
\pgfmathsetmacro\ex{(\delt/(1-\delt^2))*(\mmm - 2 + \delt + 1/\delt)}
\pgfmathsetmacro\ey{\mmm + \delt*\ex}
%x coordinates of the bottom of the dashed lines of slope 1/\de
\pgfmathsetmacro\ix{\delt - \delt^2}
\pgfmathsetmacro\jx{\myu}
\pgfmathsetmacro\kx{1 - 2*\delt + \delt^2}
\coordinate (A) at (\delt*\u,\delt*\u);
\coordinate (B) at (\bx*\u,\by*\u);
\coordinate (C) at (\cx*\u,\cy*\u);
\coordinate (D) at (\dx*\u,\dy*\u);
\coordinate (E) at (\ex*\u,\ey*\u);
\coordinate (F) at ({(1-\delt)*\u},{(1-\delt)*\u});
\draw (0,0) -- (1*\u,0) -- (1*\u,1*\u) -- (0,1*\u) -- cycle;
\draw[gray!50] (0,\delt*\u) -- ++(1*\u,0)
	(0,{(1-\delt)*\u}) -- ++(1*\u,0);
\draw[dashed]
	(0,\mm*\u) -- ++(1.1*\u,\delt*1.1*\u) node[right] {\sz slope $=\de$}
	(0,\mmm*\u) -- ++(1.1*\u,\delt*1.1*\u) node[right] {\sz slope $=\de$};
\draw[gray!50] (\delt*\u,0) -- ++(0,1*\u)
	(\myu*\u,0) -- ++(0,1*\u)
	({(1-\delt)*\u},0) -- ++(0,1*\u);
\draw[dashed]
	(\ix*\u,0) -- ++(\delt*1.1*\u,1.1*\u) node[above] {\sz slope $=1/\de$}
	(\jx*\u,0) -- ++(\delt*1.1*\u,1.1*\u) node[above,xshift=-0.5ex] {\sz slope $=1/\de$}
	(\kx*\u,0) -- ++(\delt*1.1*\u,1.1*\u) node[above,xshift=1ex] {\sz slope $=1/\de$};
    \coordinate (start0) at ($(0,0)!0.8!(A)$);
    \coordinate (end0)   at ($(A)!0.2!(B)$);
    \coordinate (start1) at ($(A)!0.7!(B)$);
    \coordinate (end1)   at ($(B)!0.1!(C)$);
    \coordinate (start2) at ($(B)!0.9!(C)$);
    \coordinate (end2)   at ($(C)!0.3!(D)$);
    \coordinate (start3) at ($(C)!0.7!(D)$);
    \coordinate (end3)   at ($(D)!0.1!(E)$);
    \coordinate (start4) at ($(D)!0.9!(E)$);
    \coordinate (end4)   at ($(E)!0.3!(F)$);
    \coordinate (start5) at ($(E)!0.8!(F)$);
    \coordinate (end5)   at ($(F)!0.2!(1*\u,1*\u)$);
    \coordinate (c1) at ($(start0) + 0.5*(\delt,\delt)$);
    \coordinate (c2) at ($(end0) - 0.45*(\bx-\delt,\by-\delt)$);
    \coordinate (c3) at ($(start1) + 0.5*(\bx-\delt,\by-\delt)$);
    \coordinate (c4) at ($(end1) - 0.45*(\cx-\bx,\cy-\by)$);
    \coordinate (c5) at ($(start2) + 0.5*(\cx-\bx,\cy-\by)$);
    \coordinate (c6) at ($(end2) - 0.45*(\dx-\cx,\dy-\cy)$);
    \coordinate (c7) at ($(start3) + 0.5*(\dx-\cx,\dy-\cy)$);
    \coordinate (c8) at ($(end3) - 0.45*(\ex-\dx,\ey-\dy)$);
    \coordinate (c9) at ($(start4) + 0.5*(\ex-\dx,\ey-\dy)$);
    \coordinate (c10) at ($(end4) - 0.45*(1-\delt-\ex,1-\delt-\ey)$);
    \coordinate (c11) at ($(start5) + 0.5*(1-\delt-\ex,1-\delt-\ey)$);
    \coordinate (c12) at ($(end5) - 0.45*(\delt,\delt)$);
\draw[red,thick] (A) -- (B) -- (C) -- (D) -- (E) -- (F);
    \draw[blue,thick] (start0) .. controls (c1) and (c2) .. (end0);
    \draw[blue,thick] (start1) .. controls (c3) and (c4) .. (end1);
    \draw[blue,thick] (start2) .. controls (c5) and (c6) .. (end2);
    \draw[blue,thick] (start3) .. controls (c7) and (c8) .. (end3);
    \draw[blue,thick] (start4) .. controls (c9) and (c10) .. (end4);
    \draw[blue,thick] (start5) .. controls (c11) and (c12) .. (end5);
\draw[red,thick,dotted] (0,0) -- (1*\delt*\u,1*\delt*\u);
\draw[red,thick,dotted] ({(1-\delt)*\u},{(1-\delt)*\u}) -- (1*\u,1*\u);
\node[left]  at (0,\delt*\u) {\sz $\de$};
\node[left]  at (0,\mm*\u) {\sz $m_1$};
\node[left]  at (0,\mmm*\u) {\sz $m_2$};
\node[left]  at (0,{(1-\delt)*\u}) {\sz $1-\de$};
\node[left]  at (0,1*\u) {\sz $1$};
\node[below left]  at (0,0) {\sz $0$};
\node[below]  at (1*\u,0) {\sz $1$};
\node[below]  at (\delt*\u,0) {\sz $\de$};
\node[below]  at (\myu*\u,0) {\sz \rule{0pt}{5pt}$u$};
\node[below]  at ({(1-\delt)*\u},0) {\sz $1-\de$};
\node[right] at (A) {\sz $A$};
\node[below right] at (B) {\sz $B$};
\node[below left] at (C) {\sz $C$};
\node[below left] at (D) {\sz $D$};
\node[below left] at (E) {\sz $E$};
\node[left] at (F) {\sz $F$};
\node[below,xshift=-3ex] at (current bounding box.south) {\sz\upshape $y=\psi_{\de,u,v,t}(x)$ in red};
\node[below,xshift=-3ex,yshift=0.5ex] at (current bounding box.south) {\sz\upshape ($\eta$-modifications in blue)};
\end{tikzpicture}
\]
Write $A=(a_x,a_y)=(\de,\de)$, $B=(b_x,b_y)$, $C=(c_x,c_y)$, $D=(d_x,d_y)$, $E=(e_x,e_y)$, $F=(f_x,f_y)=(1-\de,1-\de)$. The following inequalities are useful for locating the $x$ coordinates of these points.

\begin{lem}\label{l:ineq}
We have the following inequalities, where $\eta=\de^2/2$.
\begin{enumerate}
\item\label{lem.1}
$a_x + \eta < b_x - \eta < b_x + \de^2 < 2\de$,

\item\label{lem.2}
$u<c_x - \de^2 < c_x + \eta < d_x - \eta < d_x+\de^2 < u + \de$,

\item\label{lem.4}
$1-2\de < e_x-\de^2 < e_x+\eta < f_x-\eta$.
\end{enumerate}
\end{lem}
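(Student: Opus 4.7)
The plan is to extract closed-form expressions for the $x$-coordinates of $B,C,D,E$ by intersecting the defining lines, and then turn each of the six inequalities into an inequality among $\de,u,m_1,m_2$ that is an immediate consequence of the defining conditions of $S$ (namely $0<\de<1$, $3\de<u<1-3\de$, $2\de<m_1$, and $m_1+2\de<m_2<1-3\de$).

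For the pairs $(A,B)$ and $(E,F)$, both lying on lines of slope $1/\de$, I would apply Lemma \ref{l:delta.room} directly. For $(A,B)$: the line through $A=(\de,\de)$ of slope $\de$ is parallel to $y=m_1+\de x$, and the vertical gap between them at any $x$ equals $m_1-\de(1-\de)$. Lemma \ref{l:delta.room}(2) then gives $b_y-a_y=(m_1-\de+\de^2)/(1-\de^2)$ and, by (1), $b_x-a_x=\de(m_1-\de+\de^2)/(1-\de^2)$. Using $m_1>2\de$ yields $b_x-a_x>2\eta$, and using $m_1<m_2<1-3\de$ yields $b_x-a_x<\de(1-\de)$, so $b_x+\de^2<2\de$. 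Analogously, for $(E,F)$ the relevant vertical gap is $(1-\de)^2-\de(1-\de)-m_2=1-2\de+\de^2-m_2$, positive because $m_2<1-3\de$; I would apply Lemma \ref{l:delta.room} to conclude $f_x-e_x=\de(1-2\de+\de^2-m_2)/(1-\de^2)$, then use $m_2<1-3\de$ to get $f_x-e_x>2\eta$ and $m_2>0$ (or just $m_2>m_1+2\de$) to get $f_x-e_x<\de(1-\de)$, which is exactly $e_x>1-2\de+\de^2$.

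For the middle pair $(C,D)$, I would just solve the intersection directly. Writing the line of slope $1/\de$ through $(u,0)$ as $y=(x-u)/\de$ and intersecting with $y=m_1+\de x$ and $y=m_2+\de x$ gives the clean formulas
\[
c_x=\frac{u+\de m_1}{1-\de^2},\qquad d_x=\frac{u+\de m_2}{1-\de^2}.
\]
Then $d_x-c_x=\de(m_2-m_1)/(1-\de^2)>2\de^2/(1-\de^2)>\de^2=2\eta$ by $m_2-m_1>2\de$, giving $c_x+\eta<d_x-\eta$. Moreover, $c_x-u=\de(\de u+m_1)/(1-\de^2)$ and, using $m_1>2\de$, this exceeds $\de^2$, so $c_x-\de^2>u$. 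Similarly, $u+\de-d_x=\de(1-u\de-\de^2-m_2)/(1-\de^2)$, and $m_2<1-3\de$ together with $u<1$ yields $u+\de-d_x>\de^2$, so $d_x+\de^2<u+\de$.

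I do not anticipate any genuine obstacle; the proof is straightforward once the $x$-coordinates are in hand. The only minor bookkeeping point is that the bounds in $S$ are tuned so that each inequality leaves a cushion of size exceeding $\eta=\de^2/2$, which is why the statement carries the extra $\pm\eta$ and $\pm\de^2$ slack: the strict margin $2\de$ in the hypotheses on $m_1,m_2,u$ is what produces the quadratic-in-$\de$ slack claimed in the lemma.
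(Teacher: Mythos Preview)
Your argument is correct and follows essentially the same approach as the paper: both exploit that $A,B$ and $C,D$ and $E,F$ lie on lines of slope $1/\de$, so horizontal gaps equal $\de$ times vertical gaps, and the vertical gaps are easily bounded using the defining conditions of $S$. The only stylistic difference is that you compute closed-form expressions for $b_x,c_x,d_x,e_x$ (carrying the $1/(1-\de^2)$ factor) and then bound, whereas the paper avoids explicit formulas by directly bounding the $y$-differences (e.g.\ $b_y-a_y>m_1-\de>\de$, $d_y-c_y>m_2-m_1>\de$, $c_y>m_1>\de$, $d_y<m_2+\de$) and invoking Lemma~\ref{l:delta.room}(1) to convert; this is marginally cleaner but not substantively different.
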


Note that we could have written this as one long sequence of inequalities since $2\de<3\de<u$ and $u+\de < (1-3\de)+\de = 1-2\de$. The graph below illustrates the inequalities.
\[
\begin{tikzpicture}
\def\u{16cm}
\def\t{0.015}
\def\ta{0.005}
\draw[ultra thick,red] (0.135*\u,0) -- (0.165*\u,0)
(0.21*\u,0) -- (0.24*\u,0)
(0.455*\u,0) -- (0.485*\u,0)
(0.525*\u,0) -- (0.555*\u,0)
(0.755*\u,0) -- (0.785*\u,0)
(0.835*\u,0) -- (0.865*\u,0);
\draw (0,0) -- (1*\u,0);
\foreach \i/\j in {0/{0},0.075/{\de/2},0.15/{a_x},0.225/{b_x},0.30/{2\de},0.43/{u},0.47/{c_x},
0.54/{d_x},0.58/{u+\de},0.7/{\ \ 1-2\de},
0.77/{e_x},0.85/{f_x},0.925/{1-\de/2},1/{1}}
{
    \draw (\i*\u,-\t*\u) -- ++(0,2*\t*\u);
    \node[below] at (\i*\u,-\t*\u) {\sz \rule{0pt}{7pt}$\j$};
}
\draw[ultra thick,white] (0.345*\u,0) -- (0.385*\u,0);
\draw[dashed] (0.345*\u,0) -- (0.385*\u,0);
\draw[ultra thick,white] (0.62*\u,0) -- (0.66*\u,0);
\draw[dashed] (0.62*\u,0) -- (0.66*\u,0);
\foreach \i in {0.135,0.165,0.21,0.24,0.255,0.44,0.455,0.485,0.525,
0.555,0.57,0.74,0.755,0.785,0.835,0.865}
{
    \draw (\i*\u,-\ta*\u) -- ++(0,2*\ta*\u);
}
\foreach \i in {0.1425,0.1575,0.2175,0.2325,0.2475,0.4475,0.4625,0.4775,0.5325,0.5475,0.5625,
0.7475,0.7625,0.7775,0.8425,0.8575}
{
    \node[above] at (\i*\u,0.8*\t*\u) {\sz $\eta$};
}
\end{tikzpicture}
\]
(The inequalities $\de/2<a_x-\eta$ and $f_x+\eta < 1-\de/2$ hold because they state that $\de/2 < \de - \de^2/2$ and $1-\de + \de^2/2 < 1-\de/2$, respectively, both of which reduce to $\de<1$.)

\begin{proof}
We make repeated use of Lemma \ref{l:delta.room} without mention. Also note that since $B$ is on the line $y=m_1 + \de x$, $b_y=m_1 + \de b_x < m_1+\de$. Similarly $c_y<m_1+\de$ and $d_y,e_y<m_2+\de$.

\ref{lem.1} The slope of the line from $A$ to $B$ is $1/\de$. For the first inequality we have $b_y-a_y > m_1 - \de > \de$, so $b_x - a_x > \de^2$.
The middle one is clear.
The third can be written $b_x-a_x < \de - \de^2 = \de(1-\de)$ which
follows if we show $b_y-a_y<1-\de$, and this holds since $a_y=\de$ and $b_y<m_1+\de<1$.

\ref{lem.2} The slope of the line from $P=(p_x,p_y)=(u,0)$ to $C$ and to $D$ is $1/\de$. The second and fourth inequalities are clear. For the first, since $c_y - p_y = c_y > m_1 > \de$, we get $c_x - p_x = c_x - u > \de^2$.
For the middle inequality, we have $d_y-c_y > m_2-m_1>\de$, so $d_x-c_x > \de^2 = 2\eta$.
For the last inequality,
we have $d_y - p_y = d_y < m_2+\de$, giving $d_x - p_x = d_x - u < \de(m_2 + \de) < \de((1-3\de)+\de) = \de - 2\de^2 < \de - \de^2$.

\ref{lem.4} The slope of the line from $E$ to $F$ is $1/\de$. The first inequality can be written $f_x-e_x < \de - \de^2 = \de(1-\de)$ which follows from $f_y-e_y<1-\de$,
which in turn holds because $f_y=1-\de$ and $e_y>m_2>0$. The second inequality is clear, and for the third we have $f_y - e_y > (1-\de) - (m_2 + \de) > (1-\de) - (1-2\de) = \de$, so $f_x - e_x > \de^2 = 2\eta$.
\end{proof}

\begin{defn}
Let $\rho_{\de u v t}$ be obtained from $\psi_{\de u v t}$ by replacing the restriction to $[\de/2,1-\de/2]$ by its $\eta$-modification at the points $A,B,C,D,E,F$, where $\eta=\de^2/2$.
\end{defn}

As the inequalities in Lemma \ref{l:ineq} show, the inequalities $a_i+2\eta < a_{i+1}$ in Corollary \ref{c:roundoffcorners} (ensuring that the intervals over which the function is being modified do not overlap) are satisfied. (In the graph following the statement of Lemma \ref{l:ineq} above, these intervals are highlighted.)

\begin{lem}\label{l:cont.rho}
The map $(\de,u,v,t)\mapsto \rho_{\de u v t}$ from $S$ into $C^\infty([0,1])$ is continuous.
\end{lem}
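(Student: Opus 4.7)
The plan is to realize $\rho_{\de u v t}$ as a piecewise assembly of three continuously varying pieces --- namely $\s_\de$ on $[0,\de/2]$, the rounded-off piecewise linear function on $[\de/2, 1-\de/2]$, and $\tau_\de$ on $[1-\de/2,1]$ --- and then invoke Proposition \ref{p:C.infty.by.cases} twice to splice them together. Continuity of $\de\mapsto \s_\de$ and $\de\mapsto \tau_\de$ as maps into $C^\infty(\R)$ is given, and continuity survives restriction by Proposition \ref{p:restr.cont}. So the work is: (i) verify continuity of the rounded-off middle piece; (ii) check that the pieces agree to all orders at the splicing points $\de/2$ and $1-\de/2$.

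For (i), I would first observe that the coordinates of the six corner points $A,B,C,D,E,F$ are obtained by intersecting lines whose slopes and intercepts are explicit rational functions of $(\de,u,v,t)$ that are well-defined on $S$ (in particular $m_1=v/u$ and $m_2=(t-v)/(1-u)$ are continuous there); hence $(\de,u,v,t)\mapsto (a_x,a_y,\dots,f_x,f_y)$ is continuous into $\R^{12}$. The seven slopes $1,1/\de,\de,1/\de,\de,1/\de,1$ of the piecewise linear function are also continuous, and Lemma \ref{l:ineq} asserts exactly the separations $a_x+2\eta<b_x-2\eta,\ \dots,\ e_x+2\eta<f_x-2\eta$ required for the tuple
\[
\g = (1,\,1,\,a_x,a_y,\,b_x,b_y,\,c_x,c_y,\,d_x,d_y,\,e_x,e_y,\,f_x,f_y,\,\eta), \qquad \eta=\de^2/2,
\]
to lie in the open set $\Gamma_6$ of Corollary \ref{c:roundoffcorners}. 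That corollary then delivers continuity of $(\de,u,v,t)\mapsto G^6_\g\in C^\infty(\R)$, and hence of its restriction to $[\de/2,1-\de/2]$.

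For (ii), I need the splicing inputs to agree in all derivatives at the junctions, with the junction point $a(\de,u,v,t)$ depending continuously on $(\de,u,v,t)$ --- which $\de/2$ and $1-\de/2$ plainly do. At $\de/2$, note that $\s_\de(x)=x$ on $[\de/2,\de]$ by the defining property of $\s_\de$, while $G^6_\g$ coincides with the unmodified piecewise linear $F^6_\g$ on $[\de/2,\de-\eta]$ (since $\de/2<\de-\eta$, using $\eta=\de^2/2<\de/2$), and the leftmost segment of $F^6_\g$ is the line of slope $1$ through $A=(\de,\de)$, i.e., the identity. Thus both pieces equal the identity on an open neighborhood of $\de/2$, so all derivatives agree. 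A symmetric argument applies at $1-\de/2$ via $F=(1-\de,1-\de)$ and $\tau_\de(x)=x$ on $[1-\de,1-\de/2]$. Proposition \ref{p:C.infty.by.cases} applied to the first splice (between $\s_\de$ extended and $G^6_\g$, at $a=\de/2$) and then to the second (between the result and $\tau_\de$ extended, at $a=1-\de/2$) yields continuity of $(\de,u,v,t)\mapsto \rho_{\de u v t}$.

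The main obstacle, such as it is, is really just the identification of what the pieces reduce to near the splicing points; the rest is routine bookkeeping. Everything else --- continuity of the corner coordinates, applicability of Corollary \ref{c:roundoffcorners} via Lemma \ref{l:ineq}, and the two-step splicing --- is an immediate assembly of tools already developed in Sections \ref{s:prelim}, \ref{s:spec.higher}, and \ref{s:p.w.l.}.
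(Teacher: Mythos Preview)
Your proposal is correct and follows essentially the same route as the paper's own proof: assemble $\rho_{\de u v t}$ from $\s_\de$, the $G^6_\gamma$ produced by Corollary~\ref{c:roundoffcorners}, and $\tau_\de$, then splice via Proposition~\ref{p:C.infty.by.cases}. Your version is slightly more explicit than the paper's (you spell out why the pieces agree to all orders at $\de/2$ and $1-\de/2$, which Proposition~\ref{p:C.infty.by.cases} requires as a hypothesis), and you correctly record the last coordinate of $\gamma$ as $\eta=\de^2/2$ rather than $\de$. One small slip: the separation inequalities from Lemma~\ref{l:ineq} read $a_x+\eta<b_x-\eta$ etc., i.e., $a_x+2\eta<b_x$, which is exactly the $\Gamma_6$ condition; your ``$a_x+2\eta<b_x-2\eta$'' overstates what is needed and what is proved, but the intended content is right.
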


\begin{proof}
The function $\rho_{\de u v t}$ agrees with $\s_\de$ on $[0,\de/2]$, with $G^6_\gamma$ on $[\de/2,1-\de/2]$, and with $\tau_\de$ on $[1-\de/2,1]$, where
\[
\gamma = \gamma_{\de u v t} = (1,1,A,B,C,D,E,F,\de).
\]
It is easy to solve explicitly for the coordinates of $A,B,C,D,E,F$, for example $A=(\de,\de)$, $B=(\de(m+1-\de)/(1-\de^2),\,m + \de^2(m+1-\de)/(1-\de^2))$ (where $m=v/u$), to see that they depend continuously on $(\de,u,v,t)$. It follows from Corollary \ref{c:roundoffcorners} that $(\de,u,v,t)\mapsto \gamma_{\de u v t}\mapsto G^6_{\gamma_{\de u v t}}$ is continuous. Since $(\de,u,v,t)\mapsto \de\mapsto \s_\de$ and $(\de,u,v,t)\mapsto \de\mapsto \tau_\de$ are continuous, Proposition \ref{p:C.infty.by.cases} and Proposition \ref{p:restr.cont} imply that $(\de,u,v,t)\mapsto \rho_{\de u v t}$ is continuous.
\end{proof}

\begin{lem}\label{l:g.incre}
Let $(\de,u,v,t)$ and $(\de,u,v',t')$ be two elements of $S$ sharing the same $\de$ and  $u$. Let
\[
m_1 = v/u,\ \ m_2 = (t-v)/(1-u),\ \ m'_1 = v'/u,\ \ m'_2 = (t'-v')/(1-u).
\]
Assume that $m_1\leq m'_1$ and $m'_1+\de<\min(m_2,m'_2)$.
Let $A,B,C,D,E,F$ be as in the definition of $\rho_{\de u v t}$, and let
$A=A',B',C',D',E',F'=F$ be the corresponding points in the definition of $\rho_{\de u v' t'}$. Write
$A=(a_x,a_y)$, $B=(b_x,b_y)$, and so on, and $A'=(a'_x,a'_y)$, $B'=(b'_x,b'_y)$, and so on.
Let $I_1 = (b_x-\eta,c'_x+\eta)$. Let $I_2 = (d_x-\eta,e'_x+\eta)$ if $m_2\leq m'_2$, and $I_2=(d'_x-\eta,e_x+\eta)$ otherwise. Then the functions $\rho_{\de u v t}$ and $\rho_{\de u v' t'}$ agree outside $I_1\cup I_2$.
\begin{itemize}
\item
On $I_1$: If $m_1=m'_1$ then $\rho_{\de u v t} = \rho_{\de u v' t'}$. If $m_1<m'_1$ then $\rho_{\de u v t} < \rho_{\de u v' t'}$. In both cases, $\rho_{\de u v' t'} - \rho_{\de u v t}$ has maximum value $m'_1 - m_1$.

\item
On $I_2$: If $m_2=m'_2$ then $\rho_{\de u v t} = \rho_{\de u v' t'}$. If
$m_2 < m'_2$ then $\rho_{\de u v t} < \rho_{\de u v' t'}$, with the inequality reversed if $m'_2<m_2$. In all cases, $|\rho_{\de u v' t'} - \rho_{\de u v t}|$ has maximum value $|m'_2 - m_2|$.
\end{itemize}
On $[0,1]$, we therefore have
$|\rho_{\de u v' t'} - \rho_{\de u v t}|\leq \max(|m_1-m'_1|,|m_2-m'_2|)$.
\[
\begin{tikzpicture}
\def\u{4cm}
\def\delt{0.1}
\def\a{0.55}
\def\bb{0.2} %between a^2/2 and a/2
\def\mm{0.4} %a value between 0 and a
\def\mmq{0.25} %a value between 0 and a
\def\myu{0.5} %between \us and \ue
\pgfmathsetmacro\mmm{1 - \mm*\myu/\a - 0.02}
\pgfmathsetmacro\mmmm{\mmm + 0.09}
\pgfmathsetmacro\us{1 - 2*\bb/\a}          %u-start
\pgfmathsetmacro\ue{(1-2*\a+2*\bb)/(1-\a)} %u-end
\pgfmathsetmacro\bx{(\delt/(1-\delt^2))*(\mm + 1 - \delt)}
\pgfmathsetmacro\by{\mm + \delt*\bx}
\pgfmathsetmacro\bxq{(\delt/(1-\delt^2))*(\mmq + 1 - \delt)}
\pgfmathsetmacro\byq{\mmq + \delt*\bxq}
\pgfmathsetmacro\cx{(\delt/(1-\delt^2))*(\mm + \myu/\delt)}
\pgfmathsetmacro\cy{\mm + \delt*\cx}
\pgfmathsetmacro\cxq{(\delt/(1-\delt^2))*(\mmq + \myu/\delt)}
\pgfmathsetmacro\cyq{\mmq + \delt*\cxq}
\pgfmathsetmacro\dx{(\delt/(1-\delt^2))*(\mmm - \cy + \cx/\delt)}
\pgfmathsetmacro\ddx{(\delt/(1-\delt^2))*(\mmmm - \cy + \cx/\delt)}
\pgfmathsetmacro\dy{\mmm + \delt*\dx}
\pgfmathsetmacro\ddy{\mmmm + \delt*\dx}
\pgfmathsetmacro\ex{(\delt/(1-\delt^2))*(\mmm - 2 + \delt + 1/\delt)}
\pgfmathsetmacro\eex{(\delt/(1-\delt^2))*(\mmmm - 2 + \delt + 1/\delt)}
\pgfmathsetmacro\ey{\mmm + \delt*\ex}
\pgfmathsetmacro\eey{\mmmm + \delt*\ex}
%x coordinates of the bottom of the dashed lines of slope 1/\de
\pgfmathsetmacro\ix{\delt - \delt^2}
\pgfmathsetmacro\jx{\myu}
\pgfmathsetmacro\kx{1 - 2*\delt + \delt^2}
\coordinate (A) at (\delt*\u,\delt*\u);
\coordinate (B) at (\bx*\u,\by*\u);
\coordinate (Bq) at (\bxq*\u,\byq*\u);
\coordinate (C) at (\cx*\u,\cy*\u);
\coordinate (Cq) at (\cxq*\u,\cyq*\u);
\coordinate (D) at (\dx*\u,\dy*\u);
\coordinate (DD) at (\ddx*\u,\ddy*\u);
\coordinate (E) at (\ex*\u,\ey*\u);
\coordinate (EE) at (\eex*\u,\eey*\u);
\coordinate (F) at ({(1-\delt)*\u},{(1-\delt)*\u});
\draw (0,0) -- (1*\u,0) -- (1*\u,1*\u) -- (0,1*\u) -- cycle;
\draw[gray!50] (0,\delt*\u) -- ++(1*\u,0)
	(0,{(1-\delt)*\u}) -- ++(1*\u,0);
\draw[dashed]
	(0,\mm*\u) -- ++(1.1*\u,\delt*1.1*\u)
	(0,\mmq*\u) -- ++(1.1*\u,\delt*1.1*\u)
	(0,\mmm*\u) -- ++(1.1*\u,\delt*1.1*\u)
	(0,\mmmm*\u) -- ++(1.1*\u,\delt*1.1*\u);
\draw[gray!50] (\delt*\u,0) -- ++(0,1*\u)
	(\myu*\u,0) -- ++(0,1*\u)
	({(1-\delt)*\u},0) -- ++(0,1*\u);
\draw[dashed]
	(\ix*\u,0) -- ++(\delt*1.1*\u,1.1*\u)
	(\jx*\u,0) -- ++(\delt*1.1*\u,1.1*\u)
	(\kx*\u,0) -- ++(\delt*1.1*\u,1.1*\u);
    \coordinate (start0) at ($(0,0)!0.8!(A)$);
    \coordinate (end0)   at ($(A)!0.2!(B)$);
    \coordinate (start1) at ($(A)!0.7!(B)$);
    \coordinate (end1)   at ($(B)!0.1!(C)$);
    \coordinate (sstart1) at ($(A)!0.6!(Bq)$);
    \coordinate (eend1)   at ($(Bq)!0.1!(Cq)$);
    \coordinate (start2) at ($(B)!0.9!(C)$);
    \coordinate (end2)   at ($(C)!0.3!(D)$);
    \coordinate (sstart2) at ($(Bq)!0.9!(Cq)$);
    \coordinate (eend2)   at ($(Cq)!0.3!(C)$);
    \coordinate (start3) at ($(C)!0.7!(D)$);
    \coordinate (end3)   at ($(D)!0.1!(E)$);
    \coordinate (sstart3) at ($(C)!0.82!(DD)$);
    \coordinate (eend3)   at ($(DD)!0.1!(EE)$);
    \coordinate (start4) at ($(D)!0.9!(E)$);
    \coordinate (end4)   at ($(E)!0.3!(F)$);
    \coordinate (sstart4) at ($(DD)!0.9!(EE)$);
    \coordinate (eend4)   at ($(EE)!0.5!(F)$);
    \coordinate (start5) at ($(E)!0.8!(F)$);
    \coordinate (end5)   at ($(F)!0.2!(1*\u,1*\u)$);
    \coordinate (c1) at ($(start0) + 0.5*(\delt,\delt)$);
    \coordinate (c2) at ($(end0) - 0.45*(\bx-\delt,\by-\delt)$);
    \coordinate (c3) at ($(start1) + 0.5*(\bx-\delt,\by-\delt)$);
    \coordinate (c4) at ($(end1) - 0.45*(\cx-\bx,\cy-\by)$);
    \coordinate (cc3) at ($(sstart1) + 0.1*(\bxq-\delt,\byq-\delt)$);
    \coordinate (cc4) at ($(eend1) - 0.4*(\cxq-\bxq,\cyq-\byq)$);
    \coordinate (c5) at ($(start2) + 0.5*(\cx-\bx,\cy-\by)$);
    \coordinate (c6) at ($(end2) - 0.45*(\dx-\cx,\dy-\cy)$);
    \coordinate (cc5) at ($(sstart2) + 0.3*(\cxq-\bxq,\cyq-\byq)$);
    \coordinate (cc6) at ($(eend2) - 0.2*(\dx-\cxq,\dy-\cyq)$);
    \coordinate (c7) at ($(start3) + 0.5*(\dx-\cx,\dy-\cy)$);
    \coordinate (c8) at ($(end3) - 0.45*(\ex-\dx,\ey-\dy)$);
    \coordinate (cc7) at ($(sstart3) + 0.5*(\dx-\cx,\dy-\cy)$);
    \coordinate (cc8) at ($(eend3) - 0.45*(\ex-\dx,\ey-\dy)$);
    \coordinate (c9) at ($(start4) + 0.5*(\ex-\dx,\ey-\dy)$);
    \coordinate (c10) at ($(end4) - 0.45*(1-\delt-\ex,1-\delt-\ey)$);
    \coordinate (cc9) at ($(sstart4) + 0.5*(\ex-\dx,\ey-\dy)$);
    \coordinate (cc10) at ($(eend4) - 0.45*(1-\delt-\ex,1-\delt-\ey)$);
    \coordinate (c11) at ($(start5) + 0.5*(1-\delt-\ex,1-\delt-\ey)$);
    \coordinate (c12) at ($(end5) - 0.45*(\delt,\delt)$);
\draw[red,thick] (A) -- (B) -- (C) -- (D) -- (E) -- (F);
\draw[red,thick] (D) -- (DD) -- (EE);
\draw[red,thick] (Bq) -- (Cq) -- (C);
    \draw[blue,thick] (start0) .. controls (c1) and (c2) .. (end0);
    \draw[blue,thick] (start1) .. controls (c3) and (c4) .. (end1);
    \draw[blue,thick] (sstart1) .. controls (cc3) and (cc4) .. (eend1);
    \draw[blue,thick] (start2) .. controls (c5) and (c6) .. (end2);
    \draw[blue,thick] (sstart2) .. controls (cc5) and (cc6) .. (eend2);
    \draw[blue,thick] (start3) .. controls (c7) and (c8) .. (end3);
    \draw[blue,thick] (sstart3) .. controls (cc7) and (cc8) .. (eend3);
    \draw[blue,thick] (start4) .. controls (c9) and (c10) .. (end4);
    \draw[blue,thick] (sstart4) .. controls (cc9) and (cc10) .. (eend4);
    \draw[blue,thick] (start5) .. controls (c11) and (c12) .. (end5);
\draw[red,thick,dotted] (0,0) -- (1*\delt*\u,1*\delt*\u);
\draw[red,thick,dotted] ({(1-\delt)*\u},{(1-\delt)*\u}) -- (1*\u,1*\u);
\node[left]  at (0,\delt*\u) {\sz $\de$};
\node[left]  at (0,\mmq*\u) {\sz $m_1$};
\node[left]  at (0,\mm*\u) {\sz $m'_1$};
\node[left]  at (0,\mmm*\u) {\sz $m_2$};
\node[left]  at (0,\mmmm*\u) {\sz $m'_2$};
\node[left]  at (0,{(1-\delt)*\u}) {\sz $1-\de$};
\node[left]  at (0,1*\u) {\sz $1$};
\node[below left]  at (0,0) {\sz $0$};
\node[below,xshift=0.25ex]  at (1*\u,0) {\sz $1$};
\node[below]  at (\delt*\u,0) {\sz $\de$};
\node[below]  at (\myu*\u,0) {\sz \rule{0pt}{5pt}$u$};
\node[below,xshift=-0.25ex]  at ({(1-\delt)*\u},0) {\sz $1-\de$};
\node[right] at (A) {\sz $A$};
\node[below right] at (B) {\sz $B'$};
\node[below right] at (Bq) {\sz $B$};
\node[below left] at (C) {\sz $C'$};
\node[below left] at (Cq) {\sz $C$};
\node[below left] at (D) {\sz $D$};
\node[above left,xshift=0.6ex,yshift=-0.5ex] at (DD) {\sz $D'$};
\node[below left] at (E) {\sz $E$};
\node[above left,xshift=0.6ex,yshift=-0.5ex] at (EE) {\sz $E'$};
\node[left,yshift=0.2ex] at (F) {\sz $F$};
\end{tikzpicture}
\]
\end{lem}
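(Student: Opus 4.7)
I would approach the lemma via a direct geometric comparison of the piecewise linear functions $\psi_{\de u v t}$ and $\psi_{\de u v' t'}$, and then exploit the local nature of the $\eta$-modification to transport the comparison to $\rho_{\de u v t}$ and $\rho_{\de u v' t'}$. The key observation is that the three slope-$1/\de$ segments of each $\psi$ lie on the \emph{same} three lines, depending only on $(\de, u)$: the line of slope $1/\de$ through $A = (\de,\de)$, the one through $(u,0)$, and the one through $F = (1-\de,1-\de)$. The two middle slope-$\de$ pieces of $\psi_{\de u v t}$ lie on $y = m_1 + \de x$ and $y = m_2 + \de x$, while those of $\psi_{\de u v' t'}$ lie on $y = m'_1 + \de x$ and $y = m'_2 + \de x$. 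Hence the two $\psi$'s coincide on the overlaps of their slope-$1/\de$ segments, and on each slope-$\de$ overlap they differ by the constant vertical translation $m'_1 - m_1$ or $m'_2 - m_2$.

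First I would compute the corner $x$-coordinates by solving the relevant line intersections, obtaining that $b_x, c_x$ are increasing in $m_1$ and that $d_x$ is increasing while $e_x$ is decreasing in $m_2$. Combined with $m_1 \leq m'_1$, this pinpoints the intervals of exact agreement of the two $\psi$'s: $[0, b_x]$, $[c'_x, \min(d_x, d'_x)]$, and $[\max(e_x, e'_x), 1]$. Using Lemma \ref{l:ineq} applied separately to $(u,v,t)$ and $(u,v',t')$, I would verify that the $\eta$-modification windows around $B, C, B', C'$ all lie inside $I_1 = (b_x-\eta, c'_x + \eta)$ and those around $D, E, D', E'$ all lie inside the appropriately defined $I_2$ (the case split on the sign of $m_2 - m'_2$ accounts for the two forms of $I_2$). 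The modifications at $A$ and $F$ depend only on $\de$ and the common incident slopes $1$ and $1/\de$, so they produce identical output for the two $\rho$'s and lie outside $I_1 \cup I_2$. Combined with the previous paragraph, this yields the claimed agreement of $\rho_{\de u v t}$ and $\rho_{\de u v' t'}$ outside $I_1 \cup I_2$.

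For the bound on $I_1$, the case $m_1 = m'_1$ is immediate since then $B = B'$ and $C = C'$, so the corner data and the modifications coincide there. If $m_1 < m'_1$, the constraints on $S$ give $\de(m'_1 - m_1) < u - \de + \de^2$, so the plateaux $[b_x,c_x]$ and $[b'_x,c'_x]$ overlap in an interval $[b'_x, c_x] \subseteq I_1$ on which the two $\psi$'s differ by exactly the vertical translation $m'_1 - m_1$. Proposition \ref{roundoffcorners}(4) confines each rounded corner inside the convex hull of three explicit points of its underlying $\psi$, and this transports the pointwise estimate $|\psi_{\de u v' t'} - \psi_{\de u v t}| \leq m'_1 - m_1$ on $I_1$ to the same bound for the $\rho$'s, with the maximum $m'_1 - m_1$ attained at any point of the plateau overlap lying outside every modification window. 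The argument on $I_2$ is analogous, with the sign of $m_2 - m'_2$ determining the sign of the difference and the two forms of $I_2$ handling the two sub-cases.

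The main obstacle I anticipate is the tight regime in which modification windows of the two functions overlap, which occurs when $b'_x - b_x$ or $c'_x - c_x$ is smaller than $2\eta = \de^2$, i.e., when $m'_1 - m_1 = O(\de)$. Then the bookkeeping of which corner each function is rounding becomes delicate, and one must check that there is still a point in $[b'_x, c_x]$ lying outside all four modification windows around $B, B', C, C'$. Since each rounding stays inside its own local convex hull by Proposition \ref{roundoffcorners}(4), and those hulls differ by at most the vertical translation $m'_1 - m_1$, the uniform upper bound $\rho_{\de u v' t'} - \rho_{\de u v t} \leq m'_1 - m_1$ persists even when the windows overlap. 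The attainment of the maximum in this degenerate regime then requires a separate verification, likely a continuity argument or a direct check using the explicit formula for the smoothing function $h$ from Proposition \ref{p:h}.
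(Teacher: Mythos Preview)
Your geometric picture is correct, and the identification of the region of agreement outside $I_1\cup I_2$ is right. The concern you raise at the end is the real issue, but your proposed tool, Proposition~\ref{roundoffcorners}(4), is not strong enough to close it, and this is where the gap lies.

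The convex hull bound only tells you that each rounded corner stays inside its own triangle; when the windows around $B$ and $B'$ overlap, knowing $\rho$ lies in one triangle and $\rho'$ in a translated triangle does not yield $\rho<\rho'$, since the triangles themselves overlap and either function could sit anywhere inside its hull. What the paper uses instead is Proposition~\ref{roundoffcorners}(3): because $\gamma=(1/\de,\de,b_x,b_y,\eta)$ and $\gamma'=(1/\de,\de,b'_x,b'_y,\eta)$ share the same slopes and $\eta$, the rounded corners satisfy $G_{\gamma'}=G_\gamma+v$ exactly, where $v=B'-B$. Since $B,B'$ both lie on the slope-$1/\de$ line through $A$, the vector $v$ points along the tangent direction of $G_\gamma$ at $b_x-\eta$, and $G_\gamma$ is strictly concave on its window. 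This is precisely the setup of Corollary~\ref{c:translate.conv}(1): the translate $G_{\gamma'}$ dominates $G_\gamma$ strictly for $x>b_x-\eta$, and the difference $G_{\gamma'}-G_\gamma$ is monotone increasing, so its supremum on $(b_x-\eta,2\de]$ is attained at the right end where both functions are linear on their slope-$\de$ plateaus and the difference equals $m'_1-m_1$. The mirror argument at $C,C'$ via Corollary~\ref{c:translate.conv}(2) handles the right half of $I_1$. This simultaneously delivers the strict inequality, the monotonicity, and the exact maximum without any case split on whether the windows overlap.

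Your worry about whether a plateau point survives outside all four windows is in fact unfounded (Lemma~\ref{l:ineq} gives $b'_x+\eta<2\de<u<c_x-\eta$), but even with that observation your argument would still lack the strict inequality on the windows themselves. Replace the convex hull idea with the translation property plus Corollary~\ref{c:translate.conv} and the proof goes through cleanly.
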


\begin{proof}
We assume that $m_2\leq m'_2$ as in the diagram, the other case being similar. We thus have $m_1\leq m'_1 < m'_1+\de < m_2\leq m'_2$. The inequality $m_2 - m'_1 > \de$ ensures by Lemma \ref{l:delta.room} that $d_y - c'_y>\de$ and hence $d_x-c'_x > \de^2$. It follows that $c'_x + \eta < d_x - \eta$, implying that the intervals $I_1 = (b_x-\eta,c'_x+\eta)$ and $I_2 = (d_x-\eta,e'_x+\eta)$ are disjoint. We prove the statements about $I_1$, the ones about $I_2$ being entirely analogous.

Let $\g=(1/\de,\de,b_x,b_y,\eta)$, $\g'=(1/\de,\de,b'_x,b'_y,\eta)\in \Gamma$.
From the inequalities in Lemma \ref{l:ineq}, we see that $b_x+\eta < b'_x+\eta < 2\de < c_x-\eta < c'_x-\eta$. Thus, $\rho_{\de u v t}=G_{\g}$ and $\rho_{\de u v' t'}=G_{\g'}$ on the interval $[b_x-\eta,2\de]$.

By Proposition \ref{roundoffcorners} (3), the functions $G_{\g}$ and $G_{\g'}$ are related by $G_{\g'} = G_\g + v$, where $v = B'-B$. Since $v$ is the direction of the support line to the graph of $G_\g$ at $b_x-\eta$ and $D^2G_\g<0$ on $(b_x-\eta,b_x+\eta)$, it follows from Corollary \ref{c:translate.conv} (1) that $G_\g < G_{\g'}$ on $(b_x-\eta,2\de]$ and the difference $G_{\g'}-G_\g$ is increasing, so its maximum value is attained at $2\de$, and hence is $m'_1-m_1$. Thus the inequality $\rho_{\de u v t} < \rho_{\de u v' t'}$ holds on $(b_x-\eta,2\de]$ with $\rho_{\de u v' t'} - \rho_{\de u v t}\leq m'_1-m_1$ on that interval.
Arguing similarly at $C$ and $C'$ (using Corollary \ref{c:translate.conv} (2) with $\rho_{\de u v' t'}$ playing the role of $f$,
we get the same inequality and bound on $[2\de,c'_x+\eta)$. Together these give the claimed properties on $I_1$.
\end{proof}

\begin{cor}\label{cor:g.incre}
For fixed $(\de,u,v)$ with $(u,v)\in T$ and $0<\de<\de(u,v)$, the integral $I(\rho_{\de u v t})(1)$ is a continuous strictly increasing function of $t$ for $t_1(u,v) \leq t \leq t_2(u,v)$.
\end{cor}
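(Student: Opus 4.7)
The plan is to read strict monotonicity directly off Lemma \ref{l:g.incre} and to get continuity by composing the continuous maps provided by Lemma \ref{l:cont.rho} and Proposition \ref{p:properties.of.tau.2}.

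For continuity, fix $(\de,u,v)$ as in the statement. The affine map $t\mapsto (\de,u,v,t)$ sends the compact interval $[t_1(u,v),t_2(u,v)]$ into $S$, by Proposition \ref{p:S*}. Composing with $(\de,u,v,t)\mapsto \rho_{\de u v t}\in C^\infty([0,1])$, which is continuous by Lemma \ref{l:cont.rho}, then with the integral operator $I\colon C^\infty([0,1])\to C^\infty([0,1])$ and the evaluation map at $1$, both of which are continuous by Propositions \ref{p:properties.of.tau.2} and \ref{p:restr.cont}, we conclude that $t\mapsto I(\rho_{\de u v t})(1)$ is continuous.

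For strict monotonicity, let $t<t'$ in $[t_1(u,v),t_2(u,v)]$ and apply Lemma \ref{l:g.incre} with the two quadruples $(\de,u,v,t)$ and $(\de,u,v,t')$ (so $v'=v$). Then $m_1=m'_1=v/u$, while $m_2=(t-v)/(1-u) < (t'-v)/(1-u)=m'_2$. The hypothesis $m'_1+\de<\min(m_2,m'_2)$ reduces to $v/u+\de<(t-v)/(1-u)$, which follows from $(\de,u,v,t)\in S$ (which gives the stronger inequality $v/u+2\de<(t-v)/(1-u)$). Lemma \ref{l:g.incre} then yields that $\rho_{\de u v t}$ and $\rho_{\de u v t'}$ agree outside the interval $I_2=(d_x-\eta,e'_x+\eta)$ (in particular they agree on $I_1$ since $m_1=m'_1$), and that $\rho_{\de u v t}<\rho_{\de u v t'}$ on $I_2$. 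Integrating over $[0,1]$ gives $I(\rho_{\de u v t})(1)<I(\rho_{\de u v t'})(1)$, as required.

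There is no serious obstacle here: both assertions are immediate consequences of results already proved in this section, the only point needing verification being that the hypothesis of Lemma \ref{l:g.incre} is supplied by the definition of $S$.
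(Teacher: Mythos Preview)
Your proof is correct and follows essentially the same approach as the paper: continuity via Lemma \ref{l:cont.rho} (composed with integration and evaluation), and strict monotonicity via Lemma \ref{l:g.incre} applied with $v=v'$, verifying its hypothesis from the membership $(\de,u,v,t)\in S$.
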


\begin{proof}
The continuity follows from Lemma \ref{l:cont.rho}.
By Proposition \ref{p:S*}, when $(u,v)\in T$, $0<\de<\de(u,v)$, and $t_1(u,v) < t < t_2(u,v)$, we have $(\de,u,v,t)\in S$, so $\rho_{\de u v t}$ is defined. Given $t_1(u,v) < t < t' < t_2(u,v)$, in terms of the notation of the lemma, we are in the case $v=v'$, so both $\rho_{\de u v t}$ and $\rho_{\de u v t'}$ have the same value of $m_1$, and $m_2<m'_2$. The assumption $m'_1+\de<\min(m_2,m'_2)$ thus says $m_1 + \de < m_2$ which is true since $(\de,u,v,t)\in S$. By Lemma \ref{l:g.incre} then, $\rho_{\de u v t}\leq \rho_{\de u v t'}$ with strict inequality on $I_2$, so $I(\rho_{\de u v t})(1) < I(\rho_{\de u v t'})(1)$.
\end{proof}

\begin{lem}\label{l:int.close}
For any $(\de,u,v,t)\in S$ and $x\in [0,1]$, $|I^n(\rho_{\de u v t})(x) - I^n(f_{u v t})(x)| < 6\de$ for $n\in\N$. In particular, $|I(\rho_{\de u v t})(1) - t| = |I(\rho_{\de u v t})(1) - I(f_{u v t})(1)| < 6\de$.
\end{lem}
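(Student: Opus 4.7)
The plan is to reduce the statement to the $L^1$ estimate $\|\rho_{\de u v t} - f_{uvt}\|_1 < 6\de$. This suffices because for any integrable $g$ on $[0,1]$, any $n\in\N$, and any $x\in[0,1]$, one has $|I^n(g)(x)|\le \|g\|_1$: for $n=1$ this is $|\int_0^x g|\le \int_0^1 |g|$, and for $n\ge 2$ one uses induction via $|I^n(g)(x)|\le x\,\sup_{[0,1]}|I^{n-1}(g)|\le \|g\|_1$.

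Next I split $\rho_{\de u v t} - f_{uvt} = (\rho_{\de u v t} - \psi_{\de u v t}) + (\psi_{\de u v t} - f_{uvt})$ and bound each piece in $L^1$. The first difference vanishes outside the six $\eta$-intervals around the corner points $A,B,C,D,E,F$ (with $\eta=\de^2/2$). On each such interval, Proposition \ref{roundoffcorners}(4) traps the graph of $\rho_{\de u v t}$ inside the convex hull of the three corner vertices, so $|\rho_{\de u v t}-\psi_{\de u v t}|$ is bounded by the variation of $\psi_{\de u v t}$ there, which at corners involving slope $1/\de$ is essentially $\eta/\de=\de/2$. Multiplying by the total modified length $12\eta=6\de^2$ yields $\|\rho_{\de u v t}-\psi_{\de u v t}\|_1=O(\de^3)$, negligible since $(\de,u,v,t)\in S$ forces $\de<1/6$.

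For $\|\psi_{\de u v t}-f_{uvt}\|_1$ I decompose $[0,1]$ into the seven intervals separated by $0,\de,b_x,c_x,d_x,e_x,1-\de,1$ and bound each contribution using Lemma \ref{l:ineq}. Writing $m_1=v/u$ and $m_2=(t-v)/(1-u)$: on $[0,\de]$ (where $\psi_{\de u v t}=\s_\de\in[0,\de]$, $f_{uvt}=m_1$) and $[1-\de,1]$ (where $\psi_{\de u v t}=\tau_\de\in[1-\de,1]$, $f_{uvt}=m_2$), the combined contribution is at most $(m_1+1-m_2)\de<\de$, using $m_1<m_2$. On the three steep subintervals $[\de,b_x], [c_x,d_x], [e_x,1-\de]$ where $\psi_{\de u v t}$ has slope $1/\de$, the width bounds $b_x-\de,\,d_x-c_x,\,1-\de-e_x<\de$ from Lemma \ref{l:ineq} keep $\int|\psi_{\de u v t}-f_{uvt}|$ under $\de/2$ in each (as computations of small triangular areas). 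On the two shallow subintervals $[b_x,c_x], [d_x,e_x]$ where $\psi_{\de u v t}$ has slope $\de$, the integrand is at most $\de$, giving contributions of at most $\de/2$ each, plus an $O(\de)$ correction on $[b_x,c_x]$ for the jump of $f_{uvt}$ at $u\in(b_x,c_x)$ (controlled by $c_x-u<\de$). Summing yields $\|\psi_{\de u v t}-f_{uvt}\|_1<5\de$ and hence $\|\rho_{\de u v t}-f_{uvt}\|_1<6\de$.

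The main obstacle is the bookkeeping across the seven regions: the steep segments of slope $1/\de$ could naively contribute $O(1)$ to the $L^1$ norm, and it is only through the careful width bounds of Lemma \ref{l:ineq} (the bounds on $b_x-\de$, $d_x-c_x$, and $1-\de-e_x$) that each contribution stays $O(\de)$, keeping the total strictly under $6\de$. The final sentence of the lemma is the case $n=1$, $x=1$, together with $I(f_{uvt})(1)=t$.
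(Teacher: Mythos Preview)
Your approach is correct, but the paper's argument is considerably simpler. Rather than splitting $\rho-f=(\rho-\psi)+(\psi-f)$ and analysing seven subintervals, the paper observes directly that all six $\eta$-modification intervals, together with the steep segments of slope $1/\de$, are contained in the set $K=[0,2\de]\cup[u,u+\de]\cup[1-2\de,1]$ of measure $5\de$; outside $K$ one has $\rho=\psi$ on a slope-$\de$ segment, so $|\rho-f|=\de x<\de$ pointwise, while on $K$ the crude bound $|\rho-f|\le 1$ (both functions take values in $[0,1]$) suffices. This gives $\int_0^1|\rho-f|<\de(1-5\de)+5\de<6\de$ in two lines, followed by the same inductive step you use for $n\ge 2$. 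Your decomposition buys a marginally sharper intermediate bound $\|\psi-f\|_1<5\de$, at the cost of the triangle/trapezoid bookkeeping and the separate $O(\de^3)$ estimate for $\|\rho-\psi\|_1$, neither of which is needed for the lemma as stated.
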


\begin{proof}
We have $|\rho_{\de u v t} - f_{u v t}|<\de$ on $[0,1]\sm K$ where $K = [0,2\de]\cup [u,u+\de]\cup [1-2\de,1]$. Hence, denoting Lebesgue measure by $\mu$, we have
\begin{align*}
|I(\rho_{\de u v t})(x) - I(f_{u v t})(x)| & \leq I(|\rho_{\de u v t} - f_{u v t}|)(x) \leq I(|\rho_{\de u v t} - f_{u v t}|)(1) \\
& = \int_{[0,1]\sm K} |\rho_{\de u v t} - f_{u v t}| + \int_{K} |\rho_{\de u v t} - f_{u v t}| \\
& < \de (1-\mu (K)) + \mu(K) < \de + 5\de = 6\de.
\end{align*}
Then if $|I^n(\rho_{\de u v t}) - I^n(f_{u v t})| < 6\de$ on $[0,1]$, we get for $x\in [0,1]$,
\begin{align*}
|I^{n+1}(\rho_{\de u v t})(x) - I^{n+1}(f_{u v t})(x)| & =
|I(I^{n}(\rho_{\de u v t}) - I^{n}(f_{u v t}))(x)| \\
& \leq
|I(|I^{n}(\rho_{\de u v t}) - I^{n}(f_{u v t})|)(x)| < 6\de \qedhere.
\end{align*}
\end{proof}

\begin{lem}\label{l:def.t(u,v,t)}
Let $R=\{(\de,u,v):(u,v)\in T,\,0<\de<\de(u,v)\}$. For each $(\de,u,v)\in R$, there is a unique $t(\de,u,v)$ such that $t_1(u,v) < t(\de,u,v) < t_2(u,v)$ and $I(\rho_{\de, u, v, t(\de, u, v)})(1)=a$. The function $(\de,u,v)\mapsto t(\de, u, v)$ is continuous on $R$ and $|t(\de,u,v) - a| < 6\de$.
\end{lem}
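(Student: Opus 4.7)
The plan is to apply the intermediate value theorem fiberwise in $t$ to produce $t(\de,u,v)$, and then to invoke Proposition \ref{p:cont.of.sect} on the continuity of transversals to obtain continuity in $(\de,u,v)$.

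For existence and uniqueness, fix $(\de,u,v)\in R$, so $(u,v)\in T$ and $0<\de<\de(u,v)$. By Proposition \ref{p:S*}, all quadruples $(\de,u,v,t)$ with $t\in[t_1(u,v),t_2(u,v)]$ lie in $S$, so $\rho_{\de u v t}$ is defined throughout that interval, and by Corollary \ref{cor:g.incre} the map $t\mapsto I(\rho_{\de u v t})(1)$ is continuous and strictly increasing on $[t_1(u,v),t_2(u,v)]$. Applying Lemma \ref{l:int.close} at the endpoints and combining with the inequality $t_1(u,v)+6\de(u,v)<a<t_2(u,v)-6\de(u,v)$ from Proposition \ref{p:S*} yields
\[
I(\rho_{\de u v t_1(u,v)})(1) < t_1(u,v)+6\de < t_1(u,v)+6\de(u,v) < a,
\]
and symmetrically $I(\rho_{\de u v t_2(u,v)})(1) > t_2(u,v)-6\de(u,v) > a$. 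The intermediate value theorem then produces a unique $t(\de,u,v)\in(t_1(u,v),t_2(u,v))$ with $I(\rho_{\de,u,v,t(\de,u,v)})(1)=a$, and the estimate $|t(\de,u,v)-a|<6\de$ follows immediately from Lemma \ref{l:int.close}.

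For continuity, I will apply Proposition \ref{p:cont.of.sect} with $X=R$, $Y=Z=\R$, and
\[
E = \{\,((\de,u,v),t)\in R\times\R : t_1(u,v)<t<t_2(u,v)\,\}.
\]
Vertical sections of $E$ are open intervals, and horizontal sections are open in $R$ because $t_1$ and $t_2$ are continuous on $T$ (Proposition \ref{p:S*}) and $R$ is open. Define $F\colon E\to\R$ by $F((\de,u,v),t)=I(\rho_{\de u v t})(1)$. By Lemma \ref{l:cont.rho}, together with continuity of $I$ (Proposition \ref{p:properties.of.tau.2}) and evaluation at $1$ (Proposition \ref{p:restr.cont}), $F$ is jointly continuous, hence continuous in the first variable, and by Corollary \ref{cor:g.incre} it is strictly increasing in $t$. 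Since $t(\de,u,v)$ lies in the open interval $(t_1(u,v),t_2(u,v))$, it is never an endpoint of $E_{(\de,u,v)}$, so Proposition \ref{p:cont.of.sect} delivers continuity of $(\de,u,v)\mapsto t(\de,u,v)$ on $R$.

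No step appears genuinely difficult: the scaffolding furnished by Proposition \ref{p:S*}, Corollary \ref{cor:g.incre}, Lemma \ref{l:int.close}, and Proposition \ref{p:cont.of.sect} was engineered precisely so that the IVT applies at the endpoints $t=t_1(u,v)$ and $t=t_2(u,v)$ and the resulting transversal depends continuously on $(\de,u,v)$.
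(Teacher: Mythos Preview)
Your proof is correct and follows essentially the same route as the paper: IVT plus strict monotonicity from Corollary \ref{cor:g.incre}, the endpoint estimates via Lemma \ref{l:int.close} and Proposition \ref{p:S*}, and continuity via Proposition \ref{p:cont.of.sect}. The only cosmetic difference is that the paper applies Lemma \ref{l:int.close} at $t=a\pm 6\de$ (so that $t(\de,u,v)\in(a-6\de,a+6\de)$ drops out directly), whereas you apply it at $t=t_1(u,v),\,t_2(u,v)$ and then once more at $t=t(\de,u,v)$ to extract the bound; both work equally well.
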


\begin{proof}
Since $\de<\de(u,v)$ and $t_1(u,v) + 6\de(u,v) < a < t_2(u,v) - 6\de(u,v)$, we have
\[
t_1(u,v) < a-6\de < a < a+6\de < t_2(u,v).
\]
Lemma \ref{l:int.close} applied to $t=a-6\de$ and $t=a+6\de$, gives
\[
I(\rho_{\de, u, v, a-6\de})(1) < a < I(\rho_{\de, u, v, a+6\de})(1).
\]
By Lemma \ref{l:cont.rho}, the function $t\mapsto I(\rho_{\de, u, v, t})(1)$ is continuous on $[t_1(u,v),t_2(u,v)]$. By the Intermediate Value Theorem and Corollary \ref{cor:g.incre}, there exists a value $t(\de, u, v)\in(a-6\de,a+6\de)$, unique in $[t_1(u,v),t_2(u,v)]$ such that $I(\rho_{\de, u, v, t(\de, u, v)})(1)=a$. By Proposition \ref{p:cont.of.sect} applied to the function $(\de,u,v,t)\mapsto I(\rho_{\de, u, v, t})(1)$ on the open set
\[
E = \{(\de,u,v,t):(u,v)\in T,\, 0<\de<\de(u,v),\, t_1(u,v)<t<t_2(u,v)\}\sq R\times \R,
\]
$(\de,u,v)\mapsto t(\de, u, v)$ is continuous on $R$.
\end{proof}

Lemma \ref{l:def.t(u,v,t)} justifies the following definition.

\begin{defn}
Denote by $(\de,u,v)\mapsto \rho_{\de,u,v}$ the continuous map $R\to C^\infty([0,1])$ given by $\rho_{\de,u,v} = \rho_{\de, u, v, t(\de, u, v)}$.
\end{defn}

\begin{lem}\label{l:int.close.b}
For $(\de,u,v)\in R$ and $x\in [0,1]$, $|I^n(\rho_{\de u v})(x) - I^n(f_{u v})(x)| < 6\de(2-u)/(1-u)$ for $n\in\N$.
\end{lem}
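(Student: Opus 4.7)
The plan is to compare $\rho_{\de uv}$ with $f_{uv}$ not directly, but by interpolating through the function $f_{u,v,t(\de,u,v)}$. Since by definition $\rho_{\de uv} = \rho_{\de,u,v,t(\de,u,v)}$, Lemma \ref{l:int.close} immediately controls the difference of iterated integrals between $\rho_{\de uv}$ and $f_{u,v,t(\de,u,v)}$. What remains is to estimate the iterated integrals of $f_{u,v,t(\de,u,v)} - f_{uv}$, and this is where the factor $1/(1-u)$ will enter, because altering the parameter $t$ in $f_{uvt}$ only affects the constant value on the interval $[u,1]$ of length $1-u$.

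Concretely, I would first apply Lemma \ref{l:int.close} with $t = t(\de,u,v)$ to get
\[
|I^n(\rho_{\de uv})(x) - I^n(f_{u,v,t(\de,u,v)})(x)| < 6\de
\]
for all $x\in[0,1]$ and $n\in\N$. Next, setting $h = f_{u,v,t(\de,u,v)} - f_{u,v,a}$, I would observe that $h$ vanishes on $[0,u)$ and equals the constant $(t(\de,u,v)-a)/(1-u)$ on $[u,1]$, so by Lemma \ref{l:def.t(u,v,t)},
\[
\|h\|_\infty = \frac{|t(\de,u,v)-a|}{1-u} < \frac{6\de}{1-u}.
\]
A routine induction on $n$ gives $|I^n h(x)|\leq \|h\|_\infty\cdot x^n/n!$, hence $|I^n h(x)| < 6\de/(1-u)$ for every $x\in[0,1]$ and $n\geq 1$. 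Combining the two estimates by the triangle inequality produces
\[
|I^n(\rho_{\de uv})(x) - I^n(f_{uv})(x)| < 6\de + \frac{6\de}{1-u} = \frac{6\de(2-u)}{1-u},
\]
as desired.

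I do not anticipate any serious obstacle. The one design choice worth noting is the decision to interpolate through $f_{u,v,t(\de,u,v)}$: a direct comparison of $\rho_{\de uv}$ with $f_{uv}$ would not let us quote Lemma \ref{l:int.close}, since that lemma requires the second function to carry the \emph{same} parameter $t$ as $\rho$. Once the interpolation is in place, the two pieces assemble cleanly and the factor $(2-u)/(1-u)$ emerges from the obvious algebraic identity $1 + 1/(1-u) = (2-u)/(1-u)$.
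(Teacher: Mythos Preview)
Your proof is correct and reaches the same bound, but it takes a different interpolation route from the paper. The paper interpolates through $\rho_{\de u v a}$: it first bounds $|\rho_{\de,u,v,t(\de,u,v)} - \rho_{\de u v a}|$ pointwise using Lemma~\ref{l:g.incre} (with $v=v'$, so only the $m_2$'s differ), obtaining the bound $|t(\de,u,v)-a|/(1-u)$, and then applies Lemma~\ref{l:int.close} to $|I(\rho_{\de u v a})(x)-I(f_{uva})(x)|$. You instead interpolate through $f_{u,v,t(\de,u,v)}$, applying Lemma~\ref{l:int.close} first and then bounding the difference of the two step functions $f_{u,v,t(\de,u,v)}-f_{u,v,a}$ directly from the formula. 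Your route is a little more elementary here, since it sidesteps Lemma~\ref{l:g.incre} entirely in favor of a one-line computation with step functions; the paper's route, on the other hand, exercises Lemma~\ref{l:g.incre} in the simplest case $v=v'$, which is consistent with how that lemma is used again later (e.g.\ in Lemma~\ref{l:int.close.d}). Both approaches assemble to the same algebraic identity $6\de + 6\de/(1-u) = 6\de(2-u)/(1-u)$.
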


\begin{proof}
From Lemma \ref{l:g.incre} we have
\[
|\rho_{\de, u, v, t(\de,u,v)} - \rho_{\de u v a}|\leq \left|\frac{t(\de,u,v)-v}{1-u} - \frac{a-v}{1-u}\right| = \frac{|t(\de,u,v)-a|}{1-u}.
\]
Using Lemmas \ref{l:int.close} and \ref{l:def.t(u,v,t)}, this then yields
\begin{align*}
|I(\rho_{\de u v})(x) & - I(f_{u v})(x)| = |I(\rho_{\de, u, v, t(\de,u,v)})(x) - I(f_{u v a})(x)| \\
& \leq |I(\rho_{\de, u, v, t(\de,u,v)})(x) - I(\rho_{\de u v a})(x)| + |I(\rho_{\de u v a})(x) - I(f_{u v a})(x)| \\
& < \frac{|t(\de,u,v)-a|}{1-u} + 6\de < \frac{6\de}{1-u} + 6\de = 6\de\cdot \frac{2-u}{1-u}.
\end{align*}
For $n>1$, proceed as in the proof of Lemma \ref{l:int.close}.
\end{proof}

Now fix $r_1$ and $r_2$ such that $u_1<r_1<r_2<u_2$. Let $P(r_1,r_2,\theta)$ be the closed parallelogram whose sides are segments of the lines having equations $x=r_1$, $x=r_2$, $y=ax-a+2(b+\theta)$, $y=ax-a+2(b-\theta)$, where $\theta$ is chosen small enough so that $P(r_1,r_2,\theta)\sq T$, as in the diagram below.
\[
\begin{tikzpicture}
\def\u{4cm}
\def\a{0.55}
\def\b{0.2}
\def\xx{((1-2*\a+2*\b)/(1-\a))}
\def\os{0.1} %offset between u_1 and r_1 and between u_2 and r_2
\def\ph{0.02} %parallelogram height
    \draw ({(1-\a)*\u},0) -- (1*\u,0) -- (1*\u,0.6*\u) (0,0.6*\u) -- (0,0);
    \draw[dashed ] (0,0) -- (1*\u,\a*\u)
        (0,0) -- ({(1-\a)*\u},0) -- (1*\u,\a*\u);
    \coordinate (p1) at (  0*\u, 0*\u);
    \coordinate (p2) at (  0.4*\u, {(-0.6*\a + 2*\b)*\u});
    \coordinate (p3) at (  1*\u,\a*\u);
    \coordinate (p4) at ( {(1-2*\b/\a)*\u}, 0*\u);
    \coordinate (p4l) at ( {(\os+1-2*\b/\a)*\u}, {(\a*(\os+1-2*\b/\a)-\a+2*\b)*\u});
    \coordinate (p4r) at ( {(-\os+\xx)*\u},{(\a*(-\os+\xx)-\a+2*\b)*\u});
    \coordinate (p5) at (1*\u,2*\b*\u);
    \coordinate (p6) at ( {(1-\a)*\u}, 0*\u);
    \coordinate (p2t) at (  {\xx*\u}, {((\xx-1)*\a + 2*\b)*\u});
    \draw (p4) -- ++(0,0.6*\u);
    \draw ({\xx*\u},0) -- ++(0,0.6*\u);
    \draw[red] (p4) -- (p5);
    \fill (p4) circle [radius = 1.2pt];
    \fill (p5) circle [radius = 1.2pt];
    \fill (p6) circle [radius = 1.2pt];
    \fill (p2t) circle [radius = 1.2pt];
    \draw[gray!50] (p4l) -- (p4l |- 0,0);
    \draw[gray!50] (p4r) -- (p4r |- 0,0);
    \draw[gray!50] ($(p4r)+(0,\ph*\u)$) -- (1*\u,{(2*\b+\ph)*\u});
    \draw[gray!50] ($(p4r)+(0,-\ph*\u)$) -- (1*\u,{(2*\b-\ph)*\u});
    \draw ($(p4l)+(0,\ph*\u)$) -- ++(0,-2*\ph*\u) -- ($(p4r)+(0,-\ph*\u)$) -- ++(0,2*\ph*\u) -- node[fill=white,inner sep=1,above,xshift=-4.5ex] {\sz $P(r_1,r_2,\theta)$} cycle;
    \node[below left]  at (0,0) {\sz $0$};
    \node[below]  at (1*\u,0) {\sz $1$};
    \node[below]  at (p4) {\sz \rule{0pt}{5pt}$u_1$};
    \node[below]  at ({\xx*\u},0) {\sz \rule{0pt}{5pt}$u_2$};
    \node[below]  at (p4l |- 0,0) {\sz \rule{0pt}{5pt}$r_1$};
    \node[below]  at (p4r |- 0,0) {\sz \rule{0pt}{5pt}$r_2$};
    \node[right]  at (1*\u,\a*\u) {\sz $a$};
    \node[right,xshift=3ex,yshift=1.3ex]
        (top) at ($(p5)+(0,\ph*\u)$) {\sz $2(b+\theta)$};
    \node[right,xshift=3ex]
        (mid) at (p5) {\sz $2b$};
    \node[right,xshift=3ex,yshift=-1.5ex]
        (bot) at ($(p5)+(0,-\ph*\u)$) {\sz $2(b-\theta)$};
    \draw ($(p5)+(0,\ph*\u)+(2pt,0)$) -- (top);
    \draw ($(p5)+(2pt,0)$) -- (mid);
    \draw ($(p5)+(0,-\ph*\u)+(2pt,0)$) -- (bot);
\end{tikzpicture}
\]
Define $v^*(x,\theta) = ax - a + 2(b+\theta)$.

Since $P(r_1,r_2,\theta)$ is a compact subset of $T$ and $\de(u,v)$ is a positive continuous function on $T$, $\de^*(r_1,r_2,\theta) = \min\{\de(u,v):(u,v)\in P(r_1,r_2,\theta)\}$ exists and is a positive %continuous
function on the open set
\[
U = \{(r_1,r_2,\theta):u_1<r_2<r_2<u_2,\,\theta>0,\,P(r_1,r_2,\theta)\sq T\}
\]
in $\R^3$. By truncating $\de^*(r_1,r_2,\theta)$, we may ask also that
\[
\de^*(r_1,r_2,\theta) \leq \frac{\theta(1-r_2)}{6(2-r_1)}.
\]
For $(r_1,r_2,\theta)\in U$,
\[
R^*(r_1,r_2,\theta) = \{(\de,u,v):(u,v)\in P(r_1,r_2,\theta),\,0<\de<\de^*(r_1,r_2,\theta)\}
\]
is a subset of $R$, so $\rho_{\de u v}$ is defined for $(\de,u,v)\in R^*(r_1,r_2,\theta)$.

\begin{lem}\label{l:f<g>f}
Let $f,g\colon [0,1]\to \R$ be continuous functions satisfying for some $0<c<1$ that $f\leq g$ on $[0,c]$ and $g\leq f$ on $[c,1]$.
If $I(f)(1)\leq I(g)(1)$ then $I(f)\leq I(g)$.
\end{lem}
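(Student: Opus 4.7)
The plan is to reduce the statement to a monotonicity argument about a single auxiliary function. Set $h = g - f$ and $H(x) = I(h)(x) = \int_0^x h(t)\,dt$, so that $H = I(g) - I(f)$. The goal becomes $H(x) \geq 0$ for all $x \in [0,1]$, with the hypotheses becoming $h \geq 0$ on $[0,c]$, $h \leq 0$ on $[c,1]$, $H(0) = 0$, and $H(1) \geq 0$.

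The key observation is that the sign conditions on $h$ translate directly to monotonicity of $H$: since $H$ is differentiable with $H'(x) = h(x)$, we have $H$ increasing on $[0,c]$ and decreasing on $[c,1]$. First, I would conclude from $H(0) = 0$ and $H$ increasing on $[0,c]$ that $H(x) \geq 0$ for $x \in [0,c]$. Second, from $H$ decreasing on $[c,1]$ and $H(1) \geq 0$, I would conclude $H(x) \geq H(1) \geq 0$ for $x \in [c,1]$. Combining the two intervals gives $H \geq 0$ on all of $[0,1]$, which is exactly $I(f) \leq I(g)$.

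There is no real obstacle here; the argument is a one-line observation that a continuous function which starts at $0$, rises on $[0,c]$, falls on $[c,1]$, and ends nonnegative must be nonnegative throughout. The only minor subtlety is that $h$ is only assumed continuous, not differentiable, but this is irrelevant since $H$ is differentiable (with derivative $h$) by the fundamental theorem of calculus, and a continuous function with nonnegative (resp.\ nonpositive) derivative on an interval is increasing (resp.\ decreasing) there.
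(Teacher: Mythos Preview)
Your proof is correct and takes essentially the same approach as the paper: both split at $c$, observe that $I(g)-I(f)$ is increasing on $[0,c]$ (hence nonnegative there), and use the endpoint condition $I(g)(1)\geq I(f)(1)$ together with the sign of $g-f$ on $[c,1]$ to handle the right half. The paper phrases the second half as a short contradiction argument rather than via monotonicity of $H$, but the underlying idea is identical.
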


\begin{proof}
We clearly have $I(f)(x)\leq I(g)(x)$ for $0\leq x\leq c$. Suppose $I(g)(d)<I(f)(d)$ for some $d$ with $c<d<1$. Then since $g\leq f$ on $[d,1]$, we have
\[
I(g)(1) = I(g)(d) + \int_d^1 g(x)\,dx < I(f)(d) + \int_d^1 f(x)\,dx = I(f)(1),
\]
a contradiction.
\end{proof}

\begin{prop}\label{p:I2}
\begin{enumerate}
\item
For $(u,v),\,(u,v') \in T$, $0<\de<\min(\de(u,v),\de(u,v'))$ with $v<v'$, we have $I^2(\rho_{\de u v})(1) < I^2(\rho_{\de u v'})(1)$.

\item
Let $(r_1,r_2,\theta)\in U$. When $0<\de<\de^*(r_1,r_2,\theta)$ and $r_1<u<r_2$, the function $v\mapsto I^2(\rho_{\de u v})(1)$ is continuous and strictly increasing on $[v^*(u,-\theta),v^*(u,\theta)]$.
\end{enumerate}
\end{prop}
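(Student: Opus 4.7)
The plan is to reduce (2) to (1), and to prove (1) by partitioning $[v,v']$ into many small steps so that Lemma \ref{l:g.incre} can be applied on each consecutive pair. First a preliminary observation: from the explicit formula for $\de(u,v)$ given in the proof of Proposition \ref{p:S*}, for fixed $u$ the function $v \mapsto \de(u,v)$ is the pointwise minimum of finitely many affine functions of $v$, hence is concave. Consequently, if $\de < \min(\de(u,v),\de(u,v'))$, then $\de < \de(u,w)$ for every $w \in [v,v']$, so $\rho_{\de u w}$ is defined throughout the segment.

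For (1), partition $v = v_0 < v_1 < \cdots < v_n = v'$ with every gap $v_{i+1}-v_i < u\de$, and write $t_i = t(\de,u,v_i)$, $m_1^{(i)} = v_i/u$, $m_2^{(i)} = (t_i-v_i)/(1-u)$. Then $m_1^{(i+1)} - m_1^{(i)} < \de$, and together with the bound $m_1^{(j)} + 2\de < m_2^{(j)}$ coming from $(\de,u,v_j,t_j)\in S$, this gives $m_1^{(i+1)} + \de < \min(m_2^{(i)}, m_2^{(i+1)})$, which is the hypothesis of Lemma \ref{l:g.incre}. To decide which sub-case applies, note that if we had $m_2^{(i+1)} \geq m_2^{(i)}$, then combined with $m_1^{(i)} < m_1^{(i+1)}$, Lemma \ref{l:g.incre} would yield $\rho_{\de u v_{i+1}} \geq \rho_{\de u v_i}$ throughout $[0,1]$ with strict inequality on $I_1$, contradicting the common value $I(\rho_{\de u v_{i+1}})(1) = a = I(\rho_{\de u v_i})(1)$. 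Hence $m_2^{(i)} > m_2^{(i+1)}$, and Lemma \ref{l:g.incre} provides disjoint intervals $I_1$ and $I_2$ with $I_1$ lying to the left of $I_2$, on $I_1$ we have $\rho_{\de u v_{i+1}} > \rho_{\de u v_i}$, on $I_2$ we have $\rho_{\de u v_{i+1}} < \rho_{\de u v_i}$, and the two functions agree off $I_1 \cup I_2$.

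Choosing $c$ in the gap between $I_1$ and $I_2$, we obtain $\rho_{\de u v_i} \leq \rho_{\de u v_{i+1}}$ on $[0,c]$ and the reverse on $[c,1]$, with $I(\rho_{\de u v_i})(1) = I(\rho_{\de u v_{i+1}})(1)$. Lemma \ref{l:f<g>f} then yields $I(\rho_{\de u v_i}) \leq I(\rho_{\de u v_{i+1}})$ pointwise on $[0,1]$. Since $\rho_{\de u v_{i+1}} > \rho_{\de u v_i}$ on the nonempty open interval $I_1 \subset [0,c]$, we obtain $I(\rho_{\de u v_{i+1}})(c) > I(\rho_{\de u v_i})(c)$, and continuity extends this strict inequality to a neighborhood of $c$, forcing $I^2(\rho_{\de u v_{i+1}})(1) > I^2(\rho_{\de u v_i})(1)$. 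Chaining these strict inequalities across $i = 0,1,\dots,n-1$ establishes (1).

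For (2), continuity of $v \mapsto I^2(\rho_{\de u v})(1)$ follows by composition of continuous maps: Lemmas \ref{l:cont.rho} and \ref{l:def.t(u,v,t)} give continuity of $(\de,u,v) \mapsto \rho_{\de u v}$, Proposition \ref{p:properties.of.tau.2} handles the integral operators, and Proposition \ref{p:restr.cont} handles evaluation at $1$. Strict monotonicity on $[v^*(u,-\theta),v^*(u,\theta)]$ is immediate from (1), since both $(u,v)$ and $(u,v')$ lie in $P(r_1,r_2,\theta) \subset T$ and the definition of $\de^*(r_1,r_2,\theta)$ ensures $\de < \de(u,v)$ and $\de < \de(u,v')$. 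The main obstacle is the necessity of the partition argument, arising because the hypothesis $m_1' + \de < \min(m_2,m_2')$ in Lemma \ref{l:g.incre} fails when $v'-v$ exceeds $u\de$; the concavity of $\de(u,\cdot)$ is what guarantees that every intermediate parameter remains admissible so that the chain argument can be carried out.
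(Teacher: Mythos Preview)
Your proof is correct and follows the same route as the paper: compare $\rho_{\de u v}$ and $\rho_{\de u v'}$ via Lemma~\ref{l:g.incre}, deduce $m_2'<m_2$ from the equality of first integrals, and then feed the resulting sign pattern into Lemma~\ref{l:f<g>f} to get $I(\rho_{\de u v})\le I(\rho_{\de u v'})$ pointwise, with strict inequality forcing $I^2(\rho_{\de u v})(1)<I^2(\rho_{\de u v'})(1)$.

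The one difference is that the paper applies Lemma~\ref{l:g.incre} directly to the pair $(v,v')$ and does not pause to verify its side hypothesis $m_1'+\de<\min(m_2,m_2')$, whereas you subdivide $[v,v']$ into steps shorter than $u\de$ so that this hypothesis is visibly met on each consecutive pair, and then chain. Your concavity observation about $v\mapsto\de(u,v)$ (minimum of finitely many affine functions) is correct and is exactly what is needed to guarantee that every intermediate $w\in[v,v']$ stays in $R$, so that $\rho_{\de u w}$ is defined and the chain can be run. The partition costs nothing and makes the appeal to Lemma~\ref{l:g.incre} airtight; the paper's version is terser but leaves that verification to the reader.
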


\begin{proof}
(1) Write
\[
m_1 = \frac{v}u,\,m_2 = \frac{t(\de,u,v)-v}{1-u},\,m'_1 = \frac{v'}u,\,m'_2 = \frac{t(\de,u,v')-v'}{1-u}.
\]
Since $I(\rho_{\de u v})(1) = I(\rho_{\de u v'})(1) = a$ and $m_1<m'_1$, we necessarily have $m'_2<m_2$, so the proposition applies with $f=\rho_{\de u v}$, $g=\rho_{\de u v'}$, and $c$ any number between $c'_x+\eta$ and $d'_x-\eta$, yielding $I(\rho_{\de u v}) \leq I(\rho_{\de u v'})$. We have strict inequality on $(b_x-\eta,c'_x+\eta)$, so
$I^2(\rho_{\de u v})(1) < I^2(\rho_{\de u v'})(1)$.

(2) When $0<\de<\de^*(r_1,r_2,\theta)$ and $r_1<u<r_2$, the map $v\mapsto I^2(\rho_{\de u v})(1)$ is continuous on $[v^*(u,-\theta),v^*(u,\theta)]$ since $(\de,u,v)\mapsto \rho_{\de u v}$ is continuous on $R$. By (1) it is strictly increasing.
\end{proof}

\begin{lem}\label{l:def.v(de,u)}
Let $W(r_1,r_2,\theta)=\{(\de,u):0<\de<\de^*(r_1,r_2,\theta),\, r_1<u<r_2\}$. For each $(\de,u)\in W(r_1,r_2,\theta)$, there is a unique $v(\de,u)$ such that $v^*(u,-\theta) < v(\de,u) < v^*(u,\theta)$ and $I^2(\rho_{\de, u, v(\de, u)})(1) = b$. The function $(\de,u)\mapsto v(\de, u)$ is continuous on $W(r_1,r_2,\theta)$ and $|v(\de,u) - v^*(u,0)| < 2\theta$.
\end{lem}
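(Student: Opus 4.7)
The plan is to apply the Intermediate Value Theorem to $v\mapsto I^2(\rho_{\de u v})(1)$ on $[v^*(u,-\theta),v^*(u,\theta)]$ to get existence of $v(\de,u)$, deduce uniqueness from the strict monotonicity in Proposition \ref{p:I2}(2), and derive continuity from the transversal Proposition \ref{p:cont.of.sect}.

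Fix $(\de,u)\in W(r_1,r_2,\theta)$. By Proposition \ref{p:I2.of.f}(1), $(u,v^*(u,\pm\theta))$ lies on the line $y=ax-a+2(b\pm\theta)$, so $I^2(f_{u,v^*(u,\pm\theta)})(1)=b\pm\theta$. Combining with the approximation estimate from Lemma \ref{l:int.close.b},
\[
|I^2(\rho_{\de,u,v^*(u,\pm\theta)})(1)-(b\pm\theta)|<6\de\,\frac{2-u}{1-u}.
\]
Since $(2-x)/(1-x)$ is increasing on $(-\infty,1)$ and $u<r_2<1$, and since by construction $\de<\de^*(r_1,r_2,\theta)\le \theta(1-r_2)/(6(2-r_1))$,
\[
6\de\,\frac{2-u}{1-u}<6\cdot\frac{\theta(1-r_2)}{6(2-r_1)}\cdot\frac{2-r_2}{1-r_2}=\theta\cdot\frac{2-r_2}{2-r_1}<\theta.
\]
Therefore $I^2(\rho_{\de,u,v^*(u,-\theta)})(1)<b<I^2(\rho_{\de,u,v^*(u,\theta)})(1)$. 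The function $v\mapsto I^2(\rho_{\de u v})(1)$ is continuous on $[v^*(u,-\theta),v^*(u,\theta)]$ by Proposition \ref{p:I2}(2), so the IVT produces some $v(\de,u)\in(v^*(u,-\theta),v^*(u,\theta))$ with $I^2(\rho_{\de,u,v(\de,u)})(1)=b$; uniqueness in this interval is immediate from the strict monotonicity part of Proposition \ref{p:I2}(2).

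For continuity of $(\de,u)\mapsto v(\de,u)$ on $W(r_1,r_2,\theta)$, I apply Proposition \ref{p:cont.of.sect} to the function $(\de,u,v)\mapsto I^2(\rho_{\de u v})(1)$ on the open set
\[
E=\{(\de,u,v):(\de,u)\in W(r_1,r_2,\theta),\ v^*(u,-\theta)<v<v^*(u,\theta)\};
\]
its horizontal sections are open, its vertical sections are intervals, it is continuous in $(\de,u)$ and strictly increasing in $v$ (using Lemma \ref{l:cont.rho}, Proposition \ref{p:properties.of.tau.2}, and Proposition \ref{p:I2}(2)), and the value $v(\de,u)$ is interior to its vertical section by the strict inequalities above. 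The bound $|v(\de,u)-v^*(u,0)|<2\theta$ is automatic because $v^*(u,\pm\theta)-v^*(u,0)=\pm2\theta$.

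The only nontrivial step is the estimate $6\de(2-u)/(1-u)<\theta$ needed to straddle $b$ by the IVT; this is exactly what the truncation $\de^*(r_1,r_2,\theta)\le\theta(1-r_2)/(6(2-r_1))$ was designed to yield, so once unpacked the proof is essentially a bookkeeping assembly of previous results.
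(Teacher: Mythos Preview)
Your proof is correct and follows essentially the same route as the paper: estimate $I^2(\rho_{\de u v})(1)$ against $I^2(f_{uv})(1)$ via Lemma~\ref{l:int.close.b}, use Proposition~\ref{p:I2.of.f}(1) to identify $I^2(f_{u,v^*(u,\pm\theta)})(1)=b\pm\theta$, apply the IVT with Proposition~\ref{p:I2}(2), and conclude continuity from Proposition~\ref{p:cont.of.sect}. The only cosmetic difference is in bounding $6\de(2-u)/(1-u)$: you use monotonicity of $(2-x)/(1-x)$ to replace $u$ by $r_2$ and get $\theta\cdot(2-r_2)/(2-r_1)<\theta$, whereas the paper uses the looser replacement $(2-u)/(1-u)\le(2-r_1)/(1-r_2)$, which matches the truncation of $\de^*$ exactly; both are valid.
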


\begin{proof}
Let $(\de,u)\in W(r_1,r_2,\theta)$. For $0<\de<\de^*(r_1,r_2,\theta)$ and $(u,v)\in P(r_1,r_2,\theta)$, by Lemma \ref{l:int.close.b} and the choice of $\de^*(r_1,r_2,\theta)$, we have
\[
|I^2(\rho_{\de u v})(1) - I^2(f_{u v})(1)| < \frac{6\de(2-r_1)}{1-r_2} < \theta.
\]
By Proposition \ref{p:I2.of.f} (1) (with $b$ replaced by $b\pm\theta$), the double inequality $b-\theta < b < b+\theta$ can also be written
\[
I^2(f_{u,v^*(u,-\theta)})(1) < b < I^2(f_{u,v^*(u,\theta)})(1),
\]
and therefore
\[
I^2(\rho_{\de u v^*(u,-\theta)})(1) < b < I^2(\rho_{\de u v^*(u,\theta)})(1).
\]
By Proposition \ref{p:I2} (2) and the Intermediate Value Theorem, there is a unique
\[v(\de, u)\in (v^*(u,-\theta), v^*(u,\theta))\] such that $I^2(\rho_{\de, u, v(\de, u)})(1) = b$. We have
$|v(\de,u)-v^*(u,0)| < |v^*(u,\pm\theta)-v^*(u,0)| = 2\theta$.

Continuity of $(\de,u)\mapsto v(\de, u)$ on $W(r_1,r_2,\theta)$ follows from Proposition \ref{p:cont.of.sect} applied to the function $(\de,u,v)\mapsto I^2(\rho_{\de, u, v})(1)$ on the open set
$E = \{(\de,u,v): 0<\de<\de^*(r_1,r_2,\theta),\, r_1<u<r_2,\,v^*(u,-\theta)<v<v^*(u,\theta)\}\sq W(r_1,r_2,\theta)\times \R$.
\end{proof}

Lemma \ref{l:def.v(de,u)} justifies the following definition.

\begin{defn}
Denote by $(\de,u)\mapsto \rho_{\de u}$ the continuous map $W(r_1,r_1,\theta)\to C^\infty([0,1])$ given by $\rho_{\de u} = \rho_{\de, u, v(\de, u)}$.
\end{defn}

For a fixed $\de<\de^*(r_1,r_2,\theta)$, $u\mapsto \rho_{\de u}$ is continuous on $(r_1,r_2)$ and therefore so is $u\mapsto I^3(\rho_{\de, u})(1)$.

\begin{lem}\label{l:int.close.d}
For $(\de,u)\in W(r_1,r_2,\theta)$ and $x\in [0,1]$, we have $|I^n(\rho_{\de u})(x) - I^n(f_{u, v^*(u,0)})(x)| < 24(\theta+\de)/\min(r_1,1-r_2)$ for $n\in\N$.
\end{lem}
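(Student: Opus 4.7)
The plan is to compare $\rho_{\de u}$ with $f_{u,v^*(u,0)}$ through the intermediate function $f_{u,v(\de,u)}$, applying the triangle inequality term by term in $I^n(\cdot)(x)$. For brevity, write $v=v(\de,u)$ and $v_0=v^*(u,0)$.

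First, since $\rho_{\de u}=\rho_{\de,u,v}$ by definition, Lemma \ref{l:int.close.b} applies directly to the pair $(\rho_{\de u},f_{u,v})$ and yields
\[
|I^n(\rho_{\de u})(x)-I^n(f_{u,v})(x)| < \frac{6\de(2-u)}{1-u}
\]
for all $x\in[0,1]$ and $n\in\N$.

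Second, I would use the explicit two-step form of $f_{u,v}$: its value is $v/u$ on $[0,u)$ and $(a-v)/(1-u)$ on $[u,1]$, so
\[
|f_{u,v}(x)-f_{u,v_0}(x)| \leq \frac{|v-v_0|}{\min(u,1-u)}
\]
pointwise, and by Lemma \ref{l:def.v(de,u)} we have $|v-v_0|<2\theta$. Since $[0,x]\sq[0,1]$, iterating $I$ on a function uniformly bounded by $M$ on $[0,1]$ preserves (in fact weakens) that bound, so
\[
|I^n(f_{u,v})(x)-I^n(f_{u,v_0})(x)| \leq \frac{2\theta}{\min(u,1-u)}.
\]

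Third, I would combine the two bounds by the triangle inequality and reduce the denominators using $u>r_1$ and $1-u>1-r_2$. Then $\min(u,1-u)>\min(r_1,1-r_2)$, and $(2-u)/(1-u)<2/(1-r_2)\leq 2/\min(r_1,1-r_2)$, giving a total bound
\[
\frac{12\de}{\min(r_1,1-r_2)}+\frac{2\theta}{\min(r_1,1-r_2)} < \frac{24(\theta+\de)}{\min(r_1,1-r_2)},
\]
as claimed. The argument is essentially bookkeeping: the only points needing care are lining up the denominator $\min(u,1-u)$ coming from differentiating $f_{u,v}$ with respect to $v$, and observing that iterated integration against $dt$ on subintervals of $[0,1]$ does not amplify a uniform bound. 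No serious obstacle arises.
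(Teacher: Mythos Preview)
Your proof is correct and reaches the same bound, but it routes the triangle inequality through a different intermediate function than the paper does. The paper splits
\[
\rho_{\de u}=\rho_{\de,u,v(\de,u)}\ \longrightarrow\ \rho_{\de,u,v^*(u,0)}\ \longrightarrow\ f_{u,v^*(u,0)},
\]
using Lemma~\ref{l:g.incre} for the first step (comparing two $\rho$ functions with the same $\de,u$ but different $v$, which requires the estimate $|t(\de,u,v(\de,u))-t(\de,u,v^*(u,0))|<12\de$ from Lemma~\ref{l:def.t(u,v,t)}) and Lemma~\ref{l:int.close.b} for the second. You instead split
\[
\rho_{\de u}=\rho_{\de,u,v(\de,u)}\ \longrightarrow\ f_{u,v(\de,u)}\ \longrightarrow\ f_{u,v^*(u,0)},
\]
applying Lemma~\ref{l:int.close.b} first and then comparing two explicit step functions pointwise. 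Your route is slightly more elementary: the step-function comparison is a one-line computation, whereas the paper's $\rho$-to-$\rho$ comparison invokes the more delicate Lemma~\ref{l:g.incre} and its hypothesis $m'_1+\de<\min(m_2,m'_2)$. Both approaches yield bounds comfortably inside $24(\theta+\de)/\min(r_1,1-r_2)$; yours actually gives the sharper $(12\de+2\theta)/\min(r_1,1-r_2)$.
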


\begin{proof}
By Lemma \ref{l:g.incre}, since $|t(\de,u,v(\de,u)) - t(\de,u,v^*(u,0))| < 12\de$ by Lemma \ref{l:def.t(u,v,t)},
\begin{align*}
& |\rho_{\de, u, v(\de,u)} - \rho_{\de, u, v^*(u,0)}| \\
& \leq
\max\left(\left|\frac{v(\de,u)}{u} - \frac{v^*(u,0)}{u}\right|,\,
\left| \frac{t(\de,u,v(\de,u)) - v(\de,u)}{1-u} - \frac{t(\de,u,v^*(u,0)) - v^*(u,0)}{1-u}\right|\right) \\
& < \max\left(\frac{2\theta}{u},\frac{2\theta+12\de}{1-u}\right) \leq \frac{2\theta+12\de}{\min(u,1-u)} \leq \frac{2\theta+12\de}{\min(r_1,1-r_2)}
\end{align*}
and therefore, using Lemma \ref{l:int.close.b},
\begin{align*}
|I(\rho_{\de u})(x) & - I(f_{u, v^*(u,0)})(x)| = |I(\rho_{\de, u, v(\de,u)})(x) - I(f_{u, v^*(u,0)})(x)| \\
& \leq |I(\rho_{\de, u, v(\de,u)})(x) - I(\rho_{\de, u, v^*(u,0)})(x)| + |I(\rho_{\de, u, v^*(u,0)})(x) - I(f_{u, v^*(u,0)})(x)| \\
& < \frac{2\theta + 12\de}{\min(r_1,1-r_2)} + 6\de\cdot \frac{2-r_1}{1-r_2} \leq \frac{2\theta + 6\de(4-r_1)}{\min(r_1,1-r_2)} \leq \frac{24(\theta+\de)}{\min(r_1,1-r_2)}.
\end{align*}
For $n>1$, proceed as in the proof of Lemma \ref{l:int.close}.
\end{proof}

By Proposition \ref{p:g1g2}, we may choose $r_1,r_2,s_1,s_2$ with $u_1<r_1<s_1<s_2<r_2<u_2$, so that $H_1=I^3(f_{s_1,v^*(s_1,0)}) < c < H_2= I^3(f_{s_2,v^*(s_2,0)})$.
Choose positive numbers $\de,\theta$ small enough so that $24(\theta+\de)/\min(r_1,1-r_2) < \min(c-H_1,H_2-c)$. By taking $\theta$ smaller, we can assume that $(r_1,r_2,\theta)\in U$. Then by taking $\de$ smaller, we can assume that $\de<\de^*(r_1,r_2,\theta)$ and hence $(\de,u) \in W(r_1,r_2,\theta)$ for all $u\in [s_1,s_2]$. It then follows from $H_1<c<H_2$ and Lemma \ref{l:int.close.d} that
\[
I^3(\rho_{\de s_1})(1) < c < I^3(\rho_{\de s_2})(1).
\]
By continuity of $u\mapsto I^3(\rho_{\de u})(1)$ on $[s_1,s_2]$, there is a value of $u\in [s_1,s_2]$ for which $I^3(\rho_{\de u})(1)=c$. This $f=\rho_{\de u}$ is the desired function.

This completes the proof of Theorem \ref{t:n=3.b}.

\section{Proof of Theorem \ref{t:conj}}
\label{s:Thm9.1}

By terminating the proof of Theorem \ref{t:n=3.b} in the appropriate place, we obtain similar theorems for $n=1,2$. The case $n=0$ is just Example \ref{n=0}. Alternatively, we can deduce these theorems from Theorem \ref{t:n=3.b}. For that reason, we state these as a corollary to Theorem \ref{t:n=3.b}.

\begin{cor}\label{c:n=0,1,2}
\begin{enumerate}
\item
For each small enough $\de>0$, there is a $C^\infty$ function $f\colon [0,1]\to[0,1]$ such that $f=\s_\de$ on $[0,\de]$, $f=\tau_\de$ on $[1-\de,1]$, and $Df>0$ on $(0,1)$.

\item
Let $a$ satisfy $0<a<1$.
For each small enough $\de>0$, there is a $C^\infty$ function $f\colon [0,1]\to[0,1]$ such that $f=\s_\de$ on $[0,\de/2]$, $f=\tau_\de$ on $[1-\de/2,1]$, $Df>0$ on $(0,1)$, and $I(f)(1)=a$.

\item
Let $a,b\in\R$ satisfy $0<a<1$ and $a^2/2 < b < a/2$.
For each small enough $\de>0$, there is a $C^\infty$ function $f\colon [0,1]\to[0,1]$ such that $f=\s_\de$ on $[0,\de/2]$, $f=\tau_\de$ on $[1-\de/2,1]$, $Df>0$ on $(0,1)$, $I(f)(1)=a$, and $I^2(f)(1)=b$.
\end{enumerate}
\end{cor}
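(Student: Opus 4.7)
The plan is to deduce each part from results already proved in the excerpt. Part (1) requires no new argument: Example \ref{n=0} constructs precisely a $C^\infty$ function that equals $\s_\de$ on $[0,\de]$, equals $\tau_\de$ on $[1-\de,1]$, and has positive derivative on $(0,1)$, which is exactly the conclusion of (1).

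For parts (2) and (3), the strategy is to reduce to Theorem \ref{t:n=3.b} by filling in the ``missing'' coordinates so that the resulting triple $(a,b,c)$ satisfies the strict inequalities in (\ref{eq:3ineq}). Concretely, for part (3), given $a,b$ with $0<a<1$ and $a^2/2<b<a/2$, Proposition \ref{p:g1g2} (equivalently, Remark \ref{r:l.r}) tells us that the interval with endpoints
\[
l = \frac{2b^2}{3a},\qquad r = \frac{-a^2+2ab-4b^2+2b}{6(1-a)}
\]
is nonempty. I would pick any $c\in (l,r)$, so that $(a,b,c)$ satisfies the hypotheses of Theorem \ref{t:n=3.b}, and apply that theorem to obtain, for each sufficiently small $\de>0$, a $C^\infty$ function $f$ with $f=\s_\de$ on $[0,\de/2]$, $f=\tau_\de$ on $[1-\de/2,1]$, $Df>0$ on $(0,1)$, $I(f)(1)=a$, and $I^2(f)(1)=b$. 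The incidental equality $I^3(f)(1)=c$ is simply disregarded.

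For part (2), the same reduction is applied one level higher: given $0<a<1$, the interval $(a^2/2,\,a/2)$ is nonempty since $a<1$, so I would pick any $b$ in it and then invoke part (3) (discarding the value of $I^2(f)(1)$). There is no genuine obstacle here; the only things to check are the nonemptiness of the auxiliary parameter intervals, and these have already been established in Propositions \ref{p:prelim.ineq} and \ref{p:g1g2}. The conclusions of (2) and (3) are \emph{weaker} than that of Theorem \ref{t:n=3.b}, so no further construction is needed.
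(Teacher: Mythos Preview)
Your proposal is correct and follows essentially the same approach as the paper: for parts (2) and (3) you fill in the missing parameters $b$ (and then $c$) using the nonemptiness of the relevant intervals from Proposition \ref{p:g1g2}, then invoke Theorem \ref{t:n=3.b} and discard the unneeded integral constraints. The only difference is in part (1): you appeal directly to Example \ref{n=0}, whereas the paper's written proof handles all three parts uniformly via Theorem \ref{t:n=3.b} (though the paper explicitly notes just before the proof that the case $n=0$ ``is just Example \ref{n=0}'', so your route is acknowledged there as well).
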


\begin{proof}
To apply Theorem \ref{t:n=3.b}, we require suitable numbers $a,b,c$. In (1), we can start with any $a$ such that $0<a<1$. In (2) and (3), we are given $a$. In (1) and (2), we then need a number $b$. Since $0<a<1$, we have $a^2<a$, so we can choose a number $b$ satisfying $a^2/2<b<a/2$. Then in all three parts, by Proposition \ref{p:g1g2} we have
\[
2b^2/(3a) < (-a^2 + 2ab -4b^2 + 2b)/(6(1-a)),
\]
so we can choose a number $c$ satisfying
\[
2b^2/(3a) < c < (-a^2 + 2ab -4b^2 + 2b)/(6(1-a)).
\]
Then Theorem \ref{t:n=3.b} applies to give the desired function $f$.
\end{proof}

We now prove Theorem \ref{t:conj} stated in the introduction as Theorem A. For convenience, we restate it here.

\begin{thm}\label{t:conj}
The statements $(P_n)$, $n=0,1,2,3$, all hold. We have the following:
\begin{align*}
W_0 & = \{a\in \R:0<a\}, \\
W_1 & = \{(a,b)\in \R^2 :0<a<b\}, \\
W_2 & = \{(a,b,c)\in \R^3 :0<2a<b,\,b^2<2ac\}, \\
W_3 & = \{(a,b,c,d)\in \R^4 :0<c<d,\,2b^2<3ac,\,6ad+c^2+4b^2 < 6ac + 2bc + 2bd\}.
\end{align*}
\end{thm}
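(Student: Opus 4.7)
The plan is to combine the equivalence of Proposition~\ref{p:equiv.P_n} (which reduces $(P_n)$ to showing $W_n=W_n^\infty$ and $W_n$ open) with the integral reformulation of Proposition~\ref{p:W.equiv} (which rephrases $W_n$ in terms of $g=D^nf\in\F_0$ with $g(0)=0$ and $I^jg(1)=b_{n-j}$), and then to apply Theorem~\ref{t:n=3.b} together with Corollary~\ref{c:n=0,1,2} in each case $n\in\{0,1,2,3\}$. Let $U_n\sq\R^{n+1}$ denote the set claimed to equal $W_n$ in the statement; each $U_n$ is manifestly open, being defined by strict inequalities. Since $W_n^\infty\sq W_n$ trivially, it suffices to prove the two inclusions $W_n\sq U_n\sq W_n^\infty$, which will simultaneously yield $W_n=W_n^\infty=U_n$ and the openness of $W_n$.

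For necessity ($W_n\sq U_n$), I take $b\in W_n$ witnessed by $g\in\F_0$; continuity, monotonicity, and nonconstancy of $g$ together with $g(0)=0$ force $b_n=g(1)>0$, so $h=g/b_n$ is a continuous increasing function from $[0,1]$ to $[0,1]$. Applying Theorem~\ref{t:a-e} to $h$ gives the inequalities (a)--(e), and the ``$f=_\fin g$'' exception is automatically vacuous for continuous nonconstant $h$ (a nontrivial $2$-step step function is discontinuous, and a constant contradicts nonconstancy), so all inequalities are strict. Substituting $I^jh(1)=b_{n-j}/b_n$ and clearing denominators recovers the strict inequalities defining $U_n$. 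In the $n=3$ case, the raw translation yields five inequalities; the two extra ones $c^2<2bd$ and $2b<c$ arising from (b) are implied by the others via Proposition~\ref{p:prelim.ineq}(2)(iv), reconciling the list with the four-inequality description. The cases $n=0,1,2$ amount to similar (and simpler) substitutions, where for $n=2$ one also uses that $b^2<2ac$ together with $2a<b$ implies $b<c$.

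For sufficiency ($U_n\sq W_n^\infty$), I fix $b\in U_n$ along with arbitrary $\al\in\wt{\A}$ and $\beta\in\wt{\B}$, and must construct $g\in\F_0^\infty$ with $g(0)=0$, $I^jg(1)=b_{n-j}$ for $0\le j\le n$, and $D^jg(0)=\al_j$, $D^jg(1)=\beta_j$ for $j\ge 1$. Writing $d=b_n>0$ and setting the scaled triple $a^*=b_{n-1}/d$, $b^*=b_{n-2}/d$, $c^*=b_{n-3}/d$ (with only those that apply), the defining conditions of $U_n$ translate exactly to the hypotheses of Theorem~\ref{t:n=3.b} (using Proposition~\ref{p:prelim.ineq}(2)(iv) to recover $(a^*)^2/2<b^*<a^*/2$ from the rest when $n=3$). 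I select $\s_\de,\tau_\de$ via Proposition~\ref{p:endpt.der} with the scaled sequences $\al_j/d$ and $\beta_j/d$ (whose sign patterns are unchanged since $d>0$), and apply Theorem~\ref{t:n=3.b} (or the appropriate clause of Corollary~\ref{c:n=0,1,2} when $n\le 2$) to produce $\phi\in\F_0^\infty$ realizing the rescaled data. Then $g=d\cdot\phi$ satisfies all requirements: $I^jg(1)=d\cdot I^j\phi(1)=b_{n-j}$, $Dg>0$ on $(0,1)$, and the scaling carries the endpoint derivatives back to $\al_j$ and $\beta_j$.

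The main obstacle is the algebraic bookkeeping when matching $U_n$ to the scaled Theorem~\ref{t:a-e} inequalities; in particular, for $n=3$ I must confirm that the direct translation of clause (b) of Theorem~\ref{t:a-e} is redundant given the other conditions. This redundancy is precisely the content of Proposition~\ref{p:prelim.ineq}(2)(iv), (vi) (the latter also ensuring that the missing (e), namely $c^*<a^*/6$, follows automatically from (c) and (d)). Once these translations are verified, everything else reduces to applying the already-established Theorem~\ref{t:n=3.b} together with the elementary scaling $g=d\cdot\phi$.
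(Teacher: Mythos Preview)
Your proposal is correct and follows essentially the same route as the paper's proof: reduce via Proposition~\ref{p:equiv.P_n}(2) and Proposition~\ref{p:W.equiv}, then establish $W_n\sq U_n\sq W_n^\infty$ by rescaling by $b_n$ and invoking the necessity results (you cite Theorem~\ref{t:a-e}; the paper cites the underlying Propositions~\ref{n=1.a}, \ref{n=2.a}, \ref{p:n.equals.3} directly) for one inclusion and Theorem~\ref{t:n=3.b}/Corollary~\ref{c:n=0,1,2} together with Proposition~\ref{p:endpt.der} for the other. The handling of the redundant inequalities via Proposition~\ref{p:prelim.ineq}(2)(iv),(vi) matches the paper's implicit use of the same reductions.
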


\begin{proof}
We check $(P_n)$ in the form given in Proposition \ref{p:equiv.P_n} (2), using the integral form of the definitions from Proposition \ref{p:W.equiv}.
Each of the four formulas above has the form $W_n=S_n$ for some set $S_n$ which is clearly open. We must also show $W_n=W_n^\infty$. The inclusions $W_n^\infty\sq W_n$ are clear, so the statements are proven if we show $W_n\sq S_n$ and $S_n\sq W_n^\infty$. In the arguments below, $\al$ and $\beta$ denote arbitrary but given elements $\al\in \wt{\A}$, $\beta\in\wt{\B}$.

The case $n=0$ is covered by \cite{Bu2019}, Proposition 6.2, but we prove it here for completeness.
The inclusion $W_0\sq S_0$ is clear since it states only that if $f\in\F_0$, (i.e.,  $f$ is continuous and increasing but not constant) and $f(0)=0$, then $f(1)>0$.

To see that $S_0\sq W_0^\infty$, let $a\in S_0$, i.e., $a>0$.
We want an $f\in\F_0^\infty$ such that $f(0)=0$, $f(1)=a$ and $D^jf(0)=\al_j$, $D^jf(1)=\beta_j$, $j\in\N$. By Proposition \ref{p:endpt.der}, the functions $\s_\de$ and $\tau_\de$ fixed at the beginning of Section \ref{s:spec.higher} could have been chosen so that $D^j\s_\de(0)=\al_j/a$, $D^j\tau_\de(1)=\beta_j/a$, $j\in\N$. Then Corollary \ref{c:n=0,1,2} (1) gives, for each small enough $\de>0$, a $C^\infty$ function $g\colon [0,1]\to[0,1]$ such that $g=\s_\de$ on $[0,\de]$, $g=\tau_\de$ on $[1-\de,1]$, and $Dg>0$ on $(0,1)$.
The function $f=ag$ is as desired.

For the case $n=1$,
The inequality $W_1\sq S_1$ follows from Proposition \ref{n=1.a}. Given $f\in\F_0$ satisfying $f(0)=0$, if we write $b=f(1)$ and $a=I(f)(1)$ then the function $g=b^{-1}f$ maps into $[0,1]$. We have $I(g)(1) = b^{-1}a$. Plugging this value in for the $a$ of Proposition \ref{n=1.a}, we get $0<b^{-1}a<1$, or $0<a<b$, and hence $(a,b)\in S_1$.

To show $S_1\sq W_1^\infty$, let $(a,b)\in S_1$.
We want $f\in\F^\infty_0$ such that $f(0)=0$, $f(1)=b$, $I(f)(1)=a$, and
$D^jf(0)=\al_j$, $D^jf(1)=\beta_j$, $j\in\N$.
We have $0<a/b<1$.
By Proposition \ref{p:endpt.der}, the functions $\s_\de$ and $\tau_\de$ fixed at the beginning of Section \ref{s:spec.higher} could have been chosen so that
$D^j\s_\de(0)=\al_{j}/b$, $D^j\tau_\de(1)=\beta_{j}/b$, $j\in\N$.
By Corollary \ref{c:n=0,1,2} (2), for each small enough $\de>0$, there is a $C^\infty$ function $g\colon [0,1]\to[0,1]$ such that $g=\s_\de$ on $[0,\de/2]$, $g=\tau_\de$ on $[1-\de/2,1]$, $Dg>0$ on $(0,1)$, and $I(g)(1)=a/b$. The function $f=bg$ is as desired.

For the case $n=2$, the inequality $W_2\sq S_2$ follows from Proposition \ref{n=2.a}.
Given $f\in\F_0$ satisfying $f(0)=0$, if we write $c=f(1)$, $b=I(f)(1)$, $a=I^2(f)(1)$, then the function $g=c^{-1}f$ maps into $[0,1]$.
We have $I(g)(1) = b/c$, $I^2(g)(1) = a/c$. Plugging these values in for the $a$ and $b$ of Proposition \ref{n=1.a} (4), we get
$(b/c)^2/2 < a/c < (b/c)/2$, or $b^2 < 2ac$ and $2a < b$. Positivity of $a$ is clear from its definition. Thus, $(a,b,c)\in S_2$.

To show $S_2\sq W_2^\infty$, let $(a,b,c)\in S_2$, so $0<2a<b$ and $b^2<2ac$.
We want $f\in\F^\infty_0$ such that $f(0)=0$, $f(1)=c$, $I(f)(1)=b$, $I^2(f)(1)=a$, and
$D^jf(0)=\al_j$, $D^jf(1)=\beta_j$, $j\in\N$.
From $b^2<2ac$ and $2a<b$ we get $b^2<2ac<bc$, or $(b/c)^2/2 < a/c < (b/c)/2$. By Proposition \ref{p:endpt.der}, the functions $\s_\de$ and $\tau_\de$ fixed at the beginning of Section \ref{s:spec.higher} could have been chosen so that
$D^j\s_\de(0)=\al_{j}/c$, $D^j\tau_\de(1)=\beta_{j}/c$, $j\in\N$.
By Corollary \ref{c:n=0,1,2} (3), for each small enough $\de>0$, there is a $C^\infty$ function $g\colon [0,1]\to[0,1]$ such that $g=\s_\de$ on $[0,\de/2]$, $g=\tau_\de$ on $[1-\de/2,1]$, $Dg>0$ on $(0,1)$, $I(g)(1)=b/c$ and $I^2(g)(1)=a/c$. The function $f=cg$ is as desired.

For the case $n=3$, the inequality $W_3\sq S_3$ follows from Proposition \ref{p:n.equals.3}.
Given $f\in\F_0$ satisfying $f(0)=0$, if we write $d=f(1)$, $c=I(f)(1)$, $b=I^2(f)(1)$, $a=I^3(f)(1)$, then the function $g=d^{-1}f$ maps into $[0,1]$.
We have
$I(g)(1) = c/d$, and similarly
$I^2(g)(1) = b/d$,
$I^3(g)(1) = a/d$. Plugging these in respectively for the $a,b,c$ of Proposition \ref{p:n.equals.3}, we get
\[
\frac{2(b/d)^2}{3(c/d)} < \frac{a}{d} < \frac{-(c/d)^2 + 2(c/d)(b/d) -4(b/d)^2 + 2(b/d)}{6(1-(c/d))}.
\]
Multiplying by $d$ and simplifying gives
\[
\frac{2b^2}{3c} < a < \frac{-c^2 + 2cb -4b^2 + 2bd}{6(d-c)}.
\]
We can write these inequalities as $2b^2 < 3ac$ and $6ad - 6ac < -c^2 + 2cb -4b^2 + 2bd$, or $6ad + 4b^2 + c^2 < 2cb + 2bd + 6ac$. The property $0<c<d$ holds since $(c,d)\in W_1$ (witnessed by $f$).
Thus, $(a,b,c,d)\in S_3$.

To show $S_3\sq W_3^\infty$, let $(a,b,c,d)\in S_3$. We have
\[
0<c<d,\ 2b^2<3ac,\ \text{and}\ 6ad + 4b^2 + c^2 < 2cb + 2bd + 6ac.
\]
We want $f\in\F^\infty_0$ such that $f(0)=0$, $f(1)=d$, $I(f)(1)=c$, $I^2(f)(1)=b$, $I^3(f)(1)=a$, and
$D^jf(0)=\al_j$, $D^jf(1)=\beta_j$, $j\in\N$.
Since $0<c<d$, we have $0<c/d<1$.
The inequalities $2b^2<3ac$ and $6ad + 4b^2 + c^2 < 2cb + 2bd + 6ac$ can be re-written as
\[
\frac{2b^2}{3c} < a < \frac{-c^2 + 2bc -4b^2 + 2bd}{6(d-c)}.
\]
Dividing the numerators by $d^2$ and the denominators by $d$ gives the same inequalities with $a,b,c,d$ replaced by $a/d,b/d,c/d,1$.
By Proposition \ref{p:endpt.der}, the functions $\s_\de$ and $\tau_\de$ fixed at the beginning of Section \ref{s:spec.higher} could have been chosen so that
$D^j\s_\de(0)=\al_{j}/d$, $D^j\tau_\de(1)=\beta_{j}/d$, $j\in\N$.
By Theorem \ref{t:n=3.b}, for each small enough $\de>0$, there is a $C^\infty$ function $g\colon [0,1]\to[0,1]$ such that $g=\s_\de$ on $[0,\de/2]$, $g=\tau_\de$ on $[1-\de/2,1]$, $Dg>0$ on $(0,1)$, $I(g)(1)=c/d$, $I^2(g)(1)=b/d$ and $I^3(g)(1)=a/d$. The function $f=dg$ is as desired.
\end{proof}

\end{document}